\newtheorem*{thm_soundness}{Theorem~\ref{thm:soundness}}
\DeclareMathSymbol{\mhyph}{\mathalpha}{operators}{`-}
\newcommand{\leqnomode}{\tagsleft@true}
\newcommand{\reqnomode}{\tagsleft@false}
	\theoremstyle{definition}
		\newtheorem{remark}{Remark}
		\newtheorem{fact}{Fact}
	\theoremstyle{plain}
	\newtheorem{dfn}{Definition}
			\newtheorem{thm}{Theorem}
		\newtheorem{lemma}{Lemma}
		\newtheorem{prop}{Proposition}
		\newtheorem{corollary}{Corollary}
		\newtheorem{obse}{Observation}
		\newtheorem{op}{Open problem}
        \theoremstyle{definition}
        \newcommand{\T}{\mathrm{Tr}}
        \newcommand*\subdot[1]{\oalign{$#1$\cr\hfil.\hfil}}%
        \newcommand{\mc}[1]{\mathcal{#1}}
        \newcommand{\lnat}{\mathcal{L}_\mathbb{N}}
        \newcommand{\lt}{\mc{L}_\T}
        \newcommand{\lpos}{\mathcal{L}^{\mrm{pos}}}
        \newcommand{\mrm}{\mathrm}
        \newcommand{\vphi}{\varphi}
        \newcommand{\corn}[1]{\ulcorner #1\urcorner}
        \newcommand{\beq}{\begin{equation}}
        \newcommand{\eeq}{\end{equation}}
        \newcommand{\ra}{\rightarrow}
        \newcommand{\Ra}{\Rightarrow}
        \newcommand{\pat}{\mrm{PAT}}
        \newcommand{\nat}{\mathbb{N}}
        \newcommand{\sth}{\;|\;}
        \newcommand{\patinf}{\mrm{PA}^\infty_{\mrm{T}}}
        \newcommand{\patpinf}{\mrm{PA(P)}^\infty_{\mrm{T}}}
        \newcommand{\lp}{\mc{L}(P)_\nat}
        \newcommand{\ltp}{\mc{L}(P)_\T}
        \newcommand{\vfmpinf}{\mrm{VFM(P)}^\infty}
\title{Axiomatic theories of supervaluational truth: \\ completing the picture}
\author[1,2]{Pablo Dopico}
\author[3]{Daichi Hayashi}
\affil[1]{\small Department of Philosophy, King's College London, pablo.dopico@kcl.ac.uk}
\affil[2]{\small Faculty of Philosophy, Humboldt Universität zu Berlin}
\affil[3]{\small Hokkaido University, daichinhayashi0611@gmail.com}
\date{}
\begin{document}

\maketitle

\begin{abstract}
Supervaluational fixed-point theories of formal truth aim to amend an important shortcoming of fixed-point theories based on the Strong Kleene logic, namely, accounting for the truth of classical validities. In a celebrated paper, Andrea Cantini proposed an axiomatization of one such supervaluational theory of truth, which he called VF, and which proved to be incredibly strong proof-theoretically speaking. However, VF only axiomatizes one in a collection of several supervaluational schemes, namely the scheme which requires truth to be consistent. In this paper, we provide axiomatic theories for the remaining supervaluational schemes, labelling these systems VF$^-$ (for the theory which drops the consistency requirement), and VFM (for the theory which requires not only consistency but also completeness, i.e., maximal consistency). We then carry out proof-theoretic analyses of both theories. Our results show that VF$^-$ is as strong as VF, but that VFM's strength decreases significantly, being only as strong as the well-known theory KF. Furthermore, we introduce and analyse proof-theoretically two variants of these theories: the schematic extension, in the sense of Feferman, of VFM; and a theory in-between VFM and VF, that we call VFW, and which drops the assumption of maximal consistency. The former is shown to match the strength of predicative analysis; for the latter, we show its proof-theoretical equivalence with ramified analysis up to the ordinal $\vphi_20$, thus standing halfway between VFM and VF. 
\end{abstract}

\section{Introduction}

Due to the Liar paradox, a common proposal among formal theorists of truth is to take the truth predicate as only partially defined. In \cite{kripke_1975}, Kripke submitted a method to do this based on a fixed-point construction. His idea is as follows: one starts with a set of sentences granted to be true, and then applies an operator that collects the consequences of those sentences on some partial logic. Then, when the operator is monotonic, a fixed-point can be reached.\footnote{There are nuances to this, for the methodology to obtain a fixed-point changes depending on the set of sentences one applies the operator to. While monotonicity is always required, the usual procedure of applying the operator at successor stages and taking unions at limits also demands that the set be sound w.r.t. the operator, i.e., that if $\Phi(\cdot)$ is the operator, then the set $S$ be such that $S\subseteq \Phi(S)$. Fixed-points can also be reached if at limits intersections are taken and the set is closed w.r.t. the operator, i.e., such that $\Phi(S)\subseteq S$---see e.g. \cite{fitting_1986}; or, given any set, by applying more heterodox constructions such as in \cite{cantini_1989}.} The peculiarity (and advantage) of this fixed-points is that, when constructed with the right logic, a sentence $\vphi$ will be in the fixed-point iff the sentence `$\vphi$ is true' is. 

Very often, the partial logic with which the operator is defined is Strong Kleene (SK) logic. Indeed, the Kripke construction over SK logic has been the most explored one so far, and the basis of the celebrated Kripke-Feferman theory. However, Kripke's construction over SK presents what is arguably an important flaw, namely that many classical validities are not necessarily in the fixed-points. For example, `$\lambda\rightarrow\lambda$', for $\lambda$ the Liar sentence, is not in many fixed-points. 

Kripke's suggestion to overcome this drawback is to formulate the construction with some form of supervaluational logic. Since these logics remain partial logics, defining a monotone operator is possible. At the same time, and unlike the SK case, a supervaluational logic allows to recover all classical validities: supervaluational semantics makes true whatever is satisfied by all classical models meeting certain conditions, hence all classical validities are in.

Up to this point, we have been talking about \textit{semantic} theories of truth. But, just like the fixed-point construction over SK has an \textit{axiomatic} counterpart, namely the theory KF, the fixed-point construction over supervaluational logic has its axiomatic version. Or, at least, the fixed-point construction over one such supervaluational logic, for there are three prominent supervaluational satisfaction relations that often get mentioned in the literature. We will explain them in detail below, so for now it will suffice to know the labels they receive: VB, VC and MC.\footnote{There is a fourth relevant supervaluational relation, often labelled SV, which we will also introduce. However, as we shall see, the theory of truth that arises with this scheme has some undesirable features.} Thus, in an influential paper, Andrea Cantini \cite{cantini_1990} proposed an axiomatic theory of truth which he labelled VF and which, to some extent, captures the fixed-point construction over the satisfaction relation VC: the theory is sound with respect to standard models based on these fixed-points. Moreover, Cantini showed that the theory VF over Peano Arithmetic was mathematically very fruitful, in the proof-theoretic sense: it was proof-theoretically equivalent to (meaning it proves the same arithmetical theorems as) the theory of one inductive definition $\mrm{ID}_1$, far beyond Feferman's limits for predicative analysis.  

In this paper, we aim to complete the picture of axiomatic theories of supervaluational truth by providing sound axiomatizations of the remaining satisfaction relations: VB and MC. These are captured by two theories, which we call $\mrm{VF}^-$ and $\mrm{VFM}$. The former is characterized by a lack of axioms concerning the truth predicate in the \textit{internal} theory, whereas the latter includes an axiom of completeness for the truth predicate also in the internal theory. We also offer proof-theoretic analyses of these two theories. Finally, we provide a proof-theoretic analysis of two variants of $\mrm{VFM}$: on the one hand, a theory in-between $\mrm{VF}$ and $\mrm{VFM}$, which drops the internal completeness axiom that characterizes $\mrm{VFM}$ and instead adds the rule of $\T$-Elimination, and which we call $\mrm{VFW}$; on the other hand, the so-called schematic extension---in the sense of \cite{feferman_1991}---of the theory $\mrm{VFM}$, which we call $\mrm{VFM}^*$.

Our results show that:

\begin{itemize}
    \item $\mrm{VF}^-$ is as proof-theoretically strong as $\mrm{VF}$, thus matching the strength of $\mrm{ID}_1$. 
    \item $\mrm{VFM}$ is considerably weaker, and only as strong as KF---matching the strength of ramified analysis up to $\varepsilon_0$.
    \item Dropping internal completeness and adding the rule of $\T$-Elimination, a weakening of the so-called T-Out axiom, strengthens VFW with respect to VFM, but falls short of the proof-theoretic power of VF. 
    \item $\mrm{VFM}^*$ is as proof-theoretically strong as the schematic extension of KF---matching the strength of ramified analysis up to $\Gamma_0$.
\end{itemize}

\section{Preliminaries}

\subsection{Notational preliminaries}

We work with languages whose logical symbols are $\neg, \vee, \wedge, \forall, \exists$, as well as brackets. We write $\varphi \rightarrow \psi$ as an abbreviation for $\neg \varphi \vee \psi$. We follow mostly the conventions in \cite[Ch.5]{halbach_2014}. Our base language $\lnat$ is a definitional extension of the language of Peano Arithmetic ($\mrm{PA}$) with finitely many function symbols for primitive recursive functions. We assume a standard formalization of the syntax of first-order languages, including $\lnat$ itself---see e.g. \cite{hajek_pudlak_2017}. $\lnat$ is assumed to contain a finite set of function symbols that will stand for certain primitive recursive operations. For example, $\subdot \neg$ is a symbol for the primitive recursive function that, when inputted the code of a formula, yields the code of its negation. The same applies to $\subdot \vee, \subdot \forall, \subdot \wedge, \subdot \exists$. 
We assume a function symbol for the substitution function, and write $x (t/v)$ for the result of substituting $v$ with $t$ in $x$; $\ulcorner \varphi(\dot x)\urcorner$ abbreviates $\ulcorner \varphi (v)\urcorner (\mrm{num}(x)/\corn{v}) $, for $x$ a term variable (and provided $\varphi$ only has one free variable). Furthermore, we write $t^\circ$ for the result of applying to a term $t$ the evaluation function (which outputs the value of the inputted term). Note that this is an abbreviation for a formula, and not a symbol of the language.

  Given a language $\mc{L}$, we write  $\text{Var}_{\mc{L}}(v)$ for the formula representing the set of (codes of) variables, $\text{CT}_{\mc{L}}$ for the formula representing the set of (codes of) closed terms of $\mc{L}$, $\mathrm{For}_{\mc{L}}(x)$ ($\mathrm{For}_{\mc{L}}(x,y)$), for the formula representing the set of all formulae (formulae with $v$ free) of $\mc{L}$, and $\mathrm{Sent}_{\mc{L}}(x)$, for the formula representing the set of all sentences of $\mc{L}$. Naturally, we also use $\mrm{Var}_\mc{L}$, $\mrm{CT}_\mc{L}$, $\mrm{For}_\mc{L}$, and $\mrm{Sent}_\mc{L}$ to stand for the sets corresponding to these formulae.  We will occasionally omit reference to $\mc{L}$ when this is clear from the context.  Here, we are mostly interested in the cases in which $\mc{L}$ is $\lnat$ or $\lt:=\lnat\cup \{\T\}$; the exception will be the ramified truth languages, which we will present in time. The expression $\Bar{n}$ stands for the numeral of the number $n$ (although we omit the bar for specific numbers). We write $\# \varphi$ for the code or Gödel number of $\varphi$, and $\ulcorner \varphi\urcorner$ for the numeral of that code. 

$\pat$ is the theory formulated in the language $\lt$ consisting of the axioms of $\mrm{PA}$ with induction extended to the whole language. $\pat$ will be our background syntax theory. 

With the exception of $\mrm{VF}^-$, all the theories studied in this paper will be shown to be \emph{predicative} in strength, that is, their proof-theoretic ordinals are the so-called Feferman--Sch\"{u}tte ordinal $\Gamma_0$ or less. 
For that purpose, we use in particular the techniques of predicative ordinal analysis,
and thus we assume an appropriate ordinal  notation system OT up to $\Gamma_0$, and the ordinal less-than relation $<$ on $\Gamma_0$ (for more detailes, see, e.g., \cite[Ch.3]{pohlers_2009}). For simplicity, we identify each ordinal number with its notation. We use $0$ as the ordinal number, the ordinal sum $\alpha + \beta$, and the Veblen function $\vphi_{\alpha}\beta$. 
In addition, it is convenient to use the \emph{natural sum} $\alpha \# \beta$ (cf.~\cite[p.~32]{pohlers_2009}), which satisfies the following:
\begin{itemize}
    \item $\alpha \# \beta = \beta \# \alpha$,
    \item $\alpha \# \beta < \alpha \# \gamma$ for $\beta < \gamma$.
\end{itemize}
As usual, $1$ is the ordinal number is defined by $\vphi_00$, the $\omega$-exponentiation $\omega^x$ is defined as $\vphi_0x$, and the $\alpha$-th epsilon number $\varepsilon_{\alpha}$ is defined by $\vphi_1\alpha$. 

Given a language $\mathcal{L}$, an ordinal number $\alpha < \Gamma_0$, and an $\mathcal{L}$-formula $A$, transfinite induction for $A$ up to $\alpha$ is defined as the formula: 
\[
\mrm{TI}(\alpha, A):= \forall \beta (\forall \gamma < \beta A(\gamma) \to A(\beta)) \to \forall \beta < \alpha A(\beta).
\]
In line with the latter, the schema $\mrm{TI}_{\mathcal{L}}(<\alpha)$ is defined to be the set $\{ \mrm{TI}(\beta, A) \sth \beta < \alpha \ \& \ A \in \mathcal{L} \}$. Finally, if $S$ is a theory, its proof-theoretic ordinal (denoted $\vert S\vert$) is the ordinal $\alpha$ such that $\mrm{PA}+\mrm{TI}_{\lnat}(<\alpha)$ and $S$ prove the same $\lnat$-statements (and verifiably so within $\mrm{PA}$). If $T$, $S$ are theories, we write $\vert T\vert \geqq \vert S\vert$ to indicate that the proof-theoretic ordinal of $T$ is equal or greater than the proof-theoretic ordinal of $S$. Similarly, $\vert T\vert \equiv \vert S\vert$ is defined as $\vert T\vert \geqq \vert S\vert$ and $\vert S\vert \geqq \vert T\vert$. 

\subsection{Supervaluational theories of truth}\label{section:supervaluational theories}

For a satisfaction relation $\Vdash$, a set of (codes) of sentences $X$, and a formula $A$ of $\lt$, we will write $X \Vdash A$ to abbreviate $(\nat,X, X^-)\Vdash A$, where $X\subseteq \mrm{Sent}_{\lt}$ is the relevant interpretation of the truth predicate and $X^-$ (the \textit{antiextension} of $X$) is defined as $\{\#\vphi \sth \#\neg\vphi\in X\}$. 

As we mentioned, the literature one presents a handful of supervaluational satisfaction relations ($\vDash$ stands for the classical satisfaction relation), namely:
\begin{align}
\label{eq:sva}X\vDash_\mrm{sv} \varphi & \text{ iff }\forall X' (X'\supseteq X \Rightarrow X' \vDash \varphi)\\
\label{eq:svb} X\vDash_\mrm{vb} \varphi &\text{ iff } \forall X' (X'\supseteq X \, \&\,  X' \cap X^- =\varnothing \Rightarrow X' \vDash \varphi)\\
    X\vDash_\mrm{vc} \varphi &\text{ iff } \forall X' (X'\supseteq X\, \&\,  X' \cap X'^-=\varnothing \Rightarrow X' \vDash \varphi)\\
  \label{eq:svd}   X\vDash_\mrm{mc} \varphi &\text{ iff } \forall X' (X'\supseteq X\, \&\, \mrm{MCX}(X') \Rightarrow X' \vDash \varphi)
\end{align}

\noindent In each line of \ref{eq:svb}-\ref{eq:svd}, the second conjunct in the antecedent is called the \textit{admissibility condition}. In \ref{eq:svd}, $\mrm{MCX}$ stands for the set of all maximally consistent sets of codes of sentences. 

With this, one obtains fixed-points for the interpretation of the truth predicate in the following way. First, given a set of codes of sentences $S$ and some satisfaction relation $e\in \{\mrm{sv}, \mrm{vb}, \mrm{vc}, \mrm{mc}\}$, one defines the \emph{Kripke jump} as follows: 

\beq
\mathcal{J}_\mrm{e}(S):=\{\#\varphi\sth S \vDash_e \varphi\}
\eeq

The fixed-point interpretation of $\T$ is now obtained by the following transfinite construction, starting with some extension $S$:

\begin{align*}
& \Gamma_0 =  S\\
		& \Gamma_{\alpha+1}= \mathcal{J}_\mrm{e}(\Gamma_\alpha)\\
		& \Gamma_\lambda = \bigcup_{\beta<\lambda} \Gamma_\beta\text{ for }\lambda \text{ a limit ordinal}\\
\end{align*}

One can easily verify that the jump operator $\mathcal{J}_\mrm{e}$, for $e\in \{\mrm{sv}, \mrm{vb}, \mrm{vc}, \mrm{mc}\}$, is monotonic, that is, 

\beq
    E\subseteq E' \Ra \mathcal{J}_\mrm{e}(E)\subseteq \mathcal{J}_\mrm{e}(E')
\eeq

As we hinted before, and by the theory of inductive definitions (see e.g. \cite[Chs. 4 and 5]{mcgee_1991}), this entails that the operator will have fixed-points, i.e., there will be sets $S$ such that $\mc{J}_e(S)=S$. Moreover, there will also be a least or minimal fixed-point $\mrm{I}_e$, in the sense that, for any other fixed-point $\mrm{F}_e$, $\mrm{I}_e\subseteq \mrm{F}_e$.

In connection with the above, in \cite{cantini_1990} Cantini formulated the axiomatic theory VF, defined as follows:

\begin{dfn}
[$\mrm{VF}$]
The $\lt$-theory $\mrm{VF}$ consists of $\mrm{PAT}$ with the universal closure of the following axioms:

\begin{description}
\item[$(\mrm{V}1)$] $\mathrm{CT}(x)\land \mathrm{CT}(y) \to [(\T(x\subdot{=}y) \leftrightarrow x^{\circ} = y^{\circ})\wedge (\T(x\subdot{\neq}y) \leftrightarrow x^{\circ} \neq y^{\circ})]$

\item[$(\mrm{V}2)$] $\mathrm{Ax}_{\mrm{PAT}}(x) \to \T(x)$

\item[$(\mrm{V}3)$] $\forall z \T(x ( z/v)) \to \T(\subdot{\forall}vx)$

\item[$(\mrm{V}4)$] $\T(x) \rightarrow \T\ulcorner \T(\dot{x}) \urcorner $

\item[$(\mrm{V}5)$] $\mrm{Sent}_{\lt}(x)\wedge \T\corn{\neg \T (\dot x)} \ra \T (\subdot \neg x)$

\item[$(\mrm{V}6)$] $\T(x\subdot{\to}y) \to (\T(x) \to \T(y))$

\item[$(\mrm{V}7)$] $\T\ulcorner \neg (\T(\dot x)\wedge \T(\subdot \neg\dot  x))\urcorner $

\item[$(\mrm{V}8)$] $\T(\ulcorner \T(\dot{x}) \to \mathrm{Sent}_{\lt}(\dot{x})\urcorner)$

\item[$(\mrm{V}9)$] $\T\ulcorner \vphi\urcorner \rightarrow \vphi$ for any formula $\vphi$ of $\lt$

\end{description} 
\end{dfn}

The interesting relation with Kripke's theory of truth, which Cantini devised, is that the models of this theory were provided by fixed-point models of the scheme VC.\footnote{But not only. It is known that the set of stable truths following Herzberger's revision theory is also a model of $\mrm{VF}$ (see \cite[\S 69]{cantini_1996}). } A fixed-point model of a scheme $e$ is a model $(\nat, X)$ for the language $\lt$, where $X$ is the interpretation of $\T$ and $X=\mc{J}_{\mrm{e}}(X)$. Thus, he showed:

\begin{prop}[Cantini]
$\mrm{VF}$ is sound with respect to $\mrm{vc}$ fixed-point models, i.e., if $X=\mc{J}_\mrm{vc}(X)$, then $(\nat, X)\vDash \mrm{VF}$.
\end{prop}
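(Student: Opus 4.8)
The plan is to verify that each of the axioms $(\mathrm{V}1)$--$(\mathrm{V}9)$ holds in the classical model $(\nat, X)$ in which $\T$ is interpreted by the fixed point $X=\mathcal{J}_{\mrm{vc}}(X)$. The single fact driving every case is the unfolding of the fixed-point equation: for a sentence $\vphi$ of $\lt$,
\[
(\nat, X)\vDash \T\corn{\vphi}\iff \#\vphi\in X \iff X\vDash_{\mrm{vc}}\vphi \iff \forall X'\bigl(X'\supseteq X\ \&\ X'\cap X'^-=\varnothing \ \Ra\ X'\vDash\vphi\bigr),
\]
where $X'\vDash\vphi$ is classical satisfaction in $(\nat,X')$. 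Thus membership of $\#\vphi$ in $X$ is literally a supervaluation of $\vphi$ over all consistent classical extensions of $X$. A preliminary observation I would record is that the $X$ in question is consistent, i.e. $X\cap X^-=\varnothing$ (this is used essentially below, and one should note its role, since the only inconsistent $\mrm{vc}$ fixed point---the set of all sentences---fails $(\mathrm{V}9)$). Its immediate payoff is that $X$ is itself an admissible extension of $X$, supplying a distinguished witness $X'=X$.

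With this in hand, most axioms reduce to a property shared by \emph{every} admissible extension $X'$, which then survives the universal quantifier. For $(\mathrm{V}2)$, each $\mrm{PAT}$-axiom is true in every $(\nat, X')$---the arithmetical axioms because the domain is standard, and the induction axioms because full induction holds in any structure with domain $\nat$ regardless of how $\T$ is interpreted---so such codes lie in $X$. $(\mathrm{V}3)$ follows because the numerals exhaust $\nat$: if every consistent extension satisfies each instance $\vphi(\bar n)$, then any fixed $X'$ satisfies all of them and hence satisfies $\subdot{\forall}v\,\vphi$; $(\mathrm{V}6)$ is classical modus ponens applied uniformly across all $X'$; and $(\mathrm{V}8)$ uses only that admissible extensions are sets of sentence codes. $(\mathrm{V}4)$ is monotonicity: $\#\vphi\in X$ gives $\#\vphi\in X'$ for every $X'\supseteq X$, which is exactly $X'\vDash\T\corn{\vphi}$. $(\mathrm{V}7)$ is the admissibility condition itself, since $X'\cap X'^-=\varnothing$ forbids any $X'$ from containing both $\#\vphi$ and $\#\neg\vphi$. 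Finally, $(\mathrm{V}1)$ and $(\mathrm{V}9)$ are where the witness $X'=X$ is needed: for $(\mathrm{V}9)$, $\#\vphi\in X$ forces $X\vDash\vphi$ by instantiating the supervaluation at $X'=X$, i.e. $(\nat,X)\vDash\vphi$; for $(\mathrm{V}1)$ the arithmetical sentence $s\,\subdot{=}\,t$ has a fixed truth value in $\nat$, and consistency of $X$ rules out the otherwise vacuous case.

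The one genuinely delicate axiom is $(\mathrm{V}5)$, and I expect it to be the main obstacle. Here the hypothesis is that $\#(\neg\T\corn{\vphi})\in X$, equivalently that every consistent $X'\supseteq X$ omits $\#\vphi$, and the goal is $\#(\subdot{\neg}\vphi)\in X$, i.e. $\#\neg\vphi\in X$. The plan is to test the extension $X\cup\{\#\vphi\}$: were it consistent it would be an admissible extension of $X$ containing $\#\vphi$, contradicting the hypothesis, so it must be inconsistent. Using that $X$ is already consistent, the only way adjoining $\#\vphi$ can create an inconsistency is syntactic---either $\#\neg\vphi\in X$ already, or $\vphi$ is itself a negation $\neg\chi$ with $\#\chi\in X$. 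In the first case the goal is immediate; in the second, $\#\chi\in X$ means every consistent extension satisfies $\chi$, hence satisfies $\neg\neg\chi$, which is $\neg\vphi$, giving $\#\neg\vphi\in X$. Assembling the nine verifications completes the argument; the bookkeeping of this syntactic inconsistency analysis in $(\mathrm{V}5)$, together with the careful invocation of consistency of $X$ in $(\mathrm{V}1)$, $(\mathrm{V}5)$ and $(\mathrm{V}9)$, is the only part demanding real care.
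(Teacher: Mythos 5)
Your proof is correct and takes essentially the route the paper (deferring to Cantini) intends: unfold the fixed-point equation, check each axiom over all admissible extensions, use $X'=X$ as the distinguished witness for $(\mathrm{V}1)$ and $(\mathrm{V}9)$, and test the extension $X\cup\{\#\vphi\}$ for $(\mathrm{V}5)$. Your explicit flagging of the consistency hypothesis is a correct refinement rather than a deviation: the statement as printed omits it, but the inconsistent fixed point $\mrm{Sent}_{\lt}$ does falsify $(\mathrm{V}9)$, and the paper's analogous soundness propositions for $\mrm{VF}^-$ and $\mrm{VFM}$ do assume consistency.
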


Moreover, he carried out the ordinal analysis of VF: 

\begin{prop}[Cantini]
$\vert \mrm{VF}\vert \equiv \vert \mrm{ID}_1\vert$
\end{prop}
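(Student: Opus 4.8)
The plan is to prove the two inequalities $\vert \mrm{VF}\vert \geqq \vert \mrm{ID}_1\vert$ and $\vert \mrm{ID}_1\vert \geqq \vert \mrm{VF}\vert$ separately, keeping in mind that $\vert \mrm{ID}_1\vert$ is the Bachmann--Howard ordinal. Following Cantini, the lower bound is obtained by interpreting (a theory equivalent to) $\mrm{ID}_1$ inside $\mrm{VF}$, while the upper bound is obtained by bounding the $\lnat$-theorems of $\mrm{VF}$ by those of $\mrm{ID}_1$, for which I would use the definable minimal fixed-point model together with an infinitary reformulation of $\mrm{VF}$.

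For the lower bound I would exploit the fact that the truth predicate of $\mrm{VF}$ is itself closed under a monotone jump and can therefore internalize least fixed points. Concretely, given a positive arithmetical operator form $\mathcal{A}(x,P)$, I would define an $\lt$-predicate $I_{\mathcal{A}}(x)$ by means of $\T$ --- roughly, membership in the least fixed point is coded by the truth of a suitable self-referential $\lt$-sentence built from $\mathcal{A}$ --- and then verify the two $\mrm{ID}_1$-axioms for $I_{\mathcal{A}}$. The closure axiom $\forall x\,(\mathcal{A}(x, I_{\mathcal{A}}) \ra I_{\mathcal{A}}(x))$ should follow from the compositional and supervaluational axioms $(\mrm{V}1)$--$(\mrm{V}6)$, since these make $\T$ respect classically valid inferences together with the positivity of $\mathcal{A}$; the minimality (induction) schema should follow from $(\mrm{V}9)$ together with the consistency axiom $(\mrm{V}7)$, which jointly let one transfer external induction through the truth predicate. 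The subtle point of this direction is precisely the verification of the induction schema, where the interaction between $\T$-iteration and T-Out must be controlled; once $\mrm{ID}_1$ is interpreted, $\mrm{VF}$ proves $\mrm{TI}_{\lnat}(<\alpha)$ for every $\alpha$ below the Bachmann--Howard ordinal.

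For the upper bound the natural route is to build the minimal $\mrm{vc}$ fixed point $\mrm{I}_\mrm{vc}$ inside $\mrm{ID}_1$ and to invoke the soundness proposition above, which guarantees $(\nat, \mrm{I}_\mrm{vc}) \vDash \mrm{VF}$; since the first-order part of this model is standard, any $\lnat$-consequence of $\mrm{VF}$ is then provable in $\mrm{ID}_1$. The delicate step is that the jump $\mathcal{J}_\mrm{vc}$, as written, carries a universal set quantifier over admissible (consistent) extensions $X'$ and is thus prima facie $\Pi^1_1$ rather than a positive arithmetical operator of the kind $\mrm{ID}_1$ handles directly. I would therefore first show, using classical completeness and compactness together with the fixed first-order structure $\nat$, that $S \vDash_\mrm{vc} \vphi$ is equivalent to an arithmetical condition in $S$ --- intuitively, that supervaluational truth over $S$ reduces to consequence from $S$ under the consistency constraint --- so that $\mathcal{J}_\mrm{vc}$ collapses to an operator whose least fixed point is $\mrm{ID}_1$-definable and whose closure ordinal is that of $\mrm{ID}_1$. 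Equivalently, and more in line with the ordinal-analytic apparatus used elsewhere in this paper, I would embed $\mrm{VF}$ into an infinitary system extending $\patinf$ with rules mirroring the $\mrm{vc}$-clauses, perform (partial) cut elimination, and read off derivation-length bounds below the Bachmann--Howard ordinal.

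The main obstacle is exactly this upper-bound reduction: taming the apparent second-order complexity of the supervaluational jump. Unlike the Strong Kleene jump underlying $\mathrm{KF}$, the $\mrm{vc}$-jump mentions all consistent extensions of the current truth set, and the whole difficulty is to show that this quantification does not increase proof-theoretic strength beyond $\mrm{ID}_1$ --- i.e., that the admissibility (consistency) condition can be absorbed into an inductive definition without pushing the closure ordinal past the Bachmann--Howard ordinal. Controlling this, whether via the completeness-based reduction of the jump or via cut elimination in the infinitary calculus, is where the real work lies; the two boundedness estimates then combine to give $\vert \mrm{VF}\vert \equiv \vert \mrm{ID}_1\vert$.
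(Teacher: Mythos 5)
The paper does not actually prove this proposition: it is imported from Cantini's 1990 paper, with the remarks that the lower bound goes via an interpretation of the theory $\mrm{ID}_1^{acc}$ of accessibility inductive definitions and that the upper bound rests on Cantini's truth-as-provability interpretation. Your overall shape (interpret an $\mrm{ID}_1$-like theory for the lower bound, bound the arithmetical content for the upper bound) is right, and your lower-bound sketch is broadly in the spirit of Cantini's; note, though, that he works with accessibility inductive definitions rather than arbitrary positive operator forms precisely because the minimality schema then reduces to transfinite induction along a well-founded relation, which $(\mrm{V}9)$ (T-Out) can drive --- your appeal to $(\mrm{V}7)$ for minimality is not where the work happens. (The present paper also records an alternative lower bound, due to Friedman and Sheard, using only $\mrm{V}2$, $\mrm{V}3$, $\mrm{V}6$ and $\mrm{V}9$ to build a model of parameterless Bar Induction.)

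The genuine gap is in your primary route to the upper bound. You propose to show, ``using classical completeness and compactness'', that $S\vDash_\mrm{vc}\vphi$ is an arithmetical condition in $S$, so that $\mathcal{J}_\mrm{vc}$ collapses to a positive arithmetical operator whose least fixed point is available in $\mrm{ID}_1$. This reduction fails: the quantifier in the definition of $\vDash_\mrm{vc}$ ranges over expansions $(\nat,X')$ of the \emph{standard} model, whose first-order quantifiers are evaluated over $\nat$, so ``true in all admissible $X'$'' is truth in a class of $\omega$-models, not consequence from a first-order theory, and completeness and compactness give no handle on it. Moreover the admissibility condition $X'\cap X'^-=\varnothing$ is mere negation-disjointness of a set of codes, not deductive consistency, so there is no deductive notion for a completeness theorem to match. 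The jump $\mathcal{J}_\mrm{vc}$ is genuinely $\Pi^1_1$ in its argument, and the minimal $\mrm{vc}$ fixed point is not the least fixed point of a positive arithmetical operator. Your fallback --- interpreting $\T$ as derivability in an inductively generated infinitary calculus and formalizing that derivability relation in $\mrm{ID}_1$ --- is not an ``equivalent'' alternative but the only workable route, and it is exactly Cantini's truth-as-provability method; the ordinal bound then comes from the ordinal analysis of $\mrm{ID}_1$ itself rather than from reading off cut-free derivation lengths of an infinitary $\mrm{VF}$-calculus, which would in any case have to be measured against a Bachmann--Howard style notation system.
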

    
This makes VF a remarkably strong theory of truth, one of the strongest in the literature and by far the strongest theory that can be considered an axiomatization of any of Kripke's semantic constructions.

In addition to VF, Cantini also produced an axiomatization of SV, which he calls $\mrm{VF}_p$, and which was shown to be as proof-theoretically strong as $\mrm{ID}_1$---see \cite[Ch. 12] {cantini_1996}. The details are no different from the case of VF, and in both cases it consists in showing how to interpret the theory $\mrm{ID}_1^{acc}$ of accesibility inductive definitions. It is worth-noting, however, that the scheme SV leads to some undesirable consequences. In particular, fixed-points of the scheme SV fail to call untrue any sentence whatsoever, even clear arithmetical falsities; thus, for example, $\neg \T\corn{0=1}$ is not true in this scheme. Accordingly, the axiomatization of this theory cannot declare true that kind of sentences, i.e., $\T\corn{\neg \T \corn{0=1}}$ is not derivable in the theory. 

\section{The theory $\mrm{VF}^-$}

As stated, our goal is to axiomatize the satisfaction relations VB and MC, and thus we start with the former, whose proof-theoretic analysis also turns out to be the easiest. We recall that the satisfaction relation VB is defined as: 

\begin{equation*}
    X\vDash_\mrm{vb} \varphi \text{ iff } \forall X' (X'\supseteq X \, \&\,  X' \cap X^- =\varnothing \Rightarrow X' \vDash \varphi)
\end{equation*}

As one can read off from this definition, given a set $X$, the VB relation requires that the extensions of $X$ to be considered for the interpretation of the truth predicate be consistent with $X$. Hence, this is a form of conditional consistency: if $\vphi\in X$, and $X'$ is one of such extensions, $X'\nvDash \T\corn{\neg \vphi}$ must hold, and so $\neg\T\corn{\neg \vphi}$ must be in $\mrm{VB}(X)$. On the other hand, in the case of VF the consistency requirement is unconditional. Therefore, the latter case is captured by an \textit{internal} consistency axiom (V7), whereas in the case of the axiomatization of the scheme VB it will be captured by a conditional---see axiom $\mrm{VF}^-7$ below. Since this is the only major change, we now provide such axiomatization, which we call $\mrm{VF}^-$:\footnote{There is a further difference between VF and $\mrm{VF}^-$: $\mrm{VF}^-5$ has now been turned into a biconditional. The reason is that the right-to-left direction of this axiom is provable in VF via axiom V7; and $\mrm{VF}^-7$, which is a weakening of V7, does not suffice to prove the claim. Nonetheless, all fixed-point models of VB satisfy $\mrm{VF}^-5$ as displayed.}

\begin{dfn}
[$\mrm{VF}^-$]\label{vfminus}
The $\lt$-theory $\mrm{VF}^-$ consists of $\mrm{PAT}$ with the universal closure of the following axioms:

\begin{description}
\item[$(\mrm{VF}^-1)$] $\mathrm{CT}(x)\land \mathrm{CT}(y) \to [(\T(x\subdot{=}y) \leftrightarrow x^{\circ} = y^{\circ})\wedge (\T(x\subdot{\neq}y) \leftrightarrow x^{\circ} \neq y^{\circ})]$

\item[$(\mrm{VF}^-2)$] $\mathrm{Ax}_{\mrm{PAT}}(x) \to \T(x)$

\item[$(\mrm{VF}^-3)$] $\forall z \T(x( z/v)) \to \T(\subdot{\forall}vx)$

\item[$(\mrm{VF}^-4)$] $\T(x) \rightarrow \T\ulcorner \T(\dot{x}) \urcorner $

\item[$(\mrm{VF}^-5)$] $\mrm{Sent}_{\lt}(x)\ra( \T\corn{\neg \T (\dot x)} \leftrightarrow \T (\subdot \neg x))$

\item[$(\mrm{VF}^-6)$] $\T(x\subdot{\to}y) \to (\T(x) \to \T(y))$

\item[$(\mrm{VF}^-7)$] $\T x\ra \T \corn{\neg \T (\subdot \neg \dot x)}$

\item[$(\mrm{VF}^-8)$] $\T(\ulcorner \T(\dot{x}) \to \mathrm{Sent}_{\lt}(\dot{x})\urcorner)$

\item[$(\mrm{VF}^-9)$] $\T\ulcorner \vphi\urcorner \rightarrow \vphi$ for any formula $\vphi$ of $\lt$

\end{description} 
\end{dfn}

\begin{prop}
$\mrm{VF}^-$ is sound w.r.t. the scheme $\mrm{VB}$. That is: if $X$ is consistent and $X=\mc{J}_{vb}(X)$, then $(\nat, X)\vDash \mrm{VF}^-$.
\end{prop}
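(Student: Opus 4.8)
The plan is to interpret $\T$ by $X$ in the standard model and verify that each axiom $(\mrm{VF}^-1)$--$(\mrm{VF}^-9)$ is satisfied in $(\nat, X)$, reading off everything from the fixed-point equation $X=\mc{J}_\mrm{vb}(X)$. Unfolding this equation, membership in $X$ is governed by
\[
\#\vphi \in X \ \Longleftrightarrow\ \text{for every admissible } X', \ (\nat, X')\vDash \vphi,
\]
where I call $X'$ \emph{admissible} when $X\subseteq X'\subseteq \mrm{Sent}_{\lt}$ and $X'\cap X^-=\varnothing$. Two facts will do most of the work. First, since $X$ is consistent we have $X\cap X^-=\varnothing$, so $X$ is itself admissible; thus $X$ occurs among the completions $X'$, and whatever holds in all admissible $X'$ holds in $(\nat,X)$ in particular. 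Second, the family of admissible sets is closed under adjoining any sentence whose negation is not already in $X$: if $\#\neg\vphi\notin X$ then $\#\vphi\notin X^-$, so $X\cup\{\#\vphi\}$ is again admissible.

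The axioms $(\mrm{VF}^-1)$--$(\mrm{VF}^-4)$, $(\mrm{VF}^-6)$, $(\mrm{VF}^-8)$ and $(\mrm{VF}^-9)$ then follow routinely. For $(\mrm{VF}^-1)$ I use that a closed arithmetical (in)equality has the same truth value in every $(\nat,X')$, so its code lies in $X$ exactly when it is true---here the mere existence of one admissible $X'$, guaranteed by the first fact, is what is needed. $(\mrm{VF}^-2)$ holds because every axiom of $\pat$, induction included, is true in $(\nat,X')$ for every $X'$, the model being $\omega$-standard; $(\mrm{VF}^-3)$ is the corresponding $\omega$-rule step; $(\mrm{VF}^-6)$ is classical modus ponens inside each $X'$; and $(\mrm{VF}^-8)$ is immediate once the completions are restricted to subsets of $\mrm{Sent}_{\lt}$. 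For $(\mrm{VF}^-4)$, if $\#\vphi\in X$ then $\#\vphi\in X'$ for every admissible $X'$ by $X\subseteq X'$, which is exactly $(\nat,X')\vDash \T(\corn{\vphi})$; and $(\mrm{VF}^-9)$ uses the first fact directly, since if $\#\vphi\in X$ then $(\nat,X')\vDash\vphi$ for all admissible $X'$, and taking $X'=X$ gives $(\nat,X)\vDash\vphi$.

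The substantive work is concentrated in $(\mrm{VF}^-5)$ and $(\mrm{VF}^-7)$, where the admissibility condition must be exploited directly, and I expect these to be the main obstacle. For the left-to-right direction of $(\mrm{VF}^-5)$ one assumes $\#\neg\T(\corn{\vphi})\in X$, that is $\#\vphi\notin X'$ for every admissible $X'$, and must conclude $\#\neg\vphi\in X$; here I argue contrapositively via the second fact, since were $\#\neg\vphi\notin X$, the set $X\cup\{\#\vphi\}$ would be an admissible completion containing $\#\vphi$, a contradiction. The converse direction, and the whole of $(\mrm{VF}^-7)$, run through the antiextension: $\#\neg\vphi\in X$ puts $\#\vphi$ into $X^-$, whence $\#\vphi\notin X'$ for every admissible $X'$; and for $(\mrm{VF}^-7)$, from $\#\vphi\in X$ I first pass to $\#\neg\neg\vphi\in X$ (the two being satisfied by exactly the same $X'$, by classical logic), which places $\#\neg\vphi$ in $X^-$ and so forces $\#\neg\vphi\notin X'$ for every admissible $X'$, i.e.\ $(\nat,X')\vDash\neg\T(\corn{\neg\vphi})$. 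The delicate point throughout is that consistency of $X$ is precisely what makes $X$ admissible and what yields $X\cap X^-=\varnothing$, so that the admissibility condition can be both invoked, to exclude sentences from the completions, and circumvented, to adjoin them; keeping careful track of which sentences may safely be added to $X$ without meeting $X^-$ is where the argument must be handled with care.
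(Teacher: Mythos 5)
Your proof is correct and proceeds by the same direct model-theoretic verification the paper itself uses: the paper delegates most axioms to Cantini's Proposition 3.4 and only spells out the right-to-left direction of $(\mrm{VF}^-5)$, which you handle identically via the antiextension. The two points that genuinely need care---the detour through $\#\neg\neg\vphi$ to place $\#\neg\vphi$ in $X^-$ for $(\mrm{VF}^-7)$, and the admissibility of $X\cup\{\#\vphi\}$ for the left-to-right direction of $(\mrm{VF}^-5)$---are both handled correctly in your write-up.
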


\begin{proof}
For axioms $\mrm{VF}^-1$-$\mrm{VF}^-7$ and $\mrm{VF}^-9$, the proof is essentially the same as for axioms V1-V7 and V9 from \cite[Prop. 3.4]{cantini_1990}. The only addenda is the right-to-left direction of $\mrm{VF}^-5$: assuming $S=\mrm{VB}(S)$ and $\mrm{Sent}(x)$, we let $\vphi$ be such that $\corn{\vphi}=x$. Then assume $S\vDash \T\corn{\neg\vphi}$. This means $\neg \vphi\in S$, so for any $S'\supseteq S$ s.t. $S'\cap S^-=\varnothing$, $\vphi\notin S'$, hence $S'\vDash \neg \T\corn{\vphi}$. Therefore, $\neg \T\corn{\vphi}\in \mrm{VB}(S)=S$, and this yields the claim.

As for $\mrm{VF}^-8$, the proof has been informally sketched right before Definition \ref{vfminus}.
\end{proof}

\begin{prop}
$| \mrm{VF}^-| \equiv | \mrm{VF}| \equiv |\mrm{ID}_1|$.
\end{prop}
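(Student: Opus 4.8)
The plan is to establish the two-sided bound by proving $|\mrm{VF}| \geqq |\mrm{VF}^-|$ and $|\mrm{VF}^-| \geqq |\mrm{VF}|$; since Cantini's result gives $|\mrm{VF}| \equiv |\mrm{ID}_1|$, these two inequalities immediately yield the displayed chain. The upper bound is essentially bookkeeping, whereas the lower bound requires re-examining Cantini's interpretation of inductive definitions and checking that it survives the removal of unconditional consistency.

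For the upper bound $|\mrm{VF}| \geqq |\mrm{VF}^-|$, I would show that $\mrm{VF}$ proves every axiom of $\mrm{VF}^-$, so that $\mrm{VF}^-$ is a subtheory of $\mrm{VF}$. The axioms $\mrm{VF}^-1$--$\mrm{VF}^-4$, $\mrm{VF}^-6$, $\mrm{VF}^-8$ and $\mrm{VF}^-9$ are literally axioms of $\mrm{VF}$. The conditional consistency axiom $\mrm{VF}^-7$ follows in $\mrm{VF}$ from the unconditional V7: assuming $\T x$, axiom V4 gives $\T\corn{\T(\dot x)}$, while V7 gives $\T\corn{\neg(\T(\dot x)\wedge \T(\subdot \neg\dot x))}$; since $\mrm{VF}$ proves the truth of every logical validity (from V2 together with V3 and V6), internal modus ponens (V6) yields $\T\corn{\neg \T(\subdot \neg\dot x)}$. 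Finally, the right-to-left direction of the biconditional $\mrm{VF}^-5$ is provable in $\mrm{VF}$ via V7, as already indicated in the footnote to Definition~\ref{vfminus}. Hence every theorem of $\mrm{VF}^-$ is a theorem of $\mrm{VF}$, and the bound follows.

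For the lower bound $|\mrm{VF}^-| \geqq |\mrm{VF}|$, I would reproduce Cantini's interpretation of the theory $\mrm{ID}_1^{acc}$ of accessibility inductive definitions inside $\mrm{VF}^-$. Fixing an elementary relation $\prec$, one uses the diagonal lemma to produce a formula that, under the truth predicate, represents the $\prec$-accessible part, roughly $P(a) := \T\corn{A(\dot a)}$ with $A$ self-referential in the expected way. The closure axiom for $P$ is verified using $\mrm{VF}^-3$ (internal universal generalization), $\mrm{VF}^-4$ (truth iteration) and $\mrm{VF}^-6$ (internal modus ponens); the minimality/induction schema is obtained by external transfinite induction along the build-up of the accessible part, peeling off the truth predicate by repeated appeal to the reflection axiom $\mrm{VF}^-9$. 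Since $\mrm{VF}^-$ retains $\mrm{VF}^-1$--$\mrm{VF}^-4$, $\mrm{VF}^-6$, $\mrm{VF}^-8$, $\mrm{VF}^-9$ and, as in the upper-bound argument, still proves the truth of all logical validities, the whole construction should go through verbatim, giving $|\mrm{VF}^-| \geqq |\mrm{ID}_1| \equiv |\mrm{VF}|$.

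The main obstacle is this last point: one must verify that Cantini's embedding never makes essential use of the unconditional consistency axiom V7, which $\mrm{VF}^-$ lacks. I expect the positive character of the inductive-definition embedding to make this benign, since the closure direction is purely positive and the induction direction rests on reflection ($\mrm{VF}^-9$) rather than on consistency; where a step does require the interaction of truth with negation, the conditional consistency $\mrm{VF}^-7$ together with the strengthened biconditional $\mrm{VF}^-5$ should suffice to recover exactly what is needed. Confirming this by a careful pass through Cantini's proof, tracking each invocation of a truth axiom, is the crux of the argument.
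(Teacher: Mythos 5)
Your upper-bound argument coincides with the paper's: $\mrm{VF}^-$ is a subtheory of $\mrm{VF}$, with $\mrm{VF}^-7$ derived from V4 and V7 and the right-to-left direction of $\mrm{VF}^-5$ obtained via V7 (the paper cites Cantini's Prop.~2.1 for the latter). That part is fine.

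For the lower bound you take a genuinely different route from the paper, and it is precisely at the point you yourself flag as ``the crux'' that your argument is not actually carried out. You propose to re-run Cantini's interpretation of $\mrm{ID}_1^{acc}$ inside $\mrm{VF}^-$ and to verify that no step makes essential use of the unconditional internal consistency axiom V7; but you offer only a plausibility argument (positivity of the closure direction, reliance on $\mrm{VF}^-9$ for minimality) and explicitly defer the verification. As it stands this is a proof plan with an unverified key step, not a proof. The paper avoids the issue entirely: it invokes a result of Friedman and Sheard to the effect that axioms $\mrm{VF}^-2$, $\mrm{VF}^-3$, $\mrm{VF}^-6$ and $\mrm{VF}^-9$ alone suffice to build a model of parameterless Bar Induction $\mrm{BI}'$, which is proof-theoretically equivalent to $\mrm{ID}_1$; since all four axioms are manifestly present in $\mrm{VF}^-$, no auditing of Cantini's embedding is needed (the paper explicitly notes that its lower-bound proof is \emph{not} Cantini's). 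If you want to keep your route, you must actually do the pass through Cantini's proof and show each invocation of a truth axiom is available in $\mrm{VF}^-$ (or replaceable using $\mrm{VF}^-7$ and the biconditional $\mrm{VF}^-5$); otherwise the Friedman--Sheard detour is the cleaner and, in the paper, the actual argument.
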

\begin{proof}
For the lower-bound: By a result of Friedman and Sheard \cite[p. 19] {friedman_sheard_1987}, axioms $\mrm{VF}^-2, \mrm{VF}^-3$, $\mrm{VF}^-6$ and $\mrm{VF}^-9$ are enough to build a model of (parameterless) Bar Induction ($\mrm{BI}'$), which is proof-theoretically equivalent to $\mrm{ID}_1$. Note that this proof is different from the one Cantini gave for VF and which we mentioned in Section \ref{section:supervaluational theories}.

For the upper-bound: $\mrm{VF}^-$ is a subtheory of $\mrm{VF}$---$\mrm{VF}^-7$ can easily be proven in VF from V4 and V7, and the right-to-left direction of $\mrm{VF}^-5$ is given in \cite[Prop. 2.1]{cantini_1990}.
\end{proof}

As such, the theory $\mrm{VF}^-$ is not very interesting from the proof-theoretic point of view; and, from the truth-theoretic perspective, its only appeal is as a sound axiomatization of the scheme VB. On the other hand, the axiomatization of the scheme MC, to which we turn now, is much more interesting proof-theoretically.

\section{The theories VFM$^-$ and VFM}

\subsection{Presentation}

As before, we recall the satisfaction relation MC:

\begin{equation*}
    X\vDash_\mrm{mc} \varphi \text{ iff } \forall X' (X'\supseteq X\, \&\, \mrm{MCX}(X') \Rightarrow X' \vDash \varphi)
\end{equation*}

We can see how this relation includes a requirement, or admissibility condition, on the extensions of the interpretation of the truth predicate. According to this requirement, such an extension must be a maximally consistent set of formulae. The feature of the internal theory that this requirement forces is captured by axiom VF7 below: an axiom of internal completeness and consistency for the truth predicate. 

\begin{dfn}
    $\mrm{VFM}$ is the theory formulated in the language $\mc{L}_\T$ and consisting of the axioms of $\mrm{PAT}$ plus the universal closure of the following axioms:

    \begin{description}
    \item[($\mrm{VF}1)$] $\mrm{CT}(x)\wedge \mrm{CT}(y)\rightarrow [(\T(x\subdot{=}y)\leftrightarrow x^\circ = y^\circ )\wedge (\T(x\subdot{\neq} y)\leftrightarrow x^\circ\neq y^\circ)]$
    \item[($\mrm{VF}2)$] $\mrm{Ax}_{\mrm{PAT}}(x)\rightarrow \T(x)$
    \item[($\mrm{VF}3)$] $\forall z\T(x\, ( z/v))\rightarrow\T(\subdot{\forall} v x)$
    \item[($\mrm{VF}4)$]$\T(x)\leftrightarrow \T(\ulcorner \T(\dot x)\urcorner)$
    \item[($\mrm{VF}5)$]$\mrm{Sent}(x)_{\mc{L}_\T}\ra (\T(\subdot \neg\ulcorner \T(\dot x)\urcorner ))\leftrightarrow \T(\subdot{\neg} x))$
    \item[($\mrm{VF}6)$] $\T(x\subdot{\rightarrow} y)\rightarrow (\T(x)\rightarrow \T(y))$
    \item[($\mrm{VF}7)$] $\T(\ulcorner \neg \T(\dot x) \leftrightarrow  \T(\subdot \neg \dot x)\urcorner)$
    \item[($\mrm{VF}8)$] $\T(\ulcorner \T(\dot x) \rightarrow\mrm{Sent}_{\mc{L}_\T}(\dot x)\urcorner)$ 
    \end{description}

\end{dfn}

\begin{remark}
Axiom VF5 is redundant in the presence of VF4 and VF7. 
\end{remark}

We now present a weakening---in fact a subtheory---of VFM which allows for a more general batch of proof-theoretic results. The new theory drops the so-called T-Del axiom. 

\begin{dfn}
$\mrm{VFM}^-$ is the theory formulated in the language $\mc{L}_\T$ and consisting of the axioms of $\mrm{VFM}$ VF1-VF3 and VF5-VF8 plus the axiom: 

\begin{itemize}
\item[$\mrm{VF4}^*$]$\T(x)\rightarrow \T(\ulcorner \T(\dot x)\urcorner)$
\end{itemize}

\end{dfn}

\begin{prop}\label{prop:soundness VFM}
$\mrm{VFM}$ is sound w.r.t. the scheme $\mrm{MC}$. That is, if $X$ is consistent and $X=\mc{J}_\mrm{mc}(X)$, then $(\nat, X)\vDash \mrm{VFM}$.
\end{prop}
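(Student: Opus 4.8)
The plan is to verify each axiom of $\mrm{VFM}$ in the model $(\nat, X)$ directly, exploiting the fixed-point identity $X = \mc{J}_\mrm{mc}(X)$. The organizing observation is that, by this identity, for any sentence $\psi$ one has $\#\psi \in X$ if and only if $X' \vDash \psi$ holds for \emph{every} maximally consistent $X' \supseteq X$; thus every claim of the form ``$\T\corn{\psi}$ holds in $(\nat,X)$'' reduces to a piece of ordinary classical reasoning carried out uniformly across all such extensions $X'$. I would first record the two features of a maximally consistent extension on which everything rests: (i) each $(\nat, X')$ is a total classical $\lt$-model that decides every sentence, so that exactly one of $\#\psi, \#\neg\psi$ lies in $X'$ (this is just consistency together with completeness); and (ii) in such an $X'$ internal and external truth coincide, i.e.\ $\#\psi \in X'$ iff $X' \vDash \psi$, equivalently $X'$ is the set of sentences true in $(\nat, X')$. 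Feature (ii) is exactly what maximal consistency buys us over the merely internally-consistent admissible sets used for $\mrm{vc}$, and it is the engine behind the axioms that $\mrm{VF}$ lacks.

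For the axioms $\mrm{VF}1$--$\mrm{VF}3$, $\mrm{VF}6$ and $\mrm{VF}8$, which coincide with Cantini's $\mrm{V}1$--$\mrm{V}3$, $\mrm{V}6$, $\mrm{V}8$, I would transcribe the soundness argument of \cite[Prop.~3.4]{cantini_1990} mutatis mutandis, noting that those verifications use only generic closure properties of the supervaluational jump (monotonicity, the standardness of the arithmetical part, and $X' \subseteq \mrm{Sent}$) and that every maximally consistent set is in particular internally consistent, so the arguments carry over when the admissibility class is shrunk to $\mrm{MCX}$. The ``introduction'' half of $\mrm{VF}4$, namely $\T(x)\to\T\corn{\T(\dot x)}$, is likewise immediate and condition-independent: from $n \in X \subseteq X'$ for every admissible $X'$ one obtains $X' \vDash \T(\overbar{n})$, hence $\#\T(\overbar{n}) \in \mc{J}_\mrm{mc}(X)=X$.

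The genuinely new content is concentrated in $\mrm{VF}7$ and in the ``deletion'' half of $\mrm{VF}4$. For $\mrm{VF}7$ I would fix a sentence code $n=\#\psi$ and an arbitrary maximally consistent $X'\supseteq X$; by feature (i), $n\notin X'$ iff $\#\neg\psi\in X'$, which is precisely $X' \vDash \neg\T(\overbar{n})\leftrightarrow\T(\subdot\neg\overbar{n})$, so the biconditional holds in every extension and its code lands in $X$. For the deletion axiom $\T\corn{\T(\dot x)}\to\T(x)$ I would assume $\#\T(\overbar{n})\in X$, so that $n\in X'$ for every maximally consistent $X'\supseteq X$; feature (ii) then upgrades $n\in X'$ to $X'\vDash\psi$ for each such $X'$, whence $\#\psi=n\in\mc{J}_\mrm{mc}(X)=X$, which is the desired $\T(x)$. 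Finally, $\mrm{VF}5$ can either be checked by the same bookkeeping or, more economically, be read off from the Remark above that it is redundant given $\mrm{VF}4$ and $\mrm{VF}7$.

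The main obstacle, and the place where I would be most careful, is feature (ii): the coincidence of internal and external truth in maximally consistent extensions. It is exactly this that fails for the internally-consistent $\mrm{vc}$-extensions, which is why $\mrm{VF}$ contains only the weaker $\mrm{V}4$ and $\mrm{V}7$ and must additionally carry the global reflection axiom $\mrm{V}9$ that $\mrm{VFM}$ deliberately omits. Concretely, one must ensure that the ambient definition of $\mrm{MCX}$ forces each $X'$ to be the complete theory of the classical model $(\nat, X')$ it induces, and not merely a deductively complete consistent set, since it is this correctness that licenses the step $n\in X'\Rightarrow X'\vDash\psi$. A secondary nuisance to discharge is the behaviour of the axioms on codes $n$ that are not sentences: here one leans on the coding conventions for $\subdot\neg$ and on $X'\subseteq\mrm{Sent}$ to see that the relevant instances hold trivially, just as in Cantini's treatment.
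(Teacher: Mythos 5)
Your ``feature (ii)'' --- that a maximally consistent $X'$ satisfies $\#\psi\in X'$ iff $(\nat,X')\vDash\psi$, i.e.\ that $X'$ is the complete theory of the classical model it induces --- is not merely a point to ``be careful'' about: it is false, and indeed \emph{no} set of $\lt$-sentences can have this property, by the Liar paradox (for $\lambda$ with $\lambda\leftrightarrow\neg\T\corn{\lambda}$ one would get $\#\lambda\in X'$ iff $(\nat,X')\vDash\lambda$ iff $\#\lambda\notin X'$). Maximal consistency is a purely deductive condition and does not force $X'$ to be sound for $(\nat,X')$; e.g.\ a maximally consistent set may perfectly well contain $\#\T\corn{0=1}$. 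Since your verification of the deletion half of $\mrm{VF}4$ --- the one genuinely new axiom --- consists precisely of the step ``$n\in X'$, hence by (ii) $X'\vDash\psi$'', that verification collapses, and the repair you envisage (strengthening the definition of $\mrm{MCX}$ so that (ii) holds) is unavailable in principle.

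The paper's proof avoids this by never invoking any soundness property of the extensions $X'$, only of the fixed point $X$ itself. From $\#\T\corn{\vphi}\in X=\mc{J}_{\mrm{mc}}(X)$ one concludes that every maximally consistent $X'\supseteq X$ \emph{contains} $\#\vphi$; by Lindenbaum's lemma, $X\cup\{\neg\vphi\}$ is therefore inconsistent, i.e.\ $X\vdash\vphi$. Separately, the fixed-point identity applied to each member of $X$ shows that every maximally consistent $X'\supseteq X$ is a classical \emph{model} of every sentence in $X$; hence, by soundness of first-order logic, $(\nat,X')\vDash\vphi$ for all such $X'$, and so $\#\vphi\in\mc{J}_{\mrm{mc}}(X)=X$. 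The bridge from membership in $X'$ back to satisfaction thus runs through deductive consequence over $X$, not through a (nonexistent) internal/external coincidence inside $X'$. Your treatment of the remaining axioms is fine --- in particular $\mrm{VF}7$ needs only your feature (i), that each $X'$ is consistent and complete as a set, which is correct.
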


\begin{proof}
The only extra work with respect to VF and $\mrm{VF}^-$ is the right-to-left direction of VF4 and the left-to-right direction of VF5, since VFM does not have the schema T-Out anymore. 

For the former: we need to show that $\#\T\corn{\vphi} \in X\Ra \#\vphi\in X $. Assume $\#\T\corn{\vphi} \in X$, then $\forall X'\supseteq X(\mrm{MCX}(X') \Ra X'\vDash \T\corn{\vphi})$. So $\forall X'\supseteq X(\mrm{MCX}(X') \Ra \#\T\corn{\vphi}\in X')$. This entails that $X\cup \{\neg \vphi\}$ is inconsistent. Now, there is a set of codes of sentences $Y$ such that $X=\{\#\psi \sth \forall Y'\supseteq Y(\mrm{MCX}(Y')\Ra Y'\vDash \psi\}$---clearly $X$ itself can be that $Y$, since $X$ is a fixed-point. But since $X\cup\{\neg \vphi\}$ is inconsistent, we have $\forall Y'\supseteq Y(\mrm{MCX}(Y')\Ra Y'\vDash \vphi)$. So $\#\vphi \in  X$. 

The case of VF5 is similar. 
\end{proof}

\begin{corollary}
    $\mrm{VFM}$ and $\mrm{VFM}^-$ are consistent. 
\end{corollary}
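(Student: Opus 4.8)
The plan is to exhibit a model of $\mrm{VFM}$ and then appeal to the soundness result just proved, reducing the corollary to the existence of a single consistent fixed point. By Proposition~\ref{prop:soundness VFM}, whenever $X$ is consistent and $X=\mc{J}_\mrm{mc}(X)$ we get $(\nat,X)\vDash\mrm{VFM}$, so $\mrm{VFM}$ has a model and is consistent. The consistency of $\mrm{VFM}^-$ then comes for free: its axioms $\mrm{VF}1$--$\mrm{VF}3$ and $\mrm{VF}5$--$\mrm{VF}8$ are literally shared with $\mrm{VFM}$, while $\mrm{VF}4^*$ (namely $\T(x)\ra\T(\corn{\T(\dot x)})$) is just the left-to-right direction of $\mrm{VFM}$'s biconditional $\mrm{VF}4$ and hence provable in $\mrm{VFM}$. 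Thus $\mrm{VFM}^-$ is a subtheory of $\mrm{VFM}$, and any model of the latter---in particular the one we are about to build---is a model of the former. So everything reduces to finding one consistent $X$ with $X=\mc{J}_\mrm{mc}(X)$.

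The natural candidate is the least fixed point $\mrm{I}_\mrm{mc}$, which exists by the monotonicity of $\mc{J}_\mrm{mc}$ recorded in Section~\ref{section:supervaluational theories} and is reached by the transfinite iteration starting at $\Gamma_0=\varnothing$. It remains to check that $\mrm{I}_\mrm{mc}$ is consistent, and I would do this by transfinite induction showing every stage $\Gamma_\alpha$ is (classically) consistent. The base case is immediate, as $\Gamma_0=\varnothing$. At a limit $\lambda$, the set $\Gamma_\lambda=\bigcup_{\beta<\lambda}\Gamma_\beta$ is the union of an increasing chain of consistent sets (monotonicity makes the chain increasing), hence consistent by compactness, since any finite subset already lies in some $\Gamma_\beta$. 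For the successor step, assume $\Gamma_\alpha$ is consistent; then Lindenbaum's lemma supplies a maximally consistent $M\supseteq\Gamma_\alpha$, i.e.\ $\mrm{MCX}(M)$ holds. By the definition of $\mc{J}_\mrm{mc}$, every $\vphi$ with $\#\vphi\in\Gamma_{\alpha+1}=\mc{J}_\mrm{mc}(\Gamma_\alpha)$ must satisfy $(\nat,M,M^-)\vDash\vphi$; hence all of $\Gamma_{\alpha+1}$ is true in the single classical model $(\nat,M,M^-)$ and is therefore consistent. Taking the union over all stages yields a consistent $\mrm{I}_\mrm{mc}$, as desired.

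I expect the only delicate point to be this successor step, and precisely the fact that the universal quantifier over maximally consistent extensions in the clause for $\vDash_\mrm{mc}$ is \emph{non-vacuous}: it is exactly the inductive hypothesis (consistency of $\Gamma_\alpha$) together with Lindenbaum that guarantees at least one witness $M$, which is what keeps $\Gamma_{\alpha+1}$ from collapsing to the set of all sentences and so sustains the induction. Everything else---the compactness argument at limits and the subtheory inclusion $\mrm{VFM}^-\subseteq\mrm{VFM}$---is routine, and the final assembly is simply Proposition~\ref{prop:soundness VFM} applied to $X=\mrm{I}_\mrm{mc}$.
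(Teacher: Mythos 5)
Your proof is correct and follows essentially the same route as the paper, which states this corollary without proof as an immediate consequence of Proposition~\ref{prop:soundness VFM} together with the (implicitly assumed) existence of a consistent fixed point of $\mc{J}_\mrm{mc}$ and the fact that $\mrm{VFM}^-$ is a subtheory of $\mrm{VFM}$. Your explicit transfinite induction verifying consistency of the stages $\Gamma_\alpha$ (Lindenbaum at successors, compactness at limits) supplies exactly the detail the paper leaves tacit, and it is sound.
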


\subsection{Proof-theoretic analysis of VFM: lower-bound}\label{subsec:VFM-lower-bound}

We will analyse the proof-theoretic strength of both $\mrm{VFM}^-$ and $\mrm{VFM}$. Since $\mrm{VFM}^-$ is a subtheory of $\mrm{VFM}$, we will provide the proof-theoretic lower bound of $\mrm{VFM}^-$, and the proof-theoretic upper bound of $\mrm{VFM}$. In seeing that they coincide, we will establish: 

\begin{equation}
    \vert \mrm{VFM} \vert \equiv \vert \mrm{VFM}^- \vert \equiv \vert \mrm{KF}\vert \equiv\vert \mrm{PUTB}\vert\equiv\vert \mrm{RT}_{<\varepsilon_0}\vert\equiv\vert \mrm{RA}_{<\varepsilon_0}\vert
\end{equation}

In this section, we prove:

\begin{thm}\label{thm:lower_bound_VFM^-}
$\vert \mrm{VFM}^- \vert \geqq \vert \mrm{KF}\vert\equiv\vert \mrm{RT}_{<\varepsilon_0}\vert$
\end{thm}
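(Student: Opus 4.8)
The plan is to establish the lower bound by interpreting ramified Tarskian truth inside $\mrm{VFM}^-$. Since the displayed equivalence records $\vert\mrm{KF}\vert\equiv\vert\mrm{RT}_{<\varepsilon_0}\vert$, it suffices to show that $\mrm{VFM}^-$ is at least as strong as $\mrm{RT}_{<\varepsilon_0}$, the theory of ramified truth up to $\varepsilon_0$; concretely, I would define within $\mrm{VFM}^-$ a hierarchy of truth predicates $(\T_\alpha)_{\alpha<\varepsilon_0}$ out of the single predicate $\T$ and verify the ramified-truth axioms for it. The ordinal payoff is then inherited from the known analysis of $\mrm{RT}_{<\varepsilon_0}$, whose arithmetical consequences are those of $\mrm{PA}+\mrm{TI}_{\lnat}(<\vphi_{\varepsilon_0}0)$, yielding $\vert\mrm{VFM}^-\vert\geqq\vphi_{\varepsilon_0}0\equiv\vert\mrm{KF}\vert$. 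I stress that full $\mrm{PUTB}$ is \emph{not} available as an intermediate target here: its instance for the atom $\T(x)$ is exactly the biconditional $\mrm{VF}4$, whose right-to-left direction (T-Del) is precisely what $\mrm{VFM}^-$ drops in favour of $\mrm{VF}4^*$. The argument must therefore be robust to the absence of T-Del.

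First I would isolate a ``classical inner core'': I claim $\mrm{VFM}^-$ proves full disquotation $\T\corn{\vphi}\leftrightarrow\vphi$ for every $\T$-free (arithmetical) sentence $\vphi$, together with the internal consistency and completeness statements $\neg(\T\corn{\vphi}\wedge\T\corn{\neg\vphi})$ and $\T\corn{\vphi}\vee\T\corn{\neg\vphi}$ for sentences. The truth-introduction direction $\vphi\to\T\corn{\vphi}$ is obtained by external induction on $\vphi$ from $\mrm{VF}1$ (atomic), $\mrm{VF}2$ and $\mrm{VF}6$ (truth of the logical axioms and closure under modus ponens), and $\mrm{VF}3$ (universal generalization under $\T$); the reflection direction is recovered \emph{not} from T-Out, which $\mrm{VFM}^-$ lacks, but from the internal completeness and consistency flowing from $\mrm{VF}7$ and $\mrm{VF}5$: from $\neg\vphi$ one derives $\T\corn{\neg\vphi}$ and hence $\neg\T\corn{\vphi}$. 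This recovers a genuinely classical, compositional Tarskian truth predicate at the base level, i.e.\ an interpretation of $\mrm{RT}_1$.

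Next I would iterate. Using a primitive recursive ``level-stamping'' translation $x\mapsto x^{(\alpha)}$ that rewrites each occurrence of a lower-level predicate $\T_\beta$ ($\beta<\alpha$) into an $\lt$-formula, I would set $\T_\alpha(x):=\T(x^{(\alpha)})$ and verify, by \emph{internal} transfinite induction along the notation system $\mrm{OT}$, that each $\T_\alpha$ satisfies the Tarskian compositional clauses over $\lnat\cup\{\T_\beta\mid\beta<\alpha\}$ together with the lower-level clause $\T_\alpha\corn{\T_\beta\dot t}\leftrightarrow\T_\beta(t)$. The internal transfinite induction needed to carry this construction up to $\varepsilon_0$ is available because induction in $\mrm{PAT}$ is extended to all of $\lt$ and each verified level feeds the reflection principle that licenses the next; uniformity in $\alpha$ is guaranteed by the definability of the stamping. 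Assembling the levels yields the desired interpretation of $\mrm{RT}_{<\varepsilon_0}$.

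The main obstacle is the verification of the Tarskian clauses at each level in the face of two failures of $\T$: supervaluational $\T$ is not compositional for disjunction and the existential quantifier (a disjunction can be true with neither disjunct true), and, lacking T-Del, $\T$ is not disquotational on the atom $\T(t)$. The crux is therefore to show that, once the lower levels $\T_\beta$ ($\beta<\alpha$) have been ``decided'' by the induction hypothesis, the stamped formulas $x^{(\alpha)}$ fall into the fragment on which $\mrm{VFM}^-$'s internal completeness ($\mrm{VF}7$) forces classical, two-valued behaviour, so that exactly the valid fragment of compositionality together with completeness suffices to recover the full Tarskian clauses for $\T_\alpha$. Getting the stamping to interact correctly with negation and the quantifiers, and confirming that the entire verification is formalizable with the transfinite induction actually provable in $\mrm{VFM}^-$, is where the real work lies.
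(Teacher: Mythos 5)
Your overall architecture is the one the paper uses: a Fujimoto-style relative truth definition of $\mrm{RT}_{<\varepsilon_0}$ inside $\mrm{VFM}^-$, via a primitive recursive ``stamping'' translation (the paper's functions $h_\beta$ and $k$, with $\T_\beta(t)$ sent to $\T(k\circ h_\beta(t))$), a transfinite induction up to each $\beta<\varepsilon_0$ showing that every stamped sentence is ``decided'', and the observation that on the decided fragment T-Cons plus the sound half of compositionality yields full Tarskian compositionality. Your remarks that PUTB is unavailable without T-Del and that the reflection direction at the base level must come from T-Cons rather than T-Out are both correct and match the paper.

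The genuine gap is in your stated mechanism for why the stamped formulas behave classically. You appeal to the \emph{internal} completeness axiom $\mrm{VF}7$, i.e.\ $\T\corn{\neg\T(\dot x)\to\T(\subdot\neg\dot x)}$, to ``force classical, two-valued behaviour''. But this is a statement under the truth predicate, and $\mrm{VFM}^-$ has no T-Out with which to externalize it; from $\T\corn{\neg\T(\dot x)\to\T(\subdot\neg\dot x)}$ you cannot derive the external disjunction $\T(x)\vee\T(\subdot\neg x)$ that the compositionality argument actually needs. Indeed, external completeness fails in the intended fixed-point models (neither the Liar nor its negation is in the minimal MC fixed point), so no axiom of $\mrm{VFM}^-$ can deliver it outright; and the paper's lower-bound proof never uses $\mrm{VF}7$ at all (this is why the same argument transfers verbatim to $\mrm{VFW}^-$, where $\mrm{VF}7$ is weakened to its consistency half). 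What does the work in the paper is a syntactic definition of ``decided'': the formula $\xi(x,\T)$ expressing one step of the Strong Kleene jump, the provable implication $\xi(x,\T)\to\T(x)$ (from $\mrm{VF}1$--$\mrm{VF}6$ and $\mrm{VF}4^*$), and the predicate $\xi^*(x):=\xi(x,\T)\vee\xi(\subdot\neg x,\T)$; external completeness on the fragment $\{x : \xi^*(x)\}$ then holds by construction, and the transfinite induction shows $\xi^*(kx)$ for all stamped sentences. Equivalently, one can carry the decidedness claim $\T(kx)\vee\T(\subdot\neg kx)$ itself as the induction hypothesis and push it through connectives and quantifiers using $\mrm{VF}3$, $\mrm{VF}4^*$, $\mrm{VF}5$ and closure of $\T$ under logic. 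Either way, you need such an explicit decidedness predicate and its propagation lemma; ``$\mrm{VF}7$ forces two-valuedness'' is not a step that can be made to go through.
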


Since we will be interpreting $\mrm{RT}$-like theories, let us first define the language in which they are formulated:

\begin{dfn}
For an ordinal $\gamma\leq\Gamma_0$, $\mc{L}_{<\gamma}$ is the language extending $\lnat$ with all truth predicates $\T_\beta$, for $\beta<\gamma$, if $\gamma>0$; and the language $\lnat$ if $\gamma=0$. $\mc{L}_{\gamma}$ is defined as $\mc{L}_{\gamma}=\mc{L}_{<\gamma+1}$.
\end{dfn}

As a notational abuse, we write $\mrm{For}_{<\alpha}$, $ \mrm{For}_{\alpha}$, $ \mrm{Sent}_{<\alpha}$, $ \mrm{Sent}_{\alpha}$, etc. in place of $\mrm{For}_{\mc{L}_{<\alpha}}$, $\mrm{For}_{\mc{L}_{\alpha}}$, $\mrm{Sent}_{\mc{L}_{<\alpha}}$, $ \mrm{Sent}_{\mc{L}_{\alpha}}$, and so on. Then, for any $\gamma\leq\Gamma_0$, $\mrm{RT}_{<\alpha}$ is the theory in the language  $\mc{L}_{<\alpha}$ given by PA plus the axioms introduced in e.g. \cite[Def. 9.2]{halbach_2014}. The purpose of this subsection is thus to show how to define the truth predicates of $\mrm{RT}_{<\varepsilon_0}$ in $\mrm{VFM}^-$. The techniques draw on the proof of \cite[Lemma 36]{fujimoto_2010}. We start with a definition: 

\begin{dfn}

Let $\xi(x,X)$ be the following formula: 

\begin{align*}
    &  x\in \mrm{True}_0\\
        &\vee \exists y (x=(\subdot\neg\subdot\neg y) \wedge y \in X)\\
		&\vee  \exists y,z(x=(y\subdot{\vee} z) \wedge (y\in X\vee z\in X))\\
        &\vee  \exists y,z(x=\subdot\neg(y\subdot{\vee} z) \wedge (\subdot\neg y\in X\wedge \neg\subdot z\in X))\\
		&\vee  \exists y,z(x=(y\subdot{\wedge} z) \wedge (y\in X\wedge z\in X))\\
        &\vee  \exists y,z(x=\subdot\neg(y\subdot{\wedge} z) \wedge (\subdot\neg y\in X\vee \neg\subdot z\in X))\\
		&\vee  \exists y,v(x=(\subdot{\forall} vy) \wedge \forall z(y (z/v)\in X))\\
        &\vee  \exists y,v(x=\subdot \neg (\subdot{\forall} vy) \wedge \exists z(\subdot \neg y (z/v)\in X))\\
		&\vee  \exists y,v(x=(\subdot{\exists} vy) \wedge \exists z(y (z/v)\in X))\\
        &\vee  \exists y,v(x=\subdot \neg (\subdot{\exists} vy) \wedge \forall z(\subdot \neg y (z/v)\in X))\\
		&\vee  \exists t(x=(\subdot \T (t)) \wedge t^\circ \in X)\\
		&\vee  \exists t(x=(\subdot \neg \subdot \T (t) )\wedge (\subdot{\neg}t^\circ)\in X\vee \neg \mrm{Sent}(t^\circ))
\end{align*}

\noindent Here, $x\in \mrm{True}_0$ denotes that $x$ is a true arithmetical atomic or negated atomic formula; and $x\in X$ is an abuse of notation, standing for $X(x)$. 

\end{dfn}

It is easy to see that $\xi(x,\T)$ is the arithmetical formula that defines a Kripke set using Strong Kleene logic. The following is also clear, given the axioms of $\mrm{VFM}^-$:

\begin{prop}\label{xi implies T}
$\mrm{VFM}^-\vdash \xi(x, \T) \rightarrow \T x$
\end{prop}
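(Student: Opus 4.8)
The plan is to prove the implication disjunct-by-disjunct: assuming $x$ falls under one of the twelve cases in the definition of $\xi(x,\T)$, I derive $\T x$ inside $\mrm{VFM}^-$. The single device powering all of the propositional and quantifier clauses is an \emph{internal logic} lemma, which I would isolate and establish first. Namely, $\mrm{VFM}^-$ proves that $\T$ is closed under $\mrm{PAT}$-provable consequence, $\mrm{VFM}^-\vdash \mrm{Prov}_{\mrm{PAT}}(x)\ra \T x$, and—more usefully for the uniform clauses—that for every first-order valid implication scheme the corresponding \emph{internal} conditional is true of all codes of its instances, e.g.\ $\forall y,z\,\T\bigl(y\subdot{\to}(y\subdot\vee z)\bigr)$, $\forall y,z\,\T\bigl(y\subdot{\to}(z\subdot{\to}(y\subdot\wedge z))\bigr)$, and the instantiation scheme $\forall y,v,z\,\T\bigl((\subdot\forall vy)\subdot{\to}y(z/v)\bigr)$ together with its contrapositive. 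This follows from VF2 (every axiom of $\mrm{PAT}$, logical axioms included, is true), VF3 (internal universal generalization) and VF6 (internal modus ponens) by a formalized induction on the length of derivations, carried out with the full $\lt$-induction available in $\mrm{PAT}$.

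Granting the lemma, most cases are mechanical. The clause $x\in\mrm{True}_0$ is handled by VF1 (and internal logic for any remaining true arithmetical atomics). For the double-negation, $\subdot\vee$, $\subdot\wedge$, negated-$\subdot\vee$ and negated-$\subdot\wedge$ clauses one applies the internal logic lemma to the relevant propositional tautology (e.g.\ $y\to y\vee z$, conjunction introduction, or $\neg y\wedge\neg z\to\neg(y\vee z)$) and discharges the hypothesis $\T y$ (resp.\ $\T z$, $\T(\subdot\neg y)$) with VF6, first combining two truths into one via internal conjunction introduction where needed. The universal clause is literally VF3. For the existential and negated-universal clauses one uses instantiation (resp.\ its contrapositive) plus VF6; and for the negated-existential clause one first applies VF3 to the formula $\subdot\neg y$—noting $(\subdot\neg y)(z/v)=\subdot\neg(y(z/v))$—to obtain $\T(\subdot\forall v\,\subdot\neg y)$, and then passes to $\T(\subdot\neg\subdot\exists vy)$ by internal logic applied to $\forall v\,\neg y\to\neg\exists v\,y$ and VF6.

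The two genuinely delicate clauses are those governing the truth predicate. For $x=\subdot\T(t)$ with $t^\circ\in\T$, VF4$^*$ instantiated at $t^\circ$ gives $\T(t^\circ)\ra\T\bigl(\corn{\T(\dot{t^\circ})}\bigr)$, and one then converts $\corn{\T(\mrm{num}(t^\circ))}$ to $\subdot\T(t)$ using that $\T$ respects provable term-identities: since $t=\mrm{num}(t^\circ)$ is a true closed equation, VF1 and the internal logic lemma license swapping $t$ for $\mrm{num}(t^\circ)$ inside the $\T$-context. For $x=\subdot\neg\subdot\T(t)$ one splits on the two sub-disjuncts: if $\neg\mrm{Sent}(t^\circ)$, one contraposes VF8 $\bigl(\T(\corn{\T(\dot x)\to\mrm{Sent}(\dot x)})\bigr)$ via internal logic and combines it, through VF6, with the truth of the correct arithmetical fact $\neg\mrm{Sent}(\mrm{num}(t^\circ))$; if instead $\subdot\neg t^\circ\in\T$, one uses VF5 to move from $\T(\subdot\neg t^\circ)$ to $\T\bigl(\subdot\neg\corn{\T(\dot{t^\circ})}\bigr)$, again followed by the term-evaluation conversion. \textbf{Main obstacle.} The real work is the internal logic lemma itself—formalizing the induction on derivations and verifying that the required uniform implications are recognized as $\mrm{PAT}$-provable for \emph{all} codes $y,z,v$, not merely for numerical instances; the secondary friction lies in the term-evaluation conversions in the two $\T$-clauses, which are routine but require VF1 together with the internal substitutivity of co-referential closed terms. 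Everything outside these two points reduces to a single tautology and one application of VF6.
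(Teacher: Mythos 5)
Your proof is correct and is exactly the routine verification the paper intends: the paper states this proposition without proof (``also clear, given the axioms of $\mrm{VFM}^-$''), and your disjunct-by-disjunct argument---closure of $\T$ under internal $\mrm{PAT}$-derivability obtained from VF2, VF3 and VF6 by formalised induction on proof length, plus VF4$^*$, VF5 and VF8 for the two truth-predicate clauses---is the standard argument being elided. The only point worth smoothing is that in the $\subdot\neg\subdot\T(t)$ clause the VF5 step requires the hypothesis $\mrm{Sent}_{\lt}(t^\circ)$, but when that fails you land in the $\neg\mrm{Sent}(t^\circ)$ sub-case and the VF8 route applies, so your case split already covers it.
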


Then, we can define:

\begin{dfn}
$\xi^*(x):=\xi(x,\T) \vee \xi(\subdot\neg x, \T)$.
\end{dfn}

We prove some properties of this formula over $\mrm{VFM}^-$:

\begin{lemma}\label{properties xi star}
$\mrm{VFM}^-$ proves the following: 

\begin{itemize}
    \item[i)] $\xi^*(x)\rightarrow \xi^*(\subdot\neg x)$
    \item[ii)] $\xi^*(x)\wedge \xi^*(y)\rightarrow \xi^*(x\subdot \vee y) \wedge \xi^*(x\subdot \rightarrow y)$
    \item[iii)] $\mrm{Sent}_{\mc{L}_\T}(\subdot \forall vy) \wedge \forall t (\xi^*( y (t/v)))\rightarrow \xi^*(\subdot \forall vy)$
    \item[iv)] $\xi^*(x)\rightarrow (\T \subdot \neg x \leftrightarrow \neg \T x)$
    \item[v)] $\xi^*(x\subdot\vee y)\rightarrow (\T x\subdot \vee y \leftrightarrow \T x \vee \T y)$ 
     \item[vi)] $\xi^*(x\subdot\rightarrow y)\rightarrow (\T x\subdot \rightarrow y \leftrightarrow \T x \rightarrow \T y)$ 
    \item[vii)] $\xi^*(x)\rightarrow (\T \subdot \forall v x \leftrightarrow \forall y \T x (y/v))$
    \item[viii)] $\xi^*(x)\rightarrow (\T x\leftrightarrow \T (\subdot \T x))$
\end{itemize}
\end{lemma}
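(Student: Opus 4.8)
The plan is to prove the eight items of Lemma~\ref{properties xi star} by leveraging the definition $\xi^*(x) := \xi(x,\T) \vee \xi(\subdot\neg x, \T)$ together with Proposition~\ref{xi implies T}, which guarantees that whenever $\xi(y,\T)$ holds we already have $\T y$. The key conceptual point is that $\xi(x,\T)$ asserts that $x$ is a sentence whose \emph{outermost} connective has been ``evaluated'' compositionally via the Strong Kleene clauses, so $\xi^*(x)$ says that either $x$ or its negation is so evaluated, i.e. that $x$ has a determinate (classical) truth value at the Kripkean level. Items i)--iii) are the \emph{closure} properties of $\xi^*$ under the connectives, while iv)--viii) are the \emph{commutation} properties asserting that $\T$ behaves classically on sentences satisfying $\xi^*$.

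First I would dispatch i), which is immediate: $\xi^*$ is visibly symmetric in $x$ and $\subdot\neg x$ once one observes that $\xi(\subdot\neg\subdot\neg x,\T)$ and $\xi(x,\T)$ are interprovable using the double-negation clause of $\xi$ (the second disjunct) together with Proposition~\ref{xi implies T} and axiom VF4${}^*$/VF4. For ii) and iii) I would argue by cases on which disjunct of $\xi^*(x)$ (and of $\xi^*(y)$) is witnessed, and in each case exhibit the appropriate disjunct of $\xi(x\subdot\vee y,\T)$, $\xi(\subdot\neg(x\subdot\vee y),\T)$, etc., reading off directly from the syntactic clauses in the definition of $\xi$; here $x \subdot\rightarrow y$ is handled through its abbreviation $\subdot\neg x \subdot\vee y$ together with i). These are essentially bookkeeping on the inductive clauses and require no truth axioms beyond Proposition~\ref{xi implies T}.

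The substantive part is iv)--viii), the commutation clauses. For iv), from $\xi^*(x)$ I would split: if $\xi(x,\T)$ then $\T x$ by Proposition~\ref{xi implies T}, and I must derive $\neg\T\subdot\neg x$; if $\xi(\subdot\neg x,\T)$ then $\T\subdot\neg x$ and I must derive $\neg\T x$. The forward direction of each uses the consistency half of VF7 (no sentence and its negation are both true), and the backward direction of the biconditional $\T\subdot\neg x \leftrightarrow \neg\T x$ uses the completeness half of VF7, which is exactly where the maximal-consistency character of VFM enters. For v) and vi), given $\xi^*(x\subdot\vee y)$ I would unfold which clause of $\xi$ is responsible and combine with VF6 (for the $\rightarrow$-elimination giving one direction) and with the disjunction clauses of $\xi$ plus iv); the delicate direction is recovering $\T x \vee \T y$ from $\T(x\subdot\vee y)$, for which one invokes the evaluated clause for disjunction guaranteed by $\xi^*(x\subdot\vee y)$ rather than a naive compositional axiom (which VFM lacks). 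Clause vii) is handled analogously using VF3 for one direction and the universal clauses of $\xi$ (plus the evaluatedness of each instance) for the converse, and viii) follows from VF4 together with the $\subdot\T$-clauses of $\xi$.

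The main obstacle I anticipate is precisely the ``hard'' directions of iv)--vii): because VFM (and \emph{a fortiori} $\mrm{VFM}^-$) does not contain the full compositional Strong Kleene axioms nor the T-Out schema, I cannot simply read off $\T(x\subdot\vee y) \to (\T x \vee \T y)$ or $\neg\T x \to \T\subdot\neg x$ from an axiom. Instead each such step must be justified by the hypothesis $\xi^*(\cdot)$, which packages the needed compositional information at the \emph{semantic} level, converted into the \emph{internal} $\T$-statements via Proposition~\ref{xi implies T} and the completeness axiom VF7. Getting the interplay between the external arithmetical predicate $\xi$ and the internal predicate $\T$ exactly right --- in particular ensuring VF7's completeness is available for the sentence in question, which is guaranteed because $\xi^*$ certifies sentencehood --- is the crux, and I would organize the argument so that clause iv) is proved first and then reused throughout v)--viii).
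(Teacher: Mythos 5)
Your proposal follows essentially the same route as the paper: split on the two disjuncts of $\xi^*$, use Proposition~\ref{xi implies T} to pass from $\xi(\cdot,\T)$ to $\T(\cdot)$, use external T-Cons and closure of $\T$ under logic for the easy directions, and read the hard compositional directions off the clauses of $\xi$ (with a contradiction via T-Cons in the degenerate case where the ``wrong'' disjunct of $\xi^*$ is witnessed). One point needs correcting, however. You claim that the direction $\neg\T x\rightarrow \T(\subdot\neg x)$ in iv) ``uses the completeness half of VF7.'' It does not, and it cannot: VF7 is an \emph{internal} axiom, i.e.\ a statement under $\T$, and $\mrm{VFM}^-$ has no T-Out, so its completeness half cannot be externalized --- external T-Comp is exactly what is unavailable in these theories. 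The completeness you need is supplied entirely by the hypothesis $\xi^*(x)$: if $\neg\T x$, then $\xi(x,\T)$ is ruled out by Proposition~\ref{xi implies T}, so $\xi(\subdot\neg x,\T)$ must hold and $\T(\subdot\neg x)$ follows --- which is precisely the case analysis you yourself set up in the first half of your treatment of iv), and is how the paper argues; the extra appeal to VF7 is redundant and, taken literally as a proof step, would fail. The distinction is not merely hygienic: the paper later relies on the fact that VF7 is \emph{not} used in this lemma in order to reuse it for $\mrm{VFW}$, where VF7 is weakened to its right-to-left direction. Similarly, external T-Cons is obtained from VF1, VF2 and VF6 (from $\T x\wedge\T(\subdot\neg x)$ derive $\T\ulcorner 0=1\urcorner$ and hence $0=1$), not from ``the consistency half of VF7.'' The remainder of your outline (i--iii by bookkeeping on the clauses of $\xi$ plus Proposition~\ref{xi implies T}; v, vi, viii by unfolding the responsible clause of $\xi$; vii from VF3 and closure of $\T$ under logic) matches the paper's proof.
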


\begin{proof}
i) - iii) are easy to check. vii) follows directly from VF3 and the closure of the truth predicate under logic. 

For iv): note that $\mrm{VFM}^-$ proves T-Cons (i.e. $\neg (\T x \wedge \T \subdot \neg x)$ for all $x$). Accordingly, the left to right direction of the biconditional in the consequent always holds. Now, we reason informally, assuming $\xi^*(x)$ and $\neg \T x$. If $\xi(x, \T)$, then by Proposition 1, we get $\T x \wedge \neg \T x$, which is a contradiction. So $ \xi(\subdot\neg x, \T)$ must hold; but then, by the same proposition, $\T\subdot \neg x$, as desired. 

vi) can be shown in a similar way. 

For v): as before, the right to left direction of the biconditional in the consequent is given by the axioms of $\mrm{VFM}^-$. For the left to right, we reason informally once more. We assume $\xi^* (x\subdot \vee y)$ and $\T (x\subdot \vee y)$. If $\xi(x\subdot \vee y, \T)$, then by the definition of $\xi(x, \T)$, it follows that $\T x \vee \T y$. If $\xi(\subdot \neg (x\subdot \vee y), \T)$, Proposition 1 gives us $\T \subdot \neg (x\subdot \vee y)$. But, together with the assumption $\T (x\subdot \vee y)$, a bit of logic inside $\T$, and the provability of T-Cons, we get a contradiction, hence anything follows (including $\T x \vee \T y$).

For viii): the left-to-right direction is clear from VF4*. For the right-to-left: if $\xi(x, \T)$, then $\T x$ by Proposition \ref{xi implies T}, so also $\T\subdot \T x\ra \T x $. If $\xi(\subdot \neg x, \T)$, then $\T \subdot \neg x$, so $\xi (\subdot \neg \subdot \T x, \T)$, hence $\T (\subdot \neg\subdot \T x)$. But the assumption that $\T \subdot \T x$, together with the provability of T-Cons, implies anything, thus also $\T x$.
\end{proof}

Now, we define the two primitive recursive functions employed by Fujimoto. For this, we assume some standard coding of ordinals in theories of arithmetic: 

\begin{dfn}
For $\beta< \varepsilon_0$, let $h(x, \beta) $ be a primitive recursive function from such that 

\[   
h(x,\beta) = 
     \begin{cases}
       x &\quad\text{if } x\in \mrm{Sent}_{<\beta}\\
       \ulcorner 0=1\urcorner &\quad\text{otherwise} \\
     \end{cases}
\]

We write $h_\beta(x)$ for $h(x, \beta)$.

\end{dfn}

\begin{dfn}
    With $h_\beta(x)$ defined as above, let $k(x)$ be a primitive recursive function such that

    \[   
k(x) = 
     \begin{cases}
       x, &\quad\text{if } x\in \mrm{At}_{\lnat}\\
       \ulcorner \T (k\circ h_\beta(t))\urcorner &\quad\text{if } x=\subdot \T_\beta t \text{ and } t \text{ is a closed term}\\
       \ulcorner \neg \T k(\dot y)\urcorner  &\quad\text{if } x=\subdot \neg y\\
       \ulcorner \T k(\dot y) \vee \T k(\dot z)\urcorner  &\quad\text{if } x=y \subdot \vee z\\
       \ulcorner \T k(\dot y) \wedge \T k(\dot z)\urcorner  &\quad\text{if } x=y \subdot \wedge z\\
       \ulcorner \forall z \T k(\dot y (u/\corn{z}))\urcorner(z/\corn{u})  &\quad\text{if } x=\subdot \forall z y \text{ and } z \text{ is a variable}\\
       \ulcorner 0=1\urcorner &\quad\mrm{otherwise} \\ 
     \end{cases}
\]
\end{dfn}

\noindent The convulated expression $\ulcorner \forall z \T k(\dot y (u/\corn{z}))\urcorner(z/\corn{u})$ informally stands for: $k(\subdot \forall z\vphi(z))=\forall z \T (k\corn{\vphi (\dot z)}$. This is enough for the purposes of proving our theorem: 

\begin{proof}[Proof of Theorem 1]

Given the properties proved in Lemma \ref{properties xi star}, the main result we want to obtain is the following: for each $\beta<\varepsilon_0$,

\begin{equation}\label{eqn1}
    \mrm{VFM}^-\vdash \forall \gamma\leq \beta (\mrm{Sent}_{<\gamma}(x) \rightarrow \xi^*(kx))
\end{equation}

Since transfinite induction up to $\varepsilon_0$ is provable in PA, we need just prove the progresiveness of the formula in question. So assume the claim holds up to $\delta$; we show it holds for $\delta$. Let $x\in \mrm{Sent}_{<\delta}$. Then, we proceed by induction on the complexity of the formula coded by $x$. 

\begin{itemize}
    \item If $x$ is an atomic formula of arithmetic, then $\xi(x, \T)$ or $\xi(\subdot\neg x, \T)$ is clear, whence $\xi^*(kx)$ holds. 
    \item If $x$ is of the form $\T_\zeta t$ for some $\zeta<\delta$: First, note that

    \begin{align}
        [\T(k \circ h_\zeta t^\circ) \vee \T(\subdot \neg k \circ h_\zeta t^\circ)] & \ra [\xi(\ulcorner \T (k \circ h_\zeta \dot t)\urcorner, \T) \vee \xi(\subdot \neg \ulcorner \T (k \circ h_\zeta \dot t)\urcorner,\T)]\label{line 1}\\
        & \ra \xi^*(kx) \label{line 2}
    \end{align}

The first implication is by the definition of $\xi(x,\T)$, and the second one is the definition of $\xi^*(kx)$.

Now, if $t^\circ\in \mrm{Sent}_{<\zeta}$, then $h_\zeta(t^\circ)=t^\circ$, whence $k\circ h_\zeta(t^\circ)=k(t^\circ)$. By induction hypothesis, we have $\xi^*(k(t^\circ))=\xi^*(k\circ h_\zeta(t^\circ))$, whence the antecedent in (\ref{line 1}) follows. Hence, $\xi^*(kx)$. If $t^\circ\notin \mrm{Sent}_{<\zeta}$, then $k\circ h_\zeta(t^\circ)=\ulcorner 0=1\urcorner$, and we have $\xi(\subdot \neg \ulcorner 0=1\urcorner, \T)$, from where the claim follows. This completes the base case of the induction.

\item If $x=\subdot \neg y$ or $x= y \subdot \vee z$, then we will use the following result, provable in $\mrm{VFM}^-$ via VF4*, for $x\in \mrm{Sent}_{<\delta}$: $\xi^*(kx) \rightarrow \xi^*(\ulcorner \T kx\urcorner)$. The claim can be then proved by using the properties i) and ii) in Lemma \ref{properties xi star}. For instance, for $x=\subdot \neg y$: 

\begin{align*}
    \mrm{Sent}_{<\delta}(\subdot \neg y) & \rightarrow \mrm{Sent}_{<\delta}(y)\\
    &\rightarrow \xi^*(ky)\\
    &\rightarrow \xi^*(\ulcorner \T k(\dot y)\urcorner )\\
    & \rightarrow \xi^*(\ulcorner \neg \T k(\dot y)\urcorner)\\
    & \rightarrow \xi^*(k \subdot\neg y )
\end{align*}

The second line is by IH; the fourth line is by Lemma \ref{properties xi star}, i). 

For the case $x=\subdot \forall v y$, note that the above extends to claim that the following is provable in $\mrm{VFM}^-$, for $x\in \mrm{Sent}_{<\delta}$: $\xi^*(k x(y/v)) \rightarrow \xi^*(\ulcorner \T k(\dot x (y/\ulcorner v\urcorner))\urcorner(y/\ulcorner v\urcorner))$. Then, a very similar argument yields the desired claim, using Lemma \ref{properties xi star}, iii). This completes our induction

\end{itemize} 
Finally, for any ordinal $\alpha<\varepsilon_0$, we define a translation function $\sigma_\alpha(x): \mc{L}_{<\alpha}\rightarrow \mc{L}_\T$ which leaves arithmetical formulae intact, commutes with connectives and quantifiers, and translates $\T_\beta(x)$, for $\beta<\alpha$, as $\T (kx)$. Result (\ref{eqn1}) proved above, together with properties iv-viii of Lemma \ref{properties xi star}, and the axiom VF1, allow to prove the axioms of $\mrm{RT}_{<\varepsilon_0}$ in a straightforward way. For example, the axiom for disjunction of $\mrm{RT}_{<\varepsilon_0}$: 

\begin{center}
    Sent$_{\mathcal{L}_\beta}(x\subdot \vee y)\rightarrow (\T_\beta(x\subdot \vee y)\leftrightarrow \T_\beta x\vee \T_\beta y)$, for any $\beta<\varepsilon_0$
\end{center}

We reason infomally in $\mrm{VFM}^-$. Assume $\mrm{Sent}_{<\beta}(x\subdot \vee y)$. Then, $\xi^*(k(x\subdot \vee y))$, by (\ref{eqn1}). So $\xi^*(\ulcorner \T k(\dot x) \vee \T k(\dot y)\urcorner)$. Then, by item v) in Lemma \ref{properties xi star}, $\T\ulcorner \T k(\dot x) \vee \T k(\dot y)\urcorner\leftrightarrow (\T \ulcorner \T k(\dot x)\urcorner \vee \T \ulcorner \T k(\dot y)\urcorner) $. But note that the left-hand side of the biconditional is just $\sigma_\beta(\T_\beta (x\subdot \vee y))$, and the right-hand side is just $\sigma_\beta( \T_\beta x\vee  \T_\beta y)$ (on the assumption that $\mrm{Sent}_{<\beta}(x\subdot \vee y)$).

\end{proof}

\begin{remark}
Note that the above is a small refinement over Fujimoto's result. Hence, we do not need a formula $D^+(x)$ that holds iff $\mrm{Sent}_{\mc{L}_\T}(x)\wedge (\T x \vee \T\subdot \neg x)\wedge \neg (\T x \wedge \T \subdot \neg x)$. Rather, we need a formula $D(x)$ that entails (but need not be entailed by) $\mrm{Sent}_{\mc{L}_\T}(x)\wedge (\T x \vee \T\subdot \neg x)\wedge \neg (\T x \wedge \T \subdot \neg x)$. Then, as long as it meets conditions i-viii) and one can show both VF1 and $D(x)\rightarrow D(\ulcorner \T k\dot x\urcorner)$ (for the relevant formulae coded by $x$), the result can be proven. 
\end{remark}

This completes the lower-bound proof of $\mrm{VFM}^-$. Since the proof of the upper bound of $\mrm{VFM}$ is more complex and winding, we will dedicate a whole section to it.

\section{Proof-theoretic analysis of VFM: upper-bound}\label{section:upper_bound_vfm}

This section is devoted to showing the upper-bound of VFM, which is evidently an upper-bound for $\mrm{VFM}^-$ too. In particular, we prove:

\begin{thm}\label{upper bound}
$\vert \mrm{VFM} \vert \leqq \vert \mrm{KF}\vert\equiv\vert \mrm{RT}_{<\varepsilon_0}\vert$
\end{thm}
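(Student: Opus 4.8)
## Proof Plan for Theorem 2 (Upper Bound of VFM)

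The goal is to show $|\mathrm{VFM}| \leqq |\mathrm{KF}|$. The standard route for establishing that a classical truth theory is proof-theoretically bounded by KF (equivalently, by $\mathrm{RT}_{<\varepsilon_0}$, equivalently by ramified analysis up to $\varepsilon_0$) is to exhibit a proof-theoretic reduction of VFM into a theory whose ordinal is already known to be $\varepsilon_0$. The cleanest such target is KF itself, or a ramified system of truth. Let me think about which interpretation is natural here.

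The key structural facts about VFM: it has VF7, which asserts internal completeness and consistency — $\mathrm{T}(\ulcorner \neg \mathrm{T}(\dot x) \leftrightarrow \mathrm{T}(\subdot{\neg}\dot x)\urcorner)$. This says that inside the truth predicate, $\mathrm{T}$ behaves like a maximally consistent (hence two-valued, classical) predicate. Combined with VF4 (the $\mathrm{T}$-introduction/elimination biconditional) and the compositional axioms VF1–VF3, VF6, the internal logic of VFM's truth predicate is essentially classical. This is the crucial difference from VF, where consistency holds but completeness does not — and it is precisely why VFM collapses down to KF-strength while VF reaches $\mathrm{ID}_1$.

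So the natural upper-bound strategy is an **interpretation / truth-definition argument in reverse**: show that VFM is interpretable in (or proof-theoretically reducible to) a theory of ramified truth $\mathrm{RT}_{<\varepsilon_0}$ or KF. Because VF7 makes the truth predicate internally classical and two-valued, one expects the VFM truth predicate $\mathrm{T}$ to be definable by a **ramified hierarchy of truth predicates** indexed below $\varepsilon_0$, where the index tracks the "nesting depth" of $\mathrm{T}$. The idea: a sentence $\mathrm{T}^n(\ulcorner\varphi\urcorner)$ with $n$ nested truth predicates gets interpreted at level roughly $n$ (or $\omega^n$) in the ramified hierarchy, and transfinite induction up to $\varepsilon_0$ suffices to verify that this interpretation validates all VFM axioms.

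Here is how I would plan the proof.

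First, I would set up a **Tait-style / semi-formal calculus** for a suitable infinitary or ramified system whose cut-elimination is controlled by ordinals below $\varepsilon_0$ — concretely, either $\mathrm{RT}_{<\varepsilon_0}$ presented as $\mathrm{PA}^\infty$ augmented with ramified truth rules, or KF with its known $\varepsilon_0$ analysis. The cleanest packaging is probably to embed VFM-derivations into an infinitary proof system for ramified truth and then apply a partial cut-elimination theorem, reading off that every $\lnat$-theorem of VFM is provable using transfinite induction below $\varepsilon_0$.

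Second — and this is the conceptual heart — I would define an **interpretation** $\ast$ of the VFM-language $\lt$ into the ramified language $\mc{L}_{<\varepsilon_0}$, sending the single truth predicate $\mathrm{T}$ to an appropriate ramified predicate keyed to syntactic nesting depth, arithmetical vocabulary to itself, and commuting with connectives and quantifiers. The design of $\ast$ must be such that VF7 (internal bivalence) becomes provable: each ramified $\mathrm{T}_\beta$ is classical/two-valued, so the completeness-consistency axiom translates to a theorem of ramified analysis. One then verifies, axiom by axiom, that $\mathrm{RT}_{<\varepsilon_0}$ proves the $\ast$-translation of every VFM axiom. The compositional axioms VF1–VF3 and VF6 translate routinely; VF4 and VF8 require care with how the level-indexing interacts with the $\mathrm{T}\mathrm{T}$-iteration; and VF5 is redundant by the remark. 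The crucial verification is VF7, which should hold precisely because the ramified truth predicates are classically bivalent — this is where VFM's strength gets capped.

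Third, I would assemble these into the ordinal bound: since the $\ast$-translation carries VFM-theorems to $\mathrm{RT}_{<\varepsilon_0}$-theorems, and $\ast$ fixes $\lnat$-formulae (or at least preserves their arithmetical content), it follows that every $\lnat$-consequence of VFM is a consequence of $\mathrm{RT}_{<\varepsilon_0}$, giving $|\mathrm{VFM}| \leqq |\mathrm{RT}_{<\varepsilon_0}| \equiv |\mathrm{KF}|$ as claimed.

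\textbf{The main obstacle} I anticipate is calibrating the level-assignment so that the interpretation is simultaneously (a) well-defined within $\varepsilon_0$ — nesting depth of $\mathrm{T}$ must be bounded by a notation below $\varepsilon_0$, which is exactly what the $\omega$-exponential structure of $\varepsilon_0$ buys us — and (b) strong enough to validate the iteration axiom VF4 (the right-to-left direction $\mathrm{T}\ulcorner\mathrm{T}(\dot x)\urcorner \to \mathrm{T}(x)$) together with internal bivalence VF7, without the two requirements forcing incompatible level demands. A secondary difficulty is handling the \emph{uniformity} of the truth predicate in VFM (a single $\mathrm{T}$ applying to its own iterates) versus the \emph{stratified} predicates of the ramified system: one must ensure the translation of a genuinely self-referential-looking sentence (e.g. involving the diagonal lemma) lands coherently in the hierarchy, which is where the primitive-recursive level-stripping functions — analogous to the $h_\beta$ and $k$ used in the lower-bound section — will do the work in reverse. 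If a direct interpretation proves too delicate, the fallback is a **direct infinitary embedding with asymmetric interpretation of $\mathrm{T}$** (in the style of Cantini's and Kahle's treatments), cutting out the explicit translation and instead bounding the cut-rank of embedded VFM-proofs directly; but I expect the ramified interpretation to be the more transparent presentation.
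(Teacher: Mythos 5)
Your primary route --- a level-by-level interpretation of $\mrm{VFM}$ into $\mrm{RT}_{<\varepsilon_0}$ sending the single self-applicable predicate $\T$ to ramified predicates $\T_\beta$ keyed to syntactic nesting depth --- has a genuine gap, and it is exactly the gap the paper flags: the authors explicitly list the existence of such a direct interpretation as an \emph{open problem} in their concluding remarks. The concrete obstruction is that the axioms you single out as ``the crucial verification'' are precisely the ones that resist stratification. $\mrm{VF}7$ and $\mrm{VF}8$ have the form $\T(\corn{\dots \T(\dot x)\dots})$ with $x$ universally quantified over \emph{all} $\lt$-sentences, i.e.\ over codes of sentences of unbounded $\T$-nesting depth; a ramified predicate $\T_\beta$ applies only to sentences of $\mc{L}_{<\beta}$, so the inner occurrence of $\T$ would have to be some fixed $\T_\gamma$ with $\gamma<\beta$, which then fails to cover the quantifier's range. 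The same difficulty afflicts the right-to-left direction of $\mrm{VF}4$ (T-Del). No assignment of levels by nesting depth resolves this, because the problem is not the depth of any particular sentence but the internal quantification over sentences of every depth.

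What the paper actually does is your ``fallback'', developed in earnest: $\mrm{VFM}$ is embedded into an infinitary Tait-style calculus $\mrm{VFM}^{\infty}$, cuts are eliminated, and $\T$ is then given Cantini's truth-as-provability interpretation --- $\models^{\alpha}\T(t)$ holds iff the sentence coded by $t$ is derivable in a second infinitary calculus $I$ whose indices track applications of $\T$-Intro and of the consistency/normality rules, so that those rules can be eliminated and the consistency of $I$ established. Crucially, the derivation heights occurring in $I$ exceed $\varepsilon_0$ and $I$ has a non-recursive $\omega$-rule, so the authors judge $\mrm{RT}_{<\varepsilon_0}$ insufficient to formalise the argument; they formalise it instead in $\mrm{ID}^{*}_{1}$, the theory of positive induction, which is arithmetically conservative over $\widehat{\mrm{ID}}_1$ and hence of the same strength as $\mrm{KF}$. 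The bound is thus obtained not by an interpretation into a ramified truth theory but by a formalised soundness proof in a fixed-point theory; whether your interpretation-based route can be made to work is, as far as this paper is concerned, open.
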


The upper-bound will be given by a cut-elimination argument; the applicability of these arguments for determining the proof-theoretic ordinal of truth theories started with Cantini's upper-bound proof of $\mrm{KF}$ \cite{cantini_1989}, and have featured prominently in more recent literature \cite{hayashi_2022, rathjen_leigh_2010}. The steps in our proof can be summarized as follows:

\begin{enumerate}
\item Assume that $\mrm{VFM} \vdash A$ for an $\lnat$-sentence $A$.

\item Every theorem of $\mrm{VFM}$ is derivable in the corresponding infinitary sequent calculus $\mrm{VFM}^{\infty}$ (Lemma~\ref{lem:embedding_VFM}). In particular, we have $\mrm{VFM}^{\infty} \sststile{0}{\alpha} A$ with derivation length $\alpha < \varepsilon_0$.
Here, the derivation can be shown to be cut-free.

\item For every $\T$-positive sentence $A$, 
if $\mrm{VFM}^{\infty} \sststile{0}{\alpha} A$, 
then $\models^{\alpha} A$ (Theorem~\ref{thm:soundness}). Here, $\models^{\alpha}$ stands for some form of soundness, in which the truth predicate is interpreted as derivability in another sequent calculus. In particular, when $A$ is an $\lnat$-sentence, the soundness implies $\mathbb{N} \models A$, i.e., $A$ is true.

\item The above process is formalisable in a theory $\mrm{ID}^{*}_{1}$, which is proof-theoretically equivalent to $\mrm{KF}$. As a result, we obtain $\mrm{ID}^{*}_{1} \vdash A$ (Theorem~\ref{thm:upper-bound_VFM}).
\end{enumerate}

To begin with, we introduce the infinitary sequent calculus $\mrm{VFM}^{\infty}$ and show how VFM embeds into it. 

\subsection{The system VFM$^{\infty}$}

We formulate an infinitary derivation system $\mrm{VFM}^{\infty}$ as a Tait-style calculus, where each sentence is identified with its negation normal form.
A $sequent$ (denoted by $\varGamma, \varDelta, \dots$) is a finite set of $\lt$-sentences.
Given a sequent $\varGamma$ and sentences $A_1, \dots, A_n$, we write $\varGamma \cup \{ A_1, \dots, A_n \}$ as $\varGamma, A_1, \dots, A_n$.
For a sequent $\varGamma$, a natural number $k$, and an ordinal number $\alpha$, the predicate $\sststile{k}{\alpha} \varGamma$ means that, in $\mrm{VFM}^{\infty}$, $\varGamma$ is derivable with cut-rank $k$ and with derivation length $\alpha$. 

Here, we introduce some notation. 
For a formula $A(x)$ and closed terms $s$ and $t$,
the expression $A(t \simeq s)$ stands for any sentence $A(t)$ such that $t$ has the same value as $s$. 

The natural number $\mathrm{co}(A)$ denotes the logical complexity of $A$: 
\begin{itemize}
\item $\mathrm{co}(A) = \mathrm{co}(\neg A) = 0$, if $A$ is literal, i.e. an atomic formula or its negation,
\item $\mathrm{co}(A \land B) = \mathrm{co}(A \lor B) = \max \{ \mathrm{co}(A), \mathrm{co}(B) \} + 1$,
\item $\mathrm{co}(\forall x A(x)) = \mathrm{co}(\exists x A(x)) = \mathrm{co}(A(0)) + 1$.
\end{itemize} 

\begin{dfn}[System $\mrm{VFM}^{\infty}$]\label{dfn:vfminf}
The derivation system $\mrm{VFM}^{\infty}$ consists of Basic axioms, Basic rules and Truth principles, as displayed.

\begin{table}[H]
\begin{tabular}{l}
Basic axioms of $\mrm{VFM}^{\infty}$ \\
\hline \\
$(\mathsf{Ax.1})$ $\sststile{k}{\alpha} \varGamma, A,$ if $A$ is a true arithmetical literal.  \\
$(\mathsf{Ax.2})$ $\sststile{k}{\alpha} \varGamma, \neg \T(s), \T(t \simeq s)$. \\
\ \\
\hline \\
\end{tabular}
\end{table}

\begin{table}[H]
\begin{tabular}{ll}
Basic rules of $\mrm{VFM}^{\infty}$ \\
\hline \\
Assume: $\alpha_i < \alpha$ for each $i \in \mathbb{N}$ \\

\ \\

\infer[(\lor)]{\sststile{k}{\alpha} \varGamma, A_0\lor A_1}{\sststile{k}{\alpha_0} \varGamma, A_0, A_1} &
\infer[(\land)]{\sststile{k}{\alpha} \varGamma, A_0\land A_1}{\sststile{k}{\alpha_0} \varGamma, A_0 \ \ \ \sststile{k}{\alpha_1} \varGamma, A_1} \\

\ \\

\infer[(\exists)]{\sststile{k}{\alpha} \varGamma, \exists x A}{\sststile{k}{\alpha_0} \varGamma, A(i)} &
\infer[(\forall)]{\sststile{k}{\alpha} \varGamma, \forall x A}{\dots \sststile{k}{\alpha_i} \varGamma, A(i) \dots \ (i \in \mathbb{N})} \\

\ \\

\infer[(\mrm{cut})]{\sststile{k}{\alpha} \varGamma}{\sststile{k}{\alpha_0} \varGamma, A \ \ \ \sststile{k}{\alpha_1} \varGamma, \neg A}, where $\mathrm{co}(A) < k.$ \\
\ \\
\hline \\
\end{tabular}
\end{table}

\begin{table}[H]
\begin{tabular}{ll}
Truth principles of $\mrm{VFM}^{\infty}$ \\
\hline \\

Assume: $\alpha_i < \alpha$ for each $i \in \mathbb{N}$ \\

\ \\



\infer[(\T_=)]{\sststile{k}{\alpha} \varGamma, \T(t \simeq n \subdot{=} m)}{
\sststile{k}{\alpha_0} \varGamma, \mathrm{CT}(n) \land \mathrm{CT}(m) \land n^{\circ} = m^{\circ}} & 
\infer[(\T_{\neq})]{\sststile{k}{\alpha} \varGamma}{
\sststile{k}{\alpha_0} \varGamma, \mathrm{CT}(n) \land \mathrm{CT}(m) \land \T(n \subdot{=} m) \land n^{\circ} \neq m^{\circ}}  \\

\ \\

\infer[(\T_{\to})]{\sststile{k}{\alpha} \varGamma, \T (t \simeq m)}{
\sststile{k}{\alpha_0} \varGamma, \T (n) \ \ \ \sststile{k}{\alpha_1} \varGamma, \T (n \subdot{\to} m)} & 
\infer[(\T_{\forall})]{\sststile{k}{\alpha} \varGamma, \T (t \simeq \subdot{\forall} m n)}
{\dots \sststile{k}{\alpha_i} \varGamma, \T (n(i/m)) \dots \ (i \in \mathbb{N})} \\

\ \\

\infer[(\mrm{Rep})]{\sststile{k}{\alpha} \varGamma, \T (t \simeq \ulcorner \T (\dot{n}) \urcorner)}{\sststile{k}{\alpha_0} \varGamma, \T (n)} &
\infer[(\mathsf{Del})]{\sststile{k}{\alpha} \varGamma, \T (t \simeq n)}{\sststile{k}{\alpha_0} \varGamma, \T \ulcorner \T (\dot{n}) \urcorner} \\

\ \\

\infer[(\T_{\mrm{Cons}})]{\sststile{k}{\alpha} \varGamma, \T (t \simeq \ulcorner \neg (\T(\dot{n}) \land \T(\subdot{\neg}\dot{n})) \urcorner)}{} &
\infer[(\T_{\mrm{Comp}})]{\sststile{k}{\alpha} \varGamma, \T (t \simeq \ulcorner \mathrm{Sent}(\dot{n}) \to (\T(\dot{n}) \lor \T(\subdot{\neg}\dot{n})) \urcorner)}{} \\

\ \\

\infer[(\T_{\mrm{Norm}})]{\sststile{k}{\alpha}\varGamma, \T (t \simeq \ulcorner \T(\dot{n}) \to \mathrm{Sent}(\dot{n}) \urcorner)}{} &
\infer[(\T_{\pat})]{\sststile{k}{\alpha} \varGamma, \T(t \simeq n)}{\sststile{k}{\alpha_0} \varGamma,\mathrm{Ax}_{\pat}(n)} \\

\ \\

\infer[(\mrm{Cons})]{\sststile{k}{\alpha} \varGamma}{\sststile{k}{\alpha_0} \varGamma, \T(n) \ \ \ \sststile{k}{\alpha_0} \varGamma, \T(\subdot{\neg}n)} \\

\ \\

\hline \\
\end{tabular}
\end{table}
\end{dfn}
In the definition of $\mrm{VFM}^{\infty}$, the basic axioms and rules are used to embed $\mrm{PA}$. Each rule of the truth principles is necessary to embed the corresponding axiom of $\mrm{VFM}$ (see Lemma~\ref{lem:embedding_VFM}).


The following can be proved by the standard argument (cf.~\cite{hayashi_2022, rathjen_leigh_2010}).
Note that what is important for the cut eliminability to hold is that in each truth principle of $\mrm{VFM}^{\infty}$, the \emph{principal} (i.e., displayed in the conclusion) formula, if any, is of the form $\T(t)$ (see also Lemma~\ref{lem:cut-adm-I}). 
We also remark that such a principal formula is any formula of the form $\T(t \simeq s)$ for some $s$, not a particular one. This assumption is needed for establishing the substitution lemma (cf.~\cite[Proposition~3.4]{rathjen_leigh_2010}). 

Define $\omega_0(\alpha):=\alpha$, and $\omega_{n+1}(\alpha):=\omega^{\omega_{n}(\alpha)}$.

\begin{lemma}\label{lem:properties_VFM_inf}
\begin{description}
\item[(Substitution)] If $\sststile{k}{\alpha} \varGamma, A(s)$ and $s=t$ is true,
then $\sststile{k}{\alpha} \varGamma, A(t)$.
\item[(Weakening)] If $\sststile{k_0}{\alpha_0}\varGamma_0$, then $\sststile{k}{\alpha}\varGamma$ for any $k \geq k_0$, $\alpha \geq \alpha_0$, and $\varGamma \supseteq \varGamma_0$.

\item[(Cut-elimination)] If $\sststile{k}{\alpha}\varGamma$, then $\sststile{0}{\omega_k(\alpha)}\varGamma$.
\end{description}
\end{lemma}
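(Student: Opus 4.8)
The plan is to prove the three clauses in the order Substitution, Weakening, Cut-elimination, since the Substitution clause is invoked inside the cut-elimination argument. Both \textbf{Substitution} and \textbf{Weakening} go by a straightforward induction on the derivation length, with a case split on the last inference. For Weakening one only notes that $(\mathsf{Ax.1})$ and $(\mathsf{Ax.2})$ hold with an arbitrary side sequent and arbitrary parameters, and that every rule is monotone in cut-rank, length and context, so one applies the induction hypothesis to the premises (whose lengths remain strictly below the new $\alpha$) and reapplies the same rule. For Substitution the side-formula cases are again handled by the induction hypothesis; the cases demanding attention are those in which $A(s)$ is the principal formula. These are settled by the observation made just before the statement: truth of an arithmetical literal depends only on the values of its terms, and every truth principle admits as principal formula \emph{any} sentence $\T(t\simeq s)$, i.e.\ any truth ascription to a term of the appropriate value. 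Hence replacing $s$ by an equal-valued $t$ keeps the principal formula within the class licensed by the very same rule, and we reapply it.

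The real content is \textbf{Cut-elimination}, which I would factor through two auxiliary lemmas. The first is a \emph{Reduction Lemma}: if $\mathrm{co}(C)=r$, $\sststile{r}{\alpha}\varGamma,C$ and $\sststile{r}{\beta}\varGamma,\neg C$, then $\sststile{r}{\alpha\#\beta}\varGamma$. The second is an \emph{Elimination Lemma}: if $\sststile{r+1}{\alpha}\varGamma$, then $\sststile{r}{\omega^{\alpha}}\varGamma$, proved by induction on $\alpha$ in which the only nontrivial case is a $(\mrm{cut})$ on a formula of complexity exactly $r$ --- there one first lowers both premises to cut-rank $r$ by the induction hypothesis and then invokes the Reduction Lemma. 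The full statement then follows by induction on $k$ using $\omega_{n+1}(\alpha)=\omega^{\omega_{n}(\alpha)}$, each pass of the Elimination Lemma lowering the rank by one.

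The Reduction Lemma is proved by induction on $\alpha\#\beta$ with a case analysis on the last inferences of the two derivations. If $C$ (resp.\ $\neg C$) is not principal on the relevant side, the cut commutes upward and the induction hypothesis applies. If $C$ is a compound formula principal on both sides --- of the form $A_0\lor A_1$, $A_0\land A_1$, $\exists x A$ or $\forall x A$ --- one performs the standard reduction, replacing the cut on $C$ by cuts on its immediate subformulas, whose complexity is $<r$ and which are therefore permitted at cut-rank $r$.

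The main obstacle, and precisely the point for which the remark stresses the \emph{shape} of the truth principles, is the remaining literal case $r=0$ with $C=\T(t)$: here $C$ has no subformulas, so the subformula reduction is unavailable. The key structural fact is that \emph{no} rule of $\mrm{VFM}^{\infty}$ has a negated truth literal $\neg\T(\cdot)$ as principal formula, so $\neg\T(t)$ can enter a derivation only through $(\mathsf{Ax.2})$. I would therefore argue asymmetrically, by a subsidiary induction on $\beta$ along the derivation of $\sststile{0}{\beta}\varGamma,\neg\T(t)$, treating the derivation of $\sststile{0}{\alpha}\varGamma,\T(t)$ as a black box. In the commuting cases the rule is simply reapplied; at a leaf $(\mathsf{Ax.2})$ in which $\neg\T(t)$ is principal, the sequent has the form $\varGamma',\neg\T(t),\T(u\simeq t)$ with $u^{\circ}=t^{\circ}$, so the companion literal $\T(u)$ already belongs to $\varGamma$, and applying the Substitution clause to $\sststile{0}{\alpha}\varGamma,\T(t)$ yields $\sststile{0}{\alpha}\varGamma,\T(u)$, which is $\sststile{0}{\alpha}\varGamma$ itself since $\T(u)\in\varGamma$. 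This is the only genuinely delicate step; the other literal cases, true arithmetical literals and their negations, are immediate from $(\mathsf{Ax.1})$. Throughout, the bound $\alpha\#\beta$ is preserved by monotonicity, which gives the stated length.
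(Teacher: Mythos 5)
Your proof is correct and is precisely the ``standard argument'' the paper invokes without spelling out (cf.\ the references to Rathjen--Leigh and Hayashi): substitution and weakening by induction on the derivation, and cut-elimination via the usual reduction/elimination lemmas, with the literal case $\T(t)$ handled asymmetrically through $(\mathsf{Ax.2})$. You correctly isolate the two structural features the paper itself flags as essential --- that no rule of $\mrm{VFM}^{\infty}$ has a negated truth literal as principal formula, and that the principal formula of each truth rule is $\T(t\simeq s)$ for an \emph{arbitrary} $s$, which is what makes the substitution step in the $(\mathsf{Ax.2})$ case go through.
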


Similarly to \cite[Proposition~9.5]{cantini_1989}, we can prove that $\mrm{VFM}^{\infty}$ derives all the consequences of $\mrm{VFM}$.
\begin{lemma}[Embedding]\label{lem:embedding_VFM}
Let $A$ be any $\lt$-sentence. If $\mathsf{VFM} \vdash A$, then $\mrm{VFM}^{\infty}\sststile{0}{\alpha}A$ for some $\alpha < \varepsilon_0$.
\end{lemma}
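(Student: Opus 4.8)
The plan is to prove the embedding lemma by the standard induction on the length of $\mrm{VFM}$-derivations, following the template of \cite[Proposition~9.5]{cantini_1989}. Since $\mrm{VFM}$ is a first-order theory over $\mrm{PAT}$, a derivation of $A$ uses finitely many axioms and finitely many applications of the first-order rules; so it suffices to (i) embed the logical apparatus of classical predicate logic together with $\mrm{PA}$-induction into $\mrm{VFM}^\infty$ with ordinal bound below $\varepsilon_0$, and (ii) show that each axiom of $\mrm{VFM}$ is derivable in $\mrm{VFM}^\infty$ at some finite (or $<\omega$) length. Combining these via the cut rule and the Weakening clause of Lemma~\ref{lem:properties_VFM_inf} then yields a derivation of $A$ whose length stays below $\varepsilon_0$.

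First I would handle the embedding of the underlying logic and arithmetic. The propositional and quantifier rules of $\mrm{VFM}^\infty$ are the usual Tait-style rules, so any tautological instance and any first-order inference lifts directly, increasing the length by a finite amount per inference. For $\mrm{PA}$-induction one uses the standard device: the induction axiom for a formula $A(x)$ is simulated by the $(\forall)$-rule together with an internal induction on $i\in\mathbb{N}$, producing $\sststile{k}{\alpha}\varGamma$ with $\alpha<\omega\cdot 2$ or similar, and full induction up to $\omega$ gives the bound $\alpha<\varepsilon_0$ once we iterate through the finitely many induction instances actually used. Crucially, because induction in $\mrm{VFM}$ is over the whole language $\lt$ (it is $\mrm{PAT}$), the same treatment applies to $\T$-containing instances, since the $(\forall)$ and $(\land)$ rules do not care whether the side formulas contain $\T$.

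Next I would verify, axiom by axiom, that each of VF1--VF8 is derivable in $\mrm{VFM}^\infty$. This is where the Truth principles of Definition~\ref{dfn:vfminf} are designed to do exactly the right work: $(\T_=)$ and $(\T_{\neq})$ give VF1, $(\T_{\pat})$ gives VF2, $(\T_\forall)$ gives VF3, the pair $(\mrm{Rep})/(\mathsf{Del})$ gives the two directions of VF4, $(\T_\to)$ gives VF6, $(\T_{\mrm{Cons}})$ together with $(\T_{\mrm{Comp}})$ gives VF7, and $(\T_{\mrm{Norm}})$ gives VF8; VF5 is then obtained from VF4 and VF7 as noted in the Remark after the definition of $\mrm{VFM}$. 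The $(\mrm{Cons})$ rule supplies the external consistency needed to derive the material biconditionals. Here one must use the convention, emphasized in the text, that the principal formula of each truth rule is $\T(t\simeq s)$ for \emph{any} $t$ evaluating to $s$, invoking the Substitution clause of Lemma~\ref{lem:properties_VFM_inf} to match the exact term appearing in the axiom. Each such derivation has length below $\omega$.

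The main obstacle I anticipate is bookkeeping the ordinal bounds so that the total length stays strictly below $\varepsilon_0$, rather than merely below some larger ordinal. The subtlety is that the $(\forall)$-rule has infinitely many premises, so each quantifier introduction and each use of the induction schema contributes an $\omega$-jump in length; one must check that the finitely many such contributions accumulate to an ordinal of the form $\omega^n\cdot c<\varepsilon_0$. A secondary delicate point is that cuts on $\T$-atoms and on the formulas introduced while simulating the $\mrm{VFM}$-axioms remain admissible; but this is handled by the cut rule of $\mrm{VFM}^\infty$ at finite cut-rank $k$ (the logical complexity of the cut formula), and the final cut-free derivation $\mrm{VFM}^\infty\sststile{0}{\alpha}A$ with $\alpha<\varepsilon_0$ is then recovered a posteriori by the Cut-elimination clause of Lemma~\ref{lem:properties_VFM_inf}, so the embedding statement itself only needs the bound $\alpha<\varepsilon_0$ after accounting for the $\omega_k$-tower, which stays below $\varepsilon_0$ because $k$ is finite and the pre-elimination length is below $\varepsilon_0$.
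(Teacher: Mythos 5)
Your proposal is correct and follows essentially the same route as the paper: induction on the $\mrm{VFM}$-derivation, the standard embedding of $\mrm{PAT}$ into the Tait-style $\omega$-calculus, the same axiom-by-axiom assignment of truth rules (including VF7 via $(\T_{\mrm{Cons}})$ plus $(\T_{\mrm{Comp}})$, the redundancy of VF5, and the use of $(\mrm{Cons})$ for the left-to-right direction of the inequality clause of VF1), and a final appeal to cut-elimination to reach cut-rank $0$ below $\varepsilon_0$.
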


\begin{proof}
The proof is by induction on the derivation of $A$.
The axioms $\mrm{VF}1$ through $\mrm{VF}8$ are each derived by the corresponding rules of $\mrm{VFM}^{\infty}$.
In particular, $\mrm{VF}2$ is by $(\T_{\mrm{PAT}})$; $\mrm{VF}3$ is by $(\T_{\forall})$; $\mrm{VF}4$ is by $(\mrm{ReP})$ and $(\mrm{Del})$; $\mrm{VF}6$ is by $(\T_{\to})$; $\mrm{VF}7$ is by $(\T_{\mrm{Cons}})$ and $(\T_{\mrm{Comp}})$; $\mrm{VF}8$ is by $(\T_{\mrm{Norm}})$.
Here, recall that $\mrm{VF}5$ is redundant.
As for $\mrm{VF}1$, the first conjunct, i.e. the truth biconditional for equality, is derivable by $(\T_{=})$ and $(\T_{\neq})$. 
In addition, the right-to-left direction of the second conjunct is derived by $(\T_{\mrm{PAT}})$.
Finally, the converse direction is gained for each $m,n$:
\[ \
\infer[(\lor)]{\T(m \subdot{\neq} n) \to m^{\circ} \neq n^{\circ}}{\infer[(\mrm{Cons})]{\neg \T(m \subdot{\neq} n), m^{\circ} \neq n^{\circ}}{
\infer[(\T_{=})]{\neg \T(m \subdot{\neq} n), m^{\circ} \neq n^{\circ}, \T(m \subdot{=} n)}{\infer[\text{by logic}]{ \neg \T(m \subdot{\neq} n), m^{\circ} \neq n^{\circ}, \mrm{CT}(m) \land \mrm{CT}(n) \land m^{\circ} = n^{\circ}}{}} \ \ \ 
\infer[(\mrm{Ax}.2)]{\neg \T(m \subdot{\neq} n), m^{\circ} \neq n^{\circ}, \T(m \subdot{\neq} n)}{} 
}
},
\]

where the context $\varGamma := \neg \mrm{CT}(m), \neg \mrm{CT}(n)$ is omitted in each sequent.

The axioms and rules of $\mrm{PAT}$ are derived in the standard way.
\end{proof}

Our next step is to provide an interpretation of this infinitary system.

\subsection{Truth-as-provability interpretation for VFM}
\label{sec:truth-as-provability_VFM}
 
By the embedding lemma, the consistency of $\mrm{VFM}$ will follow from the consistency of $\mrm{VFM}^{\infty}$. Hence, our aim is to show the soundness of $\mrm{VFM}^{\infty}$.
For that purpose, we can follow Cantini's partial interpretation method for $\mrm{KF}$ (cf.~\cite{cantini_1989}), although ours is not \emph{asymmetric}, that is, we do not need to deal with negative occurrences of the truth predicate.

For each ordinal $\alpha$ and each $\T$-positive $\lt$-sentence $A$,
the relation $\models^{\alpha} A$, which stands for ``$A$ is satisfied at the level $\alpha$'', is inductively defined as follows:
\begin{itemize}
\item $\models^{\alpha}s=t$ $:\Leftrightarrow$ $s=t$ is true;

\item $\models^{\alpha}s \neq t$ $:\Leftrightarrow$ $s=t$ is false;

\item $\models^{\alpha} \T(t)$ $:\Leftrightarrow$ the value of $t$ is the G\"{o}del-number of some $\lt$-sentence $A$ and the $4$-ary relation $I(1;\alpha;\varepsilon_{\alpha}; A)$ holds;

\item $\models^{\alpha} A \land B$ $:\Leftrightarrow$ $\models^{\alpha} A$ and $\models^{\alpha} B$;

\item $\models^{\alpha} A \lor B$ $:\Leftrightarrow$ $\models^{\alpha} A$ or $\models^{\alpha} B$;

\item $\models^{\alpha} \forall x A(x)$ $:\Leftrightarrow$ $\models^{\alpha} A(n)$ for all $n$;

\item $\models^{\alpha} \exists x A(x)$ $:\Leftrightarrow$ $\models^{\alpha} A(n)$ for some $n$.
\end{itemize}
The $4$-ary relation $I(x;y;z;w)$, as the interpretation of the truth predicate, is defined in Definition~\ref{dfn:I}.
From the axioms of $\mrm{VFM}$, the conditions that $I(x;y;z;w)$ has to satisfy are determined.
Firstly, the truth predicate needs to derives all the axioms of $\mrm{PA}$, the axiom $\mrm{Cons}$ $(\forall x \neg (\T(x) \land \T(\subdot{\neg}x)))$, the  axiom $\mrm{Comp}$ ($\forall x (\mathrm{Sent}(x) \to \T(x) \lor \T(\subdot{\neg}x))$), and the $\T$-normality axiom $(\forall x (\T(x) \to \mathrm{Sent}(x)))$. As per the rules, it should be closed under Modus Ponens, $\omega$-rule, $\T$-Introduction (if $A$, then $\T\ulcorner A \urcorner$), and $\T$-Elimination (if $\T\ulcorner A \urcorner$, then $A$). Finally, the axiom $\mrm{VFM}1$ requires that $I$ is consistent.

In order to meet the above requirements, Cantini's $truth \mhyph as \mhyph provability$ interpretation \cite{cantini_1990} seems suitable.
So, we define the relation $I(x;y;z;w) \subseteq \{0,1\} \times \mrm{On} \times \mrm{On} \times \mrm{Seq}$ as a sequent calculus,
where $\mrm{On}$ is the set of ordinal numbers, and $\mrm{Seq}$ is the set of sequents.
The informal meaning of $I(i;\alpha;\beta;\varGamma)$ is that $\varGamma$ is derivable with applying $(\T \mhyph \mrm{Intro})$ at most $\alpha$-times and with the derivation height $\beta$. 
The index $i$ is used merely to distinguish derivations by whether the rule $(\mrm{Cons})$ or $(\mrm{Norm})$ (see Definition~\ref{dfn:I}) is used in it.
In particular, when $i=0$, it means that the derivation contains neither the rule $(\mrm{Cons})$ nor $(\mrm{Norm})$.

\begin{dfn}[Definition of $I$]\label{dfn:I}
The set $I \subseteq \{0,1\} \times \mrm{On} \times \mrm{On} \times \mrm{Seq}$ is defined to be the least fixed-point which is closed under the clauses below.
We write $I(x;y;z;w)$ instead of $\langle x,y,z,w \rangle \in I$.

Let $i \in \{0,1\}$, $\alpha, \beta \in \mrm{On}$, and $\varGamma \in \mrm{Seq}$.
Furthermore, we assume $\alpha_0 < \alpha$ and $\beta_0 < \beta$.

\begin{description}
\item[$(\mathrm{Ax}.1)$] $I(i; \alpha;\beta; \varGamma, s=t)$ holds, if $s=t$ is true.

\item[$(\mathrm{Ax}.2)$] $I(i; \alpha;\beta; \varGamma, s\neq t)$ holds, if $s\neq t$ is true.

\item[$(\mathrm{Ax}.3)$] $I(i; \alpha;\beta; \varGamma, \T(s), \neg \T(t \simeq s))$ holds.

\item[$(\lor)$] If $I(i;\alpha;\beta_0;\varGamma, A_0,A_1)$, then $I(i;\alpha;\beta;\varGamma, A_0 \lor A_1)$.

\item[$(\land)$] If $I(i;\alpha;\beta_0;\varGamma, A_0)$ and $I(i;\alpha;\beta_0;\varGamma, A_1)$, then $I(i;\alpha;\beta;\varGamma, A_0 \land A_1)$.

\item[$(\exists)$] If $I(i;\alpha;\beta_0;\varGamma, A(n))$ for some $n\in \nat$, then $I(i;\alpha;\beta;\varGamma, \exists x A(x))$.

\item[$(\forall)$] If $I(i;\alpha;\beta_0;\varGamma, A(n))$ for all $n\in \nat$, then $I(i;\alpha;\beta;\varGamma, \forall x A(x))$.

\item[$(\T \mhyph \mrm{Intro})$] If $I(i;\alpha_0;\beta_0;A)$, then 
$I(i;\alpha;\beta;\varGamma, \T (t \simeq \ulcorner A \urcorner))$.

\item[$(\mrm{Comp})$] $I(i;\alpha;\beta;\varGamma, \T(s \simeq \ulcorner A \urcorner), \T(t \simeq \ulcorner \neg A \urcorner))$ holds for every $\mathcal{L}_{\T}$-sentence $A$.

\item[$(\mrm{Cons})$] If $I(i;\alpha;\beta_0;\varGamma, \T(n))$ and $I(i;\alpha;\beta_0;\varGamma, \T(\subdot{\neg}n))$, then $I(1;\alpha;\beta;\varGamma)$.

\item[$(\mrm{Norm})$] If $I(i;\alpha;\beta_0;\varGamma, \T(n))$ and $\neg \mathrm{Sent}_{\lt}(n)$ holds, then $I(1;\alpha;\beta;\varGamma)$.

\end{description}
\end{dfn}


The reason for using the index $i$ in $I$ is explained as follows.
As remarked above, we have to assure the consistency of $I$. Although $I$ does not contain the cut rule, the consistency does not immediately follows, for the rules $(\mrm{Cons})$ and $(\mrm{Norm})$ break the subformula property of the derivation.
Thus, we also need to eliminate all applications of the rules $(\mrm{Cons})$ and $(\mrm{Norm})$. 
For this purpose, we use indices $i$ to record the information on these rules.
We also remark that the index $\alpha$ is necessary to eliminate these rules by transfinite induction on $\alpha$.


We can now state our goal, that is, the Soundness Theorem of $\mrm{VFM}$.
For a $\T$-positive sequent $\varGamma$, let $\models^{\alpha} \varGamma$ $:\Leftrightarrow$ $\models^{\alpha} A$ for some $A \in \varGamma$.

\begin{thm}[Soundness]\label{thm:soundness}
Let $\varGamma$ be any $\T$-positive sequent.
If $\mrm{VFM}^{\infty} \sststile{0}{\alpha} \varGamma$, then $\vDash^{\alpha} \varGamma$.
\end{thm}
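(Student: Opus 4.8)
The plan is to prove the Soundness Theorem by induction on the derivation length $\alpha$ of $\mrm{VFM}^{\infty} \sststile{0}{\alpha} \varGamma$, establishing for every $\T$-positive sequent $\varGamma$ that $\vDash^{\alpha} \varGamma$. The cut-rank is already $0$, so I need not worry about the $(\mrm{cut})$ rule; the cases to check are the basic axioms, the basic logical rules, and each of the truth principles of $\mrm{VFM}^{\infty}$. The governing idea is that the meta-level relation $\models^{\alpha}$ interprets $\T(t)$ via the internal derivability predicate $I(1;\alpha;\varepsilon_{\alpha}; A)$, so that each truth principle of $\mrm{VFM}^{\infty}$ which introduces a formula $\T(t \simeq s)$ must be matched by the corresponding closure clause of $I$ from Definition~\ref{dfn:I}. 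The alignment is deliberate: $(\T_{\pat})$ corresponds to the provability of $\mrm{PA}$-axioms inside $I$, $(\mrm{Rep})/(\mathsf{Del})$ to $(\T\mhyph\mrm{Intro})$, $(\T_{\mrm{Cons}})$ and $(\T_{\mrm{Comp}})$ to the consistency/completeness requirements recorded by $(\mrm{Cons})$ and $(\mrm{Comp})$, and so forth.

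The routine cases are as follows. For $(\mathsf{Ax.1})$, a true arithmetical literal is satisfied at every level by the first two clauses of the definition of $\models^{\alpha}$. For $(\mathsf{Ax.2})$, the sequent contains either $\neg\T(s)$ or a formula of the form $\T(t \simeq s)$; since the sequent is $\T$-positive, the relevant disjunct is $\T(t \simeq s)$, and I discharge it by exhibiting the matching axiom $(\mathrm{Ax}.3)$ of $I$. The logical rules $(\lor), (\land), (\exists), (\forall)$ transfer directly, using the inductive definition of $\models^{\alpha}$ together with the induction hypothesis applied to the premises, whose derivation lengths $\alpha_i$ are strictly smaller than $\alpha$. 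The key move common to all truth principles is: whenever the rule concludes with a principal formula $\T(t \simeq s)$, I must show $\models^{\alpha}\T(t \simeq s)$, i.e. that the value of $t$ codes an $\lt$-sentence $A$ with $I(1;\alpha;\varepsilon_{\alpha};A)$. For the premise-free principles $(\T_{\mrm{Cons}}), (\T_{\mrm{Comp}}), (\T_{\mrm{Norm}})$, this is immediate from the corresponding clause of $I$. For principles with premises, such as $(\T_{\to}), (\T_{\forall}), (\mrm{Rep})$, I apply the induction hypothesis to get $\models^{\alpha_0}\T(n)$, unpack this as $I(1;\alpha_0;\varepsilon_{\alpha_0};A)$ for the appropriate sentence, and then invoke the relevant $I$-clause together with the monotonicity $\alpha_0 < \alpha$ and $\varepsilon_{\alpha_0} < \varepsilon_{\alpha}$ to conclude $I(1;\alpha;\varepsilon_{\alpha}; \cdot)$.

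The main obstacle will be the two rules that conclude with an \emph{empty} principal formula while eliminating truth assumptions, namely $(\T_{\neq})$ and especially $(\mrm{Cons})$. For $(\mrm{Cons})$, the premises give $\models^{\alpha_0}\varGamma, \T(n)$ and $\models^{\alpha_0}\varGamma, \T(\subdot{\neg}n)$, and I must derive $\models^{\alpha}\varGamma$; if neither premise is satisfied through $\varGamma$, then both $\T(n)$ and $\T(\subdot{\neg}n)$ must hold at level $\alpha_0$, yielding $I(1;\alpha_0;\varepsilon_{\alpha_0};\T(n))$ and $I(1;\alpha_0;\varepsilon_{\alpha_0};\T(\subdot{\neg}n))$. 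I then need the \emph{consistency of $I$}: that no sentence and its negation are both internally derivable, which forces a contradiction and hence the desired conclusion vacuously. This is exactly where the index $i$ and the ordinal bound $\alpha$ earn their keep, since consistency of $I$ is not immediate and must itself be established by transfinite induction on $\alpha$, eliminating the rules $(\mrm{Cons})$ and $(\mrm{Norm})$ that break the subformula property. I expect this to be the delicate heart of the argument; the remaining cases, including $(\T_{=}), (\mathsf{Del})$, and the embedding of $\mrm{PAT}$, reduce to bookkeeping once the $\models^{\alpha}/I$ correspondence and the consistency of $I$ are in place.
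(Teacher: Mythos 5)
Your overall strategy coincides with the paper's: induction on $\alpha$ with cases on the last rule, interpreting $\T(t)$ at level $\alpha$ as $I(1;\alpha;\varepsilon_{\alpha};\cdot)$, and reducing $(\T_{\neq})$ and $(\mrm{Cons})$ to the consistency of $I$, itself obtained by eliminating $(\mrm{Cons})$ and $(\mrm{Norm})$. However, two of your case analyses would not go through as you describe them.

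First, $(\T_{\to})$: you propose to ``invoke the relevant $I$-clause,'' but Definition~\ref{dfn:I} has no modus ponens clause. Closure of $I$ under modus ponens is a \emph{derived} property: from $I(1;\alpha_0;\varepsilon_{\alpha_0};A)$ and $I(1;\alpha_0;\varepsilon_{\alpha_0};A\to B)$ one applies Cut-admissibility for $I$ (Lemma~\ref{lem:cut-adm-I}) to obtain $I(1;\alpha_0;\omega_{\mrm{co}(A)}(\varepsilon_{\alpha_0}\#\varepsilon_{\alpha_0});B)$, and the fact that the second index is an epsilon number is exactly what keeps the result below $\varepsilon_{\alpha}$. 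Cut-admissibility is a substantial lemma here --- it must be proved so as not to increase the index $i$ or the $\T$-Intro rank --- and you never name it; it is also what turns the two premises of your $(\mrm{Cons})$ case into a derivation of the empty sequent before consistency of $I$ is invoked.

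Second, $(\mathsf{Del})$ is not bookkeeping; it is the case the whole design of $I$ is built around. From the premise you get $I(1;\alpha_0;\varepsilon_{\alpha_0};\T(n))$ and must produce $I(1;\alpha;\varepsilon_{\alpha};A)$ for $n=\ulcorner A\urcorner$, i.e., you need admissibility of $\T$-Elimination for $I$. The paper obtains this from the Disquotation Lemma (Lemma~\ref{lem:disq_I}): disquoting an atomic sequent costs only finitely many $\omega$-exponentials --- possible because no atomic sequent is the conclusion of an infinitary rule --- so $\omega_n(\varepsilon_{\alpha_0})=\varepsilon_{\alpha_0}<\varepsilon_{\alpha}$, and the $\T$-Intro rank strictly decreases in the process. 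Without this lemma, and without the observation that the derivation-length slot of $I$ must always be an epsilon number, the $(\mathsf{Del})$ case does not close and the induction fails.
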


The proof is given in the next section. As a consequence, the consistency of $\mathsf{VFM}$ follows:
\begin{corollary}[Consistency]
$\mrm{VFM}$ is consistent.
\end{corollary}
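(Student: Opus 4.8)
The plan is to read off consistency directly from the Soundness Theorem (Theorem~\ref{thm:soundness}) and the Embedding Lemma (Lemma~\ref{lem:embedding_VFM}). The governing observation is that soundness collapses to ordinary arithmetical truth on the $\T$-free fragment, so it suffices to exhibit a single false $\lnat$-sentence that $\mrm{VFM}$ fails to prove. A false arithmetical literal such as $0=1$ is the natural choice, since it contains no occurrence of $\T$ and is therefore (vacuously) $\T$-positive, placing it squarely inside the fragment to which soundness applies.

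Concretely, I would argue by reductio. Suppose $\mrm{VFM}$ were inconsistent. As $\mrm{VFM}$ contains $\mrm{PAT}$ and is formulated over classical logic, it would then prove every $\lt$-sentence; in particular $\mrm{VFM} \vdash 0=1$. Since the singleton $\{0=1\}$ is a $\T$-positive sequent, I can now push it through the two infinitary results in turn. By the Embedding Lemma, $\mrm{VFM} \vdash 0=1$ yields $\mrm{VFM}^{\infty} \sststile{0}{\alpha} 0=1$ for some $\alpha < \varepsilon_0$, i.e. a cut-free derivation of a $\T$-positive sequent. Applying the Soundness Theorem to this derivation gives $\models^{\alpha} 0=1$. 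But by the base clause defining the satisfaction relation, $\models^{\alpha}(s=t)$ holds if and only if $s=t$ is true in the standard model, and $0=1$ is false. This contradiction shows $\mrm{VFM} \nvdash 0=1$, and hence that $\mrm{VFM}$ is consistent.

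I expect no genuine obstacle at this step: all of the real content has already been discharged inside the Soundness Theorem (and, in turn, in the consistency/well-foundedness analysis of the interpretation $I$ that underlies it). The only point deserving a word of care is the choice of witness. Soundness is stated solely for $\T$-positive sequents, so the sentence used to detect inconsistency must lie in that fragment; an arithmetical falsehood satisfies this trivially while also being decided correctly by the atomic clause of $\models^{\alpha}$. As a variant one could instead route through the empty sequent: an inconsistency would, after a single cut followed by cut-elimination (Lemma~\ref{lem:properties_VFM_inf}), give $\mrm{VFM}^{\infty} \sststile{0}{\beta} \varnothing$, and since the empty sequent is vacuously $\T$-positive, soundness would yield $\models^{\beta} \varnothing$, which is false by definition (there is no disjunct to satisfy). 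I prefer the $0=1$ route, however, as it needs only Embedding and Soundness and avoids invoking cut-elimination on the infinitary side.
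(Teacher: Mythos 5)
Your proof is correct and takes essentially the same route as the paper's: assume $\mrm{VFM} \vdash 0=1$, use the Embedding Lemma to get $\mrm{VFM}^{\infty} \sststile{0}{\alpha} 0=1$ for some $\alpha < \varepsilon_0$, apply the Soundness Theorem to obtain $\models^{\alpha} 0=1$, and contradict the atomic clause of $\models^{\alpha}$. Your additional remarks (why inconsistency yields a proof of $0=1$, and the alternative via the empty sequent) are fine but do not change the argument.
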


\begin{proof}
Assume, for contradiction, that $\mrm{VFM} \vdash 0=1$. Then, by the embedding lemma, we have $\mrm{VFM}^{\infty} \sststile{0}{\alpha}0=1$ for some $\alpha<\varepsilon_0$. Thus, the soundness theorem implies that $\models^{\alpha} 0=1$. However, by the definition of $\models^{\alpha}$, $0=1$ is satisfied at $\alpha$ only when $0=1$ is true, a contradiction. Therefore, $\mrm{VFM} \nvdash 0=1$.
\end{proof}

\subsection{Proof of the Soundness Theorem}
This subsection is dedicated to the proof of the Soundness Theorem (Theorem~\ref{thm:soundness}). 
As we remarked in the previous subsection, we have to show that $I$ is consistent and is closed under cut rule and $\T$-Elimination rule.

The following lemmata are proved similarly to for $\mrm{VFM}^{\infty}$.
\begin{lemma}[Substitution]\label{lem:subst_I}
If $I(i;\alpha;\beta;\varGamma,A(s))$ and $s=t$ is true, then $I(i;\alpha;\beta;\varGamma,A(t))$.
\end{lemma}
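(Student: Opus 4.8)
The plan is to prove the Substitution Lemma for $I$ by transfinite induction on the pair $(\alpha,\beta)$ — that is, on the derivation that witnesses $I(i;\alpha;\beta;\varGamma,A(s))$ — mirroring the substitution argument already carried out for $\mrm{VFM}^\infty$ in Lemma~\ref{lem:properties_VFM_inf}. The key observation is that substituting $t$ for $s$ along a true identity $s=t$ preserves all the side-conditions that gate the clauses of Definition~\ref{dfn:I}: a true arithmetical literal remains true, a false one remains false, $\neg\mrm{Sent}_{\lt}(n)$ is unaffected since the value of the term is unchanged, and — crucially — the clauses governing the truth predicate are stated for $\T(t\simeq s)$, i.e. for \emph{any} closed term with the appropriate value, so they are automatically stable under replacement of one such term by another of the same value. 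This is exactly the design feature flagged in the remark before Lemma~\ref{lem:properties_VFM_inf}.

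Concretely, I would argue by cases on the last clause applied to derive $I(i;\alpha;\beta;\varGamma,A(s))$. For the propositional and quantifier rules $(\lor),(\land),(\exists),(\forall)$, the principal formula and the premises all lie in the scope of the substitution, so one applies the induction hypothesis to each premise (which has strictly smaller $\beta_0<\beta$) and reassembles the same rule; here one must be mild careful with $(\forall)$ and $(\exists)$ to ensure the substituted term does not clash with the bound variable, but since $s,t$ are closed terms this is not an issue. For the axiom clauses $(\mathrm{Ax}.1),(\mathrm{Ax}.2)$ the truth value of the displayed literal is preserved because $s=t$ is true, and $(\mathrm{Ax}.3)$ and $(\mrm{Comp})$ are initial sequents whose membership in $I$ does not depend on the particular term occurrence. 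The genuinely rule-like truth clauses $(\T\mhyph\mrm{Intro}),(\mrm{Cons}),(\mrm{Norm})$ are handled by applying the induction hypothesis to their premises and then re-applying the rule; the side-condition $\neg\mrm{Sent}_{\lt}(n)$ in $(\mrm{Norm})$ is untouched since substitution of a value-equal term does not change whether the coded object is a sentence.

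The main obstacle I anticipate is bookkeeping around the $(\T\mhyph\mrm{Intro})$ clause and the $\simeq$ notation. Because the conclusion of $(\T\mhyph\mrm{Intro})$ has the shape $\T(t\simeq\ulcorner A\urcorner)$ — standing for \emph{some} closed term whose value is $\#A$ — when the occurrence of $s$ being replaced sits inside such a term, the substitution simply yields another term of the same value, so the instance of the clause still applies with the same premise $I(i;\alpha_0;\beta_0;A)$ (which contains no occurrence of $s$ to substitute, as $A$ is fixed). I would make explicit that this is precisely why the clauses are formulated with $t\simeq s$ rather than with a fixed term, so that the substitution lemma goes through without needing to track a specific numeral. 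The remaining cases are routine, and since the induction is on the well-founded ordinal derivation height, no separate termination argument is needed.
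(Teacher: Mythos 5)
Your proposal is correct and follows essentially the same route the paper intends: the paper gives no detailed proof, stating only that the lemma is "proved similarly to for $\mrm{VFM}^{\infty}$", i.e.\ by induction on the derivation with a case split on the last clause, relying on the fact that principal formulas are of the form $\T(t\simeq s)$ for an arbitrary term of the right value — exactly the design feature you identify. Your only cosmetic deviation is inducting on the pair $(\alpha,\beta)$ where induction on the derivation height $\beta$ alone already suffices.
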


\begin{lemma}[Weakening]\label{lem:weak_I}
Assume $0 \leq i \leq j \leq 1$; $\alpha_0 \leq \alpha$; $\beta_0 \leq \beta$; and $\varGamma_0 \subseteq \varGamma$.
If $I(i;\alpha_0;\beta_0;\varGamma_0)$, then $I(j;\alpha;\beta;\varGamma)$.
\end{lemma}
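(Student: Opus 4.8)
The plan is to argue by induction on the generation of $I$, i.e.\ by rule induction on the inductive definition in Definition~\ref{dfn:I} (equivalently, by induction on the derivation height $\beta_0$ witnessing $I(i;\alpha_0;\beta_0;\varGamma_0)$), in exact parallel to the weakening clause for $\mrm{VFM}^{\infty}$. Concretely, I would fix the hypothesis $I(i;\alpha_0;\beta_0;\varGamma_0)$ and establish the statement $P(i,\alpha_0,\beta_0,\varGamma_0)$ asserting that $I(j;\alpha;\beta;\varGamma)$ holds for every $j,\alpha,\beta,\varGamma$ with $i\le j\le 1$, $\alpha_0\le\alpha$, $\beta_0\le\beta$ and $\varGamma_0\subseteq\varGamma$, by checking that $P$ is preserved by each clause of the definition applied to its premises.

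The axiomatic clauses $(\mrm{Ax}.1)$--$(\mrm{Ax}.3)$ and $(\mrm{Comp})$ are immediate: each asserts membership for an \emph{arbitrary} index, arbitrary ordinals and an arbitrary context containing the displayed literals, and since $\varGamma_0\subseteq\varGamma$ those literals still lie in $\varGamma$, so $I(j;\alpha;\beta;\varGamma)$ holds by the same clause. For the logical rules $(\lor),(\land),(\exists),(\forall)$, which preserve both the index and the $\T$-count and merely raise the height, I would apply the induction hypothesis to the premises (of strictly smaller height $\beta_0'<\beta_0$) to lift them simultaneously to index $j$, to $\alpha$, to height $\beta_0'$ (noting $\beta_0'<\beta$) and to the enlarged context $\varGamma$ together with the relevant side formulas; re-applying the same rule---legitimate because the principal formula still belongs to $\varGamma$ and $\beta_0'<\beta$---then yields $I(j;\alpha;\beta;\varGamma)$. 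The rule $(\T\mhyph\mrm{Intro})$ is handled similarly but more easily, since its conclusion already permits an arbitrary context and arbitrary ordinals above the premise: I would only need the induction hypothesis to lift the singleton premise $I(i;\alpha_0';\beta_0';A)$ to index $j$, and then re-apply $(\T\mhyph\mrm{Intro})$ with the target parameters $\alpha,\beta,\varGamma$, using $\alpha_0'<\alpha_0\le\alpha$ and $\beta_0'<\beta_0\le\beta$.

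The only cases that touch the index nontrivially are $(\mrm{Cons})$ and $(\mrm{Norm})$, and here the argument is forced in a convenient direction: both produce a conclusion of index exactly $1$, so the tuple being weakened already has $i=1$, whence $i\le j\le 1$ gives $j=1$. I would then lift the premises via the induction hypothesis to $\alpha$, to a height $\beta_0'<\beta$ and to the context $\varGamma$ (leaving their indices untouched, which is harmless), and re-apply $(\mrm{Cons})$ respectively $(\mrm{Norm})$---carrying along the side condition $\neg\mrm{Sent}_{\lt}(n)$ in the latter case---to obtain $I(1;\alpha;\beta;\varGamma)$. I do not expect a genuine obstacle: the lemma is a routine structural monotonicity statement, and the only point requiring care is the simultaneous bookkeeping of the three monotonicity parameters and the index, in particular the observation that every clause either preserves the premise index (so it can be lifted to $j$ before re-application) or outputs index $1$ (so that $j=1$ is already forced).
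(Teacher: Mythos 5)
Your proof is correct and is exactly the standard rule-induction argument that the paper invokes without writing out (it only remarks that Weakening for $I$ is ``proved similarly to for $\mrm{VFM}^{\infty}$''). The one point that genuinely needed care---that $(\mrm{Cons})$ and $(\mrm{Norm})$ force $i=1$ and hence $j=1$, while all other clauses preserve the premise index---is handled correctly.
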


In the case of $I$, we have cut admissibility instead of elimination.

\begin{lemma}[Cut-admissibility]\label{lem:cut-adm-I}
The following holds:

If $I(i;\alpha;\beta_0;\varGamma, A)$ and $I(i;\alpha;\beta_1;\varDelta,\neg A)$, then $I(i;\alpha;\omega_{\mrm{co}(A)}(\beta_0 \# \beta_1);\varGamma,\varDelta)$.
\end{lemma}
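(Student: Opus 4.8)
The plan is to prove cut-admissibility for $I$ by the standard double induction used in cut-elimination arguments for infinitary calculi, but carried out inside the inductive definition of $I$ rather than in an external metatheory. The statement to establish is that if $I(i;\alpha;\beta_0;\varGamma,A)$ and $I(i;\alpha;\beta_1;\varDelta,\neg A)$, then $I(i;\alpha;\omega_{\mrm{co}(A)}(\beta_0\#\beta_1);\varGamma,\varDelta)$. Crucially, the bound on the first index $\alpha$ (the number of $(\T\mhyph\mrm{Intro})$ applications) does \emph{not} increase, and the index $i$ is preserved; only the derivation-height index $\beta$ grows, and it grows by the iterated-exponential function $\omega_{\mrm{co}(A)}$ applied to the natural sum $\beta_0\#\beta_1$. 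This separation is exactly why the calculus was set up with the extra indices: cutting on the cut-formula $A$ must not spend any of the budget recorded by $\alpha$ or change the $(\mrm{Cons})/(\mrm{Norm})$-flag $i$.

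First I would reduce the general statement to the principal case by the usual outer induction on $\mrm{co}(A)$, and for fixed $\mrm{co}(A)$ an inner induction on the natural sum $\beta_0\#\beta_1$. The reduction step handles all cases where at least one premise was obtained by a rule whose principal formula is \emph{not} the cut-formula: in each such case one applies the induction hypothesis on $\beta_0\#\beta_1$ (the side premises have strictly smaller height, and the natural sum is strictly monotone in each argument by the property $\alpha\#\beta<\alpha\#\gamma$ for $\beta<\gamma$ stated in the preliminaries) and then reapplies the same rule, using Weakening (Lemma~\ref{lem:weak_I}) to restore the context $\varGamma,\varDelta$. One must check this for every rule of $I$, including the axioms $(\mathrm{Ax}.1)$--$(\mathrm{Ax}.3)$ and $(\mrm{Comp})$ — these are simply re-derived with the weakened context — and the rules $(\mrm{Cons})$ and $(\mrm{Norm})$, which permute past the cut without difficulty since they do not display the cut-formula and they preserve $i=1$ as needed.

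The essential case is the principal one, where both premises are conclusions of rules whose principal formula is $A$ respectively $\neg A$. Since $A$ is $\T$-positive and literal cut-formulae are handled directly via the axioms, the interesting subcases are $A=A_0\lor A_1$ against $\neg A=\neg A_0\land\neg A_1$, the dual conjunction case, and $A=\exists x B$ against $\neg A=\forall x\neg B$ (with its dual). In each one I would first apply the induction hypothesis on $\beta_0\#\beta_1$ to bring both premises to a common height, and then perform one or two \emph{smaller} cuts on the immediate subformulae $A_0,A_1$ or $B(n)$, which have strictly smaller complexity, invoking the outer induction hypothesis on $\mrm{co}(A)$. Tracking the heights through these subcuts is where the $\omega_{\mrm{co}(A)}$ bound comes from: each drop in complexity costs one level of $\omega$-exponentiation, and the natural sum is used precisely because it lets the two premise heights be combined while remaining below the next exponential.

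The main obstacle I anticipate is the quantifier case together with the Substitution lemma. For $A=\exists x B$ cut against $\forall x\neg B$, the $(\forall)$-premise supplies derivations of $\varDelta,\neg B(n)$ for \emph{all} $n$, while the $(\exists)$-premise supplies $\varGamma,B(m)$ for one witness $m$; I would select the matching instance and cut on $B(m)$, but because the principal formulae of the truth rules are of the schematic form $\T(t\simeq s)$ rather than a fixed numeral, I must use Lemma~\ref{lem:subst_I} to align the term values correctly, exactly the point flagged in the remark before Lemma~\ref{lem:properties_VFM_inf}. Verifying that substitution preserves the indices $i$ and $\alpha$ while leaving $\beta$ unchanged, and that the selected witness instance indeed has the required height, is the delicate bookkeeping; once that is in place the ordinal arithmetic $\omega_{\mrm{co}(B(m))}(\dots)\leq\omega_{\mrm{co}(A)}(\beta_0\#\beta_1)$ closes the case.
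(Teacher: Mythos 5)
Your proposal is correct and follows essentially the same strategy as the paper's proof: main induction on $\mrm{co}(A)$, sub-induction on the natural sum $\beta_0 \# \beta_1$, permutation of the cut past non-principal rules, and smaller cuts on immediate subformulae in the compound principal cases, with Substitution and Weakening doing the bookkeeping that keeps $i$ and $\alpha$ fixed. The one point of emphasis you invert is that the paper's entire written argument concerns the atomic case $\mrm{co}(A)=0$, where the crux is that $\neg\T(t)$ can be principal only in $(\mathrm{Ax}.3)$ (every truth rule of $I$ has a $\T$-positive principal formula), so the literal cut reduces to Substitution plus Weakening applied to the $\T(t)$-premise, with a separate observation for the $(\T\mhyph\mrm{Intro})$ side case; your phrase ``handled directly via the axioms'' is exactly this mechanism, but it is the substantive step of the lemma and is where the Substitution Lemma for $\T(t\simeq s)$ is actually invoked, rather than in the quantifier case you single out.
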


\begin{proof}
The proof is almost the same as for \cite[Lemma~3.6]{rathjen_leigh_2010}, but we must be careful not to increase the index $i$ and the $\T$-Intro-rank $\alpha$ through the cut rule.
Therefore, we observe several cases.

The proof is, as usual, by main-induction on $\mrm{co}(A)$ and sub-induction on $\beta_0 \# \beta_1$.
\begin{description}
\item[$\mrm{co}(A) = 0$.] We divide the cases by whether $A$ or $\neg A$ is principal in the last rule of the derivation. 
\begin{description}
\item[Both $A$ and $\neg A$ are principal.]
By symmetry, we can assume that $A$ is of the form $\T(t)$. 
Then, $\neg A$ can be principal only when $I(i;\alpha;\beta_1;\varDelta,\neg A)$ is an instance of $(\mrm{Ax}.3)$.
Thus, $\T(s) \in \varDelta$ for some $s$ with $s = t$.
Therefore, the claim $I(i;\alpha;\omega_{\mrm{co}(A)}(\beta_0 \# \beta_1);\varGamma,\varDelta)$ is directly obtained from $I(i;\alpha;\beta_0;\varGamma, A)$ by Lemma~\ref{lem:subst_I} and Lemma~\ref{lem:weak_I}.

\item[Either $A$ or $\neg A$ is not principal.]
By symmetry, we assume that $A$ is of the form $\T(t)$. 
Since the case where $\neg A$ is principal can be treated in the same way as for the above case, we can suppose not. 

As a special case, we further suppose that $I(i;\alpha;\beta_1;\varDelta,\neg A)$ is obtained by $(\T \mhyph \mrm{Intro})$.
Thus, we obtain the premise $I(i;\alpha_0;\beta_2;B)$ for some $\alpha_0 < \alpha$, $\beta_2 < \beta_1$, and an $\lt$-sentence $B$ such that $\T (s \simeq \ulcorner B \urcorner) \in \varDelta$.
Then, the claim $I(i;\alpha;\omega_{\mrm{co}(A)}(\beta_0 \# \beta_1);\varGamma,\varDelta)$ is directly obtained from the premise by $(\T \mhyph \mrm{Intro})$.

The other cases are immediate by the induction hypothesis, because $\neg A \equiv \neg\T(t)$ is contained in the premises of the last rule.
For example, assume that $I(i;\alpha;\beta_1;\varDelta,\neg A)$ is obtained by a two-premise rule $(R)$ from $I(i;\alpha;\beta_2;\varDelta_0,\neg A)$ and $I(i;\alpha;\beta_3;\varDelta_1,\neg A)$ for some $\beta_2,\beta_3 < \beta_1$ and $\varDelta_0, \varDelta_1 \in \mrm{Seq}$.
Then, applying the induction hypothesis to each of the premises, we have $I(i;\alpha;\omega_{\mrm{co}(A)}(\beta_0 \# \beta_2);\varGamma,\varDelta_0)$ and $I(i;\alpha;\omega_{\mrm{co}(A)}(\beta_0 \# \beta_3);\varGamma,\varDelta_1)$.
Thus, the rule $(R)$ gives the claim $I(i;\alpha;\omega_{\mrm{co}(A)}(\beta_0 \# \beta_1);\varGamma,\varDelta)$.

\end{description}

\item[$\mrm{co}(A) > 0$.] Similarly to the case~IIb in \cite[Lemma~3.6]{rathjen_leigh_2010}, The claim is established by using the main induction hypothesis on $\mrm{co}(A)$ (see also the proof of Lemma~\ref{lem:formal_cut-adm}).
\end{description}
\end{proof}

Next, we want to show the admissibility of $\T$-Elimination rule.

\begin{dfn}
A sequent $\varGamma$ is \emph{atomic}, if $\varGamma$ consists only of atomic sentences of $\lt$.
That is, $\varGamma$ contains only equations $s=t$ or truth predicates $\T(t)$. For an atomic sequent $\varGamma$, we define the \emph{disquotation} $\mrm{Disq}(\varGamma)$ to be the sequent 

\begin{equation*}
\{ s= t \ | \ s=t \in \varGamma \} \cup \{ A \ | \ \T (t) \in \varGamma \text{ for some closed term $t$ such that $t = \ulcorner A \urcorner$} \}
\end{equation*}

\end{dfn}

From the definition, if $\varGamma$ contains only equations $s=t$, then $\mrm{Disq}(\varGamma)$ is identical to $\varGamma$.

\begin{lemma}[Diquotation lemma]\label{lem:disq_I}
\begin{enumerate}
\item For an atomic sequent $\varGamma$ and $\alpha > 0$, assume that $I(i;\alpha;\beta;\varGamma)$. Then, $I(i;\alpha_0;\omega_n(\beta);\mrm{Disq}(\varGamma))$ holds for some $\alpha_0 < \alpha$ and some $n \in \mathbb{N}$.

\item In particular, when $\varGamma$ contains only equations $s=t$, we obtain $I(i;0;\varepsilon_{\beta};\varGamma)$ from the assumption $I(i;\alpha;\varepsilon_{\beta};\varGamma)$.
\end{enumerate}
\end{lemma}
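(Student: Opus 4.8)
The plan is to establish part~(1) by main induction on the derivation height $\beta$, branching on the last rule used to obtain $I(i;\alpha;\beta;\varGamma)$. Since $\varGamma$ is atomic and $\T$-positive (it contains only equations $s=t$ and truth atoms $\T(t)$), the logical rules $(\lor),(\land),(\exists),(\forall)$, as well as the axioms $(\mrm{Ax}.2)$ and $(\mrm{Ax}.3)$, cannot be the last rule, because their principal formulae ($s\neq t$, a $\neg\T$-literal, or a compound sentence) never occur in $\varGamma$. This leaves exactly five cases: $(\mrm{Ax}.1)$, $(\T\mhyph\mrm{Intro})$, $(\mrm{Comp})$, $(\mrm{Cons})$ and $(\mrm{Norm})$. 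Throughout I would track that the final $\T$-Intro-rank is some $\alpha_0<\alpha$ and the final height is $\omega_n(\beta)$ for a suitable finite $n$.

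The four non-$(\mrm{Cons})$ cases are routine. If the last rule is $(\mrm{Ax}.1)$, then $\varGamma$ contains a true equation, which is retained in $\mrm{Disq}(\varGamma)$, so $(\mrm{Ax}.1)$ re-derives $\mrm{Disq}(\varGamma)$ at $\T$-Intro-rank $0<\alpha$. If it is $(\T\mhyph\mrm{Intro})$ producing $\T(t\simeq\ulcorner A\urcorner)\in\varGamma$, then its premise $I(i;\alpha_0;\beta_0;A)$ already has $\alpha_0<\alpha$, and since $A\in\mrm{Disq}(\varGamma)$, Weakening (Lemma~\ref{lem:weak_I}) yields $I(i;\alpha_0;\beta;\mrm{Disq}(\varGamma))$ with $n=0$. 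If it is $(\mrm{Comp})$, then $\mrm{Disq}(\varGamma)$ contains both $A$ and $\neg A$; here I would invoke the standard identity derivation of $\{A,\neg A\}$, which uses only $(\mrm{Ax}.1)$, $(\mrm{Ax}.3)$ and the logical rules and so has $\T$-Intro-rank $0$ and finite height, and then weaken to $\mrm{Disq}(\varGamma)$, choosing a finite $n$ with $\omega_n(\beta)$ above that finite height. If it is $(\mrm{Norm})$, then $\neg\mrm{Sent}(n)$, so $\T(n)$ disquotes to nothing and $\mrm{Disq}(\varGamma,\T(n))=\mrm{Disq}(\varGamma)$; applying the induction hypothesis to the single premise and raising the index to $1$ by Weakening closes the case.

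The crucial case is $(\mrm{Cons})$, with premises $I(i;\alpha;\beta_0;\varGamma,\T(n))$ and $I(i;\alpha;\beta_0;\varGamma,\T(\subdot{\neg}n))$ and conclusion of index $1$. When $n=\ulcorner A_n\urcorner$ codes a sentence, both premises are atomic of height $\beta_0<\beta$, so the induction hypothesis gives $I(i;\alpha_1;\omega_{n_1}(\beta_0);\mrm{Disq}(\varGamma),A_n)$ and $I(i;\alpha_2;\omega_{n_2}(\beta_0);\mrm{Disq}(\varGamma),\neg A_n)$ with $\alpha_1,\alpha_2<\alpha$. I would equalise the two $\T$-Intro-ranks to $\alpha':=\max\{\alpha_1,\alpha_2\}<\alpha$ and the heights to a common $\omega_{n'}(\beta_0)$ by Weakening, and then apply Cut-admissibility (Lemma~\ref{lem:cut-adm-I}) on the cut formula $A_n$. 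The essential point is that cut raises neither the index nor the $\T$-Intro-rank, so the output is $I(i;\alpha';\omega_{\mrm{co}(A_n)}(\omega_{n'}(\beta_0)\#\omega_{n'}(\beta_0));\mrm{Disq}(\varGamma))$, and raising the index to $1$ gives the claim. For the height I would note that $\omega_{\mrm{co}(A_n)}(\omega_{n'}(\beta_0)\#\omega_{n'}(\beta_0))$ collapses to $\omega_c(\beta_0)$ for a finite $c$ (using $\omega_j(\omega_k(x))=\omega_{j+k}(x)$ and that the natural sum stays below $\omega_{n'+1}(\beta_0)$), whence $\omega_c(\beta_0)<\omega_c(\beta)$ by strict monotonicity of $\omega_c$, so $m=c$ works. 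If $n$ does not code a sentence, both $\T(n)$ and $\T(\subdot{\neg}n)$ disquote to nothing, the induction hypothesis on either premise already yields $\mrm{Disq}(\varGamma)$, and no cut is needed.

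For part~(2), when $\varGamma$ contains only equations we have $\mrm{Disq}(\varGamma)=\varGamma$, and since $\varepsilon_\beta$ is an epsilon number it satisfies $\omega_n(\varepsilon_\beta)=\varepsilon_\beta$ for every $n$; hence part~(1) specialises to: from $I(i;\alpha;\varepsilon_\beta;\varGamma)$ with $\alpha>0$ one obtains $I(i;\alpha_0;\varepsilon_\beta;\varGamma)$ for some $\alpha_0<\alpha$. I would then drive the $\T$-Intro-rank down to $0$ by transfinite induction on $\alpha$, the base case being trivial and the step applying this specialisation and then the induction hypothesis at $\alpha_0<\alpha$. I expect $(\mrm{Cons})$ to be the main obstacle, precisely because it is the only place where cut-admissibility is genuinely invoked and where one must simultaneously keep the index from exceeding $1$, reconcile the two distinct $\T$-Intro-ranks returned by the induction hypothesis so that the cut rule applies, and confirm that the cut-generated height remains bounded by $\omega_m(\beta)$ for a finite $m$; the $(\mrm{Comp})$ case is a secondary point, resting on the availability of the identity derivation $\{A,\neg A\}$ at $\T$-Intro-rank $0$.
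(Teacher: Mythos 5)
Your proposal is correct and follows essentially the same route as the paper's proof: induction on the derivation height with the same five-case split, Weakening for $(\T\mhyph\mrm{Intro})$ and $(\mrm{Norm})$, the logic-internal identity derivation of $\{A,\neg A\}$ for $(\mrm{Comp})$, Cut-admissibility with rank-equalisation and the $\omega_j(\omega_k(x))=\omega_{j+k}(x)$ bookkeeping for $(\mrm{Cons})$, and transfinite induction on $\alpha$ using $\omega_n(\varepsilon_\beta)=\varepsilon_\beta$ for part~(2). The only cosmetic difference is that the paper records the identity derivation uniformly as $I(0;0;\omega;A,\neg A)$ rather than as a finite height depending on $A$; both are adequate.
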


\begin{proof}

Item 2 is immediate by transfinite induction on $\alpha$ from item 1.
In fact, for a sequent $\varGamma$ that contains only equations, we assume $I(i;\alpha;\varepsilon_{\beta};\varGamma)$.
Then, by item 1, we have 
$I(i;\alpha_0;\omega_n(\varepsilon_{\beta});\mrm{Disq}(\varGamma))$ for some $\alpha_0 < \alpha$ and some $n \in \mathbb{N}$.
Since $\mrm{Disq}(\varGamma) = \varGamma$ and $\omega_n(\varepsilon_{\beta}) = \varepsilon_{\beta}$,
it follows by the induction hypothesis that $I(i;0;\varepsilon_{\beta};\varGamma)$.

The proof of item~1 is by induction on the derivation length of $\varGamma$. 
We divide the cases by the last rule of the derivation.
\begin{description}
\item[$(\mathrm{Ax}.1)$] Assume that $I(i;\alpha;\beta;\varGamma)$ holds by $(\mathrm{Ax}.1)$. Thus, $\varGamma$ contains some true equation $s=t$. Then, since $s=t \in \mrm{Disq}(\varGamma)$, we have $I(i;\alpha_0;\omega_n(\beta);\mrm{Disq}(\varGamma))$ for any $n \in \mathbb{N}$ and any $\alpha_0 < \alpha$ by $(\mathrm{Ax}.1)$. 

\item[$(\T \mhyph \mrm{Intro})$]
Assume that $I(i;\alpha;\beta;\varGamma)$ is obtained from $I(i;\alpha_0;\beta_0;A)$ for some $\alpha_0 < \alpha$, some $\beta_0 < \beta$, and some $\T (t \simeq \ulcorner A \urcorner) \in \varGamma$.
Then, since $A \in \mrm{Disq}(\varGamma)$, we have $I(i;\alpha_0;\omega_n(\beta);\mrm{Disq}(\varGamma))$ by the Weakening Lemma for $I$. 

\item[$(\mrm{Comp})$] Assume that $I(i;\alpha;\beta;\varGamma)$ and $\{ \T (s \simeq \ulcorner A \urcorner), \T (t \simeq \ulcorner \neg A \urcorner) \} \subseteq \varGamma$.
Then, $\{ A, \neg A \} \subseteq \mrm{Disq}(\varGamma)$.
Since $I$ is closed under classical logic, we have $I(0;0;\omega;A,\neg A)$ by an easy induction. Thus, $I(i;\alpha_0;\omega_n(\beta);\mrm{Disq}(\varGamma))$ follows for any $n \geq 2$ by the Weakening Lemma for $I$.

\item[$(\mrm{Cons})$] Assume that $I(i;\alpha;\beta;\varGamma)$ is derived from $I(i_0;\alpha;\beta_0;\varGamma, \T(m))$ and $I(i_0;\alpha;\beta_0;\varGamma, \T(\subdot{\neg}m))$ for some $i_0 \leq i=1$, some $\beta_0 < \beta$, and some $m \in \mathbb{N}$. 
If $m$ does not denote any G\"{o}del-number of an $\lt$-sentence, then $\mrm{Disq}(\varGamma, \T(m)) = \mrm{Disq}(\varGamma)$, and thus the induction hypothesis and the Weakening Lemma yield $I(i;\alpha_0;\omega_n(\beta);\mrm{Disq}(\varGamma))$ for some $\alpha_0 < \alpha$ and some $n \in \mathbb{N}$.

Next, we assume that $m$ denotes $\ulcorner A \urcorner$ for some $\lt$-sentence.
Then, by the induction hypothesis, we have $I(i_0;\alpha_0;\omega_k(\beta_0);\mrm{Disq}(\varGamma), A)$ and $I(i_0;\alpha_1;\omega_l(\beta_0);\mrm{Disq}(\varGamma), \neg A)$ for some $\alpha_0,\alpha_1 < \alpha$ and for some $k,l \in \mathbb{N}$.
Thus, by Cut-admissibility for $I$, it follows that: 
\[
I(i;\max \{ \alpha_0, \alpha_1 \};\omega_{\mrm{co}(A)}(\omega_k(\beta_0) \# \omega_l(\beta_0));\mrm{Disq}(\varGamma)).
\]
Since $\max \{ \alpha_1, \alpha_2 \} < \alpha$ and $\omega_{\mathrm{co}(A)}(\omega_k(\beta_0) \# \omega_l(\beta_1)) < \omega_{\mathrm{co}(A)+ \max \{ k,l\}}(\beta)$, we can let $\alpha_0 := \max \{ \alpha_1, \alpha_2 \}$ and let $n := \mathrm{co}(A)+ \max \{ k,l\}$.
By Weakening for $I$, the conclusion is obtained for these $\alpha_0$ and $n$.

\item[$(\mrm{Norm})$] Assume $I(i;\alpha;\beta;\varGamma)$ is derived from $I(i_0;\alpha;\beta_0;\varGamma, \T(m))$ for some $\beta_0 < \beta$, where $\neg \mathrm{Sent}(m)$ is true.
Then, since $\mrm{Disq}(\varGamma, \T(t)) = \mrm{Disq}(\varGamma)$, the conclusion is obvious by the induction hypothesis. 

\end{description}
As $\varGamma$ is atomic, the other cases are impossible.

\end{proof}

\begin{remark}
Item~1 of the disquotation lemma informally says that if $\varGamma$ is derived with length $\beta_0$, then $\mrm{Disq}(\varGamma)$ is derivable with length $\omega_n(\beta)$ for some $n$.
For the proof of the Soundness Theorem to succeed, it is crucial that the derivation length of $\mrm{Disq}(\varGamma)$ can be kept below the least epsilon number larger than $\beta$ (see, e.g., the case of $\mrm{Del}$ in the proof of Theorem~\ref{thm:soundness} below).
This is made possible by the fact that no atomic sequent is derivable via an infinitary rule like the $\omega$-rule.
\end{remark}

A singleton $\{ \T\ulcorner A \urcorner \}$ is itself an atomic sequent, so the admissibility of $(\T \mhyph \mrm{Elim})$ follows from this lemma.
Thus, we can already establish that all the rules except $(\T_{\neq})$ and $(\mrm{Cons})$ are sound. On the other hand, as we remarked above, $(\T_{\neq})$ and $(\mrm{Cons})$ require the consistency of $I$.
Since the only obstacle to the consistency proof is the existence of $(\mrm{Cons})$ and $(\mrm{Norm})$ in $I$,
we want to eliminate applications of them from a given derivation.



\begin{lemma}[Elimination of $(\mrm{Cons})$ and $(\mrm{Norm})$]
Assume $I(i;0;\beta;\varGamma)$ for an atomic sequent $\varGamma$. Then, $I(0;0;\omega_n(\beta);\mrm{Disq}(\varGamma))$ holds for some $n \in \mathbb{N}$.
Therefore, if $\varGamma$ contains only equations $s=t$, we obtain $I(0;0;\omega_n(\beta);\varGamma)$.
\end{lemma}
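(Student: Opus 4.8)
The plan is to prove this by induction on the derivation length $\beta$ of the assumption $I(i;0;\beta;\varGamma)$, following the same case analysis as in the Disquotation Lemma (Lemma~\ref{lem:disq_I}), but now exploiting the crucial hypothesis that the $\T$-Intro-rank is \emph{zero}. The key observation is that when $\alpha = 0$, the clause $(\T\mhyph\mrm{Intro})$ cannot have been applied, since it requires a premise with strictly smaller first index $\alpha_0 < \alpha = 0$, which is impossible. This already forces the derivation of an atomic sequent to be quite restricted, and it is what makes a \emph{full} elimination of $(\mrm{Cons})$ and $(\mrm{Norm})$ possible here, rather than the mere reduction of their rank as in the disquotation argument.

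First I would set up the induction and dispose of the easy cases. Since $\varGamma$ is atomic, the only rules that could produce it are $(\mathrm{Ax}.1)$, $(\mathrm{Ax}.2)$, $(\mathrm{Ax}.3)$, $(\mrm{Comp})$, $(\mrm{Cons})$, and $(\mrm{Norm})$; the logical rules $(\lor),(\land),(\exists),(\forall)$ and the rule $(\T\mhyph\mrm{Intro})$ cannot apply, the latter precisely because $\alpha = 0$. For the axioms $(\mathrm{Ax}.1)$, $(\mathrm{Ax}.2)$ and $(\mrm{Comp})$, the index is already $0$ and the conclusion follows directly as in Lemma~\ref{lem:disq_I}, choosing $n$ large enough and applying Weakening for $I$ (Lemma~\ref{lem:weak_I}). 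The axiom $(\mathrm{Ax}.3)$ puts a literal $\neg\T(t\simeq s)$ into $\varGamma$, contradicting atomicity as defined (which admits only $s=t$ and $\T(t)$), so this case does not arise.

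The substantive work is in the cases $(\mrm{Cons})$ and $(\mrm{Norm})$, and here the argument mirrors the corresponding cases of Lemma~\ref{lem:disq_I} but now \emph{removes} the index rather than merely preserving it. For $(\mrm{Cons})$: the conclusion $I(1;0;\beta;\varGamma)$ is derived from $I(i_0;0;\beta_0;\varGamma,\T(m))$ and $I(i_0;0;\beta_0;\varGamma,\T(\subdot{\neg}m))$. If $m$ is not the code of an $\lt$-sentence, then $\mrm{Disq}(\varGamma,\T(m))=\mrm{Disq}(\varGamma)$ and the induction hypothesis together with Weakening gives the claim with index $0$. If $m$ codes $\ulcorner A\urcorner$, apply the induction hypothesis to both premises to get $I(0;0;\omega_k(\beta_0);\mrm{Disq}(\varGamma),A)$ and $I(0;0;\omega_l(\beta_0);\mrm{Disq}(\varGamma),\neg A)$ with index $0$; then Cut-admissibility for $I$ (Lemma~\ref{lem:cut-adm-I}) yields $I(0;0;\omega_{\mrm{co}(A)}(\omega_k(\beta_0)\#\omega_l(\beta_0));\mrm{Disq}(\varGamma))$, and absorbing the height into $\omega_n(\beta)$ for a suitable $n$ finishes the case. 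The case $(\mrm{Norm})$ is analogous but simpler: since $\neg\mathrm{Sent}(m)$ holds, $\T(m)$ disquotes to nothing and $\mrm{Disq}(\varGamma,\T(m))=\mrm{Disq}(\varGamma)$, so the induction hypothesis plus Weakening delivers the conclusion directly with index $0$. The final sentence of the statement follows because when $\varGamma$ contains only equations, $\mrm{Disq}(\varGamma)=\varGamma$.

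The main obstacle I anticipate is the bookkeeping of the ordinal height, specifically verifying that the $\T$-Intro-rank genuinely stays at $0$ throughout. The delicate point is that Cut-admissibility as stated preserves the first index $i$ and the rank $\alpha$, so feeding in two derivations of index $0$ must yield a cut-derivation of index $0$ (and rank $0$); this is exactly why the lemma is stated with $\alpha=0$ and why one cannot run the same elimination when $\alpha>0$. I would take care to confirm that Lemma~\ref{lem:cut-adm-I} does not secretly invoke $(\T\mhyph\mrm{Intro})$ or raise the index when $\alpha=0$, so that the output of the cut remains in the index-$0$, rank-$0$ fragment; granting this, all height increases are of the harmless form $\beta\mapsto\omega_n(\beta)$, which is closed enough to be absorbed by a single finite $n$ at the end by Weakening.
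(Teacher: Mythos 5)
Your proof is correct and takes essentially the same route as the paper, which simply observes that the argument is that of the Disquotation Lemma run with the extra fact that $\alpha=0$ excludes any application of $(\T\mhyph\mrm{Intro})$, so that the index $i$ can be driven to $0$ throughout (the preservation of $i$ and $\alpha$ by Cut-admissibility, which you rightly flag as the delicate point, is guaranteed by the statement of Lemma~\ref{lem:cut-adm-I}). Your spelled-out case analysis, including the use of the induction hypothesis on both premises of $(\mrm{Cons})$ followed by a cut, is exactly what the paper's terse proof intends.
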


\begin{proof}
Similarly to the proof of the Disquotation Lemma (Lemma~\ref{lem:disq_I}), the claim is established by induction on $\alpha$.
For example, the case $(\mrm{Cons})$ is proved in exactly the same way as for the same case in the proof of Disquotation Lemma.
Note that since the $\T \mhyph \mrm{Intro}$-rank $\alpha$ is $0$, the rule $\T \mhyph \mrm{Intro}$ is not used in the derivation of $\varGamma$.
\end{proof}

\begin{corollary}[Consistency of $I$]\label{cor:consistency_of_I}
No false equation $s=t$ is derivable in $I$, that is, if $I(i;\alpha;\beta;s=t)$, then $s=t$ is true. 
\end{corollary}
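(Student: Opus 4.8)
The plan is to take an arbitrary derivation of the singleton sequent $\{s=t\}$ and push it down to the simplest possible shape—index $0$ and $\T \mhyph \mrm{Intro}$-rank $0$—after which truth of $s=t$ can be read off directly from the last inference. So suppose $I(i;\alpha;\beta;s=t)$. Since $\{s=t\}$ consists of a single equation it is atomic, and by the Weakening Lemma (Lemma~\ref{lem:weak_I}) I would first raise the derivation height to the epsilon number $\varepsilon_{\beta} \geq \beta$, obtaining $I(i;\alpha;\varepsilon_{\beta};s=t)$. This is exactly the hypothesis needed for item~2 of the Disquotation Lemma (Lemma~\ref{lem:disq_I}), which applies to a sequent consisting only of equations and collapses the $\T \mhyph \mrm{Intro}$-rank to $0$ while keeping the height at $\varepsilon_{\beta}$; this yields $I(i;0;\varepsilon_{\beta};s=t)$.

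Next I would invoke the Elimination Lemma for $(\mrm{Cons})$ and $(\mrm{Norm})$. Its hypothesis $I(i;0;\cdot;\varGamma)$ is now met, and since $\varGamma = \{s=t\}$ contains only the equation, its ``therefore'' clause gives $I(0;0;\omega_n(\varepsilon_{\beta});s=t)$ for some $n \in \nat$. As $\omega_n(\varepsilon_{\beta}) = \varepsilon_{\beta}$, this is simply $I(0;0;\varepsilon_{\beta};s=t)$. At this point both recorded switches are off: the derivation can use neither $(\T \mhyph \mrm{Intro})$, whose premise would require a $\T \mhyph \mrm{Intro}$-rank $\alpha_0 < 0$, nor $(\mrm{Cons})$ or $(\mrm{Norm})$, since each of these forces the index to $1$, contradicting index $0$.

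Finally I would inspect which clause of Definition~\ref{dfn:I} can conclude the singleton $\{s=t\}$ under these two constraints. The logical rules $(\lor),(\land),(\exists),(\forall)$ are excluded because their principal formula is compound and would have to lie in the sequent; $(\mrm{Ax}.2)$ produces a disequation $s' \neq t'$, while $(\mrm{Ax}.3)$ and $(\mrm{Comp})$ produce $\T$-literals, none of which is the equation $s=t$. The only remaining possibility is $(\mrm{Ax}.1)$, whose side condition is precisely that $s=t$ be true. Hence $s=t$ is true, which is the claim.

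The substantive work has already been carried out in the Disquotation and Elimination Lemmas, so the only delicate point here is the ordinal bookkeeping: the hard part is ensuring that chaining the two rank- and index-reductions does not let the derivation height escape the notation system. This is exactly what forces the preliminary passage to height $\varepsilon_{\beta}$, since then the iterated $\omega_n(\cdot)$ blow-ups incurred by disquotation and by $(\mrm{Cons})$-elimination are absorbed by the identity $\omega_n(\varepsilon_{\beta}) = \varepsilon_{\beta}$. Everything after that is a one-line case analysis of the last rule, so I do not expect any genuine obstacle beyond this absorption bookkeeping.
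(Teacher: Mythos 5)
Your proposal is correct and follows essentially the same route as the paper's own (very terse) proof: weaken the height up to $\varepsilon_{\beta}$, use item~2 of the Disquotation Lemma to collapse the $\T\mhyph\mrm{Intro}$-rank to $0$, use the Elimination Lemma to collapse the index to $0$ (absorbing the $\omega_n$ blow-up via $\omega_n(\varepsilon_{\beta})=\varepsilon_{\beta}$), and then observe that only $(\mathrm{Ax}.1)$ can produce the singleton $\{s=t\}$ under those constraints. Your explicit last-rule case analysis is in fact a slightly cleaner rendering of the paper's remark that such a derivation ``is possible only when $s=t$ is true.''
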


\begin{proof}
Assume $I(i;\alpha;\beta;s=t)$.
Then, by Weakening and the above lemmata, we have $I(0;0;\varepsilon_{\beta};s=t)$.
This means that $s=t$ is derived without $(\T \mhyph \mrm{Intro})$, $(\mrm{Cons})$, nor $(\mrm{Norm})$, which is possible only when $s=t$ is true.
\end{proof}

Finally, we give the proof of the Soundness theorem as promised.

\begin{lemma}[Persistency of $\models$]\label{lem:persist-I}
Let $\varGamma$ be a $\T$-positive sequent. 
If $\models^{\alpha_0} \varGamma$, then $\models^{\alpha} \varGamma$ for any $\alpha > \alpha_0$. 
\end{lemma}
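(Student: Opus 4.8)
The plan is to reduce the lemma to its sentence-level version and then argue by induction on the build-up of $\T$-positive sentences according to the clauses defining $\models^{\alpha}$. Concretely, I would first prove: for every $\T$-positive $\lt$-sentence $A$, if $\models^{\alpha_0} A$ then $\models^{\alpha} A$ for all $\alpha > \alpha_0$. The statement for sequents then follows immediately, since $\models^{\alpha_0} \varGamma$ means $\models^{\alpha_0} A$ for some $A \in \varGamma$, and the sentence-level persistency upgrades this to $\models^{\alpha} A$, whence $\models^{\alpha} \varGamma$.

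For the sentence-level claim, the two equational base cases $A \equiv (s=t)$ and $A \equiv (s \neq t)$ are immediate: the corresponding satisfaction conditions (``$s=t$ is true'', resp. ``$s=t$ is false'') do not mention the level $\alpha$ at all, so they persist trivially. The propositional and quantifier cases $A \land B$, $A \lor B$, $\forall x A(x)$, and $\exists x A(x)$ reduce directly to the induction hypothesis applied to the immediate subformulae, because in each of these defining clauses the superscript $\alpha$ is simply passed down unchanged to the constituent $\T$-positive formulae.

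The only case requiring a genuine argument is the atomic truth case $A \equiv \T(t)$. Here $\models^{\alpha_0} \T(t)$ unfolds to the assertion that the value of $t$ is $\ulcorner B \urcorner$ for some $\lt$-sentence $B$ and that $I(1;\alpha_0;\varepsilon_{\alpha_0};B)$ holds; to obtain $\models^{\alpha} \T(t)$ I need $I(1;\alpha;\varepsilon_{\alpha};B)$. I would derive this from the Weakening Lemma for $I$ (Lemma~\ref{lem:weak_I}): the index is unchanged ($1 \leq 1$), the sequent $\{B\}$ is unchanged, and both ordinal parameters only grow, since $\alpha_0 < \alpha$ gives $\alpha_0 \leq \alpha$ and, by monotonicity of the Veblen function $\alpha \mapsto \varepsilon_{\alpha} = \vphi_1\alpha$, also $\varepsilon_{\alpha_0} \leq \varepsilon_{\alpha}$. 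A single application of weakening then yields the required membership.

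I expect this last step to be the only real subtlety: one must notice that the definition of $\models^{\alpha} \T(t)$ couples the level $\alpha$ simultaneously to the $\T$-Intro-rank slot and to the derivation-height slot $\varepsilon_{\alpha}$ of $I$, and that raising $\alpha$ increases both slots monotonically, so that weakening applies uniformly. No elimination of $(\mrm{Cons})$ or $(\mrm{Norm})$ and no appeal to cut-admissibility is needed here, since persistency is purely a monotonicity statement about the parameters of $I$.
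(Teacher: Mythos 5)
Your proof is correct and is essentially the argument the paper intends (the paper merely defers to Cantini's Lemma 9.8): induction on the build-up of the $\T$-positive sentence, with all cases trivial except $\T(t)$, which is handled by Weakening for $I$ applied to both ordinal slots, using $\alpha_0 \leq \alpha$ and the monotonicity $\varepsilon_{\alpha_0} \leq \varepsilon_{\alpha}$. No gaps.
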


\begin{proof}
The proof is similar to \cite[Lemma~9.8]{cantini_1989}.
\end{proof}

\begin{thm_soundness}
Let $\varGamma$ be any $\T$-positive sequent. If $\mrm{VFM}^{\infty} \sststile{0}{\alpha} \varGamma$, then $\vDash^{\alpha} \varGamma$.
\end{thm_soundness}

\begin{proof}
The proof is by induction on $\alpha$.
The cases are divided by the last rule of the derivation.

\begin{description}
\item[$(\T_{\neq})$] Assume that $\varGamma$ is derived as follows:
\begin{center} \
\infer[(\T_{\neq})]{\sststile{0}{\alpha} \varGamma}{
\sststile{0}{\alpha_0} \varGamma, \mathrm{CT}(n) \land \mathrm{CT}(m) \land \T(n \subdot{=} m) \land n^{\circ} \neq m^{\circ}}.
\end{center}
By the induction hypothesis, we have:
\[
\models^{\alpha_0} \varGamma, \mathrm{CT}(n) \land \mathrm{CT}(m) \land \T(n \subdot{=} m) \land n^{\circ} \neq m^{\circ}.
\]
For a contradiction, we assume:
\[
\models^{\alpha_0} \mathrm{CT}(n) \land \mathrm{CT}(m) \land \T(n \subdot{=} m) \land n^{\circ} \neq m^{\circ}.
\]
By the definition of $\models^{\alpha_0}$,
if an $\lnat$-sentence is satisfied at $\alpha_0$, then it should be true. Thus, we have
$\mathbb{N} \models \mathrm{CT}(n) \land \mathrm{CT}(m) \land n^{\circ} \neq m^{\circ}$.
On the other hand, $\models^{\alpha_0} \T(n \subdot{=} m)$ means $I(1;\alpha_0;\varepsilon_{\alpha_0};n^{\circ}=m^{\circ})$,
so $n^{\circ} = m^{\circ}$ has to be true by
Corollary~\ref{cor:consistency_of_I}. Therefore, we obtain a contradiction, and thus we get $\models^{\alpha_0} \varGamma$, which, by the persistency of $\models$ (Lemma~\ref{lem:persist-I}), implies $\models^{\alpha} \varGamma$.

\item[$(\T_{\to})$] 
We consider the following derivation:
\[
\infer[(\T_{\to})]{\sststile{k}{\alpha} \varGamma, \T(t \simeq m)}{\sststile{k}{\alpha_0} \varGamma, \T (n) \ \ \ \sststile{k}{\alpha_0} \varGamma, \T (n\subdot{\to}m)}.
\]
By persistency of $\models$, we can suppose that the induction hypotheses are $\models^{\alpha_0} \T (n)$ and $\models^{\alpha_0} \T (n\subdot{\to}m)$.
Thus, we have $I(1;\alpha_0; \varepsilon_{\alpha_0}; A)$ and $I(1;\alpha_0; \varepsilon_{\alpha_0}; A \to B)$ for some sentences $A, B$ such that $n = \ulcorner A \urcorner$ and $t^{\mathbb{N}} = m = \ulcorner B \urcorner$.
By Cut-admissibility for $I$ (Lemma~\ref{lem:cut-adm-I}), we obtain $I(1;\alpha_0; \omega_{\mathrm{co}(A)}(\varepsilon_{\alpha_0} \# \varepsilon_{\alpha_0}); B)$,
which, by Weakening for $I$, yields $\models^{\alpha} \T(t)$.

\item[$(\mrm{Cons})$] We consider the following derivation:
\[
\infer[(\mrm{Cons})]{\sststile{k}{\alpha} \varGamma}{\sststile{k}{\alpha_0} \varGamma, \T (n) \ \ \ \sststile{k}{\alpha_0} \varGamma, \T (\subdot{\neg}n)}.
\]
For a contradiction, we suppose both $\models^{\alpha_0} \T(n)$ and $\models^{\alpha_0} \T(\subdot{\neg}n)$.
Then, $n$ must denote some sentence $A$, and thus we have $I(1;\alpha_0; \varepsilon_{\alpha_0}; A)$ and $I(1;\alpha_0; \varepsilon_{\alpha_0}; \neg A)$.
By Cut-admissibility for $I$ (Lemma~\ref{lem:cut-adm-I}), we obtain $I(1;\alpha_0; \omega_{\mathrm{co}(A)}(\varepsilon_{\alpha_0} \# \varepsilon_{\alpha_0}); \emptyset)$, which contradicts Corollary~\ref{cor:consistency_of_I}.
Therefore, we get $\models^{\alpha_0} \varGamma$, which implies $\models^{\alpha} \varGamma$ by Lemma~\ref{lem:persist-I}.

\item[$(\T_{\forall})$] Use the fact that $I$ is closed under the $\omega$-rule. 

\item[$(\mrm{Rep})$] Use the fact that $I$ is closed under the rule $(\T \mhyph \mrm{Intro})$. 

\item[$(\mrm{Del})$] For $\varGamma = \varGamma', \T(t \simeq n)$, consider the following derivation:
\[
\infer[(\mrm{Del})]{\sststile{k}{\alpha} \varGamma', \T (t \simeq n)}{\sststile{k}{\alpha_0} \varGamma', \T \ulcorner \T (\dot{n}) \urcorner}.
\]
By induction hypothesis, we have $\models^{\alpha_0} \varGamma', \T \ulcorner \T (\dot{n}) \urcorner$. In particular, we can assume that
$\models^{\alpha_0} \T \ulcorner \T (\dot{n}) \urcorner$, and thus, we have $I(1;\alpha_0; \varepsilon_{\alpha_0};\T(n))$.
Then, by the consistency of $I$ (Corollary~\ref{cor:consistency_of_I}) and the Disquotation Lemma (Lemma~\ref{lem:disq_I}), $t$ and $n$ have to denote $\ulcorner A \urcorner$ for some $\lt$-sentence $A$.
Therefore, again by the Disquotation Lemma,
$I(1;\alpha_1; \omega_n(\varepsilon_{\alpha_0});A)$ holds for some $\alpha_1 < \alpha_0$ and some $n \in \mathbb{N}$.
Since $\omega_n(\varepsilon_{\alpha_0}) = \varepsilon_{\alpha_0} < \varepsilon_{\alpha}$,
Weakening for $I$ yields $I(1;\alpha;\varepsilon_{\alpha};A)$, which means $\models^{\alpha} \T(t)$, as required.
\end{description}
The other rules are similarly treated.
\end{proof}

\subsection{Formalising the consistency proof}
Based on the consistency proof of $\rm{VFM}$ in the previous subsection, we now want to show that a lower bound of $\mrm{VFM} $ given in Section~\ref{subsec:VFM-lower-bound} is indeed exact, that is, $\vert \mrm{VFM} \vert \leq \vert \mrm{RT}_{< \varepsilon_0} \vert$. 
A natural idea, then, would be to formalise our consistency proof of $\mrm{VFM}$ in $\mrm{RT}_{< \varepsilon_0}$ or another theory equivalent to $\mrm{RT}_{< \varepsilon_0}$, as Cantini \cite{cantini_1989} did for $\mrm{KF}$.
This task, however, is not so simple, because the derivation system $I$, defined in our proof, uses derivations whose  derivation height exceeds $\varepsilon_0$. 
Since $I$ has the $\omega$-rule, its derivation is generally not recursive and therefore $\mrm{RT}_{< \varepsilon_0}$ seems insufficient to formalise it.\footnote{In fact, $\mrm{RT}_{< \varepsilon_{\varepsilon_0}}$ is enough.}
To overcome this difficulty, we use an expressively rich system $\mrm{ID}^*_1$,
the theory of positive induction, which is known to still be conservative over $\mrm{RT}_{< \varepsilon_0}$ (cf. \cite{afshari_rathjen_2010,arai_2018,probst_2006}).
Thus, firstly we define $\mrm{ID}^*_1$.

For a new unary predicate $Q(x)$, let $\mathcal{L}_{Q} := \lnat \cup \{ Q(x) \}$.
Then, the language $\lpos$ of $\mathsf{ID}^*_1$ is defined to be an expansion of $\lnat$ with new unary predicate symbols $\mrm{I}_A$, where $A : \equiv A(u, Q)$ is any $\mathcal{L}_Q$-formula such that only $u$ may occur free and every occurrence of $Q$ is positive in $A$.
The predicate $\mrm{I}_A$ is intended to denote a \emph{quasi} least fixed-point of the positive operator $A$.
We let $\mrm{PA}_{\lpos}$ be $\mrm{PA}$ formulated over $\lpos$.
In particular, $\mrm{PA}_{\lpos}$ has the induction schema for all $\lpos$-formulae.

\begin{dfn}
The $\lpos$-theory $\mrm{ID}^*_1$ consists of $\mrm{PA}_{\lpos}$ with the following:
\begin{description}
\item[($\mrm{I}_A.1$)] $\forall u [A(u, \mrm{I}_A) \leftrightarrow \mrm{I}_A(u)]$.
\item[($\mrm{I}_A.2$)] $\forall u [A(u, F) \to F(u)] \to \forall u [\mrm{I}_A(u) \to F(u)]$,

where $F(x)$ is any $\lpos$-formula in which the new predicates $\mrm{I}_B$ $(B \in \mathcal{L}_Q)$ occur only positively.
\end{description}
\end{dfn}

As a note, the fixed-point theory $\widehat{\mrm{ID}}_1$ does not have the axioms ($\mrm{I}_A.2$),
whereas the theory $\mrm{ID}_1$ of positive inductive definition admits all $\lpos$-formulae $F$ in the axioms ($I_A.2$). The intermediate theory $\mathsf{ID}^*_1$ is known to be significantly weaker than $\mrm{ID}_1$:

\begin{fact}[\cite{afshari_rathjen_2010, arai_2018, probst_2006}]
$\mrm{ID}^{*}_{1}$ is arithmetically conservative over the fixed-point theory $\widehat{\mrm{ID}}_1$.
Therefore, $\vert \mrm{ID}^{*}_{1} \vert \equiv \varphi \varepsilon_0 0$.
\end{fact}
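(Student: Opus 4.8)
The statement has two parts: the arithmetic conservativity of $\mrm{ID}^*_1$ over $\widehat{\mrm{ID}}_1$, and the resulting identification of the proof-theoretic ordinal. The plan is to obtain the ordinal equivalence as a corollary of conservativity together with the known analysis $\vert \widehat{\mrm{ID}}_1 \vert \equiv \varphi_{\varepsilon_0}(0)$. One inclusion is immediate: since every axiom of $\widehat{\mrm{ID}}_1$, namely $(\mrm{I}_A.1)$, is an axiom of $\mrm{ID}^*_1$, we have $\widehat{\mrm{ID}}_1 \subseteq \mrm{ID}^*_1$ and hence $\vert \mrm{ID}^*_1 \vert \geqq \vert \widehat{\mrm{ID}}_1 \vert$. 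The whole content therefore lies in the opposite direction: showing that every arithmetical theorem of $\mrm{ID}^*_1$ is already a theorem of $\widehat{\mrm{ID}}_1$, equivalently that the positive-induction axioms $(\mrm{I}_A.2)$ add no arithmetical strength.

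The route I would take is a genuine ordinal analysis of $\mrm{ID}^*_1$ by infinitary proof theory, following the pattern used in this paper for $\mrm{VFM}$. First I would embed $\mrm{ID}^*_1$ into a suitable Tait-style calculus with $\omega$-rules, clauses generating each fixed point $\mrm{I}_A$ stage by stage, and an inference reflecting the schema $(\mrm{I}_A.2)$. Then I would run cut elimination together with a collapsing argument. The decisive point — and the reason the result holds at all — is that the restriction to $\mrm{I}$-\emph{positive} formulas $F$ in $(\mrm{I}_A.2)$ lets the induction inference be discharged by a monotonicity/persistence argument internal to the infinitary system, so that its contribution to the derivation length is merely additive rather than requiring the impredicative collapsing (with a symbol $\Omega$ for the first uncountable) that full $\mrm{ID}_1$ demands. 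This is exactly what separates $\mrm{ID}^*_1$, whose bound stays at the metapredicative ordinal $\varphi_{\varepsilon_0}(0)$, from $\mrm{ID}_1$, whose ordinal is the far larger Bachmann--Howard ordinal $\psi_{\Omega}(\varepsilon_{\Omega+1})$. Reading off the cut-reduced derivations of arithmetical sequents yields a bound $< \varphi_{\varepsilon_0}(0)$, and since $\widehat{\mrm{ID}}_1$ both proves $\mrm{TI}_{\lnat}(<\alpha)$ for every $\alpha < \varphi_{\varepsilon_0}(0)$ and can formalise the soundness of the calculus (it possesses the very fixed points needed to interpret the $\mrm{I}_A$-atoms), the arithmetical theorems of $\mrm{ID}^*_1$ fall within $\widehat{\mrm{ID}}_1$, giving conservativity; the ordinal equivalence $\vert \mrm{ID}^*_1 \vert \equiv \varphi_{\varepsilon_0}(0)$ then follows.

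An alternative, more algebraic route — essentially the one in the cited note of Afshari and Rathjen — would be to interpret $\mrm{ID}^*_1$ directly inside $\widehat{\mrm{ID}}_1$, replacing each least-fixed-point predicate by a fixed point of a modified operator carrying a stage/rank annotation, and then deriving $(\mrm{I}_A.2)$ for the annotated predicate from ordinary arithmetical induction along the ranks. In either route the main obstacle is precisely the treatment of $(\mrm{I}_A.2)$: in the proof-theoretic route one must verify that the positivity of $F$ genuinely confines the induction inference to a monotone persistence step, so that the ordinal never climbs past $\varphi_{\varepsilon_0}(0)$; in the interpretation route one must exhibit, \emph{without} the minimality principle that $\widehat{\mrm{ID}}_1$ lacks, a fixed point that provably satisfies positive induction — the delicate part being that the stage annotation need not be well-founded in nonstandard models, so the argument must proceed schema-wise for each positive $F$ rather than by a single internal quantification over closed sets. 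Once $(\mrm{I}_A.2)$ is accounted for, the remaining axioms are unproblematic and both parts of the Fact follow.
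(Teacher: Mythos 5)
This statement is labelled a \emph{Fact} and the paper gives no proof of it at all: it is imported wholesale from the cited literature (Afshari--Rathjen, Arai, Probst), so there is no in-paper argument to compare yours against. Judged on its own terms, your outline is a faithful reconstruction of the two strategies actually used in those sources --- the infinitary-calculus/asymmetric-interpretation route and the direct interpretation of the quasi-least fixed points inside $\widehat{\mrm{ID}}_1$ --- and you correctly identify the treatment of $(\mrm{I}_A.2)$ as the entire content of the result, as well as the genuinely delicate point that the stage annotation need not be well-founded in nonstandard models, forcing a schema-wise argument.

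That said, as a proof the proposal has a gap exactly where the theorem lives. The assertion that positivity of $F$ lets the $(\mrm{I}_A.2)$-inference be ``discharged by a monotonicity/persistence argument'' with a ``merely additive'' contribution to derivation length is the statement to be proved, not a step of a proof; and ``additive'' mischaracterises the cost --- in the standard asymmetric interpretation each positive occurrence of $\mrm{I}_A$ is pushed to a higher stage and the closure ordinal of the stages must be iterated along the length of the derivation, which is precisely why the bound lands at $\varphi_{\varepsilon_0}0$ rather than lower. Relatedly, your soundness step cannot simply interpret the $\mrm{I}_A$-atoms by the fixed points that $\widehat{\mrm{ID}}_1$ provides, since those fixed points provably satisfy only $(\mrm{I}_A.1)$ and need not validate positive induction; the $(\mrm{I}_A.2)$-inferences must already have been eliminated (or replaced by stage-indexed surrogates) before that interpretation is sound. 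Your second, interpretation-theoretic route acknowledges this, but neither route is carried far enough to certify the bound; for the purposes of this paper the honest course is what the authors do, namely cite the result.
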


Our strategy for formalising the consistency proof of $\mrm{VFM}$ is to express the relation $I$ within $\mrm{ID}^{*}_{1}$.
Since the relation $I$ was positively defined in Definition~\ref{dfn:I}, we can construct a fixed point even in $\widehat{\mrm{ID}}_1$.
But as we remarked above, we cannot rely on arguments by transfinite induction beyond $\varepsilon_0$.
Instead, we make use of the axiom ($\mrm{I}_A.2$), which allows induction on the length of the derivation.

It is known that basic set-theoretic notions and operations are formalisable in $\mrm{PA}$, so we shall use the same notation as in the previous section. 
Let $\mathrm{Seq}(x)$ be a unary predicate meaning that $x$ is the code of a sequent; $\{ x_0. x_1, \dots, x_n \}$ denotes the code of a finite set consisting of $x_0, \dots, x_n$; $x \cup y$ denotes the code of the union of sets $x$ and $y$. 
Similarly to the original definition of $I$, we suppress angle brackets, so we mean  $Q(\langle x,y,z \rangle)$ by $Q(x;y;z)$.

\begin{dfn}[Formalised $\mrm{I}$]
The $\mathcal{L}_{Q}$-formula $A^I(u,Q)$ is defined to be 
\[
(u)_0 \leq 1 \land (u)_1 \in \mrm{OT} \land (u)_2 \in \mathrm{Seq} \land (\star),
\]
where the formula $(\star)$ is the disjunction of the following:
\begin{description}
\item[$(\mathrm{Ax}.1)$] $\exists s,t (s^{\circ} = t^{\circ} \land (s\subdot{=} t) \in (u)_2)$

\item[$(\mathrm{Ax}.2)$] $\exists s,t (s^{\circ} \neq t^{\circ} \land (s\subdot{\neq} t) \in (u)_2)$

\item[$(\mathrm{Ax}.3)$] $\exists s,t (s^{\circ} = t^{\circ} \land \{ \subdot{\T}s, \subdot{\neg}(\subdot{\T}t) \} \in (u)_2)$

\item[$(\lor)$] $\exists \varGamma' \exists x,y [(u)_2 = \varGamma' \cup \{ x \subdot{\lor} y \} \land Q((u)_0;(u)_1; \varGamma' \cup  \{ x,y \})]$

\item[$(\land)$] $\exists \varGamma' \exists x,y [(u)_2 = \varGamma' \cup \{ x \subdot{\land} y \} \land Q((u)_0;(u)_1; \varGamma' \cup  \{ x \}) \land Q((u)_0;(u)_1; \varGamma' \cup  \{ y \} )]$

\item[$(\exists)$] $\exists \varGamma' \exists v \exists x [(u)_2 = \varGamma' \cup \{ \subdot{\exists}vx \} \land \exists n (Q((u)_0;(u)_1;\varGamma' \cup \{ x(n/v) \} ))]$

\item[$(\forall)$] $\exists \varGamma' \exists v \exists x [(u)_2 = \varGamma' \cup \{ \subdot{\forall}vx \} \land \forall n (Q((u)_0;(u)_1;\varGamma' \cup \{ x(n/v) \} ))]$

\item[$(\T \mhyph \mrm{Intro})$] $\exists \varGamma' \exists t [t^{\circ} \in \mathrm{Sent} \land \subdot{\T}t \in (u)_2 \land \exists \alpha_0 < (u)_1 (Q((u)_0; \alpha_0; \{ t^{\circ} \} ))]$

\item[$(\mrm{Comp})$] $\exists s,t [s^{\circ} \in \mathrm{Sent} \land \subdot{\neg} (s^{\circ}) = t^{\circ} \land \{ \subdot{\T}s, \subdot{\T}t \} \subseteq (u)_2]$

\item[$(\mrm{Cons})$] $\exists j \leq (u)_0 \exists n [(u)_0=1 \land Q(j; (u)_1; (u)_2 \cup \{ \subdot{\T}n \} ) \land Q(j; (u)_1; (u)_2 \cup \{ \subdot{\T}(\subdot{\neg}n) \} )]$

\item[$(\mrm{Norm})$] $\exists j \leq (u)_0 \exists n [(u)_0 = 1 \land Q(j; (u)_1; (u)_2 \cup \{ \subdot{\T}n \} ) \land \neg \mathrm{Sent}(n^{\circ})]$

\end{description}
Since $A^I(u,Q)$ is a $Q$-positive formula, $\widehat{\mrm{ID}}_1$ has a fixed point of $A^I$.
So, we take such a predicate symbol $\mrm{I}(u)$:
\[
\widehat{\mrm{ID}}_1 \vdash \forall u (\mrm{I}(u) \leftrightarrow A^I(u, \mrm{I})).
\]
\end{dfn}


Let $\varGamma \in {\rm{Seq}}_v$ mean that $\varGamma$ is a set of formulas in which only $v$ may occur free.
For such $\varGamma$, we define $\varGamma[t/v]$ to be the result of replacing $v$ by $t$ simultaneously for all formulae of $\varGamma$. 
Thus, if $x \in \varGamma$, then we have $x(t/v) \in \varGamma[t/v]$.

\begin{lemma}[Formalised Substitution]
Let a formula $F(u)$ be the following:
\[
F(u):\equiv \forall x \forall v \forall s,t \forall \varGamma \in {\rm{Seq}}_v [(u)_2 = \varGamma[t/v] \land s^{\circ} = t^{\circ} \to \mrm{I}((u)_0; (u)_1; \varGamma[s/v] )].
\]
Then, $\mathsf{ID}^{*}_{1} \vdash \forall u (\mrm{I}(u) \to F(u)) $.
In particular, the Substitution Lemma of the familiar form is obtained:
\[
\forall \varGamma \in \mrm{Seq}[\mrm{I}(i;\alpha;\varGamma \cup \{ x(t/v) \}) \land s^{\circ} = t^{\circ} \to \mrm{I}(i;\alpha;\varGamma \cup \{ x(s/v) \})].
\]
\end{lemma}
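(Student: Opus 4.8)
The plan is to prove $\mathsf{ID}^*_1 \vdash \forall u(\mrm{I}(u) \ra F(u))$ by invoking the inductive-definition axiom $(\mrm{I}_A.2)$ for the fixed point $\mrm{I}$ of the positive operator $A^I$. The first thing to check is that $F(u)$ is a legitimate instance of $(\mrm{I}_A.2)$: it is an $\lpos$-formula, and the only occurrence of a fixed-point predicate in it, namely $\mrm{I}((u)_0;(u)_1;\varGamma[s/v])$, sits in the consequent of an implication and is therefore \emph{positive}, as the restriction on $(\mrm{I}_A.2)$ demands. Granting this, $(\mrm{I}_A.2)$ reduces the goal to establishing the closure (progressiveness) condition $\forall u(A^I(u,F) \ra F(u))$, where $A^I(u,F)$ denotes the result of substituting the formula $F$ for the predicate $Q$ throughout $A^I(u,Q)$.

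So I would assume $A^I(u,F)$ and prove $F(u)$. Fix arbitrary $x,v,s,t$ and $\varGamma \in \mrm{Seq}_v$ with $(u)_2 = \varGamma[t/v]$ and $s^\circ = t^\circ$; the goal is $\mrm{I}((u)_0;(u)_1;\varGamma[s/v])$. By the fixed-point equation $(\mrm{I}_A.1)$ this is equivalent to $A^I(\langle (u)_0,(u)_1,\varGamma[s/v]\rangle, \mrm{I})$, so it suffices to exhibit the appropriate disjunct of $(\star)$ for the substituted sequent $\varGamma[s/v]$. I would then case-split according to which disjunct of $(\star)$ witnesses the hypothesis $A^I(u,F)$.

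For the premise-free disjuncts $(\mrm{Ax}.1)$--$(\mrm{Ax}.3)$ and $(\mrm{Comp})$, the conditions are stated entirely in terms of the values $s^\circ,t^\circ$ of closed terms and of side conditions such as $\mrm{Sent}$; since these depend only on values and $s^\circ = t^\circ$, replacing $t$ by $s$ inside $\varGamma$ leaves the very same disjunct witnessed for $\varGamma[s/v]$. This is precisely where the value-based ($\simeq$) formulation of the truth axioms and rules pays off. For the disjuncts carrying $Q$-premises --- $(\lor)$, $(\land)$, $(\exists)$, $(\forall)$, $(\T\mhyph\mrm{Intro})$, $(\mrm{Cons})$ and $(\mrm{Norm})$ --- the premises now appear as $F$-statements. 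I would decompose $(u)_2=\varGamma[t/v]$ into principal formula and context, observe that each premise sequent is again of the form $\varDelta[t/v]$ for a suitable $\varDelta \in \mrm{Seq}_v$, apply the induction hypothesis $F$ to each premise to obtain its $s$-substituted instance $\mrm{I}(\dots;\varDelta[s/v])$, and then reapply the very same clause of $(\star)$ (whose value-based side conditions, e.g. $t^\circ \in \mrm{Sent}$ or $\neg\mrm{Sent}(n^\circ)$, are unaffected) to assemble $A^I(\langle (u)_0,(u)_1,\varGamma[s/v]\rangle, \mrm{I})$.

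The main obstacle will be the syntactic bookkeeping in these rule cases: one must verify that decomposing the substituted sequent $\varGamma[t/v]$ commutes correctly with substitution, so that the principal formula and context of each premise genuinely arise as $\varDelta[t/v]$, and --- in the $(\forall)$ and $(\exists)$ cases --- that the bound variable can be chosen disjoint from $v$, ensuring no capture occurs in passing from $[t/v]$ to $[s/v]$. Once the closure condition is in place, $(\mrm{I}_A.2)$ delivers $\forall u(\mrm{I}(u)\ra F(u))$, and the displayed familiar substitution lemma follows by instantiating $u := \langle i,\alpha,\varGamma \cup \{x(t/v)\}\rangle$ (under the hypothesis $\mrm{I}(u)$) and choosing the witness $\varDelta := \varGamma \cup \{x\} \in \mrm{Seq}_v$, renaming $v$ away from $\varGamma$ if necessary, so that $\varDelta[t/v]$ is the given sequent while $\varDelta[s/v] = \varGamma \cup \{x(s/v)\}$.
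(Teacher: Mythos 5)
Your proposal is correct and follows essentially the same route as the paper's proof: observe that $F$ is $\mrm{I}$-positive, invoke $(\mrm{I}_{A^I}.2)$ to reduce the claim to the closure condition $\forall u (A^I(u,F) \to F(u))$, and then case-split on the disjunct of $A^I(u,F)$, using value-invariance (e.g.\ that $(s_2(t/v))^{\circ} = (s_2(s/v))^{\circ}$ when $s^{\circ}=t^{\circ}$) for the axiom cases and the $F$-premises plus $(\mrm{I}_{A^I}.1)$ for the rule cases. Your additional attention to variable capture and to the commutation of sequent decomposition with substitution is bookkeeping the paper leaves implicit, but it does not change the argument.
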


\begin{proof}
The informal meaning of the formula $F$ should be clear: if the third element $(u)_2$ of the sequence $u$ is a sequent $\varGamma[t/v]$, then every sentence $x(t/v)$ in $(u)_2$ can be simultaneously replaced by $x(s/v)$ for any closed term $s$ such that $s^{\circ} = t^{\circ}$. Moreover, the values of $(u)_0$ and $(u)_1$ remain unchanged.
Note also that $F$ so defined is $\mrm{I}$-positive, so we can use the axiom $(\mrm{I}_{A^{I}}.2)$ to prove the claim.
Thus, it suffices to show $\forall u [A^I(u,F) \to F(u)]$.

Taking any $u$ and assuming $A^I(u,F)$, we prove $F(u)$. 
So, we further take any $x,v,s,t,\varGamma$ such that $(u)_2 = \varGamma[t/v] $ and $s^{\circ} = t^{\circ}$.
Then, we have to show $\mrm{I}((u)_0; (u)_1; \varGamma[s/v] )$.
The proof is divided by cases according to which disjunct of $A^I(u,F)$ holds.

\begin{description}
\item[$(\mrm{Ax}.1)$] In this case, $(u)_2 = \varGamma[t/v]$ contains some equation $s_1\subdot{=} t_1$ with $s_1^{\circ} = t_1^{\circ}$. 
Now we write $s_1 \equiv (s_2(t/v))$ and $t_1 \equiv (t_2(t/v))$.
Since $s^{\circ} = t^{\circ}$ is true, so is $(s_2(t/v))^{\circ} = (s_2(s/v))^{\circ}$, which can be verified by formal induction on $s_2$. 
Similarly, $(t_2(t/v))^{\circ} = (t_2(s/v))^{\circ}$ holds.
Therefore, $s_2(s/v) \subdot{=} t_2(s/v)$ is true and is contained in $\varGamma[s/v]$.
Thus, we have $A^I(\langle  (u)_0, (u)_1, \varGamma[s/v] \rangle)$ by $(\mrm{Ax}.1)$, which implies $\mrm{I}((u)_0; (u)_1; \varGamma[s/v] )$ by $(\mrm{I}_{A^I}.1)$.


\item[$(\lor)$] 
In this case, $(u)_2 = \varGamma[t/v]$ contains $(x \subdot{\lor} y)(t/v) $ for some $x,y$. 
Then, we may assume that the premise of $(u)_2$ is $\varGamma[t/v] \cup  \{ (x(t/v),y(t/v) \}$.
Intuitively, we consider the following derivation:
\begin{center} \
\infer[(\lor)]{\varGamma[t/v]}{\varGamma[t/v],x(t/v),y(t/v)}    
\end{center}
Then, by $A^I(u,F)$, we get $\mrm{I}((u)_0; (u)_1; \varGamma[s/v],x(s/v),y(s/v))$. Thus, $\mrm{I}((u)_0; (u)_1; \varGamma[s/v])$ follows by $(\mrm{I}_{A^I}.1)$.

\end{description}
The other cases are similar.
\end{proof}

A formalised version of Weakening is proved as well.
\begin{lemma}[Formalised Weakening]
Let a formula $F(u)$ be the following:
\[
F(u) :\equiv \forall i \in \{ (u)_0, 1 \} \forall \alpha \geq (u)_1 \forall \varGamma \supseteq (u)_2 (\mrm{I}(i;\alpha; \varGamma )).
\]
Then, $\mrm{ID}^{*}_{1} \vdash \forall u (\mrm{I}(u) \to F(u)) $.
\end{lemma}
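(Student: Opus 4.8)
The plan is to follow the template of the Formalised Substitution Lemma verbatim. First I would check that $F(u)$ is $\mrm{I}$-positive: its sole occurrence of $\mrm{I}$ is the positive atom $\mrm{I}(i;\alpha;\varGamma)$, so $F$ is admissible as the predicate in the fixed-point induction axiom $(\mrm{I}_{A^I}.2)$, which here plays the role of induction on the length of the derivation. Consequently it suffices to prove the single implication $\forall u\,[A^I(u,F)\to F(u)]$, from which $(\mrm{I}_{A^I}.2)$ delivers $\forall u\,(\mrm{I}(u)\to F(u))$; instantiating $u:=\langle i_0,\alpha_0,\varGamma_0\rangle$ then recovers the usual statement that $\mrm{I}(i_0;\alpha_0;\varGamma_0)$ is preserved under raising the index to $1$, raising the rank, and enlarging the sequent.

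To prove $\forall u\,[A^I(u,F)\to F(u)]$ I would fix $u$, assume $A^I(u,F)$, and then fix arbitrary $i\in\{(u)_0,1\}$, $\alpha\geq(u)_1$ and $\varGamma\supseteq(u)_2$, with the goal $\mrm{I}(i;\alpha;\varGamma)$. The argument is a case split on the disjunct of $(\star)$ witnessing $A^I(u,F)$. For the axiom clauses $(\mrm{Ax}.1)$--$(\mrm{Ax}.3)$ and for $(\mrm{Comp})$ the critical literals already lie in $(u)_2$ and hence in $\varGamma$, so the very same clause witnesses $A^I(\langle i,\alpha,\varGamma\rangle,\mrm{I})$ and one application of $(\mrm{I}_{A^I}.1)$ gives $\mrm{I}(i;\alpha;\varGamma)$. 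For the propositional and quantifier clauses $(\lor),(\land),(\exists),(\forall)$ I would feed each premise sequent to the induction hypothesis $F$, instantiated at the same $i$ and $\alpha$ and at the enlarged sequent $\varGamma\cup\{\dots\}$; since $i\in\{(u)_0,1\}$, $\alpha\geq(u)_1$, and $\varGamma\supseteq(u)_2\supseteq\varGamma'$, these are legitimate instances and yield the weakened premises $\mrm{I}(i;\alpha;\varGamma\cup\{\dots\})$, after which the matching clause of $A^I$ and $(\mrm{I}_{A^I}.1)$ rebuild $\mrm{I}(i;\alpha;\varGamma)$.

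The only cases that touch the remaining two coordinates need slightly more care. In $(\T \mhyph \mrm{Intro})$ the rank strictly drops, so from the witness $\alpha_0<(u)_1$ I would use $\alpha_0<(u)_1\leq\alpha$ together with the hypothesis $F(\langle(u)_0,\alpha_0,\{t^\circ\}\rangle)$ --- instantiated at the index $i$ (legitimate because $i\in\{(u)_0,1\}$) and at $\alpha_0$ --- to obtain $\mrm{I}(i;\alpha_0;\{t^\circ\})$, whence $(\T \mhyph \mrm{Intro})$ reconstructs $\mrm{I}(i;\alpha;\varGamma)$ using $\alpha_0<\alpha$. In $(\mrm{Cons})$ and $(\mrm{Norm})$ the premise of $A^I(u,F)$ forces $(u)_0=1$, so the outer quantifier collapses to $i=1$; here I would push $F$ through each premise with its premise-index $j\leq 1$ and the enlarged sequent $\varGamma\cup\{\subdot{\T}n\}$ (respectively $\varGamma\cup\{\subdot{\T}(\subdot{\neg}n)\}$), and then re-apply $(\mrm{Cons})$ (respectively $(\mrm{Norm})$) with the same $j$ to conclude $\mrm{I}(1;\alpha;\varGamma)$. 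I expect the main, though purely bookkeeping, obstacle to be keeping the index coordinate inside its admissible range at every rule: one must verify, for each premise, that the index selected lies in the set $\{(u)_0,1\}$ (or $\{j,1\}$) over which the corresponding instance of $F$ quantifies. Beyond this, and the single strict inequality $\alpha_0<\alpha$ in $(\T \mhyph \mrm{Intro})$, the proof introduces no idea not already present in the Substitution Lemma.
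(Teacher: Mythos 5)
Your proposal is correct and matches the intended argument: the paper omits the proof of Formalised Weakening, indicating only that it is proved like Formalised Substitution, i.e., by verifying that the $\mrm{I}$-positive formula $F$ is closed under $A^I$ and invoking $(\mrm{I}_{A^I}.2)$, exactly as you do. Your case analysis, including the careful handling of the index set $\{(u)_0,1\}$ in the $(\mrm{Cons})$/$(\mrm{Norm})$ clauses and the strict drop $\alpha_0<(u)_1\leq\alpha$ in $(\T\mhyph\mrm{Intro})$, is sound.
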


Before the full cut-admissibility, we first establish the atomic case.

\begin{lemma}[Formalised $\T$-Cut]\label{lem:formal_T-Cut}
Taking any $i \leq 1$, $\alpha \in \mrm{OT}$, $\varGamma \in \mathrm{Seq}$, and $t \in \mathrm{CT}$,
let a formula $F(u)$ (with parameters $i,\alpha,\varGamma,t$) be the following:
\[
F(u):\equiv 
\mrm{I}(u) \land \mrm{I}(\max \{i,(u)_0 \}; \max \{\alpha,(u)_1 \}; (u)_2 [ \varGamma / \{ \subdot{\neg}(\subdot{\T}t) \}] ),
\]
where $(u)_2 [ \varGamma / \{ \subdot{\neg}(\subdot{\T}t) \}]$ is the result of eliminating $\subdot{\neg}(\subdot{\T}t)$ contained in $(u)_2$ and instead adding every member of $\varGamma$; if $\subdot{\neg}(\subdot{\T}t)$ is not contained in $(u)_2$, then it just returns $(u)_2$.

Then, $\mrm{ID}^{*}_{1} \vdash \mrm{I}(i; \alpha; \varGamma \cup \{ \subdot{\T}t \} ) \to \forall u (\mrm{I}(u) \to F(u))$.
In particular, Cut-admissibility for $\T$ is obtained:
\[
\mrm{ID}^{*}_{1} \vdash \mrm{I}(i; \alpha; \varGamma \cup \{ \subdot{\T}t \} ) \land \mrm{I}(i; \alpha; \varDelta \cup \{ \subdot{\neg}(\subdot{\T}t) \} ) \to \mrm{I}(i; \alpha; \varGamma, \varDelta ).
\]
\end{lemma}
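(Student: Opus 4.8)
The plan is to derive the displayed implication $\mrm{I}(i; \alpha; \varGamma \cup \{ \subdot{\T}t \}) \to \forall u (\mrm{I}(u) \to F(u))$ by the fixed-point induction $(\mrm{I}_{A^I}.2)$, and then to read off the familiar form by a single instantiation. Throughout I fix the parameters $i,\alpha,\varGamma,t$ and assume $\mrm{I}(i; \alpha; \varGamma \cup \{ \subdot{\T}t \})$; this is the derivation of the cut partner that will be spliced in. Since $F(u)$ contains $\mrm{I}$ only positively (both conjuncts are positive occurrences of $\mrm{I}$), it is an admissible instance in the induction axiom $(\mrm{I}_{A^I}.2)$, so it suffices to prove $\forall u (A^I(u,F) \to F(u))$. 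This is precisely induction on the build-up of the derivation coded by $u$. The first conjunct $\mrm{I}(u)$ of $F$ is carried along on purpose: it keeps the \emph{untransformed} subderivations available in $\mrm{I}$, which I will need in the $(\T\mhyph\mrm{Intro})$ case.

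So I fix $u$, assume $A^I(u,F)$, and establish the two conjuncts of $F(u)$. The first conjunct $\mrm{I}(u)$ is immediate: as $F \to \mrm{I}$ and $A^I(u,Q)$ is $Q$-positive, $A^I(u,F)$ implies $A^I(u,\mrm{I})$, whence $(\mrm{I}_{A^I}.1)$ gives $\mrm{I}(u)$. For the second conjunct I split on which disjunct of $A^I(u,F)$ holds, i.e.\ on the last rule of the derivation of $(u)_2$. The genuinely interesting case is the principal one, $(\mrm{Ax}.3)$: say $\{ \subdot{\T}s, \subdot{\neg}(\subdot{\T}s') \} \subseteq (u)_2$ with $s'^{\circ} = s^{\circ}$. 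If the removed literal $\subdot{\neg}(\subdot{\T}t)$ is, as a code, distinct from $\subdot{\neg}(\subdot{\T}s')$, then $(u)_2[\varGamma/\{\subdot{\neg}(\subdot{\T}t)\}]$ is still an $(\mrm{Ax}.3)$-instance and the claim follows by $(\mrm{I}_{A^I}.1)$. Otherwise $s'=t$, so $(u)_2$ contains $\subdot{\T}s$ with $s^{\circ} = t^{\circ}$, and the target sequent contains $\subdot{\T}s$ together with $\varGamma$; applying Formalised Substitution to the standing hypothesis $\mrm{I}(i; \alpha; \varGamma \cup \{ \subdot{\T}t \})$ (legitimate since $s^{\circ} = t^{\circ}$) yields $\mrm{I}(i; \alpha; \varGamma \cup \{ \subdot{\T}s \})$, and Formalised Weakening then raises the index to $\max \{ i,(u)_0 \}$, the rank to $\max \{ \alpha,(u)_1 \}$, and the sequent to the full target.

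For the remaining disjuncts the literal $\subdot{\neg}(\subdot{\T}t)$ occurs only in the context, so I apply the second conjunct of $F$ to each premise and re-apply the same rule. For the logical rules and for $(\mrm{Comp})$, $(\mrm{Cons})$, $(\mrm{Norm})$ none of these rules decreases the $\T$-introduction rank $(u)_1$, so the transformed premises already sit at rank $\max \{ \alpha,(u)_1 \}$ and re-applying the rule reproduces the conclusion at exactly that rank, while $(\mrm{Cons})$ and $(\mrm{Norm})$ leave the index at $1 = \max \{ i,(u)_0 \}$. The one case that must be handled differently is $(\T\mhyph\mrm{Intro})$: its premise is the singleton $\{ A \}$ of the inner sentence with the context \emph{discarded}, so the removed literal does not occur in the premise. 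I therefore feed in the first conjunct of $F$, i.e.\ the untransformed premise $\mrm{I}((u)_0;\alpha_0;\{ A \})$ with $\alpha_0 < (u)_1$, and re-apply $(\T\mhyph\mrm{Intro})$ to conclude at rank $\max \{ \alpha,(u)_1 \}$; this is legitimate because $\alpha_0 < (u)_1 \leq \max \{ \alpha,(u)_1 \}$.

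The hard part, and the reason the bound is the sharp $\max \{ \alpha,(u)_1 \}$ rather than a natural sum, is exactly this rank bookkeeping. What makes $\max$ correct is the structural fact that $\subdot{\neg}(\subdot{\T}t)$ is a negative literal which is principal only in the axiom $(\mrm{Ax}.3)$, combined with the context-discarding of $(\T\mhyph\mrm{Intro})$: a $\subdot{\neg}(\subdot{\T}t)$ in the context can never survive upward past a $(\T\mhyph\mrm{Intro})$, so every point at which the hypothesis derivation is spliced lies below no application of $(\T\mhyph\mrm{Intro})$. Hence the $\alpha$ truth-introductions contributed by the spliced derivation never stack on top of the $(u)_1$ truth-introductions of the $(u)_2$-derivation, and the two ranks combine by $\max$; this mirrors the $\mrm{co}(A)=0$ case of Lemma~\ref{lem:cut-adm-I}, now recast as a fixed-point induction. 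Having proved $\forall u (\mrm{I}(u) \to F(u))$, I obtain the familiar form by instantiating $u := \langle i, \alpha, \varDelta \cup \{ \subdot{\neg}(\subdot{\T}t) \} \rangle$ and supplying the second hypothesis $\mrm{I}(i; \alpha; \varDelta \cup \{ \subdot{\neg}(\subdot{\T}t) \})$: the second conjunct of $F$ then reads $\mrm{I}(\max \{ i,i \}; \max \{ \alpha,\alpha \}; \varGamma \cup \varDelta)$, i.e.\ $\mrm{I}(i; \alpha; \varGamma, \varDelta)$, as required.
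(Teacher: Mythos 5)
Your proposal is correct and follows essentially the same route as the paper's proof: fixed-point induction via $(\mrm{I}_{A^I}.2)$ on the $\mrm{I}$-positive formula $F$, with the principal case $(\mrm{Ax}.3)$ handled by Formalised Substitution applied to the standing hypothesis followed by Formalised Weakening, and the remaining rules re-applied to the transformed premises. Your explicit treatment of the $(\T\mhyph\mrm{Intro})$ case via the first conjunct of $F$, and the accompanying explanation of why the ranks combine by $\max$, only spell out details the paper subsumes under ``the other cases are similarly proved.''
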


\begin{proof}
Suppose $\mrm{I}(i; \alpha; \varGamma \cup \{ \subdot{\T}t \} )$.
Since $F(u)$ is $\mrm{I}$-positive (with the parameters $i,\alpha,\varGamma,t$), it suffices by $(\mrm{I}_{A^I}.2)$ to show that $F$ is closed under $A^I$: $\forall u [A^I(u,F) \to F(u)]$.
So, taking any $u$ such that $A^I(u,F)$, we want to derive $F(u)$. The cases are divided by which disjunct of $A^I(u,F)$ holds.
\begin{description}
\item[$(\mathrm{Ax}.1)$] Assume that there exist $s_0,s_1 \in \mathrm{CT}$ such that $s_0^{\circ} = s_1^{\circ}$ and $(s_0 \subdot{=} s_1) \in (u)_2$. 
Then, $\mrm{I}(u)$ is obvious.
Moreover, it is clear that $(s_0 \subdot{=} s_1) \in (u)_2 [ \varGamma / \{ \subdot{\neg}(\subdot{\T}t) \}]$, so it follows that $\mrm{I}(\max \{i,(u)_0 \}; \max \{\alpha,(u)_1 \}; (u)_2 [ \varGamma / \{ \subdot{\neg}(\subdot{\T}t) \}] )$.
Therefore, we have $F(u)$.

\item[$(\mathrm{Ax}.3)$] Assume that there exist $s_0,s_1 \in \mathrm{CT}$ such that $s_0^{\circ} = s_1^{\circ}$ and $\{ \subdot{\T}s_0, \subdot{\neg} (\subdot{\T}s_1) \} \in (u)_2$.
If $s_1 \neq t$, then $\{ \subdot{\T}s_0, \subdot{\neg} (\subdot{\T}s_1) \} \in (u)_2 [ \varGamma / \{ \subdot{\neg}(\subdot{\T}t) \}]$, and thus we have $F(u)$. Therefore, we can suppose $s_1 = t$. Then, since $s_0^{\circ} = t^{\circ}$, the supposition $\mrm{I}(i; \alpha; \varGamma \cup \{ \subdot{\T}t \} )$ implies by Formalised Substitution that $\mrm{I}(i; \alpha; \varGamma \cup \{ \subdot{\T}s_0 \} )$.
As $\varGamma \cup \{ \subdot{\T}s_0 \} \subseteq  (u)_2 [ \varGamma / \{ \subdot{\neg}(\subdot{\T}t) \}]$, we can conclude $F(u)$ by Formalised Weakening.

\item[$(\lor)$] Assume $(u)_2 = \varDelta \cup \{ x \subdot{\lor} y \}$ for some $\varDelta \in \mathrm{Seq}$ and $x,y \in \mathrm{Sent}$. Then, the induction hypothesis is as follows:
\[
\mrm{I}((u)_0;(u)_1;\varDelta \cup \{ x,y \}) \ \land \
\mrm{I}(\max \{i,(u)_0 \};\max \{\alpha,(u)_1 \};(\varDelta \cup \{ x, y \})[\varGamma / \{ \subdot{\neg}(\subdot{\T}t) \}]).
\] 

Firstly, $\mrm{I}(u)$, the first conjunct of $F(u)$, is clear by $\mrm{I}((u)_0;(u)_1;\varDelta \cup \{ x,y \})$, because $\mrm{I}$ is closed under the rule $(\lor)$. 
Also, if $\subdot{\neg}(\subdot{\T}t) \notin (u)_2$, then $(u)_2 [ \varGamma / \{ \subdot{\neg}(\subdot{\T}t) \}] = (u)_2$ and thus the second conjunct of $F(u)$ follows from $\mrm{I}(u)$ by Formalised Weakening.
Thus, we  can assume $\subdot{\neg}(\subdot{\T}t) \in (u)_2$.
Since $(\subdot{\neg}(\subdot{\T}t)) \neq (x \subdot{\lor} y)$, this implies $\subdot{\neg}(\subdot{\T}t) \in \varDelta$.
Then, we have: 
\[
(\varDelta \cup \{ x, y \})[\varGamma / \{ \subdot{\neg}(\subdot{\T}t) \}] \subseteq (u)_2[\varGamma / \{ \subdot{\neg}(\subdot{\T}t) \}] \cup \{ x,y \}.
\]
Therefore, by Formalised Weakening, the second conjunct of the induction hypothesis implies:
\[
\mrm{I}(\max \{i,(u)_0 \};\max \{\alpha,(u)_1 \};(u)_2[\varGamma / \{ \subdot{\neg}(\subdot{\T}t) \}] \cup \{ x,y \}).
\]
Since $x \subdot{\lor} y \in (u)_2[\varGamma / \{ \subdot{\neg}(\subdot{\T}t) \}]$, we have the second conjunct of $F(u)$ by the rule $(\lor)$:  
\[
\mrm{I}(\max \{i,(u)_0 \};\max \{\alpha,(u)_1 \};(u)_2[\varGamma / \{ \subdot{\neg}(\subdot{\T}t) \}]).
\]
To conclude, we obtain $F(u)$.
\end{description}
The other cases are similarly proved.
\end{proof}

The following, which is required for the cut-admissibility for (in)equality, is shown similarly to Formalised $\T$-Cut.
\begin{lemma}[Formalised Falsity-elimination]\label{lem:formal_falsity}
Take any $i \leq 1$, $\alpha \in \mrm{OT}$, $\varGamma \in \mathrm{Seq}$, and $s,t \in \mathrm{CT}$. Then,
$\mrm{ID}^{*}_{1} \vdash  \mrm{I}(i;\alpha;\varGamma \cup \{ s \subdot{=} t \}) \land s^{\circ} \neq t^{\circ} \to \mrm{I}(i;\alpha;\varGamma)$.
\end{lemma}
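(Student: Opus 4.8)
The plan is to mirror the proof of Formalised $\T$-Cut (Lemma~\ref{lem:formal_T-Cut}): define an $\mrm{I}$-positive formula and close it under $A^I$ by means of the induction axiom $(\mrm{I}_{A^I}.2)$. Fix $s,t \in \mrm{CT}$ with $s^{\circ} \neq t^{\circ}$, and write $(u)_2 \setminus \{ s \subdot{=} t \}$ for the sequent obtained by deleting $s \subdot{=} t$ from $(u)_2$ (leaving $(u)_2$ unchanged when $s \subdot{=} t \notin (u)_2$); this is the special case of the replacement operation of Lemma~\ref{lem:formal_T-Cut} in which nothing is added. I would take
\[
F(u) :\equiv \mrm{I}(u) \land \mrm{I}((u)_0; (u)_1; (u)_2 \setminus \{ s \subdot{=} t \}),
\]
which is clearly $\mrm{I}$-positive, so $(\mrm{I}_{A^I}.2)$ reduces the task to proving $\forall u [A^I(u,F) \to F(u)]$, whence $\forall u (\mrm{I}(u) \to F(u))$. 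The lemma then follows by instantiating $u := \langle i, \alpha, \varGamma \cup \{ s \subdot{=} t \} \rangle$ under the hypothesis $\mrm{I}(i;\alpha;\varGamma \cup \{ s \subdot{=} t \})$: the second conjunct of $F(u)$ gives $\mrm{I}(i;\alpha;(\varGamma \cup \{ s \subdot{=} t \}) \setminus \{ s \subdot{=} t \})$, and since $(\varGamma \cup \{ s \subdot{=} t \}) \setminus \{ s \subdot{=} t \} \subseteq \varGamma$, Formalised Weakening yields $\mrm{I}(i;\alpha;\varGamma)$.

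For the closure step I would fix $u$ with $A^I(u,F)$ and argue by cases on which disjunct of $A^I$ holds. In each case the first conjunct $\mrm{I}(u)$ is immediate: since $F$ implies $\mrm{I}$ and $A^I$ is $Q$-positive, $A^I(u,F)$ yields $A^I(u,\mrm{I})$, hence $\mrm{I}(u)$ by $(\mrm{I}_{A^I}.1)$. The only case that genuinely differs from Formalised $\T$-Cut is $(\mrm{Ax}.1)$: here $(u)_2$ contains a \emph{true} equation $s_0 \subdot{=} s_1$ (so $s_0^{\circ} = s_1^{\circ}$), and since $s^{\circ} \neq t^{\circ}$ this witness cannot be the code $s \subdot{=} t$; thus it survives in $(u)_2 \setminus \{ s \subdot{=} t \}$ and re-proves the second conjunct by $(\mrm{Ax}.1)$. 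In $(\mrm{Ax}.2)$ and $(\mrm{Ax}.3)$ the introduced literal is an inequality or a truth-literal, hence distinct from the equation $s \subdot{=} t$ and likewise preserved. For the logical and truth rules $(\lor), (\land), (\exists), (\forall), (\T \mhyph \mrm{Intro}), (\mrm{Comp}), (\mrm{Cons}), (\mrm{Norm})$ the principal formula (if any) is never an equation, so deleting $s \subdot{=} t$ commutes with the rule: I would apply the induction hypothesis (the second conjunct of $F$) to the premises and re-apply the same rule, exactly as in the proof of Lemma~\ref{lem:formal_T-Cut}. Note that, unlike $\T$-Cut, no appeal to cut-admissibility or to Formalised Substitution is needed, and the indices $(u)_0, (u)_1$ are preserved throughout, because the false equation is atomic and can never be principal, so it is simply erased rather than reduced against a matching derivation.

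The only subtlety, inherited verbatim from the $(\lor)$-case of Lemma~\ref{lem:formal_T-Cut}, arises when the deleted equation also occurs as an immediate subformula of a premise (e.g. as a disjunct $x \equiv s \subdot{=} t$ in the $(\lor)$-case). Then the induction hypothesis only supplies the premise with that occurrence removed, so I would first weaken along the inclusion $(\varDelta \cup \{ x, y \}) \setminus \{ s \subdot{=} t \} \subseteq ((u)_2 \setminus \{ s \subdot{=} t \}) \cup \{ x, y \}$ to restore $x, y$, and then apply $(\lor)$, which reintroduces $x \subdot{\lor} y$ (already a member of $(u)_2 \setminus \{ s \subdot{=} t \}$) and collapses the added disjuncts. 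The analogous weaken-then-reapply manoeuvre handles $(\land), (\exists), (\forall)$ and the two-premise truth rules. I expect this bookkeeping to be the main---though entirely routine---obstacle; the substantive mathematical content reduces to the single observation in the $(\mrm{Ax}.1)$ case that a false equation can never witness a true-equation axiom.
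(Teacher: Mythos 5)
Your proposal is correct and is exactly what the paper intends: the paper gives no explicit proof, stating only that the lemma ``is shown similarly to Formalised $\T$-Cut'', and your argument is precisely that adaptation --- an $\mrm{I}$-positive formula $F(u)$ asserting derivability of $(u)_2$ with the false equation deleted, closure under $A^I$ via $(\mrm{I}_{A^I}.2)$, with the only substantive point being that a false equation cannot witness $(\mathrm{Ax}.1)$. Your observation that, unlike $\T$-Cut, no appeal to cut-admissibility or substitution is needed and the indices $(u)_0,(u)_1$ are preserved is also accurate.
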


We now state full cut-admissibility:
\begin{lemma}[Formalised Cut-admissibility]\label{lem:formal_cut-adm}
Take any $i \leq 1$, $\alpha \in \mrm{OT}$, $\varGamma, \varDelta \in \mathrm{Seq}$, and $x \in \mathrm{Sent}$. Then, $\mrm{ID}^{*}_{1}$ derives the following:
\[
\mrm{I}(i;\alpha;\varGamma \cup \{ x \} ) \land \mrm{I}(i;\alpha;\varDelta \cup \{ \subdot{\neg}x \} ) \to
\mrm{I}(i;\alpha;\varGamma \cup \varDelta ).
\]
\end{lemma}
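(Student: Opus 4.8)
The plan is to argue by \emph{ordinary} complete induction on the natural number $c := \mrm{co}(x)$, which is legitimate since $\mrm{PA}_{\lpos}$ carries full induction. For each fixed $c$ the genuine work---what was the sub-induction on $\beta_0 \# \beta_1$ in the unformalised Lemma~\ref{lem:cut-adm-I}---is carried out not by transfinite induction (which would run past $\varepsilon_0$) but by a single application of the positive-induction axiom $(\mrm{I}_{A^I}.2)$, exactly in the style of Formalised $\T$-Cut (Lemma~\ref{lem:formal_T-Cut}). The delicate constraint inherited from Lemma~\ref{lem:cut-adm-I} is that the cut must raise neither the index $i$ nor the $\T$-Intro-rank $\alpha$; this is secured because none of the reductions below invoke $(\mrm{Cons})$, $(\mrm{Norm})$ or an extra $(\T\mhyph\mrm{Intro})$, and because Weakening, Substitution and the inversion lemma below all preserve $i$ and $\alpha$.

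For the base case $c=0$ the cut formula $x$ is a literal. If $x$ is $\subdot{\T}t$ or $\subdot{\neg}(\subdot{\T}t)$, the claim is precisely Formalised $\T$-Cut (Lemma~\ref{lem:formal_T-Cut}). If $x$ is an equation $s\subdot{=}t$ (so $\subdot{\neg}x$ is the inequality), I split on whether $s^{\circ}=t^{\circ}$: when $s^{\circ}\neq t^{\circ}$, Formalised Falsity-elimination (Lemma~\ref{lem:formal_falsity}) deletes $s\subdot{=}t$ from the first premise and Formalised Weakening restores the context; when $s^{\circ}=t^{\circ}$, the analogous statement for false inequalities deletes $s\subdot{\neq}t$ from the second premise. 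This settles $c=0$.

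For the inductive step $c>0$ I fix the left premise $\mrm{I}(i;\alpha;\varGamma \cup \{x\})$ as a hypothesis (with $\varGamma$ and $x$ as parameters) and perform positive induction on the derivation of the right premise through the $\mrm{I}$-positive formula $F(u) :\equiv \mrm{I}(u) \land \mrm{I}(\max\{i,(u)_0\};\max\{\alpha,(u)_1\};(u)_2[\varGamma/\{\subdot{\neg}x\}])$, so that $(\mrm{I}_{A^I}.2)$ reduces the goal to $\forall u\,[A^I(u,F)\to F(u)]$; keeping the left premise outside $F$ is what guarantees $\mrm{I}$-positivity. When $\subdot{\neg}x$ is \emph{not} principal in the last rule of $u$, the cut is pushed to the premises by the inner induction hypothesis and the rule re-applied, with Formalised Weakening managing contexts. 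When $\subdot{\neg}x$ \emph{is} principal, both premises are principal and the reduction passes to the immediate subformulae of $x$: for instance, if $x = x_0 \subdot{\lor} x_1$ the last rule of $u$ is $(\land)$ on $\subdot{\neg}x$, so I invert the left premise to $\mrm{I}(i;\alpha;\varGamma \cup \{x_0,x_1\})$ and cut successively on $x_0$ and $x_1$, each of complexity $<c$, by the main induction hypothesis. This needs a routine \emph{Formalised Inversion Lemma} for $\subdot{\lor}$, $\subdot{\land}$ and $\subdot{\forall}$, itself proved by positive induction and preserving $i$ and $\alpha$; instantiating $F$ at the given right premise then yields the lemma.

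The main obstacle is exactly this principal case, for two reasons. First, positivity forbids placing the derivability of the second premise inside $F$, so the argument is necessarily \emph{asymmetric}: one premise is inducted on while the other must be \emph{inverted} to expose its immediate subformulae---hence the inversion lemma is unavoidable. Second, inversion behaves well for $\subdot{\forall}$, $\subdot{\land}$, $\subdot{\lor}$ but fails for $\subdot{\exists}$; I avoid existential inversion by choosing, according to the outermost connective of $x$, \emph{which} premise to induct on, so that the complementary premise is always universal, conjunctive or disjunctive at the point where it is inverted. By the symmetry of the statement in $(\varGamma,x)$ and $(\varDelta,\subdot{\neg}x)$ this choice is always available, and the witness delivered by the $(\exists)$-rule on the inducted side is precisely the instance at which the $(\forall)$-side is inverted. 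Verifying that $i$ and $\alpha$ remain fixed through every reduction completes the argument.
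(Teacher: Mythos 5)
Your proof is correct and follows essentially the same route as the paper's: formal induction on $\mrm{co}(x)$, with the base case discharged by Formalised $\T$-Cut and Falsity-elimination, and the inductive step handled by fixing one premise as a hypothesis, running positive induction via the $\mrm{I}$-positive formula $F(u)$ on the other, and invoking a formalised inversion lemma. The only (inessential) difference is that in the propositional case you induct on the conjunctive premise and invert the disjunctive one, whereas the paper's Formalised $\land$-Cut does the mirror image, inverting the conjunction and inducting on the side carrying $\subdot{\neg}(x\subdot{\land}y)$.
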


\begin{proof}
The proof is by formal induction on the logical complexity of $x$.
The case where $x$ is of the form $\subdot{\T}t$ for some $t \in \mathrm{CT}$ is by Lemma~\ref{lem:formal_T-Cut}. If $x$ is an equation or a negated equation, then the conclusion follows from Lemma~\ref{lem:formal_falsity}.
As to the inductive steps, by symmetry, it suffices to consider the case where $x$ is either conjunctive or universal. Thus, we show the following subsidiary lemmata.
\end{proof}

\begin{lemma}[Formalised Inversion]\label{lem:formal-inv}
In $\mrm{ID}^{*}_{1}$, take any $i \leq 1$, $\alpha \in \mrm{OT}$, $\varGamma \in \mathrm{Seq}$,  $x,y \in \mathrm{Sent}$, $v \in \mathrm{Var}$, and $z \in \mathrm{Form}$.
Then, the following are derived:
\begin{description}
\item[$(\land \mhyph \mrm{Inv})$] $\mrm{I}(i; \alpha; \varGamma \cup \{ x \subdot{\land} y \} ) \to \mrm{I}(i; \alpha; \varGamma \cup \{ x \} ) \land \mrm{I}(i; \alpha; \varGamma \cup \{ y \} )$


\item[$(\forall \mhyph \mrm{Inv})$] $\mrm{I}(i; \alpha; \varGamma \cup \{ \subdot{\forall}vz \} ) \to \forall n (\mrm{I}(i; \alpha; \varGamma \cup \{ z(n/v) \} ))$
\end{description}
\end{lemma}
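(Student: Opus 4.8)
The plan is to prove both inversion principles by exactly the method already deployed for Formalised $\T$-Cut and Cut-admissibility: formal induction on derivations, realised through the axiom $(\mrm{I}_{A^I}.2)$. I would treat $(\land\mhyph\mrm{Inv})$ first. Fixing the sentences $x,y$, I would define the $\mrm{I}$-positive formula
\[
F(u):\equiv \mrm{I}(u) \land \big[\, (x \subdot{\land} y) \in (u)_2 \to \mrm{I}((u)_0;(u)_1;(u)_2[\{x\}/\{x\subdot{\land}y\}]) \land \mrm{I}((u)_0;(u)_1;(u)_2[\{y\}/\{x\subdot{\land}y\}]) \,\big],
\]
where, as in Lemma~\ref{lem:formal_T-Cut}, $(u)_2[\{x\}/\{x\subdot{\land}y\}]$ denotes the sequent obtained by deleting $x\subdot{\land}y$ and inserting $x$. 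The crucial design choice is that the two inverted sequents carry the \emph{same} index $(u)_0$ and the \emph{same} ordinal $(u)_1$: inversion must cost neither derivation height nor $\T$-Intro rank, exactly as cut was forbidden to raise them. Since every occurrence of $\mrm{I}$ in $F$ is positive, $(\mrm{I}_{A^I}.2)$ reduces the claim to the closure condition $\forall u[A^I(u,F)\to F(u)]$.

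Next I would take any $u$ with $A^I(u,F)$ and derive $F(u)$ by cases on the witnessing disjunct. The first conjunct $\mrm{I}(u)$ is immediate in each case, as $\mrm{I}$ is closed under every clause of $A^I$ and the hypotheses already supply $\mrm{I}$ of the relevant premises. For the second conjunct I may assume $x\subdot{\land}y\in(u)_2$. The principal case is the disjunct $(\land)$ in which $x\subdot{\land}y$ is itself introduced: there $(u)_2=\varGamma'\cup\{x\subdot{\land}y\}$ with premises witnessing $\mrm{I}((u)_0;(u)_1;\varGamma'\cup\{x\})$ and $\mrm{I}((u)_0;(u)_1;\varGamma'\cup\{y\})$, which are precisely the two required sequents (after Formalised Weakening to restore any shared members). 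In every other disjunct $x\subdot{\land}y$ occurs only as a side formula, hence lies in the contexts of the premises; applying the induction hypothesis---the second conjunct of $F$ at the premises---yields the inverted premises, and reapplying the very same rule via $(\mrm{I}_{A^I}.1)$ produces the inverted conclusions without altering $(u)_0$ or $(u)_1$. The axiom clauses $(\mrm{Ax}.1)$--$(\mrm{Ax}.3)$ and $(\mrm{Comp})$ are trivial, their principal literals being distinct from $x\subdot{\land}y$, so the conjunction survives untouched in the context; and $(\mrm{Cons})$, $(\mrm{Norm})$ are routine side-formula cases, the conjunction residing in the persisting context.

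For $(\forall\mhyph\mrm{Inv})$ I would proceed identically, now fixing $v,z$ and taking
\[
F(u):\equiv \mrm{I}(u) \land \big[\, (\subdot{\forall}vz)\in(u)_2 \to \forall n\,\mrm{I}((u)_0;(u)_1;(u)_2[\{z(n/v)\}/\{\subdot{\forall}vz\}]) \,\big].
\]
The only genuinely new point is the principal disjunct $(\forall)$, where the premise supplies $\mrm{I}((u)_0;(u)_1;\varGamma'\cup\{z(n/v)\})$ uniformly for all $n$---exactly the universally quantified conclusion---with Formalised Substitution used to align the numeral instances. I expect the main obstacle, just as in Lemma~\ref{lem:cut-adm-I} and Lemma~\ref{lem:formal_T-Cut}, to be the bookkeeping that guarantees neither the index $i=(u)_0$ nor the $\T$-Intro rank $\alpha=(u)_1$ is raised by the transformation, together with the set-theoretic care needed when the principal formula happens to coincide with a formula already present in the context; both are handled uniformly by Formalised Weakening. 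Finally, specialising $F$ to $u=\langle i,\alpha,\varGamma\cup\{x\subdot{\land}y\}\rangle$ (resp. $\langle i,\alpha,\varGamma\cup\{\subdot{\forall}vz\}\rangle$) would then deliver the stated inversion lemmata.
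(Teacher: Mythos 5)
Your proposal is correct and follows exactly the method the paper uses throughout this subsection (the paper in fact states Lemma~\ref{lem:formal-inv} without an explicit proof, leaving it to the uniform pattern of the surrounding ``Formalised'' lemmata): define an $\mrm{I}$-positive formula $F(u)$ expressing the inversion property with unchanged index $(u)_0$ and rank $(u)_1$, and verify $\forall u[A^I(u,F)\to F(u)]$ by cases so that $(\mrm{I}_{A^I}.2)$ yields the claim. The only detail worth flagging is the $(\T\mhyph\mrm{Intro})$ case, where the premise is a singleton not containing the principal conjunction, so one reapplies the rule directly rather than invoking the induction hypothesis---but this does not affect correctness.
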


\begin{lemma}[Formalised $\land$-Cut]
We argue in $\mrm{ID}^{*}_{1}$.
Taking any $i \leq 1$, $\alpha \in \mrm{OT}$, and $x,y \in \mathrm{Sent}$,
we assume that Formalised Cut-admissibility holds for every sentence $z$ such that $\mathrm{co}(z) < \mathrm{co}(x \subdot{\land}y)$:
\[
\forall z \big(\mrm{co}(z) < \mrm{co}(x \subdot{\land} y) \to \forall \varGamma, \varDelta [\mrm{I}(i;\alpha;\varGamma \cup \{ z \} ) \land \mrm{I}(i;\alpha;\varDelta \cup \{ \subdot{\neg}z \} ) \to
\mrm{I}(i;\alpha;\varGamma \cup \varDelta )]\big).
\]
Then, Formalised Cut-admissibility for $x \subdot{\land}y$ holds for any $\varGamma, \varDelta$:
\[
\mrm{I}(i;\alpha;\varGamma \cup \{ x \subdot{\land}y \} ) \land \mrm{I}(i;\alpha;\varDelta \cup \{ \subdot{\neg}(x \subdot{\land}y) \} ) \to
\mrm{I}(i;\alpha;\varGamma \cup \varDelta ).
\]

\end{lemma}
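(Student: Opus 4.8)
The plan is to treat this as the conjunctive case of the main induction (on logical complexity $\mrm{co}$) in the proof of Formalised Cut-admissibility (Lemma~\ref{lem:formal_cut-adm}), reusing the architecture of Formalised $\T$-Cut (Lemma~\ref{lem:formal_T-Cut}). First I would fix $i,\alpha,x,y$ and a sequent $\varGamma$, assume the left premise $H:\equiv\mrm{I}(i;\alpha;\varGamma\cup\{x\subdot\land y\})$, and prove the implication for all $\varDelta$ at once by an internal induction along the generation of $\mrm{I}$. Since the explicit derivation height is not tracked in the formalised predicate, this ``sub-induction on the height of the right premise'' is carried out via the axiom $(\mrm{I}_{A^I}.2)$ applied to a suitable $\mrm{I}$-positive formula $F$.

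Concretely, I would set
\[
F(u):\equiv \mrm{I}(u)\land \mrm{I}\big(\max\{i,(u)_0\};\max\{\alpha,(u)_1\};(u)_2[\varGamma/\{\subdot\neg(x\subdot\land y)\}]\big),
\]
where $(u)_2[\varGamma/\{\subdot\neg(x\subdot\land y)\}]$ deletes the (negation-normal) literal $\subdot\neg(x\subdot\land y)=\subdot\neg x\subdot\lor\subdot\neg y$ from $(u)_2$ and adjoins every member of $\varGamma$, exactly as in Lemma~\ref{lem:formal_T-Cut}. As $F$ is $\mrm{I}$-positive, $(\mrm{I}_{A^I}.2)$ reduces the goal to $\forall u\,[A^I(u,F)\to F(u)]$; instantiating the resulting $\forall u\,(\mrm{I}(u)\to F(u))$ at $u=\langle i,\alpha,\varDelta\cup\{\subdot\neg(x\subdot\land y)\}\rangle$ then yields $\mrm{I}(i;\alpha;\varGamma\cup\varDelta)$ from the right premise. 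The first conjunct $\mrm{I}(u)$ follows from $A^I(u,F)$ by monotonicity of $A^I$ and $(\mrm{I}_{A^I}.1)$. For the second conjunct, every case of $A^I(u,F)$ in which $\subdot\neg(x\subdot\land y)$ is either absent from $(u)_2$ or not principal in the last rule is a routine permutation: one reads off the induction hypothesis $F$ at the premises, notes that the replacement commutes with the rule, and re-applies that same rule — precisely as in the $(\lor)$ case of Lemma~\ref{lem:formal_T-Cut} — using Formalised Weakening when $\subdot\neg(x\subdot\land y)\notin(u)_2$.

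The one genuinely reductive case is when the last rule is $(\lor)$ with principal formula $\subdot\neg(x\subdot\land y)$, i.e. $(u)_2=\varDelta''\cup\{\subdot\neg x\subdot\lor\subdot\neg y\}$ with premise $\mrm{I}((u)_0;(u)_1;\varDelta''\cup\{\subdot\neg x,\subdot\neg y\})$. Here I would apply Formalised Inversion $(\land\mhyph\mrm{Inv})$ (Lemma~\ref{lem:formal-inv}) to the standing hypothesis $H$ to obtain $\mrm{I}(i;\alpha;\varGamma\cup\{x\})$ and $\mrm{I}(i;\alpha;\varGamma\cup\{y\})$, and then invoke the \emph{main} induction hypothesis — Formalised Cut-admissibility for the strictly simpler sentences $x$ and $y$ (since $\mrm{co}(x),\mrm{co}(y)<\mrm{co}(x\subdot\land y)$) — twice: first cutting $x$ against $\subdot\neg x$ to remove $\subdot\neg x$, then $y$ against $\subdot\neg y$. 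After aligning indices and ordinals to $\max\{i,(u)_0\}$ and $\max\{\alpha,(u)_1\}$ by Formalised Weakening, these two cuts produce a derivation of $\varGamma\cup\varDelta''$, which is exactly $(u)_2[\varGamma/\{\subdot\neg(x\subdot\land y)\}]$, giving $F(u)$.

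The main obstacle is bookkeeping rather than conceptual: I must check that $F$ is $\mrm{I}$-positive (so that $(\mrm{I}_{A^I}.2)$ is applicable), that the index never exceeds $1$ and the tracked ordinal stays in $\mrm{OT}$ through the $\max$ operations, and — most importantly — that the two appeals to the main induction hypothesis in the reductive case are only ever made on cut formulas of strictly smaller complexity, so that no circularity with the present case arises. Since every cut introduced in the reduction is on $x$ or $y$, this is guaranteed, and the whole argument stays within $\mrm{ID}^{*}_{1}$.
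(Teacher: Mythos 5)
Your proposal matches the paper's proof essentially step for step: the same $\mrm{I}$-positive formula $F(u)$, the same appeal to $(\mrm{I}_{A^I}.2)$ reducing the goal to $\forall u[A^I(u,F)\to F(u)]$, and the same treatment of the principal $(\lor)$ case via Formalised Inversion on the standing hypothesis followed by two applications of the assumed cut-admissibility for the strictly simpler $x$ and $y$. The only slip is that in the principal case you cut against the raw premise $\varDelta''\cup\{\subdot{\neg}x,\subdot{\neg}y\}$ rather than its $[\varGamma/\{\subdot{\neg}(x\subdot{\land}y)\}]$-transformed version supplied by the second conjunct of $F$ at the premise; since $\subdot{\neg}(x\subdot{\land}y)$ may also occur as a side formula in $\varDelta''$, the paper performs the cuts on the transformed premise, and your own setup already provides exactly what is needed to do the same.
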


\begin{proof}
Taking any $i \leq 1$, $\alpha \in \mrm{OT}$, and $\varGamma \in \mathrm{Seq}$,
let a formula $F(u)$ be the following:
\[
F(u):\equiv 
\mrm{I}(u) \land \mrm{I}(\max \{i, (u)_0 \}; \max \{\alpha, (u)_1 \}; (u)_2 [ \varGamma / \{ \subdot{\neg}(x\subdot{\land} y) \}] ),
\]
where $(u)_2 [ \varGamma / \{ \subdot{\neg}(x\subdot{\land} y) \}]$ is the result of eliminating $\subdot{\neg}(x\subdot{\land} y)$ occurring in $(u)_2$ and instead adding every member of $\varGamma$. If $\subdot{\neg}(x\subdot{\land} y)$ is not contained in $(u)_2$, then it just returns $(u)_2$.

Then, we show that $\mrm{ID}^{*}_{1}$ derives $\mrm{I}(i; \alpha; \varGamma \cup \{ x\subdot{\land} y \} ) \to \forall u (\mrm{I}(u) \to F(u))$, which clearly implies Formalised Cut-admissibility for $x \subdot{\land} y$ by putting $u := \langle i;\alpha;\varDelta \cup \{ \subdot{\neg} (x \subdot{\land} y) \}\rangle$.
Since $F(u)$ is $\mrm{I}$-positive (with parameters $i,\alpha,\varGamma$), it suffices to derive $\forall u (A^I(u,F) \to F(u))$, assuming $\mrm{I}(i; \alpha; \varGamma \cup \{ x\subdot{\land} y \} )$.
So, taking any $u$ such that $A^I(u,F)$, we prove $F(u)$.

As the most important case, we suppose that $A^I(u,F)$ holds by the rule $(\lor)$. In particular, we assume that $(u)_2 = \varDelta' \cup \{ \subdot{\neg} (x \subdot{\land} y ) \}$ for some $\varDelta' \in \mathrm{Seq}$ and the induction hypothesis is the conjunction of the following:
\begin{itemize}
\item $\mrm{I}((u)_0; (u)_1; \varDelta' \cup \{ \subdot{\neg}x,\subdot{\neg}y \})$,
\item $\mrm{I}((u)_0;(u)_1; (\varDelta' \cup \{ \subdot{\neg}x,\subdot{\neg}y \})[\varGamma / \{ \subdot{\neg}(x\subdot{\land} y) \}])$.
\end{itemize}
Firstly, $\mrm{I}(u)$ is obvious by the first conjunct of the induction hypothesis and the rule $(\lor)$.
Secondly, the second conjunct of the induction hypothesis is equivalent to $\mrm{I}((u)_0;(u)_1; \varDelta' [\varGamma / \{ \subdot{\neg}(x\subdot{\land} y) \}] \cup \{ \subdot{\neg}x,\subdot{\neg}y \})$. 
On the other hand, the assumption $\mrm{I}(i; \alpha; \varGamma \cup \{ x\subdot{\land} y \} )$ implies the following by Formalised Inversion (Lemma~\ref{lem:formal-inv}):
\[
\mrm{I}(i; \alpha; \varGamma \cup \{ x \} )
\land \mrm{I}(i; \alpha; \varGamma \cup \{ y \} ).
\]
Thus, by applying Formalised Cut-admissibility for $x$ to the second conjunct of the induction hypothesis, we have:
\[
\mrm{I}(\max \{i, (u)_0 \}; \max \{\alpha, (u)_1 \}; \varDelta' [\varGamma / \{ \subdot{\neg}(x\subdot{\land} y) \}] \cup \{ \subdot{\neg}y \} \cup \varGamma).
\]
Therefore, by Formalised Cut-admissibility for $y$, we obtain further:
\[
\mrm{I}(\max \{i, (u)_0 \}; \max \{\alpha, (u)_1 \}; \varDelta' [\varGamma / \{ \subdot{\neg}(x\subdot{\land} y) \}] \cup \varGamma).
\]
Since $\varDelta' [\varGamma / \{ \subdot{\neg}(x\subdot{\land} y) \}] \cup \varGamma \subseteq (u)_2[\varGamma / \{ \subdot{\neg}(x\subdot{\land} y) \}]$, the second conjunct of $F(u)$ follows by Formalised Weakening:
\[
\mrm{I}(\max \{i, (u)_0 \}; \max \{\alpha, (u)_1 \}; (u)_2 [ \varGamma / \{ \subdot{\neg}(x\subdot{\land} y) \}] ).
\]
To summarise, $F(u)$ is obtained, as required.
\end{proof}

Formalised $\forall$-Cut is admissible as well, which completes the proof of Lemma~\ref{lem:formal_cut-adm}:
\begin{lemma}[Formalised $\forall$-Cut]
We argue in $\mrm{ID}^{*}_{1}$.
Taking any $(\subdot{\forall}vx) \in \mathrm{Sent}$,
we assume that Formalised Cut-admissibility holds for every sentence $z$ such that $\mathrm{co}(z) < \mathrm{co}(\subdot{\forall}vx)$.
Then, Formalised Cut-admissibility also holds for $\subdot{\forall}vx$.
\end{lemma}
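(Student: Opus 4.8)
The plan is to mirror the proof of Formalised $\land$-Cut, replacing the rule $(\lor)$ by the rule $(\exists)$ as the critical case, since in negation normal form $\subdot{\neg}(\subdot{\forall}vx)$ is $\subdot{\exists}v(\subdot{\neg}x)$. Fix $i \leq 1$, $\alpha \in \mrm{OT}$, and $\varGamma \in \mrm{Seq}$, and define
\[
F(u) :\equiv \mrm{I}(u) \land \mrm{I}(\max\{i,(u)_0\}; \max\{\alpha,(u)_1\}; (u)_2[\varGamma / \{ \subdot{\neg}(\subdot{\forall}vx) \}]),
\]
where $(u)_2[\varGamma / \{ \subdot{\neg}(\subdot{\forall}vx) \}]$ deletes $\subdot{\neg}(\subdot{\forall}vx)$ from $(u)_2$ and instead adds all of $\varGamma$. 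As in the conjunctive case, $F$ is $\mrm{I}$-positive, so by $(\mrm{I}_{A^I}.2)$ it suffices, assuming $\mrm{I}(i;\alpha;\varGamma \cup \{ \subdot{\forall}vx \})$, to establish $\forall u (A^I(u,F) \to F(u))$; instantiating the resulting $\forall u (\mrm{I}(u) \to F(u))$ at $u := \langle i;\alpha;\varDelta \cup \{ \subdot{\neg}(\subdot{\forall}vx) \} \rangle$ then yields Formalised Cut-admissibility for $\subdot{\forall}vx$.

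The essential case is when $A^I(u,F)$ holds by $(\exists)$ with $(u)_2 = \varDelta' \cup \{ \subdot{\exists}v(\subdot{\neg}x) \}$; here the induction hypothesis provides, for the witnessing $n$, both $\mrm{I}((u)_0;(u)_1;\varDelta' \cup \{ \subdot{\neg}x(n/v) \})$ and its $F$-image $\mrm{I}(\max\{i,(u)_0\};\max\{\alpha,(u)_1\}; (\varDelta' \cup \{ \subdot{\neg}x(n/v) \})[\varGamma / \{ \subdot{\neg}(\subdot{\forall}vx) \}])$. The first conjunct of $F(u)$, namely $\mrm{I}(u)$, is immediate from the first of these by reapplying $(\exists)$. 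For the second conjunct, if $\subdot{\neg}(\subdot{\forall}vx) \notin (u)_2$ it follows from $\mrm{I}(u)$ by Formalised Weakening; otherwise $\subdot{\neg}(\subdot{\forall}vx) \in \varDelta'$, and we apply Formalised Inversion, clause $(\forall \mhyph \mrm{Inv})$ of Lemma~\ref{lem:formal-inv}, to the standing assumption $\mrm{I}(i;\alpha;\varGamma \cup \{ \subdot{\forall}vx \})$ to obtain $\mrm{I}(i;\alpha;\varGamma \cup \{ x(n/v) \})$ for exactly that witness $n$. Since $\mrm{co}(x(n/v)) = \mrm{co}(x(0/v)) < \mrm{co}(\subdot{\forall}vx)$, the induction hypothesis of the lemma gives Formalised Cut-admissibility for $x(n/v)$; after weakening both derivations to the index--rank pair $(\max\{i,(u)_0\},\max\{\alpha,(u)_1\})$, cutting on $x(n/v)$ yields $\mrm{I}(\max\{i,(u)_0\};\max\{\alpha,(u)_1\};\varDelta'[\varGamma / \{ \subdot{\neg}(\subdot{\forall}vx) \}] \cup \varGamma)$, and a final Weakening up to $(u)_2[\varGamma / \{ \subdot{\neg}(\subdot{\forall}vx) \}]$ delivers the second conjunct of $F(u)$.

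The remaining disjuncts of $A^I(u,F)$ do not introduce the principal formula $\subdot{\exists}v(\subdot{\neg}x)$, so there the corresponding rule of $A^I$ is simply reapplied to the $F$-images supplied by the hypothesis, exactly as in the conjunctive case. I expect the only genuinely delicate point to be the bookkeeping on the index $i$ and the $\T$-Introduction rank $\alpha$: one must check that the single cut on the instance $x(n/v)$ preserves the pair $(\max\{i,(u)_0\},\max\{\alpha,(u)_1\})$ and never raises the index above $1$, which is guaranteed because Formalised Cut-admissibility for lower-complexity sentences is stated to preserve $i$ and $\alpha$, and because Formalised Weakening only moves $i$ within $\{(u)_0,1\}$ and $\alpha$ upward. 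The complexity bound $\mrm{co}(x(n/v)) < \mrm{co}(\subdot{\forall}vx)$, together with the fact that $(\exists)$ supplies a single premise, is precisely what makes one cut (rather than the two needed for $\subdot{\land}$) suffice.
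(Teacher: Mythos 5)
Your proof is correct and is precisely the argument the paper intends: the paper states Formalised $\forall$-Cut without proof, leaving it as the evident analogue of Formalised $\land$-Cut, and your adaptation --- taking $(\exists)$ in place of $(\lor)$ as the critical case, applying $(\forall \mhyph \mrm{Inv})$ to $\mrm{I}(i;\alpha;\varGamma \cup \{ \subdot{\forall}vx \})$ at the single witness $n$, and performing one cut on $x(n/v)$ of strictly smaller complexity --- is exactly the intended filling-in. The bookkeeping on the index $i$ and the $\T$-Intro rank $\alpha$ that you flag is handled correctly, since the cut for lower-complexity sentences and Formalised Weakening both keep the pair at $(\max\{i,(u)_0\},\max\{\alpha,(u)_1\})$.
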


Let a primitive recursive predicate $\mathrm{AtSeq}(x)$ mean that the G\"{o}del-number $x$ of some sequent consists only of the G\"{o}del-number of atomic sentences. 
Moreover, we define a primitive recursive function $\mathrm{Disq}(x)$ such that
if $\mathrm{AtSeq}(x)$ holds, $\mathrm{Disq}(x)$ returns the G\"{o}del-number of the disquotation of $x$; otherwise, $\mathrm{Disq}(x)$ returns $x$.

\begin{lemma}[Formalised Disquotation]
Define a formula $F(u)$ to be the following:

\begin{multline*}
\mrm{I}(u) \land \exists \alpha  \bigl([\mathrm{AtSeq}((u)_2) \to \alpha= (u)_1=0 \lor \alpha < (u)_1] \land [\neg \mathrm{AtSeq}((u)_2) \to \alpha = (u)_1] \\
\land \mrm{I}((u)_0; \alpha; \mathrm{Disq}((u)_2) )\bigr)
\end{multline*}

Then, $\mathsf{ID}^{*}_{1} \vdash \forall u (\mrm{I}(u) \to F(u))$.
\end{lemma}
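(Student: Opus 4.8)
The plan is to prove $\mrm{ID}^{*}_{1} \vdash \forall u (\mrm{I}(u) \to F(u))$ by the least-fixed-point induction available in $\mrm{ID}^{*}_{1}$, i.e. by the axiom $(\mrm{I}_{A^{I}}.2)$, mirroring the informal Disquotation Lemma (Lemma~\ref{lem:disq_I}) but with the explicit derivation-height parameter suppressed. First I would check that $F$ is admissible for $(\mrm{I}_{A^{I}}.2)$: both occurrences of $\mrm{I}$ in $F(u)$ — namely $\mrm{I}(u)$ and $\mrm{I}((u)_0;\alpha;\mathrm{Disq}((u)_2))$ — are positive, and $\mathrm{AtSeq}$, $\mathrm{Disq}$, and the ordinal relations are arithmetical, so $F$ is $\mrm{I}$-positive. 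It then suffices to prove closure $\forall u (A^{I}(u,F) \to F(u))$, from which the axiom delivers the conclusion. So I fix $u$ with $A^{I}(u,F)$ and derive the two conjuncts of $F(u)$. The first conjunct $\mrm{I}(u)$ is immediate: since $F(v)\to\mrm{I}(v)$ for all $v$ and $A^{I}$ is $Q$-positive, $A^{I}(u,F)$ yields $A^{I}(u,\mrm{I})$, whence $\mrm{I}(u)$ by the fixed-point equation $(\mrm{I}_{A^{I}}.1)$.

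For the existential conjunct I would split on which disjunct of $A^{I}(u,F)$ holds. Whenever the principal formula of the last rule is compound or a negated literal — the cases $(\land),(\lor),(\exists),(\forall),(\mathrm{Ax}.2),(\mathrm{Ax}.3)$ — the sequent $(u)_2$ fails $\mathrm{AtSeq}$, so $\mathrm{Disq}((u)_2)=(u)_2$ and I simply take $\alpha:=(u)_1$, making the second conjunct coincide with the already-established $\mrm{I}(u)$. It remains to treat the rules that can leave $(u)_2$ atomic, where throughout I choose $\alpha$ to meet the displayed atomic clause, namely $\alpha=(u)_1=0$ when $(u)_1=0$ and $\alpha<(u)_1$ when $(u)_1>0$. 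In $(\mathrm{Ax}.1)$ the true equation persists into $\mathrm{Disq}((u)_2)$, so $\mrm{I}((u)_0;\alpha;\mathrm{Disq}((u)_2))$ holds by the axiom for any $\alpha$, in particular $\alpha:=0$. In $(\mathrm{Comp})$ both $A$ and $\neg A$ enter $\mathrm{Disq}((u)_2)$; since $\mrm{I}$ is closed under classical logic one derives $\mrm{I}((u)_0;0;\{A,\neg A\})$ at $\T$-Intro count $0$ and then weakens to $\mathrm{Disq}((u)_2)$ by Formalised Weakening, taking $\alpha:=0$. In $(\T\mhyph\mrm{Intro})$ the premise supplies $\mrm{I}((u)_0;\alpha_0;\{t^{\circ}\})$ with $\alpha_0<(u)_1$, and since $t^{\circ}\in\mathrm{Disq}((u)_2)$, Formalised Weakening gives $\mrm{I}((u)_0;\alpha_0;\mathrm{Disq}((u)_2))$ with $\alpha:=\alpha_0<(u)_1$.

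The heart of the argument is $(\mrm{Cons})$, with $(\mrm{Norm})$ an easier variant. Here $(u)_0=1$ and the premises are $\mrm{I}(j;(u)_1;(u)_2\cup\{\subdot{\T}n\})$ and $\mrm{I}(j;(u)_1;(u)_2\cup\{\subdot{\T}(\subdot{\neg}n)\})$ for some $j\le 1$. If $n^{\circ}$ is not the code of an $\lt$-sentence, the added truth atoms contribute nothing to disquotation, so $\mathrm{Disq}$ of each premise equals $\mathrm{Disq}((u)_2)$, and the induction hypothesis with Formalised Weakening closes the case — this also settles $(\mrm{Norm})$. Otherwise $n^{\circ}=\ulcorner A\urcorner$, and the induction hypothesis applied to the two premises furnishes $\mrm{I}(j;\alpha_0;\mathrm{Disq}((u)_2)\cup\{A\})$ and $\mrm{I}(j;\alpha_1;\mathrm{Disq}((u)_2)\cup\{\neg A\})$ with $\alpha_0,\alpha_1$ each subject to the atomic clause for $(u)_1$. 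Raising both to $\max\{\alpha_0,\alpha_1\}$ by Formalised Weakening and cutting on $A$ via Formalised Cut-admissibility (Lemma~\ref{lem:formal_cut-adm}) yields $\mrm{I}(1;\max\{\alpha_0,\alpha_1\};\mathrm{Disq}((u)_2))$, and I set $\alpha:=\max\{\alpha_0,\alpha_1\}$.

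I expect the main obstacle to be exactly this cut: it must not raise the $\T$-Intro count. Formalised Cut-admissibility is designed to keep both the index and the count $\alpha$ fixed, charging only the implicit derivation height, so $\max\{\alpha_0,\alpha_1\}$ still meets the atomic clause (it is $<(u)_1$ when $(u)_1>0$, and equals $0=(u)_1$ when $(u)_1=0$); this is the counterpart of the bookkeeping $\max\{\alpha_0,\alpha_1\}<\alpha$ in Lemma~\ref{lem:disq_I}. What vanishes in the formalisation is the explicit height $\omega_n(\beta)$ of the informal lemma, which is absorbed into the fixed-point structure of $\mrm{I}$ and so needs no arithmetical control — precisely the reason $\mrm{ID}^{*}_{1}$ (rather than an ordinal-indexed theory bounded by $\varepsilon_0$) is used. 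The remaining delicacy is purely the matching of the two disjuncts of the atomic clause across the weakenings and the cut, together with the initial verification of $\mrm{I}$-positivity of $F$ so that $(\mrm{I}_{A^{I}}.2)$ genuinely applies.
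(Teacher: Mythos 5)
Your proposal is correct and follows essentially the same route as the paper's proof: verify that $F$ is $\mrm{I}$-positive so that $(\mrm{I}_{A^{I}}.2)$ applies, reduce to the closure condition $\forall u (A^{I}(u,F)\to F(u))$, and split on the disjuncts of $A^{I}(u,F)$, with the $(\T\mhyph\mrm{Intro})$ case handled by Formalised Weakening and the $(\mrm{Cons})$ case by the sentence/non-sentence split followed by Formalised Cut-admissibility, which preserves the index and the $\T$-Intro rank exactly as you note. The paper only writes out the $(\T\mhyph\mrm{Intro})$ and $(\mrm{Cons})$ cases and your treatment of those, including the choice of $\alpha$ against the two disjuncts of the atomic clause, matches it.
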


\begin{proof}
Since $F(u)$ is $\mrm{I}$-positive, it is enough to prove $\forall u (A^I(u,F) \to F(u))$, so taking any $u$ such that $A^I(u,F)$, we show $F(u)$. The cases are divided by which disjunct of $A^I(u,F)$ holds.
\begin{description}
\item[$(\T \mhyph \mrm{Intro})$] Assume that $A^I(u,F)$ is true by $(\T \mhyph \mrm{Intro})$.
Then, $(u)_2$ is of the form $\varGamma \cup \{ \subdot{\T}t \}$ for some $t^{\circ} \in \mathrm{Sent}$ and $\varGamma \in \mathrm{Seq}$. Since $(u)_1 > 0$ by the condition of $(\T \mhyph \mrm{Intro})$, the induction hypothesis is that $F((u)_0;\alpha;\{t^{\circ}\})$ for some $\alpha < (u)_1$.
Since $\mrm{I}(u)$ is obvious from the induction hypothesis,
we show the second conjunct of $F(u)$.
If $\neg \mathrm{AtSeq}((u)_2)$, then we have to derive $\mrm{I}(u)$, which is already obtained.
If $\mathrm{AtSeq}((u)_2)$, then $\mrm{I}((u)_0; \alpha; \mathrm{Disq}((u)_2) )$ is obtained by the induction hypothesis $F((u)_0;\alpha;\{t^{\circ}\})$ and Formalised Weakening. Thus, in both cases, we have $F(u)$.

\item[$(\mrm{Cons})$] Assume that $A^I(u,F)$ is true by $(\mrm{Cons})$.
Then, $(u)_0=1$ and there exist $j \leq 1$ and a numeral $n$ such that 
the induction hypotheses are $F(j;(u)_1;(u)_2 \cup \{ \subdot{\T}n \})$ and $F(j;(u)_1;(u)_2 \cup \{ \subdot{\T}(\subdot{\neg}n) \})$. Since $\mrm{I}(u)$ is clear from the induction hypotheses, we concentrate on the second conjunct of $F(u)$. Now, we can assume $\mathrm{AtSeq}((u)_2)$, then we also have $\mathrm{AtSeq}((u)_2 \cup \{  \subdot{\T}n \})$ and $\mathrm{AtSeq}((u)_2 \cup \{ \subdot{\T}(\subdot{\neg}n) \})$.

If $n^{\circ} \notin \mathrm{Sent}$, then it follows that $\mathrm{Disq}((u)_2)= \mathrm{Disq}((u)_2 \cup \{  \subdot{\T}n \})$.
Therefore, we obtain $\mrm{I}((u)_0; \alpha; \mathrm{Disq}((u)_2))$ from the induction hypothesis $\mrm{I}(j; \alpha; \mathrm{Disq}((u)_2) \cup \{ \subdot{\T}t \})$, where $\alpha$ is an ordinal such that $\alpha=(u)_1=0$ or $\alpha<(u)_1$ holds.
If $n^{\circ} \in \mathrm{Sent}$, then we have $\mathrm{Disq}((u)_2 \cup \{  \subdot{\T}n \}) = \mathrm{Disq}((u)_2) \cup \{ n^{\circ} \}$ and $\mathrm{Disq}((u)_2 \cup \{  \subdot{\T}(\subdot{\neg}n) \}) = \mathrm{Disq}((u)_2) \cup \{ \subdot{\neg} n^{\circ} \}$, respectively. 
Thus, Formalised Cut-admissibility implies $\mrm{I}((u)_0; \alpha; \mathrm{Disq}((u)_2))$ from the induction hypotheses $\mrm{I}(j; \alpha; \mathrm{Disq}((u)_2) \cup \{ n^{\circ} \})$ and $\mrm{I}(j; \alpha; \mathrm{Disq}((u)_2) \cup \{ \subdot{\neg} (n^{\circ}) \})$, where $\alpha$ is an ordinal such that $\alpha=(u)_1=0$ or $\alpha<(u)_1$ holds.
Thus, in both cases, we have $F(u)$.
\end{description}
The other cases are proved in a similar way.
\end{proof}

\begin{corollary}[Formalised Elimination of $(\T \mhyph \mrm{Intro})$]
Fix any ordinal $\alpha < \varepsilon_0$. 
Then, we have the following:
\[
\mrm{ID}^{*}_{1} \vdash \forall \alpha_0 < \alpha \forall s,t \in \mathrm{CT} (\mrm{I}(i;\alpha_0;\{ s \subdot{=} t \}) \to \mrm{I}(i;0; \{s \subdot{=} t \})).
\]
\end{corollary}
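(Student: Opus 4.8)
The plan is to derive the corollary from the Formalised Disquotation Lemma by a transfinite induction on the $\T$-Intro rank, exactly mirroring the passage from item~1 to item~2 of the (non-formalised) Disquotation Lemma (Lemma~\ref{lem:disq_I}). Throughout I argue inside $\mrm{ID}^{*}_{1}$, treating $i \leq 1$ as a fixed free parameter (which is legitimate, since the Formalised Disquotation Lemma preserves the index $(u)_0$). The target is the $\lpos$-formula
\[
P(\alpha_0) :\equiv \forall s,t \in \mathrm{CT}\,(\mrm{I}(i;\alpha_0;\{ s \subdot{=} t \}) \to \mrm{I}(i;0;\{ s \subdot{=} t \})),
\]
and it suffices to prove $\forall \alpha_0 < \alpha\, P(\alpha_0)$ for the fixed $\alpha < \varepsilon_0$.

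First I would establish the progressiveness of $P$: assuming $\forall \gamma < \alpha_0\, P(\gamma)$, I show $P(\alpha_0)$. Fix $s,t \in \mathrm{CT}$ and suppose $\mrm{I}(i;\alpha_0;\{ s \subdot{=} t \})$. The sequent $\{ s \subdot{=} t \}$ is atomic, so $\mathrm{AtSeq}(\{ s \subdot{=} t \})$ holds and $\mathrm{Disq}(\{ s \subdot{=} t \}) = \{ s \subdot{=} t \}$. Instantiating the Formalised Disquotation Lemma at $u := \langle i, \alpha_0, \{ s \subdot{=} t \} \rangle$ and feeding it the hypothesis $\mrm{I}(u)$, I obtain an ordinal $\alpha'$ with $\mrm{I}(i;\alpha';\{ s \subdot{=} t \})$ such that either $\alpha' = \alpha_0 = 0$ or $\alpha' < \alpha_0$. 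If $\alpha_0 = 0$ the conclusion is immediate; otherwise the first disjunct is excluded, so $\alpha' < \alpha_0$, and the induction hypothesis $P(\alpha')$ yields $\mrm{I}(i;0;\{ s \subdot{=} t \})$. This gives $P(\alpha_0)$.

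Second, since $\alpha < \varepsilon_0$ is a fixed notation, transfinite induction up to $\alpha$ for the $\lpos$-formula $P$ is available: $\mrm{PA}_{\lpos}$ (and a fortiori $\mrm{ID}^{*}_{1}$, whose proof-theoretic ordinal is $\varphi \varepsilon_0 0$) proves $\mrm{TI}(\beta, P)$ for every $\beta < \varepsilon_0$ and every $\lpos$-formula, by the standard Gentzen-style derivation of bounded transfinite induction from ordinary induction (which is already a schema of $\mrm{PA}_{\lpos}$ for all $\lpos$-formulae). Applying $\mrm{TI}(\alpha, P)$ to the progressiveness just shown delivers $\forall \alpha_0 < \alpha\, P(\alpha_0)$, which is exactly the claim.

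The hard part, though it is a small one, will be the extraction of a \emph{strictly} smaller $\T$-Intro rank: one must read off from the definition of $F(u)$ in the Formalised Disquotation Lemma that, for an atomic sequent with $(u)_1 = \alpha_0 > 0$, the disjunct $\alpha = (u)_1 = 0$ is impossible, forcing $\alpha' < \alpha_0$. With this strict decrease secured the induction closes; everything else is routine. Crucially, no argument by transfinite induction beyond $\varepsilon_0$ (which $\mrm{ID}^{*}_{1}$ cannot support directly) is needed, because the derivation-height parameter has been absorbed into the inductive definition of $\mrm{I}$ via the axiom $(\mrm{I}_{A^I}.2)$, and the sole ordinal we induct on, $\alpha_0$, stays below the fixed $\alpha < \varepsilon_0$.
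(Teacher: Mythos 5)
Your proof is correct and takes essentially the same route as the paper, whose own proof is the single line ``by formal transfinite induction up to $\alpha$, which is available in $\mrm{ID}^{*}_{1}$''; you have simply spelled out that this induction is applied to the conclusion of the Formalised Disquotation Lemma, using that $\mathrm{Disq}(\{s \subdot{=} t\}) = \{s \subdot{=} t\}$ and that the extracted rank is strictly smaller unless it is already $0$.
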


\begin{proof}
The proof is by formal transfinite induction up to $\alpha$, which is available in $\mrm{ID}^*_1$.
\end{proof}

\begin{lemma}[Formalised Elimination of $(\mrm{Cons})$ and $(\mrm{Norm})$]
\[
\mrm{ID}^{*}_{1} \vdash \forall \varGamma \in \mathrm{AtSeq} (\mrm{I}(i;0;\varGamma) \to \mrm{I}(0;0;\mathrm{Disq}(\varGamma)).
\]
\end{lemma}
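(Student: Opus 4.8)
The plan is to establish this as the formalised analogue of the earlier (informal) Elimination of $(\mrm{Cons})$ and $(\mrm{Norm})$ lemma, the only real change being that, since the formalised predicate $\mrm{I}$ no longer records a derivation height, the informal induction on derivation length must be replaced by fixed-point induction via the axiom $(\mrm{I}_{A^{I}}.2)$. Fixing a parameter $i \leq 1$, I would introduce the formula
\[
F(u) :\equiv \bigl[(u)_1 = 0 \land \mathrm{AtSeq}((u)_2)\bigr] \to \mrm{I}(0;0;\mathrm{Disq}((u)_2)),
\]
in which $\mrm{I}$ occurs only positively, so that $(\mrm{I}_{A^{I}}.2)$ applies. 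It then suffices to prove $\mrm{ID}^{*}_{1} \vdash \forall u (A^{I}(u,F) \to F(u))$; this yields $\forall u (\mrm{I}(u) \to F(u))$, and instantiating $u := \langle i, 0, \varGamma \rangle$ for an atomic $\varGamma$ gives exactly the claim.

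So I would take any $u$ with $A^{I}(u,F)$ and, assuming $(u)_1 = 0$ and $\mathrm{AtSeq}((u)_2)$ (otherwise $F(u)$ holds vacuously), derive $\mrm{I}(0;0;\mathrm{Disq}((u)_2))$ by cases on which disjunct of $(\star)$ obtains. Two families of cases are closed off immediately: the logical clauses $(\lor), (\land), (\exists), (\forall)$ cannot apply, since each requires a compound formula in $(u)_2$, contradicting $\mathrm{AtSeq}((u)_2)$; and $(\T\mhyph\mrm{Intro})$ cannot apply, since it demands some $\alpha_0 < (u)_1 = 0$. The axiom cases $(\mrm{Ax}.1)$--$(\mrm{Ax}.3)$ and $(\mrm{Comp})$ are routine: the disquotation of such a sequent is again an axiom, or a sequent of the shape $A, \neg A$, so it is derivable already at index $0$---directly or by closure of $\mrm{I}$ under classical logic---and Formalised Weakening then lifts this to all of $\mathrm{Disq}((u)_2)$.

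The substance of the lemma lies in the cases $(\mrm{Cons})$ and $(\mrm{Norm})$, which are precisely the rule applications being removed. For $(\mrm{Norm})$, the premise is $\mrm{I}(j;0;(u)_2 \cup \{\subdot{\T}n\})$ with $\neg\mathrm{Sent}(n^{\circ})$; the induction hypothesis $F$, applied to this premise (which is still atomic and still of rank $0$), gives $\mrm{I}(0;0;\mathrm{Disq}((u)_2 \cup \{\subdot{\T}n\}))$, and since $n^{\circ}$ is not a sentence $\mathrm{Disq}$ discards the truth literal, so this is already $\mrm{I}(0;0;\mathrm{Disq}((u)_2))$. For $(\mrm{Cons})$, the two premises $\mrm{I}(j;0;(u)_2 \cup \{\subdot{\T}n\})$ and $\mrm{I}(j;0;(u)_2 \cup \{\subdot{\T}(\subdot{\neg}n)\})$ give, through $F$, the index-$0$ facts $\mrm{I}(0;0;\mathrm{Disq}((u)_2) \cup \{n^{\circ}\})$ and $\mrm{I}(0;0;\mathrm{Disq}((u)_2) \cup \{\subdot{\neg}(n^{\circ})\})$ when $n^{\circ}$ codes a sentence (and trivially otherwise, as in $(\mrm{Norm})$); Formalised Cut-admissibility (Lemma~\ref{lem:formal_cut-adm}) then cuts these against each other to yield $\mrm{I}(0;0;\mathrm{Disq}((u)_2))$.

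The hard part, and the reason this is not a mere transcription of the axiom cases, is the index bookkeeping in $(\mrm{Cons})$: the cut eliminating the two complementary literals must be performed on the \emph{disquoted} sentence $n^{\circ}$, not on $\subdot{\T}n$, so that it is an ordinary logical cut governed by Formalised Cut-admissibility, which provably preserves the index and thus keeps it at $0$. This is exactly where the hypothesis $(u)_1 = 0$ is indispensable: it forces $(\T\mhyph\mrm{Intro})$ to be absent from the derivation, so that the whole derivation of the atomic $\varGamma$ lives among atomic sequents linked only by axioms, $(\mrm{Comp})$, $(\mrm{Cons})$ and $(\mrm{Norm})$, and the fixed-point induction $(\mrm{I}_{A^{I}}.2)$ ranges over precisely these cases.
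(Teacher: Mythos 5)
Your proposal is correct and follows essentially the same route as the paper: a fixed-point induction via $(\mrm{I}_{A^{I}}.2)$ on an $\mrm{I}$-positive formula $F$, with the case analysis patterned on Formalised Disquotation and Formalised Cut-admissibility (applied to the disquoted sentence $n^{\circ}$, preserving index $0$) handling the $(\mrm{Cons})$ case. The only difference is cosmetic: the paper takes the unguarded $F(u):\equiv \mrm{I}(0;(u)_1;\mathrm{Disq}((u)_2))$ and leaves the cases implicit, whereas your guard $(u)_1=0 \land \mathrm{AtSeq}((u)_2)$ makes the vacuous cases explicit and is, if anything, the more careful formulation.
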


\begin{proof}
Letting $F(u):\equiv \mrm{I}(0;(u)_1;\mathrm{Disq}((u)_2))$, we show $\forall u (\mrm{I}(u) \to F(u))$.
Since $F(u)$ is $\mrm{I}$-positive, it suffices to prove $\forall u (A^I(u,F) \to F(u))$.
The remaining part of the proof is similar to for Formalised Disquotation.
\end{proof}

\begin{corollary}[Formalised Consistency of $\mrm{I}$] 
Fix any ordinal $\alpha < \varepsilon_0$. 
Then,
\[
\mrm{ID}^{*}_{1} \vdash \forall \alpha_0 < \alpha \big( \mrm{I}(i;\alpha; \{ s\subdot{=}t \}) \to s^{\circ}=t^{\circ} \big).
\]
\end{corollary}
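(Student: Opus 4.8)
The plan is to derive the claim by chaining the two preceding elimination results and then reading off the base case directly from the fixed-point equation for $A^I$. Fix $\alpha < \varepsilon_0$ and reason inside $\mrm{ID}^{*}_{1}$. Take an arbitrary $\alpha_0 < \alpha$, an arbitrary $i \leq 1$, and arbitrary $s,t \in \mrm{CT}$, and assume the hypothesis $\mrm{I}(i;\alpha_0;\{ s\subdot{=}t \})$; the goal is to conclude $s^{\circ}=t^{\circ}$.

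First I would apply the Formalised Elimination of $(\T\mhyph\mrm{Intro})$ (the corollary just above), which for the fixed $\alpha$ turns $\mrm{I}(i;\alpha_0;\{ s\subdot{=}t \})$ into $\mrm{I}(i;0;\{ s\subdot{=}t \})$: the $\T\mhyph\mrm{Intro}$ rank has been collapsed to $0$. Next, observe that $\{ s\subdot{=}t \}$ is an atomic sequent, so $\mrm{AtSeq}(\{ s\subdot{=}t \})$ holds, and since it contains no truth atoms its disquotation is itself, i.e.\ $\mrm{Disq}(\{ s\subdot{=}t \}) = \{ s\subdot{=}t \}$. Hence the Formalised Elimination of $(\mrm{Cons})$ and $(\mrm{Norm})$ applies and yields $\mrm{I}(0;0;\{ s\subdot{=}t \})$; the index has now been driven to $0$ as well.

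It then remains to show, already in $\widehat{\mrm{ID}}_1$, that $\mrm{I}(0;0;\{ s\subdot{=}t \}) \to s^{\circ}=t^{\circ}$. For this I would unfold the fixed-point equation $(\mrm{I}_{A^I}.1)$, namely $\mrm{I}(u) \leftrightarrow A^I(u,\mrm{I})$, at $u := \langle 0,0,\{ s\subdot{=}t \} \rangle$, and inspect which disjunct of $A^I(u,\mrm{I})$ can hold. Since $(u)_2$ contains only the single equation $s\subdot{=}t$, every clause demanding a compound formula, a negated equation, or a truth atom in $(u)_2$---that is, $(\mrm{Ax}.2)$, $(\mrm{Ax}.3)$, $(\lor)$, $(\land)$, $(\exists)$, $(\forall)$, $(\T\mhyph\mrm{Intro})$ and $(\mrm{Comp})$---is excluded; moreover $(\T\mhyph\mrm{Intro})$ additionally fails because it requires some $\alpha_0 < (u)_1 = 0$, and both $(\mrm{Cons})$ and $(\mrm{Norm})$ require $(u)_0 = 1$, whereas here $(u)_0 = 0$. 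Thus the only available disjunct is $(\mrm{Ax}.1)$, whose condition $\exists s',t'\,(s'^{\circ}=t'^{\circ} \land (s'\subdot{=}t') \in (u)_2)$ forces $s^{\circ}=t^{\circ}$. This gives the desired conclusion, and since $\alpha_0 < \alpha$, $i$, $s$ and $t$ were arbitrary, the universally quantified statement follows.

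I do not expect any serious obstacle here: the substantive work---cut-admissibility together with the two eliminations---has already been carried out, so this corollary is essentially their composition. The only point requiring care is that the transfinite induction invoked in the Formalised Elimination of $(\T\mhyph\mrm{Intro})$ step runs only up to the fixed ordinal $\alpha < \varepsilon_0$, which is exactly the induction available in $\mrm{ID}^{*}_{1}$; the closing case analysis on the disjuncts of $A^I$ is purely syntactic and is settled by a single appeal to the fixed-point axiom $(\mrm{I}_{A^I}.1)$.
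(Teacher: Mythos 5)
Your proof is correct and follows the same route the paper intends (the paper leaves this corollary unproved, but its unformalised analogue, Corollary~\ref{cor:consistency_of_I}, is proved by exactly this chaining): first collapse the $\T\mhyph\mrm{Intro}$ rank to $0$, then eliminate $(\mrm{Cons})$ and $(\mrm{Norm})$ using that $\{s\subdot{=}t\}$ is atomic and equal to its own disquotation, and finally observe via one unfolding of $(\mrm{I}_{A^I}.1)$ that only clause $(\mathrm{Ax}.1)$ can apply, forcing $s^{\circ}=t^{\circ}$. Your explicit case analysis on the disjuncts of $A^I$ merely spells out what the paper states informally, and correctly notes that the transfinite induction is confined to the fixed $\alpha<\varepsilon_0$ in the $\T\mhyph\mrm{Intro}$ elimination step.
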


Finally, we obtain the formalised soundness theorem.
For each natural number $n$, $\mathrm{Pos}_n(x)$ expresses that $x$ is a $\T$-positive sequent whose sentences are of at most logical complexity $\leq n$.
For each $n$, the \emph{partial} satisfaction predicate $\models_{n}^{y}(x)$
means that the sentence $x$ has at most logical complexity $\leq n$ and $x$ is satisfied at the level $y < \varepsilon_0$.

In particular, $\models^{y}_n(x)$ is defined such that the following are satisfied, provably even in $\mrm{PA}$:
\begin{itemize}

\item $\models^{y }_n(\ulcorner s = t \urcorner) \ \leftrightarrow \ s = t$.

\item $\models^{y }_n(\ulcorner s \neq t \urcorner) \ \leftrightarrow \ s \neq t$.
\item $\models^{y }_n(\ulcorner \T (t) \urcorner) 
 \ \leftrightarrow \ \mrm{I}(1;y;\{ t \})$.
 
\item $\models^{y }_n (\ulcorner A \land B \urcorner) \ \leftrightarrow \ \models^{y}_{n-1} (\ulcorner A \urcorner) \land \models^{y}_{n-1} (\ulcorner B \urcorner)$, where $A \land B \in \lt$ and $\mrm{co}(A \land B) \leq n$.

\item $\models^{y }_n (\ulcorner A \lor B \urcorner) \ \leftrightarrow \ \models^{y}_{n-1} (\ulcorner A \urcorner) \lor \models^{y}_{n-1} (\ulcorner B \urcorner)$, where $A \lor B \in \lt$ and $\mrm{co}(A \land B) \leq n$.

\item $\models^{y }_n (\ulcorner \forall x A(x) \urcorner) \ \leftrightarrow \ \forall v \big(\models^{y}_{n-1} (\ulcorner A(\dot{v}) \urcorner) \big)$, where $\forall x A(x) \in \lt$ and $\mrm{co}(\forall x A(x)) \leq n$.

\item $\models^{y }_n (\ulcorner \exists x A(x) \urcorner) \ \leftrightarrow \ \exists v \big(\models^{y}_{n-1} (\ulcorner A(\dot{v}) \urcorner) \big)$, where $\exists x A(x) \in \lt$ and $\mrm{co}(\forall x A(x)) \leq n$.

\item $\models^{y }_n (\ulcorner A \urcorner) \ \leftrightarrow \ 0 = 1$, if 
$\mrm{co}(A) > n$ or $A$ is not $\T$-positive.
\end{itemize}

Then, we can easily expand the definition to sequents $x \in \mrm{Seq}$:
\[
\models^{y}_n (x) : \leftrightarrow \ x \in \mrm{Seq} \land \exists z \in x \big(\models^{y}_n(z)\big).
\]

When $x$ is the singleton of an $\lnat$-sentence $A$, we can prove, by meta-induction on $A$, that this predicate implies $A$ itself for every $n \geq \mrm{co} (A)$:
\[
\mrm{ID}^{*}_{1} \vdash \ \models^{y}_n(\{ \ulcorner A \urcorner \}) \to A.
\]


The arithmetical predicate $\mathrm{Bew}_{\mrm{VFM}^{\infty}}(x,y,z)$ stands for the sequent $z$ having a \emph{recursive} derivation with the length $x$ and the cut-rank $y$ in $\mrm{VFM}^{\infty}$.\footnote{For further explanation of such a predicate, see, e.g. \cite[5.2.2]{schwichtenberg_1977}. 
See also the next section, where we give a detailed explanation of an infinite derivability predicate for $\mrm{VFW}$, using ramified truth predicates.}

\begin{lemma}[Formalised Persistency of $\models$]\label{lem:form-persist-I}
Fix any ordinal number $\alpha < \varepsilon_0$ and any natural number $n$.
Then, for a sufficiently large $m \geq n$, we have the following:
\[
\mathsf{ID}^{*}_{1} \vdash \forall \alpha_1 < \alpha \forall \alpha_0 < \alpha_1 \forall x \in \mathrm{Pos}_n (\models^{\alpha_0}_m(x) \to \models^{\alpha_1}_m(x)).
\]
\end{lemma}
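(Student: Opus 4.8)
The plan is to prove the claim by external (meta-level) induction on the complexity bound $n$, mirroring the stratified definition of the partial satisfaction predicate $\models^{y}_{m}$. Because each subscript yields a syntactically distinct $\lpos$-formula, for every $n$ the persistency statement is a separate theorem of $\mrm{ID}^{*}_{1}$, and the induction threads through the defining equivalences of $\models^{y}_{m}$, which hold provably already in $\mrm{PA}$. I fix $\alpha < \varepsilon_0$ and take any $m \geq n$; the constraint $m \geq n$ guarantees that every $x \in \mathrm{Pos}_{n}$ --- whose sentences have complexity $\leq n \leq m$ --- is evaluated by the genuine clauses of $\models^{y}_{m}$ rather than by the default clause $\models^{y}_{m}(\ulcorner A \urcorner) \leftrightarrow 0 = 1$, which fires only at complexity $> m$. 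In contrast to the earlier formalised lemmata, the argument itself appeals to the apparatus of $\mrm{I}$ only through Formalised Weakening (the monotonicity of $\mrm{I}$ in its ordinal argument); it needs neither the fixed-point induction axiom $(\mrm{I}_{A^{I}}.2)$ nor any transfinite induction.

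For the base case $n = 0$ I treat the atomic $\T$-positive sentences. When $x$ is $\ulcorner s = t \urcorner$ or $\ulcorner s \neq t \urcorner$, the value of $\models^{y}_{m}(x)$ is independent of the level $y$, so $\models^{\alpha_0}_{m}(x) \to \models^{\alpha_1}_{m}(x)$ is immediate. When $x$ is $\ulcorner \T(t) \urcorner$, the defining clause gives $\models^{y}_{m}(\ulcorner \T(t) \urcorner) \leftrightarrow \mrm{I}(1;y;\{ t \})$; since $\alpha_0 < \alpha_1$, Formalised Weakening delivers $\mrm{I}(1;\alpha_0;\{ t \}) \to \mrm{I}(1;\alpha_1;\{ t \})$, i.e. persistency at this atom. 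This single appeal to the monotonicity of $\mrm{I}$ is the only substantive ingredient of the whole proof.

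For the inductive step I peel off the outermost connective via the stratified clauses, observing that they lower the formula complexity and the subscript in lockstep, so the constraint $m \geq n$ is preserved on descent (from $n$ to $n-1$ and from $m$ to $m-1$, with $m-1 \geq n-1$). For a conjunction of complexity $\leq m$ one has $\models^{y}_{m}(\ulcorner A \land B \urcorner) \leftrightarrow \models^{y}_{m-1}(\ulcorner A \urcorner) \land \models^{y}_{m-1}(\ulcorner B \urcorner)$, and since $A, B$ are again $\T$-positive of complexity $\leq n-1$, the induction hypothesis applied to each conjunct yields persistency of the whole; disjunction and the two quantifiers are identical, the quantifier clauses using that $\models^{y}_{m}$ commutes with the object-level $\forall, \exists$. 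I then lift the result from sentences to sequents through $\models^{y}_{m}(x) \leftrightarrow x \in \mrm{Seq} \land \exists z \in x (\models^{y}_{m}(z))$, an existential quantification over a persistent family being itself persistent.

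The point demanding care is bookkeeping rather than mathematical depth: since $\models^{y}_{m}$ is genuinely \emph{partial} (stratified by complexity), the external induction on $n$ must be kept in step with the internal first-order reasoning of $\mrm{ID}^{*}_{1}$, the recursive clauses being available provably in $\mrm{PA}$. The restriction to $\T$-positive inputs in the statement matches the scope of the interpretation $\models$, which never treats negative occurrences of $\T$, and because the recursion descends only into positive subformulae one never needs persistency for a negatively occurring $\T$. Finally, any $m \geq n$ works, so ``sufficiently large'' may be read simply as $m \geq n$; the clause sending high-complexity and non-$\T$-positive inputs to $0 = 1$ makes every omitted case vacuously persistent.
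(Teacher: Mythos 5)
Your proof is correct and matches the argument the paper intends: the paper gives no proof of the formalised lemma, deferring to the unformalised Persistency Lemma (itself deferred to Cantini's Lemma~9.8), and the standard argument there is exactly yours --- induction on the complexity of the sentence, with the only non-trivial case being the $\T$-atoms, where monotonicity of $\mrm{I}$ in its ordinal arguments (Formalised Weakening) does all the work, the arithmetical atoms being level-independent and the compositional clauses reducing to the induction hypothesis at $\models^{y}_{m-1}$. Your reading of the stratification --- meta-induction on $n$ in step with the subscript of the partial satisfaction predicate, with ``sufficiently large'' realised already by $m \geq n$ --- is the right way to unpack the statement.
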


\begin{lemma}[Formalised Soundness]\label{form soundness}
Fix any ordinal number $\alpha < \varepsilon_0$ and any natural number $n$. Then, for a sufficiently large $m \geq n$, we have the following:
\[
\mathsf{ID}^{*}_{1} \vdash \forall \alpha_0 < \alpha \forall x \in \mathrm{Pos}_n (\mathrm{Bew}_{\mrm{VFM}^{\infty}}(\alpha_0,0, x) \to \models^{\alpha_0}_{m}(x)).
\]
\end{lemma}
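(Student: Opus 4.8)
The plan is to read this lemma as the internalisation of the Soundness Theorem (Theorem~\ref{thm:soundness}) inside $\mrm{ID}^{*}_{1}$, and to mimic that proof step for step, systematically replacing each semantic fact about $\mrm{I}$ and about $\models$ by the corresponding \emph{formalised} lemma already established. Concretely, I would fix $\alpha<\varepsilon_0$ and $n$ in the metatheory, choose $m\ge n$ as large as Formalised Persistency (Lemma~\ref{lem:form-persist-I}) requires, and then argue \emph{inside} $\mrm{ID}^{*}_{1}$ by formal transfinite induction on the derivation length $\alpha_0<\alpha$. Such induction is available because $\alpha<\varepsilon_0$ and $\mrm{ID}^{*}_{1}\supseteq\mrm{PA}_{\lpos}$ proves transfinite induction below $\varepsilon_0$ for every $\lpos$-formula; the induction formula $\forall x\in\mathrm{Pos}_n(\mathrm{Bew}_{\mrm{VFM}^{\infty}}(\alpha_0,0,x)\to\,\models^{\alpha_0}_m(x))$ is $\lpos$ since $\models^{\cdot}_m$ is built from the fixed-point predicate $\mrm{I}$. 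Given $\mathrm{Bew}_{\mrm{VFM}^{\infty}}(\alpha_0,0,x)$ for a $\T$-positive $x$, I would analyse the last rule of the recursive cut-free derivation; because $x$ is $\T$-positive, the only axiom introducing a negative occurrence of $\T$ (namely $\mathsf{Ax}.2$) cannot be the last rule, and every premise of the remaining rules is again $\T$-positive and of length $<\alpha_0$, so the induction hypothesis applies. The bookkeeping on recursive derivations — extracting the last rule, recovering subderivation codes of strictly smaller length, and accessing the premise uniformly in $n$ for the $\omega$-rules $(\forall)$ and $(\T_{\forall})$ — is the standard $\mrm{PA}$-formalisable manipulation of recursive derivability predicates.

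The case analysis then transcribes the proof of Theorem~\ref{thm:soundness}. For the logical rules $(\lor),(\land),(\exists),(\forall)$, I would unfold the defining clauses of $\models^{\alpha_0}_m$ and feed the induction hypothesis to the premises; the complexity bound drops by one at each unfolding, which is exactly why $m\ge n$ suffices, and here it is essential that $\T(t)$ is literal, so $\mathrm{co}(\T(t))=0$ and the truth principles never consume complexity budget. For $(\T_{\to})$ and $(\mrm{Cons})$ the hypothesis delivers premises of the form $\mrm{I}(1;\cdot;\{n\})$ etc.; after raising the two premises to a common level by Formalised Persistency, I would apply Formalised Cut-admissibility (Lemma~\ref{lem:formal_cut-adm}), whose crucial feature is that it preserves both the index and the $\T$-intro count, obtaining the conclusion in the $(\T_{\to})$ case and, in the $(\mrm{Cons})$ case, a contradiction against Formalised Consistency of $\mrm{I}$. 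The case $(\T_{\neq})$ is settled exactly as in Theorem~\ref{thm:soundness}, using Formalised Consistency to refute $\mrm{I}(1;\cdot;\{n^{\circ}=m^{\circ}\})$ when $n^{\circ}\neq m^{\circ}$, and $(\mrm{Rep})$ and $(\T_{\forall})$ use closure of $\mrm{I}$ under $(\T\mhyph\mrm{Intro})$ and under the $\omega$-rule.

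The delicate case is $(\mathsf{Del})$, which is also where I expect the main obstacle to sit. Unfolding $\models^{\alpha_0''}_m\T\ulcorner\T(\dot n)\urcorner$ gives $\mrm{I}(1;\alpha_0'';\{\subdot{\T}n\})$ with $\alpha_0''<\alpha_0$; since $\{\subdot{\T}n\}$ is atomic, Formalised Disquotation yields $\mrm{I}(1;\alpha_1;\{n\})$ with $\alpha_1\le\alpha_0''<\alpha_0$. The \emph{strict decrease} of the $\T$-intro count is precisely what the argument needs, and it is the formalised counterpart of the semantic observation (flagged in the Remark following the Disquotation Lemma) that no atomic sequent is derivable via the $\omega$-rule. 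Formalised Consistency then guarantees that $n$ codes a genuine $\lt$-sentence, and Formalised Weakening raises $\alpha_1$ back up to $\alpha_0$, giving $\mrm{I}(1;\alpha_0;\{n\})$, i.e.\ $\models^{\alpha_0}_m\T(t)$, as required.

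The hardest part, I anticipate, is not any single rule but the uniform ordinal/index bookkeeping: I must check that every appeal to Formalised Cut-admissibility and Formalised Disquotation keeps the $\T$-intro count $(u)_1$ and the index $(u)_0$ within the bounds imposed by the defining clause $\models^{y}_m\T(t)\leftrightarrow\mrm{I}(1;y;\{t\})$, so that the derivation height — which in $\mrm{I}$ is \emph{not} carried by an explicit ordinal but absorbed into the fixed point via the axiom $(\mrm{I}_{A^I}.2)$ — never forces transfinite induction beyond $\varepsilon_0$. Coordinating the single parameter $m$ across the logical cases with the requirement of Formalised Persistency, and verifying that the induction formula is genuinely $\lpos$ so that the induction of $\mrm{ID}^{*}_{1}$ applies, are the remaining points demanding care; once these are in place, the individual rule verifications follow those of Theorem~\ref{thm:soundness} almost verbatim, and instantiating the proven implication at the specific $\alpha_0<\varepsilon_0$ supplied by the embedding and cut-elimination lemmas will complete the upper-bound argument.
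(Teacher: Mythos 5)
Your proposal is correct and follows exactly the route the paper takes: the paper's own proof consists of the single line ``by formal transfinite induction up to $\alpha$'', and your reconstruction---formal transfinite induction on $\alpha_0<\alpha$ inside $\mrm{ID}^{*}_{1}$, transcribing the case analysis of Theorem~\ref{thm:soundness} with each semantic fact replaced by its formalised counterpart (Formalised Cut-admissibility, Disquotation, Consistency, Persistency, Weakening)---is precisely what that line leaves implicit. Your observations about the $\lpos$-ness of the induction formula, the preservation of the index and $\T$-Intro count through the formalised lemmas, and the strict decrease of the $\T$-Intro rank in the $(\mathsf{Del})$ case all match the machinery the paper sets up in the preceding lemmas.
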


\begin{proof}
The proof is by formal transfinite induction up to $\alpha$.
\end{proof}

From the proof of the embedding lemma, $\mrm{PA}$ is clearly enough to formalise this fact. Thus, we have:
\begin{lemma}[Formalised Embedding lemma]\label{lemma:formalized_embedding}
Assume that $\mrm{VFM} \vdash A$ for some $\lt$-sentence $A$.
Then, we have $\mrm{PA} \vdash \mathrm{Bew}_{\mrm{VFM}^{\infty}}(\alpha,0, \{ \ulcorner A \urcorner \})$ for some $\alpha < \varepsilon_0$.
\end{lemma}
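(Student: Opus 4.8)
The plan is to re-run the proof of the semantic embedding lemma (Lemma~\ref{lem:embedding_VFM}), this time keeping track of the fact that every $\mrm{VFM}^{\infty}$-derivation produced there is \emph{recursive}, and that both its construction and the bounds on its length and cut-rank are already verifiable in $\mrm{PA}$. Here ``recursive derivation'' is taken in the usual sense of a primitive recursive index coding the derivation tree, so that the $\omega$-branchings of the rules $(\forall)$ and $(\T_{\forall})$ are presented uniformly by a recursive function of the branch; for this bookkeeping I would rely on the standard formalisation of infinite derivability predicates such as $\mathrm{Bew}_{\mrm{VFM}^{\infty}}$ (cf.~\cite{schwichtenberg_1977}), whose defining clauses transcribe the rules of Definition~\ref{dfn:vfminf}.

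The argument proceeds by meta-induction on a fixed finite $\mrm{VFM}$-derivation $\pi$ of $A$. For each of the axioms $\mrm{VF}1$--$\mrm{VF}8$ and each axiom of $\mrm{PAT}$, the proof of Lemma~\ref{lem:embedding_VFM} already exhibits a concrete cut-free $\mrm{VFM}^{\infty}$-derivation together with an explicit ordinal bound; each such derivation is visibly given by a primitive recursive index, and the fact that this index witnesses $\mathrm{Bew}_{\mrm{VFM}^{\infty}}(\alpha,0,z)$ for the relevant $z$ is a $\Pi^0_1$-statement provable in $\mrm{PA}$ by inspection of the calculus. The purely logical rules of the Hilbert system (modus ponens and generalisation) are then simulated by combining the recursive derivations obtained for the premises: the operations involved---Weakening, and, where a cut is introduced to mimic modus ponens, a single application of $(\mrm{cut})$ followed by the cut-elimination of Lemma~\ref{lem:properties_VFM_inf}---all act recursively on derivation indices, and their effect on the derivation length and the cut-rank is exactly the $\mrm{PA}$-provable ordinal arithmetic (the natural sum and the functions $\omega_k$) already recorded there.

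Since $\pi$ is a fixed finite object, the ordinal attached to the final derivation is a concrete notation $\alpha<\varepsilon_0$ of $\mrm{OT}$, and $\mrm{PA}$ verifies, step by step, that the coded tree is a correct cut-free derivation of $\{\ulcorner A \urcorner\}$ of length $\alpha$ and cut-rank $0$; should any cuts survive the embedding step, the formalised cut-elimination lowers the cut-rank to $0$ while keeping the length below $\varepsilon_0$ via $\alpha \mapsto \omega_k(\alpha)$. The only point requiring genuine care, rather than routine transcription, is to ensure that the infinitary $(\forall)$- and $(\T_{\forall})$-branchings are represented by recursive derivations whose correctness is genuinely captured by the arithmetical predicate $\mathrm{Bew}_{\mrm{VFM}^{\infty}}$, and that all the ordinal operations remain inside $\mrm{OT}$ provably in $\mrm{PA}$. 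Once this coding is fixed, each clause of the induction is a direct formalisation of the corresponding clause in the proof of Lemma~\ref{lem:embedding_VFM}, which is precisely why $\mrm{PA}$ is clearly enough to formalise this fact.
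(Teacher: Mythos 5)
Your proposal is correct and follows essentially the same route as the paper, which simply observes that the proof of the Embedding Lemma (Lemma~\ref{lem:embedding_VFM}) produces recursive derivations and is therefore formalisable in $\mrm{PA}$; you have merely spelled out the bookkeeping (recursive indices for the $\omega$-branchings, $\mrm{PA}$-provable ordinal arithmetic, cut-elimination to reach cut-rank $0$) that the paper leaves implicit. No gap.
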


\begin{thm}\label{thm:upper-bound_VFM}
Let $A$ be any $\lnat$-sentence.
If $\mrm{VFM} \vdash A$, then $\mrm{ID}^{*}_{1} \vdash A$. Therefore, $\vert\mrm{VFM}\vert \leqq \vert \mrm{PA} + \mrm{TI}(< \varphi_{\varepsilon_0} 0)\vert $.
\end{thm}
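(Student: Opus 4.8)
The plan is to obtain Theorem~\ref{thm:upper-bound_VFM} by simply chaining the three formalised results established above, each of which is by now an $\mrm{ID}^{*}_{1}$- (or $\mrm{PA}$-) theorem parametrised by externally fixed data. So suppose $\mrm{VFM} \vdash A$ for an $\lnat$-sentence $A$. First I would invoke the Formalised Embedding Lemma (Lemma~\ref{lemma:formalized_embedding}) to extract a concrete ordinal $\alpha^{*} < \varepsilon_0$ with $\mrm{PA} \vdash \mathrm{Bew}_{\mrm{VFM}^{\infty}}(\alpha^{*},0,\{ \ulcorner A \urcorner \})$; since $\mrm{PA}_{\lpos} \subseteq \mrm{ID}^{*}_{1}$, this is a fortiori a theorem of $\mrm{ID}^{*}_{1}$.

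Next, I would observe that an $\lnat$-sentence contains no occurrence of $\T$ and is therefore vacuously $\T$-positive, so that the singleton $\{ \ulcorner A \urcorner \}$ lies in $\mathrm{Pos}_{n}$ for $n := \mrm{co}(A)$. I then apply the Formalised Soundness Lemma (Lemma~\ref{form soundness}) with the \emph{external} parameters chosen as the ordinal $\alpha^{*}+1$ (still below $\varepsilon_0$) and the complexity bound $n$; this yields a natural number $m \geq n$ with $\mrm{ID}^{*}_{1} \vdash \forall \alpha_0 < \alpha^{*}+1\, \forall x \in \mathrm{Pos}_{n}\,(\mathrm{Bew}_{\mrm{VFM}^{\infty}}(\alpha_0,0,x) \to\ \models^{\alpha_0}_{m}(x))$. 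Instantiating $\alpha_0 := \alpha^{*}$ (which is admissible precisely because I passed to $\alpha^{*}+1$) and $x := \{ \ulcorner A \urcorner \}$, and discharging the $\mathrm{Bew}$-premise by the previous step, I obtain $\mrm{ID}^{*}_{1} \vdash\ \models^{\alpha^{*}}_{m}(\{ \ulcorner A \urcorner \})$. Finally, since $m \geq \mrm{co}(A)$, the reflection fact $\mrm{ID}^{*}_{1} \vdash\ \models^{y}_{m}(\{ \ulcorner A \urcorner \}) \to A$ (established by meta-induction on $A$) delivers $\mrm{ID}^{*}_{1} \vdash A$, as required.

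For the concluding clause I would appeal to the Fact that $\mrm{ID}^{*}_{1}$ is arithmetically conservative over $\widehat{\mrm{ID}}_1$, whence $\vert \mrm{ID}^{*}_{1} \vert \equiv \varphi_{\varepsilon_0}0$. By the definition of proof-theoretic ordinal, $\mrm{ID}^{*}_{1}$ and $\mrm{PA}+\mrm{TI}_{\lnat}(<\varphi_{\varepsilon_0}0)$ prove exactly the same $\lnat$-statements; since the argument above shows that every $\lnat$-theorem of $\mrm{VFM}$ is a theorem of $\mrm{ID}^{*}_{1}$, it follows that $\vert \mrm{VFM} \vert \leqq \vert \mrm{PA}+\mrm{TI}(<\varphi_{\varepsilon_0}0)\vert$. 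Combined with the lower bound of Theorem~\ref{thm:lower_bound_VFM^-}, this pins down the announced equivalence $\vert \mrm{VFM} \vert \equiv \vert \mrm{KF} \vert$.

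As for the difficulty: the present theorem carries essentially no mathematical weight of its own, since all the labour resides in the formalised cut-admissibility and disquotation machinery feeding into Lemma~\ref{form soundness}, whose entire purpose is to keep the $\T$-Intro rank and the index $i$ of the calculus $I$ under control so that the consistency argument fits inside $\mrm{ID}^{*}_{1}$ rather than requiring transfinite induction beyond $\varepsilon_0$. The only genuine care needed \emph{here} is the bookkeeping of the external parameters: one must apply Soundness at a strictly larger ordinal than the $\alpha^{*}$ produced by Embedding, so that $\alpha^{*}$ falls within the range $\alpha_0 < \alpha^{*}+1$, and one must take the complexity cutoff $m$ to be at least $\mrm{co}(A)$ so that the reflection fact for $\models^{y}_{m}$ applies. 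Once these choices are aligned, the composition of the three lemmas is immediate.
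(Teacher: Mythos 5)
Your proposal is correct and follows essentially the same route as the paper: Formalised Embedding, then Formalised Soundness applied to the (vacuously $\T$-positive) singleton $\{\ulcorner A\urcorner\}$, then the reflection fact $\models^{y}_{m}(\{\ulcorner A\urcorner\})\to A$, and finally the conservativity of $\mrm{ID}^{*}_{1}$ over $\widehat{\mrm{ID}}_1$ for the ordinal bound. The only cosmetic difference is that the paper's proof starts from a derivation of arbitrary cut-rank $k$ and invokes formalised cut-elimination to reduce to rank $0$, whereas you read the rank-$0$ conclusion directly off the statement of the Formalised Embedding Lemma; both are fine.
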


\begin{proof}
Assume that $\mrm{VFM} \vdash A$. Then, by the formalised embedding lemma (Lemma \ref{lemma:formalized_embedding}), we have that $\mrm{PA} \vdash \mathrm{Bew}_{\mrm{VFM}^{\infty}}(\alpha,k, \{ \ulcorner A \urcorner \})$ for some $k$ and $\alpha < \varepsilon_0$.
Thus, by formalising Cut-ellimination for $\mrm{VFM}^{\infty}$, we also have $\mrm{PA} \vdash \mathrm{Bew}_{\mrm{VFM}^{\infty}}(\omega_k(\alpha),0, \{ \ulcorner A \urcorner \})$.
Therefore, by formalised soundness, we obtain $\mrm{ID}^{*}_{1} \vdash\models^{\omega_k(\alpha)}_{m} (\{ \ulcorner A \urcorner \})$ for a sufficiently large $m \geq \mrm{co}(A)$.
Since we can prove $\mrm{ID}^{*}_{1} \vdash \models^{\omega_k(\alpha)}_{m} (\{ \ulcorner A \urcorner \}) \to A$ by an induction on the complexity of $A$,
it follows that $\mrm{ID}^{*}_{1} \vdash A$.
\end{proof}


\begin{proof}[Proof of Theorem \ref{upper bound}]
Follows from Theorem \ref{thm:upper-bound_VFM}.
\end{proof}

By the results of the last two sections:

\begin{thm}
$\vert\mrm{VFM}\vert \equiv \vert \mrm{PA} + \mrm{TI}(< \varphi_{\varepsilon_0} 0)\vert \equiv \vert \mrm{KF}\vert \equiv \vert \mrm{RT}_{<\varepsilon_0}\vert $
\end{thm}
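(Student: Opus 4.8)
The plan is to obtain the whole chain of equivalences by sandwiching $\vert\mrm{VFM}\vert$ between the lower bound of Section~\ref{subsec:VFM-lower-bound} and the upper bound of Section~\ref{section:upper_bound_vfm}, and then to close the remaining identifications with the classical value of $\vert\mrm{KF}\vert$. First I would record that $\mrm{VFM}^-$ is by definition a subtheory of $\mrm{VFM}$: the two share $\mrm{VF}1$--$\mrm{VF}3$ and $\mrm{VF}5$--$\mrm{VF}8$, while $\mrm{VF}4^*$ is precisely the left-to-right half of $\mrm{VF}4$ and so is a theorem of $\mrm{VFM}$. Hence every $\lnat$-theorem of $\mrm{VFM}^-$ is an $\lnat$-theorem of $\mrm{VFM}$, giving $\vert\mrm{VFM}^-\vert \leqq \vert\mrm{VFM}\vert$. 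Combining this with the lower bound $\vert\mrm{VFM}^-\vert \geqq \vert\mrm{KF}\vert$ (Theorem~\ref{thm:lower_bound_VFM^-}) yields $\vert\mrm{VFM}\vert \geqq \vert\mrm{KF}\vert$.

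For the reverse inequality I would simply appeal to Theorem~\ref{upper bound} (equivalently, to Theorem~\ref{thm:upper-bound_VFM}), which gives $\vert\mrm{VFM}\vert \leqq \vert\mrm{KF}\vert$ by interpreting every $\lnat$-consequence of $\mrm{VFM}$ in $\mrm{ID}^*_1$ and invoking the conservativity of $\mrm{ID}^*_1$ over $\widehat{\mrm{ID}}_1$. Putting the two inequalities together delivers $\vert\mrm{VFM}\vert \equiv \vert\mrm{KF}\vert$, and incidentally $\vert\mrm{VFM}^-\vert \equiv \vert\mrm{KF}\vert$ as well, since we have sandwiched $\vert\mrm{KF}\vert \leqq \vert\mrm{VFM}^-\vert \leqq \vert\mrm{VFM}\vert \leqq \vert\mrm{KF}\vert$.

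The remaining equivalences $\vert\mrm{KF}\vert \equiv \vert\mrm{RT}_{<\varepsilon_0}\vert \equiv \vert\mrm{PA}+\mrm{TI}(<\vphi_{\varepsilon_0}0)\vert$ I would not reprove: the first is the by-now standard ordinal analysis of $\mrm{KF}$ going back to Cantini~\cite{cantini_1989}, pinning its proof-theoretic ordinal at $\vphi_{\varepsilon_0}0$ and identifying it with that of ramified truth up to $\varepsilon_0$; the second holds by the very definition of proof-theoretic ordinal fixed in Section~2.1, under which $\vert\mrm{PA}+\mrm{TI}_{\lnat}(<\alpha)\vert = \alpha$. Substituting $\alpha = \vphi_{\varepsilon_0}0$ then lines all four quantities up, completing the theorem.

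The point worth stressing is that this final statement carries no new difficulty of its own; it is a pure assembly of the two hard results that precede it. The genuine obstacle lay in the upper-bound argument, where the truth-as-provability relation $I$ employs derivation heights well past $\varepsilon_0$, so a naive formalisation in $\mrm{RT}_{<\varepsilon_0}$ breaks down. That difficulty was circumvented by replaying the consistency proof inside $\mrm{ID}^*_1$, whose axiom $(\mrm{I}_A.2)$ supplies induction on derivation length in place of transfinite induction beyond $\varepsilon_0$, while conservativity over $\widehat{\mrm{ID}}_1$ keeps the arithmetical strength pinned exactly at $\vphi_{\varepsilon_0}0$.
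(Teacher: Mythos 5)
Your proposal is correct and matches the paper's own treatment exactly: the theorem is stated there as an immediate corollary of Theorem~\ref{thm:lower_bound_VFM^-} (lower bound via $\mrm{VFM}^-\subseteq\mrm{VFM}$) and Theorem~\ref{thm:upper-bound_VFM} (upper bound via $\mrm{ID}^*_1$ and its conservativity over $\widehat{\mrm{ID}}_1$), with the identifications $\vert\mrm{KF}\vert\equiv\vert\mrm{RT}_{<\varepsilon_0}\vert\equiv\vert\mrm{PA}+\mrm{TI}(<\vphi_{\varepsilon_0}0)\vert$ taken as known. Your write-up merely makes explicit the sandwiching that the paper leaves implicit in the phrase ``by the results of the last two sections.''
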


\section{The Theories VFW$^-$ and VFW}\label{section:VFW}

As we have seen in the previous sections, $\mrm{VFM}^{(-)}$ is much weaker than $\mrm{VF}^{(-)}$ 
and it would be natural to speculate that the axiom of \emph{internal completeness}, i.e., the left-to-right direction $(\mrm{VF}7^{\to})$ of $\mrm{VF}7$, is the cause of the weakness:
\begin{description}
\item[$(\mrm{VF}7^{\to})$] $\T(\ulcorner \neg \T(\dot x) \rightarrow  \T(\subdot \neg \dot x)\urcorner)$
\item[$(\mrm{VF}7^{\leftarrow})$] $\T(\ulcorner \neg \T(\dot x) \leftarrow  \T(\subdot \neg \dot x)\urcorner)$
\end{description}
Indeed, by Friedman and Sheard's result, we can easily observe that even a small fragment of $\mrm{VF}$ is incompatible with $\mrm{VF}7^{\to}$.
Let $\T \mhyph \mrm{Elim}$ be the following rule:
\begin{center} \
\infer[]{A}{\T \ulcorner A \urcorner}, for any $\lt$-sentence $A$.
\end{center}

\begin{obse}[{cf.~\cite[Section 4]{friedman_sheard_1987}}]
$\mrm{V}2$, $\mrm{V}4$, $\mrm{V}6$,  $\mrm{VF}7^{\to}$, and $\T \mhyph \mrm{Elim}$ combinedly imply a contradiction. 
\end{obse}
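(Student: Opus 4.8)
The plan is to derive an outright contradiction from a liar sentence. The one delicate point throughout is that $\T\mhyph\mrm{Elim}$ is a \emph{rule}, applicable to theorems only, and not an implication that may be discharged under hypotheses; so the naive liar reductio is unavailable, and the real task is to manufacture a genuine \emph{theorem} of the form $\T\corn{\cdots}$ to which the rule can legitimately be applied.

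First I would record the Friedman--Sheard fact that $\mrm{V}2$ and $\mrm{V}6$ already give internal necessitation for concrete $\mrm{PAT}$-theorems: for a fixed $\mrm{PAT}$-theorem $B$, tracing a derivation of $B$ in a Hilbert calculus whose only rule is modus ponens (all generalisations of axioms being themselves axioms), $\mrm{V}2$ makes each axiom internally true and $\mrm{V}6$ carries internal truth through each modus ponens step, yielding $\vdash\T\corn B$ by a finite derivation that uses neither $\mrm{V}3$ nor $\mrm{V}4$. Fix, by diagonalisation, a sentence $\lambda$ with $\vdash_{\mrm{PAT}}\lambda\leftrightarrow\neg\T\corn\lambda$; then $\neg\lambda\leftrightarrow\T\corn\lambda$, and in particular $\T\corn\lambda\to\neg\lambda$, are $\mrm{PAT}$-theorems. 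Applying the above to the latter and then $\mrm{V}6$ (internal modus ponens) gives the theorem $\vdash\T\corn{\T\corn\lambda}\to\T\corn{\neg\lambda}$.

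Next I would assemble two theorems. The instance of $\mrm{V}4$ at $x=\corn\lambda$ is $\vdash\T\corn\lambda\to\T\corn{\T\corn\lambda}$, which composed with the implication just obtained yields
\[ \vdash\ \T\corn\lambda\to\T\corn{\neg\lambda}. \qquad(\dagger) \]
The instance of $\mrm{VF}7^{\to}$ at $x=\corn\lambda$ is the theorem $\T\corn{\neg\T\corn\lambda\to\T\corn{\neg\lambda}}$, which already has the shape $\T\corn{\cdots}$, so a single application of $\T\mhyph\mrm{Elim}$ produces
\[ \vdash\ \neg\T\corn\lambda\to\T\corn{\neg\lambda}. \qquad(\ddagger) \]
As $(\dagger)$ and $(\ddagger)$ are both unconditional theorems, reasoning by cases on $\T\corn\lambda$ in the (classical) external logic yields the theorem $\vdash\T\corn{\neg\lambda}$.

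Finally, since $\T\corn{\neg\lambda}$ is now a genuine theorem, $\T\mhyph\mrm{Elim}$ applies and gives $\vdash\neg\lambda$; by the liar equivalence this is $\vdash\T\corn\lambda$, and a second, equally legitimate application of $\T\mhyph\mrm{Elim}$ gives $\vdash\lambda$, so the theory proves both $\lambda$ and $\neg\lambda$. I expect the main obstacle to be exactly the bookkeeping forced by the rule status of $\T\mhyph\mrm{Elim}$: the derivation of $(\ddagger)$ and every step feeding the two closing eliminations must be unconditional, which is precisely why $(\dagger)$ and $(\ddagger)$ are merged by external excluded middle into $\T\corn{\neg\lambda}$ \emph{before} any truth predicate is eliminated. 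A secondary point to check carefully is the modus-ponens-only presentation of $\mrm{PAT}$ underlying the necessitation step, which is what allows $\mrm{V}3$ to be dispensed with.
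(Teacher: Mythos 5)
Your proof is correct and is essentially the standard Friedman--Sheard argument that the paper itself merely cites (it gives no proof of the Observation beyond the reference to \cite[Section 4]{friedman_sheard_1987}): use $\mrm{V}2$ and $\mrm{V}6$ to internalise the $\mrm{PAT}$-theorem $\T\corn{\lambda}\to\neg\lambda$, chain with $\mrm{V}4$ to get $\T\corn{\lambda}\to\T\corn{\neg\lambda}$, extract $\neg\T\corn{\lambda}\to\T\corn{\neg\lambda}$ from $\mrm{VF}7^{\to}$ by one application of $\T\mhyph\mrm{Elim}$, and combine by excluded middle before eliminating. Your care in producing the unconditional theorem $\T\corn{\neg\lambda}$ prior to any use of the $\T\mhyph\mrm{Elim}$ rule, and your remark that an MP-only presentation of the logical calculus lets $\mrm{V}3$ be dispensed with, are exactly the right points to attend to.
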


Thus, $(\mrm{VF}7^{\to})$ and $\T \mhyph \mrm{Elim}$ are in the relationship of trade-off over $\mrm{VFM}^{(-)}$.
In this section, we study the strength of $\T \mhyph \mrm{Elim}$ to look more closely at the difference between the two.

The theory $\mrm{VFW}$ is obtained from $\mrm{VFM}$ by adding $ (\T \mhyph \mrm{Elim})$ and by replacing $(\mrm{VF}7)$ with $(\mrm{VF}7^{\leftarrow})$.
The theory $\mrm{VFW}^-$ is similarly obtained from $\mrm{VFM}^-$,
then $\mrm{VFW}^-$ is of course a subtheory of $\mrm{VFW}$.
Since $\mrm{VFW}$, and hence $\mrm{VFW}^-$, is clearly a subtheory of $\mrm{VF}$, it is consistent.
In particular, we determine the exact proof-theoretic strength: 

\begin{thm}\label{thm:VFW}
$\vert \mrm{VFW}^- \vert \equiv \vert \mrm{VFW} \vert \equiv \vert \mrm{PA} + \mrm{TI}(< \varphi_{\varphi_2 0} 0)\vert \equiv \vert \mrm{RT}_{< \varphi_20} \vert$.
\end{thm}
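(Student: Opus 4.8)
The plan is to prove the chain of equivalences by the same pinching strategy used for $\mrm{VFM}$: since $\mrm{VFW}^-$ is a subtheory of $\mrm{VFW}$, it suffices to establish the lower bound $\vert \mrm{VFW}^-\vert \geqq \vert \mrm{RT}_{<\varphi_2 0}\vert$ and the upper bound $\vert \mrm{VFW}\vert \leqq \vert \mrm{RT}_{<\varphi_2 0}\vert$, and then to invoke the standard computation of ramified-truth ordinals, $\vert \mrm{RT}_{<\varphi_2 0}\vert \equiv \vert \mrm{PA}+\mrm{TI}_{\lnat}(<\varphi_{\varphi_2 0}0)\vert$ (the $\varepsilon_0$-instance of which already underlies the $\mrm{VFM}$ theorem). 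The guiding idea is that the role formerly played by internal completeness is now played by $\T\mhyph\mrm{Elim}$: by the trade-off between $\mrm{VF}7^{\to}$ and $\T\mhyph\mrm{Elim}$ recorded above they cannot coexist, and it is precisely the replacement of the internal-completeness axiom by the reflection \emph{rule} $\T\mhyph\mrm{Elim}$ that lifts the admissible ramification level from $\varepsilon_0$ (the ceiling for $\mrm{VFM}$) to its first fixed point $\varphi_2 0 = \zeta_0$, thereby lifting the ordinal from $\varphi_{\varepsilon_0}0$ to $\varphi_{\varphi_2 0}0$.

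For the lower bound I would re-use the Fujimoto-style interpretation of Theorem~\ref{thm:lower_bound_VFM^-} essentially unchanged. The auxiliary formula $\xi^*$ and the collapsing function $k$ are defined exactly as before, and the eight properties of Lemma~\ref{properties xi star} transfer to $\mrm{VFW}^-$: the SK-closure axioms $\mrm{VF}1$--$\mrm{VF}3$, $\mrm{VF}5$, $\mrm{VF}6$, $\mrm{VF}8$ are retained, $\mrm{VF}4^*$ still supplies the $\T\mhyph\mrm{Intro}$ direction used in item (viii), and, crucially, every one of those proofs used only the \emph{consistency} direction of $\mrm{VF}7$ (the statement $\neg(\T x \wedge \T\subdot{\neg} x)$, derived from $\mrm{VF}7^{\leftarrow}$ together with $\mrm{VF}3$ and the compositional axioms, exactly as for $\mrm{VFM}^-$) and never internal completeness. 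Hence the translation $\sigma_\gamma$ of $\mrm{RT}_{<\gamma}$ verifies all ramified-truth axioms at every level $\gamma$, \emph{provided} the base theory can carry out the transfinite induction up to $\gamma$ certifying $\mrm{Sent}_{<\gamma}(x)\to\xi^*(kx)$. This is the sole new point. For $\mrm{VFM}^-$ the induction was free up to $\varepsilon_0$ because $\mrm{PA}$ proves it; for $\mrm{VFW}^-$ the target $\varphi_2 0$ exceeds $\varepsilon_0$, so the decisive lemma to establish is
\[
\mrm{VFW}^- \vdash \mrm{TI}(\gamma, A) \quad \text{for every } \gamma < \varphi_2 0 \text{ and every } A\in\lt.
\]
I would prove this by the iterated-reflection argument familiar from the proof-theoretic treatment of Friedman--Sheard-style rule systems: $\T\mhyph\mrm{Intro}$ internalises a given instance of transfinite induction, the compositional axioms propagate it, and $\T\mhyph\mrm{Elim}$ discharges the truth predicate, so that each nested use of the two rules climbs one level of the $\varepsilon$-hierarchy $\beta\mapsto\varepsilon_\beta$; closure under nesting then reaches every ordinal below the first fixed point $\varphi_2 0$. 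With this lemma in hand the interpretation of $\mrm{RT}_{<\varphi_2 0}$ goes through and yields $\vert \mrm{VFW}^-\vert \geqq \varphi_{\varphi_2 0}0$.

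For the upper bound I would mirror Section~\ref{section:upper_bound_vfm}. First introduce an infinitary calculus $\mrm{VFW}^{\infty}$ obtained from $\mrm{VFM}^{\infty}$ by deleting the completeness rule $(\T_{\mrm{Comp}})$, keeping $(\T_{\mrm{Cons}})$, and adjoining a rule corresponding to $\T\mhyph\mrm{Elim}$; then re-prove the Embedding and Cut-elimination lemmata. Next, adapt the truth-as-provability interpretation of Definition~\ref{dfn:I} so that the interpreting relation is closed under \emph{full} $\T\mhyph\mrm{Elim}$ --- which the Disquotation Lemma (Lemma~\ref{lem:disq_I}) already delivers on atomic sequents. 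The catch is that, once $\T\mhyph\mrm{Elim}$ is a genuine rule, atomic conclusions can be reached through derivations that use the $\omega$-rule, so the cheap $\omega_n$-blow-up that kept $\mrm{VFM}$'s disquotation safely below $\varepsilon_{\varepsilon_0}$ no longer applies: each (possibly nested) $\T\mhyph\mrm{Elim}$ now costs a full $\varepsilon$-jump, and the derivation heights sweep up the same $\varepsilon$-tower as in the lower bound, with supremum $\varphi_2 0$. Consequently the consistency proof can no longer be formalised in $\mrm{ID}^*_1$; instead, as anticipated in the footnote of Section~\ref{section:upper_bound_vfm}, I would formalise the infinite derivability predicate for $\mrm{VFW}^{\infty}$ using the ramified truth predicates of a metatheory of strength $\varphi_{\varphi_2 0}0$, obtaining $\vert \mrm{VFW}\vert \leqq \varphi_{\varphi_2 0}0$.

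The main obstacle in both directions is the exact calibration of $\T\mhyph\mrm{Elim}$ to the ordinal $\varphi_2 0$. On the lower side one must show that nested applications of the two rules yield transfinite induction up to \emph{every} ordinal below $\varphi_2 0$ but no further, which requires pinning down the per-step $\varepsilon$-jump rather than merely bounding it. On the upper side one must show that the height blow-up produced by nested $\T\mhyph\mrm{Elim}$ is bounded by $\varphi_2 0$ and, moreover, is expressible by the ramified truth predicates of the chosen metatheory. Both tasks reduce to controlling the interaction between the $\T\mhyph\mrm{Intro}$-rank and the disquotation tower; this bookkeeping --- essentially trivial for $\mrm{VFM}$, where everything remained below $\varepsilon_{\varepsilon_0}$ --- is where the substance of the theorem lies.
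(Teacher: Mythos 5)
Your proposal follows the paper's strategy almost exactly: the lower bound via the Fujimoto-style interpretation of Theorem~\ref{thm:lower_bound_VFM^-} once $\mrm{TI}_{\lt}(<\varphi_20)$ is available in $\mrm{VFW}^-$ (the paper simply cites Leigh--Rathjen, Theorem~2.41, for this, rather than re-deriving it by the iterated $\T\mhyph\mrm{Intro}$/$\T\mhyph\mrm{Elim}$ reflection argument you sketch, but that is the same underlying mechanism), and the upper bound via a calculus $\mrm{VFW}^{\infty}$, a modified interpreting relation $I'$ (obtained from $I$ by dropping $(\mrm{Comp})$), and a formalisation in $\mrm{RT}_{<\varphi_20}$ using ramified truth predicates to express the now non-recursive derivability predicate in place of $\mrm{ID}^*_1$. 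Your diagnosis of where the ordinal $\varphi_20$ comes from --- each discharge of the truth predicate costing an $\varepsilon$-jump, with the nesting bounded because a proof contains only finitely many applications of $\T\mhyph\mrm{Elim}$ --- matches the paper's Corollary~\ref{cor:adm_T-Elim} and Corollary~\ref{cor:embed_VFW}.

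The one point where your plan, as written, would run into trouble is the proposal to adjoin $\T\mhyph\mrm{Elim}$ as a \emph{primitive} rule of $\mrm{VFW}^{\infty}$ and then ``re-prove Cut-elimination.'' The cut-elimination argument for these infinitary calculi depends on every truth principle having a principal formula of the form $\T(t)$ (as the paper remarks after Definition~\ref{dfn:vfminf}); a primitive rule passing from $\T\ulcorner A\urcorner$ to an arbitrary $A$ violates this and the standard reduction steps break down. The paper avoids the issue by never making $\T\mhyph\mrm{Elim}$ primitive: it adds the axiom $(\mathrm{Ax}_{I'})$, which imports $I'$-derivations into $\mrm{VFW}^{\infty}$ with principal formula $\T(t\simeq\ulcorner A\urcorner)$, proves soundness of $\mrm{VFW}^{\infty}$ with $\T$ interpreted by $I'$, and combines this with $I'\subseteq\mrm{VFW}^{\infty}$ (Lemma~\ref{lem:I'_subset_VFW-infty}) to obtain $\T\mhyph\mrm{Elim}$ as an \emph{admissible} rule (Corollary~\ref{cor:adm_T-Elim}), which is all the Embedding Lemma needs. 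Your plan should be amended to this admissibility device; everything else goes through as you describe.
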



Before exhibiting the proof, we give an outline.
First, the lower bound is immediate, because Leigh and Rathjen \cite[Theorem~2.41]{rathjen_leigh_2010} prove that (a subtheory of) $\mrm{VFW}^-$ derives the schema $\mrm{TI}_{\lt}(< \varphi_20)$.
Thus, by applying the proof of Theorem~\ref{thm:lower_bound_VFM^-}, we can define the system of ramified truth $\mrm{RT}_{< \varphi_20}$ in $\mrm{VFW}^-$.
\footnote{In particular, note that $\mrm{VF}7$ is not used in the proof of Lemma~\ref{properties xi star}.}

To determine the upper bound of $\mrm{VFW}$, we need to modify the upper-bound proof for $\mrm{VFM}$, in which we used two derivation systems $\mrm{VFM}^{\infty}$ and $I(x;y;z;w)$.
Here, $\mrm{VFM}^{\infty}$ is used to embed $\mrm{VFM}$, and $I(x;y;z;w)$ is for the interpretation of the truth predicate.
For the proof-theoretic purpose, we considered only \emph{recursive} derivations of 
$\mrm{VFM}^{\infty}$, whereas $I(x;y;z;w)$ need not be recursive. 
Now, if one were to give a similar proof for $\mrm{VFW}$, it would proceed as follows:
\begin{enumerate}
\item We define a derivation system $\mrm{VFW}^{\infty}$ for which it is proved that $\mrm{VFW}$ is embeddable.
In particular, we want to show that $\mathsf{VFW}^{\infty}$ is closed under $\T \mhyph \mrm{Elim}$.

\item Similar to $I(x;y;z;w)$, we define a derivation system $I'$ such that if $\mrm{VFW}^{\infty} \vdash \T \ulcorner A \urcorner$, then $I' \vdash A$ holds.

\item We also prove that if $I' \vdash A$, then $\mrm{VFW}^{\infty} \vdash A$.
Therefore, $\mrm{VFW}^{\infty}$ is closed under $\T \mhyph \mrm{Elim}$, as required.
\end{enumerate}

The last step entails that $I' \subseteq \mrm{VFW}^{\infty}$, but since $I'$ has the (non-recursive) $\omega$-rule, $\mrm{VFW}^{\infty}$ must also be closed under it. Thus  $\mrm{VFW}^{\infty}$ cannot be restricted to recursive derivations.
To solve this problem, we, in $\mrm{RT}_{< \varphi_20}$, define both systems $\mrm{VFW}^{\infty}$ and $I'$ in which derivation lengths are restricted to less than $\varphi_20$.
It should be noted here that we now have $\mrm{TI}_{\mathcal{L}_{< \varphi_20}}(< \varphi_20)$ in $\mrm{RT}_{< \varphi_20}$,
which is enough to directly  formalise $\mrm{VFW}^{\infty}$, $I'$, and their properties, such as the soundness theorem for $I'$. 
Therefore, we do not need to rely on $\mrm{ID}^*_1$, unlike for the upper-bound proof of $\mrm{VFM}$.


Keeping the above motivation in mind, we now define the systems $\mrm{VFW}^{\infty}$ and $I'(x;y;z;w)$. 
First, $I'(i;\alpha;\beta;\varGamma) \subseteq \{0,1\} \times \varphi_20 \times \varphi_20 \times \mrm{Seq}$ is obtained from $I(i;\alpha;\beta;\varGamma)$ by simply removing the axiom $(\mrm{Comp})$ of Definition~\ref{dfn:I}.
The meanings of $i$, $\alpha, \beta$, and $\varGamma$ are exactly the same as those for $I$. To summarise, $I'$ is defined as follows:

\begin{dfn}[Definition of $I'$]\label{defn:I'}
The set $I' \subseteq \{0,1\} \times \varphi_20 \times \varphi_20 \times \mrm{Seq}$ is defined to be the least fixed-point which is closed under the clauses below.
We write $I'(i;\alpha;\beta;\varGamma)$ instead of $\langle i,\alpha,\beta, \varGamma \rangle \in I'$.

Let $i \in \{0,1\}$, $\alpha_0, \alpha, \beta_0, \beta \in \mrm{On}$, and $\varGamma \in Seq$.
Furthermore, we assume $\alpha_0 < \alpha$, $\beta_0 < \beta$, and $\alpha \leq \beta$.
\begin{description}
\item[$(\mathrm{Ax}.1)$] $I'(i; \alpha; \beta; \varGamma, s=t)$ holds, if $s=t$ is true.

\item[$(\mathrm{Ax}.2)$] $I'(i; \alpha; \beta; \varGamma, s \neq t)$ holds, if $s\neq t$ is true.

\item[$(\mathrm{Ax}.3)$] $I'(i; \alpha; \beta; \varGamma, \T(s), \neg \T(t \simeq s))$ holds.

\item[$(\lor)$] If $I'(i;\alpha; \beta_0;\varGamma, A_0,A_1)$, then $I'(i;\alpha; \beta;\varGamma, A_0 \lor A_1)$.

\item[$(\land)$] If $I'(i;\alpha; \beta_0; \varGamma, A_0)$ and $I'(i;\alpha; \beta_0; \varGamma, A_1)$, then $I'(i;\alpha; \beta;\varGamma, A_0 \land A_1)$.

\item[$(\exists)$] If $I'(i;\alpha; \beta_0; \varGamma, A(n))$ for some $n$, then $I'(i;\alpha; \beta; \varGamma, \exists x A(x))$.

\item[$(\forall)$] If $I'(i;\alpha;\beta_0;\varGamma, A(n))$ for all $n$, then $I'(i;\alpha;\beta;\varGamma, \forall x A(x))$.

\item[$(\T \mhyph \mrm{Intro})$] If $I'(i;\alpha_0;\beta_0;A)$, then 
$I'(i;\alpha;\beta;\varGamma, \T (t \simeq \ulcorner A \urcorner))$.


\item[$(\mrm{Cons})$] If $I'(i;\alpha;\beta_0;\varGamma, \T(n))$ and $I'(i;\alpha;\beta_0;\varGamma, \T(\subdot{\neg}n))$, then $I'(1;\alpha;\beta;\varGamma)$.

\item[$(\mrm{Norm})$] If $I'(i;\alpha;\beta_0;\varGamma, \T(n))$ and $\neg \mathrm{Sent}(n)$ holds, then $I'(1;\alpha;\beta;\varGamma)$.

\end{description}
\end{dfn}

Second, the system $\mrm{VFW}^{\infty}$ is obtained from $\mrm{VFM}^{\infty}$ by removing the axiom $(\T_{\mrm{Comp}})$, and instead adding the following axiom $(\mathrm{Ax}_{I'})$ and rule $(\mrm{Norm})$:
\begin{description}
\item[$(\mathrm{Ax}_{I'})$] $\sststile{k}{\beta}\varGamma, \T(t \simeq \ulcorner A \urcorner)$ holds, if $I'(i;\alpha;\beta;A)$.
\end{description}

\begin{center} \
\infer[(\mrm{Norm})]{\sststile{k}{\alpha} \varGamma}{\sststile{k}{\alpha_0} \varGamma, \T(n) \text{ for $\mathbb{N} \not\models \mathrm{Sent}(n)$}}.
\end{center}

The new axiom and rule are used to prove Lemma~\ref{lem:I'_subset_VFW-infty} below. We remark that Lemma~\ref{lem:properties_VFM_inf} (Substitution, Weakening, and Cut elimination) holds for $\mrm{VFW}^{\infty}$. By contrast, unlike with Lemma~\ref{lem:embedding_VFM},
the proof of the Embedding Lemma for $\mrm{VFW}$ is more complicated due to $\T \mhyph \mrm{Elim}$.
In order to prove the admissibility of $\T \mhyph \mrm{Elim}$ in $\mrm{VFW}^{\infty}$, the Soundness Lemma must be established.

We begin by establishing the following, which roughly means $I' \subseteq \mrm{VFW}^{\infty}$:
\begin{lemma}\label{lem:I'_subset_VFW-infty}
If $I'(i;\alpha;\beta;\varGamma)$, then $\mrm{VFW}^{\infty}\sststile{0}{\beta}\varGamma$. 
\end{lemma}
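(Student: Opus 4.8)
The plan is to argue by induction on the generation of the least fixed-point $I'$, i.e.\ to verify that the predicate $\Phi(i,\alpha,\beta,\varGamma) :\equiv \mrm{VFW}^{\infty}\sststile{0}{\beta}\varGamma$ is closed under every clause of Definition~\ref{defn:I'}. Since $\Phi$ mentions neither $i$ nor $\alpha$, these two parameters play no role on the $\mrm{VFW}^{\infty}$ side; the only bookkeeping that matters is that the derivation height in $\mrm{VFW}^{\infty}$ be kept equal to the $I'$-height $\beta$, which is exactly why the statement asserts $\sststile{0}{\beta}\varGamma$ with the \emph{same} $\beta$. The cut-rank will stay $0$ throughout, since none of the reconstructed derivations introduces a cut.

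Most clauses are matched verbatim by an axiom or rule of $\mrm{VFW}^{\infty}$, followed by a single application of Weakening (Lemma~\ref{lem:properties_VFM_inf}, which also holds for $\mrm{VFW}^{\infty}$) to raise a premise height $\beta_0 < \beta$ to $\beta$. Concretely, $(\mathrm{Ax}.1)$ and $(\mathrm{Ax}.2)$ become instances of $(\mathsf{Ax}.1)$ for true arithmetical literals; $(\mathrm{Ax}.3)$ is an instance of $(\mathsf{Ax}.2)$ after relabelling the value-equal terms (taking $\T(s)$ as $\T(b\simeq a)$ and $\neg\T(t\simeq s)$ as $\neg\T(a)$ with $a:=t$, $b:=s$); and $(\lor)$, $(\land)$, $(\exists)$, $(\forall)$, $(\mrm{Cons})$ are handled by the identically-named rules, applying the induction hypothesis to their premises. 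The $(\forall)$ case in particular feeds the induction hypothesis for all $n$ into the $\omega$-rule of $\mrm{VFW}^{\infty}$.

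The two genuinely new cases are precisely those for which $\mrm{VFW}^{\infty}$ was equipped with a tailored rule and axiom. For $(\mrm{Norm})$, the induction hypothesis gives $\sststile{0}{\beta_0}\varGamma, \T(n)$ with $\neg\mathrm{Sent}(n)$, so the new rule $(\mrm{Norm})$ of $\mrm{VFW}^{\infty}$ yields $\sststile{0}{\beta}\varGamma$ directly. For $(\T\mhyph\mrm{Intro})$, the crucial observation is that $\mrm{VFW}^{\infty}$ has \emph{no} unrestricted $\T$-introduction, so the induction hypothesis $\sststile{0}{\beta_0}A$ is of no direct help; instead one invokes the new axiom $(\mathrm{Ax}_{I'})$, which is stated exactly so that the clause's premise $I'(i;\alpha_0;\beta_0;A)$ immediately furnishes $\sststile{0}{\beta_0}\varGamma, \T(t\simeq\ulcorner A\urcorner)$, whence Weakening gives $\sststile{0}{\beta}\varGamma, \T(t\simeq\ulcorner A\urcorner)$.

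I expect no serious obstacle here: the content of the lemma lies entirely in the prior design of $\mrm{VFW}^{\infty}$, which was built so that each clause of $I'$ has a direct counterpart. The one point demanding care is the $(\T\mhyph\mrm{Intro})$ case—recognising that it must appeal to $(\mathrm{Ax}_{I'})$ rather than to the induction hypothesis—together with the ordinal bookkeeping, namely ensuring that the reconstructed height never exceeds $\beta$; this is guaranteed by the side conditions $\beta_0 < \beta$ inherited from Definition~\ref{defn:I'} and the admissibility of Weakening.
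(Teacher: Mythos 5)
Your proof is correct and follows essentially the same route as the paper: induction on $\beta$ (equivalently, on the generation of $I'$), with all clauses other than $(\T\mhyph\mrm{Intro})$ discharged by the identically-named rules of $\mrm{VFW}^{\infty}$, and the $(\T\mhyph\mrm{Intro})$ case handled—exactly as in the paper—by appealing to the axiom $(\mathrm{Ax}_{I'})$ together with Weakening rather than to the induction hypothesis. Your additional remarks on the irrelevance of $i$ and $\alpha$ and the matching of $(\mathrm{Ax}.3)$ with $(\mathsf{Ax}.2)$ are accurate elaborations of what the paper leaves implicit.
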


\begin{proof}
By induction on $\beta$.
We divide the cases by the last rule of the derivation of $I'$,
but it is sufficient to observe the case of the rule $(\T \mhyph \mrm{Intro})$, because the other rules are shared by $I'$ and $\mrm{VFW}^{\infty}$.
So, letting $\varGamma = \varGamma' \cup \{ \T(t) \}$ for some $t$ with $t^{\mathbb{N}} = \ulcorner A \urcorner$, 
we assume that $I'(i;\alpha;\beta;\varGamma)$ is derived from $I'(i;\alpha;\beta_0;A)$ for some $\beta_0 < \beta$.
Then, by the axiom $(\mathrm{Ax}_{I'})$ and Weakening in $\mrm{VFW}^{\infty}$, we obtain $\sststile{0}{\beta}\varGamma$, as desired.
\end{proof}

Next, the definition of the satisfaction relation $\models^{\alpha} A$ is the same as in Section~\ref{sec:truth-as-provability_VFM}, except for the clause for $\T$.
The only change is to interpret the truth predicate by $I'$ instead of $I$:
\begin{itemize}
\item $\models^{\alpha} \T(t)$ $:\Leftrightarrow$ the value of $t$ is the G\"{o}del-number of some $\lt$-sentence $A$ and the $4$-ary relation $I'(1;\alpha; \varepsilon_{\alpha}; A)$ holds.
\end{itemize}

For a $\T$-positive sequent $\varGamma$, let $\models^{\alpha} \varGamma$ $:\Leftrightarrow$ $\models^{\alpha} A$ for some $A \in \varGamma$.


Our aim is then to prove the soundness of $\mrm{VFW}^{\infty}$.
Since $I'$ is just a subsystem of $I$, 
we get the same results for $I'$ as for $I$, so we list them without proof:

\begin{lemma}[Substitution for $I'$]
If $I'(i;\alpha;\beta;\varGamma,A(s))$ and $s=t$ is true, then $I'(i;\alpha;\beta;\varGamma,A(t))$.
\end{lemma}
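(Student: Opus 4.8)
The plan is to argue by induction on the derivation height $\beta$ of the hypothesis $I'(i;\alpha;\beta;\varGamma, A(s))$, splitting into cases according to the last rule $R$ applied. Since every clause of $I'$ is also a clause of $I$ (indeed $I'$ is obtained from $I$ merely by deleting $(\mrm{Comp})$), the whole argument is just the restriction to $I'$ of the one already invoked for Lemma~\ref{lem:subst_I}; every case that can arise for $I'$ already arises there. Throughout I rely on the routine fact, provable by induction on term structure, that replacing a closed term $s$ by a closed term $t$ inside any term or formula preserves its value whenever $s = t$ is true, i.e.\ $s^\circ = t^\circ$.

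First suppose $A(s)$ is \emph{not} the principal formula of $R$, so that $A(s)$ sits in the context inherited by each premise. Applying the induction hypothesis to every premise (each of height $<\beta$) replaces the passive occurrence $A(s)$ by $A(t)$ without disturbing $i$, $\alpha$, or the premise heights; reapplying $R$ yields $I'(i;\alpha;\beta;\varGamma, A(t))$. This case disposes in particular of $(\mrm{Cons})$ and $(\mrm{Norm})$, whose conclusions display no principal formula, so that $A(s)$ is necessarily among their context.

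Next suppose $A(s)$ is principal. If $R$ is a logical rule the substitution commutes with the outermost connective or quantifier: writing, say, $A(s) = B(s) \lor C(s)$ or $A(s) = \forall x\, B(x,s)$, we feed the induction hypothesis to the premises and reapply $R$ to recover $A(t)$. If $R$ is $(\mathrm{Ax}.1)$ or $(\mathrm{Ax}.2)$, the sequent is an axiom because it contains a true (in)equation; substituting $t$ for $s$ keeps that (in)equation true by value-preservation, so $\varGamma, A(t)$ is still an axiom.

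The only genuinely delicate cases are those governing the truth predicate, and they are exactly the ones the flexibility of the $\simeq$-convention was designed to absorb. If $R$ is $(\mathrm{Ax}.3)$, the displayed pair is $\T(s_0),\neg\T(t_0)$ with $s_0^\circ = t_0^\circ$; substitution sends $A(s)$ to a formula whose term has unchanged value, and since $(\mathrm{Ax}.3)$ only requires the two values to agree, $\varGamma, A(t)$ is again an instance of it. Likewise, if $R$ is $(\T\mhyph\mrm{Intro})$ with conclusion $\varGamma, \T(u \simeq \corn{B})$, then $A(s) = \T(u)$ for some $u$ with $u^\circ = \#B$; substitution yields $\T(u')$ with $u'^\circ = \#B$ as well, and because the rule licenses $\T$ applied to \emph{any} term of value $\#B$ from the very same premise $I'(i;\alpha_0;\beta_0;B)$, we obtain $I'(i;\alpha;\beta;\varGamma, A(t))$ directly, with all parameters intact. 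I expect this last family of cases---checking that each truth clause is stated up to evaluation and is therefore stable under substitution---to be the only step requiring care; everything else is the standard commutation of substitution with the logical and structural rules.
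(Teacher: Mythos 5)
Your proposal is correct and is exactly the ``standard argument'' the paper invokes but does not write out (the paper lists the $I'$ lemmata without proof, deferring to the corresponding facts for $I$ and $\mrm{VFM}^{\infty}$): induction on the derivation height with a case split on the last rule, where the only delicate cases are the truth clauses, absorbed precisely by the $t \simeq s$ convention that the paper itself flags as being ``needed for establishing the substitution lemma.''
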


\begin{lemma}[Weakening for $I'$]
Assume $0 \leq i \leq j \leq 1$
; $\alpha_0 \leq \alpha$; $\beta_0 \leq \beta$; $\alpha \leq \beta$;
and $\varGamma_0 \subseteq \varGamma$.
If $I'(i;\alpha_0;\beta_0;\varGamma_0)$, then $I'(j;\alpha;\beta;\varGamma)$.
\end{lemma}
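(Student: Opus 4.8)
The plan is to argue exactly as for Weakening of $I$ (Lemma~\ref{lem:weak_I}), by induction along the stages of the inductive definition of $I'$ (Definition~\ref{defn:I'}), i.e.\ by induction on the height $\beta_0$. Fix the data $0\le i\le j\le 1$, $\alpha_0\le\alpha$, $\beta_0\le\beta$, $\alpha\le\beta$, and $\varGamma_0\subseteq\varGamma$, and assume $I'(i;\alpha_0;\beta_0;\varGamma_0)$. I would then split into cases according to the last clause used to generate this tuple and, in each case, produce $I'(j;\alpha;\beta;\varGamma)$.

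If the last clause is an axiom $(\mrm{Ax}.1)$–$(\mrm{Ax}.3)$, the witnessing literal (a true equation, a true negated equation, or a pair $\T(s),\neg\T(t\simeq s)$) already lies in $\varGamma_0\subseteq\varGamma$; since these axioms hold for an arbitrary index and arbitrary ordinals, $I'(j;\alpha;\beta;\varGamma)$ is immediate, the hypothesis $\alpha\le\beta$ merely certifying that the target tuple is admissible. If the last clause is a logical rule $(\lor),(\land),(\exists),(\forall)$ or the truth rule $(\T\mhyph\mrm{Intro})$, I would apply the induction hypothesis to each premise and then reapply the same rule. The point to watch is purely the ordinal bookkeeping: each premise of the original derivation has height $\beta_0'<\beta_0\le\beta$, so the induction hypothesis can be invoked on it with target index $j$, target first coordinate $\alpha$, the enlarged context (e.g.\ $\varGamma,A_0,A_1$ in the $(\lor)$ case, which contains $\varGamma_0',A_0,A_1$), and a premise height $\beta'$ chosen with $\beta_0'\le\beta'<\beta$; reapplying the rule then restores the height to $\beta$. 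Monotonicity in $i$, in $\alpha$, and in $\varGamma$ is transported unchanged through the induction hypothesis.

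The rules $(\mrm{Cons})$ and $(\mrm{Norm})$ deserve a brief remark, since they fix the first coordinate to $1$: if the hypothesis was obtained by one of them then already $i=1$, hence $j=1$ as well, so reapplying the rule to the inductively weakened premises yields exactly the index $1=j$ required, and the side condition $\neg\mrm{Sent}$ in $(\mrm{Norm})$ is untouched by weakening. The one genuine point of difference from Lemma~\ref{lem:weak_I}—and the step I expect to need the most care—is precisely the admissibility constraint $\alpha\le\beta$ carried by $I'$ but not by $I$. This is exactly why $\alpha\le\beta$ is assumed in the statement: the constraint is respected throughout because we never require a premise of height $\ge\beta$, so every intermediate tuple produced can be taken with height strictly below $\beta$, while the final conclusion inherits $\alpha\le\beta$ from the hypothesis. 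Beyond confirming that this side condition is compatible with the ordinal assignments in each case, the argument is routine and entirely parallel to the case of $I$, which is why the paper is content to list it without proof.
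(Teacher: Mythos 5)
Your argument is correct and is exactly the standard induction on the derivation that the paper leaves implicit: the paper states this lemma without proof, remarking only that $I'$ is a subsystem of $I$ and that Weakening for $I$ is itself proved "similarly to for $\mrm{VFM}^{\infty}$", i.e.\ by the same case analysis on the last clause that you carry out. Your added remarks on $(\mrm{Cons})$/$(\mrm{Norm})$ forcing $i=j=1$ and on the admissibility constraint $\alpha\le\beta$ correctly identify the only points where $I'$ differs from $I$, so nothing further is needed.
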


\begin{lemma}[Cut-admissibility for $I'$]\label{lem:cut-adm_I'}
If $I'(i;\alpha;\beta;\varGamma, A)$ and $I'(i;\alpha;\gamma;\varDelta,\neg A)$, then it holds that $I'(i;\alpha;\omega_{\mathrm{co}(A)}(\beta \# \gamma);\varGamma,\varDelta)$, 
where $\mathrm{co}(A)$ is the logical complexity of $A$.
\end{lemma}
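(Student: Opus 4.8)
The plan is to mirror the proof of Cut-admissibility for $I$ (Lemma~\ref{lem:cut-adm-I}), which itself follows \cite[Lemma~3.6]{rathjen_leigh_2010}. Since $I'$ is obtained from $I$ merely by deleting the axiom $(\mrm{Comp})$, every clause of $I'$ is already a clause of $I$, so no new combinatorics arises; the argument only becomes \emph{shorter}, because one case disappears. I would argue by main induction on $\mrm{co}(A)$ with a subsidiary induction on the natural sum $\beta \# \gamma$, exactly as before, invoking Substitution and Weakening for $I'$ wherever the $I$-proof invokes their analogues.

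For the base case $\mrm{co}(A) = 0$ the cut formula $A$ is a literal. When $A$ is an equation or a negated equation, one of $A$, $\neg A$ is false, and I would eliminate the false literal from its premise by the analogue of Falsity-elimination (the false literal is absorbed into the context via $(\mrm{Ax}.1)/(\mrm{Ax}.2)$), then weaken. When $A \equiv \T(t)$ I would distinguish whether both $A$ and $\neg A$ are principal in the last rules — possible only if $\neg \T(t)$ is introduced by $(\mrm{Ax}.3)$, which is settled directly by Substitution and Weakening — or at least one of them is a side formula, in which case I unwind the last rule and apply the subsidiary induction hypothesis, re-applying that rule afterwards (a cut against a $(\T \mhyph \mrm{Intro})$-premise simply re-applies $(\T \mhyph \mrm{Intro})$ to the same premise). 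The one point genuinely requiring care, flagged already in the $I$-proof, is that \emph{neither the index $i$ nor the $\T$-Intro rank $\alpha$ may be raised by the cut}: all the reducing clauses keep $i$ and $\alpha$ fixed, so this discipline is maintained throughout. For $\mrm{co}(A) > 0$ the cut formula is compound, and I reduce its complexity via the main induction hypothesis precisely as in case IIb of \cite[Lemma~3.6]{rathjen_leigh_2010} (compare the Formalised $\land$-Cut and Formalised $\forall$-Cut arguments of Section~\ref{section:upper_bound_vfm}).

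The only bookkeeping beyond the $I$-case concerns the two extra side-conditions carried by $I'$. First, the definition of $I'$ imposes $\alpha \leq \beta$ on the derivation data, so I must check the conclusion respects it; this is immediate, since $\alpha \leq \beta \leq \beta \# \gamma \leq \omega_{\mrm{co}(A)}(\beta \# \gamma)$. Second, the derivation lengths of $I'$ are bounded by $\varphi_2 0$, so I must confirm $\omega_{\mrm{co}(A)}(\beta \# \gamma) < \varphi_2 0$ whenever $\beta, \gamma < \varphi_2 0$; this holds because $\varphi_2 0$, being the least fixed point of $\xi \mapsto \varepsilon_\xi$, is an epsilon number and hence closed under natural sum and under the finite exponential towers $\omega_n(\cdot)$. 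I expect these ordinal side-conditions, together with the constant vigilance needed to keep $i$ and $\alpha$ unchanged through every cut, to be the only real obstacle; the logical heart of the induction is identical to that for $I$, which is exactly why the statement can be listed with the proof merely indicated.
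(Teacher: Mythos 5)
Your proposal is correct and follows essentially the same route as the paper, which states this lemma without separate proof precisely because the argument is the cut-admissibility proof for $I$ (Lemma~\ref{lem:cut-adm-I}) with the $(\mrm{Comp})$ case deleted. Your two additional bookkeeping checks --- that the constraint $\alpha \leq \beta$ survives since $\alpha \leq \beta \leq \omega_{\mrm{co}(A)}(\beta \# \gamma)$, and that $\varphi_2 0$ is closed under natural sums and the towers $\omega_n(\cdot)$ --- are exactly the points the paper leaves implicit, and both are verified correctly.
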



\begin{lemma}[Diquotation for $I'$]\label{lem:disq_I'}
Let $\varGamma$ be an atomic sequent and assume $I'(i;\alpha;\beta;\varGamma)$. Then, $I'(i;\alpha_0;\omega_n(\beta);\mrm{Disq}(\varGamma))$ holds for some $n \in \mathbb{N}$ and $\alpha_0 < \alpha$.
In particular, when $\varGamma$ contains only equations, we obtain $I'(i;0;\varepsilon_{\beta};\varGamma)$.
\end{lemma}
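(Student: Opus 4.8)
The plan is to reuse, essentially verbatim, the structure of the Disquotation Lemma for $I$ (Lemma~\ref{lem:disq_I}), since $I'$ arises from $I$ merely by deleting the axiom $(\mrm{Comp})$ in Definition~\ref{defn:I'}; consequently the argument for $I'$ is a strict simplification of the one already given. As there, and under the same presupposition $\alpha > 0$, the ``in particular'' clause follows from the main statement by transfinite induction on $\alpha$: assuming $I'(i;\alpha;\varepsilon_\beta;\varGamma)$ for a sequent $\varGamma$ of equations, the main claim yields $I'(i;\alpha_0;\omega_n(\varepsilon_\beta);\mrm{Disq}(\varGamma))$ for some $\alpha_0 < \alpha$ and some $n \in \mathbb{N}$; since $\mrm{Disq}(\varGamma) = \varGamma$ and $\omega_n(\varepsilon_\beta) = \varepsilon_\beta$ (the epsilon number absorbing the finite $\omega$-tower), the induction hypothesis on $\alpha$ then delivers $I'(i;0;\varepsilon_\beta;\varGamma)$.

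For the main statement I would argue by induction on the derivation length $\beta$, splitting on the last rule applied in the $I'$-derivation of $\varGamma$. Because $\varGamma$ is atomic---containing only equations $s=t$ and truth atoms $\T(t)$---the rules $(\lor)$, $(\land)$, $(\exists)$, $(\forall)$ cannot have been applied last, and $(\mathrm{Ax}.2)$, $(\mathrm{Ax}.3)$ are excluded as well, since their principal formulae $s \neq t$ and $\neg \T(t \simeq s)$ are not atomic in the present sense. Crucially, the $(\mrm{Comp})$ case that had to be treated for $I$ simply disappears here, being absent from Definition~\ref{defn:I'}; this is the only respect in which the proof departs from that of Lemma~\ref{lem:disq_I}. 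The surviving cases are handled exactly as before: for $(\mathrm{Ax}.1)$ the true equation already lies in $\mrm{Disq}(\varGamma)$, so we reapply $(\mathrm{Ax}.1)$ for any $\alpha_0 < \alpha$ and any $n$; for $(\T \mhyph \mrm{Intro})$ the disquoted premise $A$ belongs to $\mrm{Disq}(\varGamma)$, so Weakening for $I'$ concludes; and for $(\mrm{Norm})$, where the principal $\T(m)$ satisfies $\neg \mathrm{Sent}(m)$, one has $\mrm{Disq}(\varGamma,\T(m)) = \mrm{Disq}(\varGamma)$ and the induction hypothesis on the premise suffices.

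The one case needing genuine bookkeeping is $(\mrm{Cons})$, from premises $I'(i_0;\alpha;\beta_0;\varGamma,\T(m))$ and $I'(i_0;\alpha;\beta_0;\varGamma,\T(\subdot\neg m))$. If $m$ codes no $\lt$-sentence, then disquotation leaves $\varGamma$ untouched and the induction hypothesis together with Weakening for $I'$ closes the case. If $m$ is $\ulcorner A \urcorner$, the induction hypothesis gives $I'(i_0;\alpha';\omega_k(\beta_0);\mrm{Disq}(\varGamma),A)$ and $I'(i_0;\alpha'';\omega_l(\beta_0);\mrm{Disq}(\varGamma),\neg A)$ with $\alpha',\alpha'' < \alpha$, and Cut-admissibility for $I'$ (Lemma~\ref{lem:cut-adm_I'}) then produces $\mrm{Disq}(\varGamma)$ at level $\omega_{\mrm{co}(A)}(\omega_k(\beta_0) \# \omega_l(\beta_0))$. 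The point to verify---exactly as for $I$---is that this length stays below $\omega_{\mrm{co}(A)+\max\{k,l\}}(\beta)$, so that setting $\alpha_0 := \max\{\alpha',\alpha''\} < \alpha$ and $n := \mrm{co}(A)+\max\{k,l\}$ and applying Weakening yields the required $I'(i;\alpha_0;\omega_n(\beta);\mrm{Disq}(\varGamma))$.

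I expect no real obstacle beyond this ordinal estimate, which is identical to the one appearing in the proof of Lemma~\ref{lem:disq_I}. The only additional point to confirm, owing to the side condition $\alpha \leq \beta$ built into Definition~\ref{defn:I'}, is that every conclusion produced above respects it; but this is automatic, since $\omega_n(\beta) \geq \beta \geq \alpha > \alpha_0$ throughout the argument.
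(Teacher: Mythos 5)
Your proposal is correct and follows exactly the route the paper intends: the paper states this lemma without proof, remarking only that $I'$ is a subsystem of $I$ so the argument of Lemma~\ref{lem:disq_I} carries over, and your reconstruction is precisely that argument with the $(\mrm{Comp})$ case rightly deleted and the same ordinal bookkeeping in the $(\mrm{Cons})$ case. The extra check that the side condition $\alpha \leq \beta$ of Definition~\ref{defn:I'} is preserved is a sensible addition not made explicit in the paper.
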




\begin{lemma}[Elimination of $(\mrm{Cons})$ and $(\mrm{Norm})$ in $I'$]
Assume $I'(i;0;\beta;\varGamma)$ for an atomic sequent $\varGamma$. Then, $I'(0;0;\omega_{n}(\beta);\mrm{Disq}(\varGamma))$ holds for some $n \in \mathbb{N}$.
Therefore, if $\varGamma$ contains only equations, then we obtain $I'(0;0;\omega_n(\beta);\varGamma)$.
\end{lemma}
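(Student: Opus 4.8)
The plan is to follow the proof of the Disquotation Lemma for $I'$ (Lemma~\ref{lem:disq_I'}) almost verbatim, the decisive new ingredient being that the hypothesis freezes the $\T$-$\mrm{Intro}$ rank at $0$. First I would argue by induction on the derivation length $\beta$, splitting into cases according to the last rule of the given derivation of $I'(i;0;\beta;\varGamma)$. Because the rule $(\T\mhyph\mrm{Intro})$ requires a strictly smaller $\T$-$\mrm{Intro}$ rank $\alpha_0 < \alpha = 0$, it can never be the last rule; the axiom $(\mrm{Comp})$ is absent from $I'$ altogether; and since $\varGamma$ is atomic, none of the logical rules $(\lor),(\land),(\exists),(\forall)$ can be applied last, as their principal formula would be compound. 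Hence the only possibilities are the axioms of $I'$ and the rules $(\mrm{Cons})$ and $(\mrm{Norm})$.

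For the axiom cases the conclusion is immediate, since the witnessing literal is retained in $\mrm{Disq}(\varGamma)$ (equations are copied unchanged), so the same axiom yields $I'(0;0;\omega_n(\beta);\mrm{Disq}(\varGamma))$ for any $n$. The two substantive cases are $(\mrm{Cons})$ and $(\mrm{Norm})$, and these I would treat exactly as the corresponding cases in the proof of Lemma~\ref{lem:disq_I'}. In the $(\mrm{Cons})$ case, where $\varGamma$ is obtained from premises $\varGamma,\T(m)$ and $\varGamma,\T(\subdot\neg m)$: if $m$ does not code a sentence then $\mrm{Disq}(\varGamma,\T(m)) = \mrm{Disq}(\varGamma)$ and the induction hypothesis together with Weakening for $I'$ suffice; if $m$ codes $\ulcorner A\urcorner$, the induction hypothesis yields $I'(0;0;\omega_k(\beta_0);\mrm{Disq}(\varGamma),A)$ and $I'(0;0;\omega_l(\beta_0);\mrm{Disq}(\varGamma),\neg A)$, to which I apply Cut-admissibility for $I'$ (Lemma~\ref{lem:cut-adm_I'}) to obtain $I'(0;0;\omega_{\mathrm{co}(A)}(\omega_k(\beta_0)\#\omega_l(\beta_0));\mrm{Disq}(\varGamma))$, which Weakening brings under the required bound $\omega_n(\beta)$. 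The $(\mrm{Norm})$ case is analogous but simpler, since disquoting the non-sentence $\T(m)$ leaves $\mrm{Disq}(\varGamma)$ unchanged, so the induction hypothesis and Weakening close it.

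The point that distinguishes this statement from the plain Disquotation Lemma is the drop of the index from $i$ to $0$: since the $\T$-$\mrm{Intro}$ rank is $0$ throughout, the induction hypothesis delivers the premises of each $(\mrm{Cons})$ or $(\mrm{Norm})$ inference already with index $0$, and Cut-admissibility for $I'$ preserves the index. Thus no application of $(\mrm{Cons})$ or $(\mrm{Norm})$ survives, and the output genuinely carries index $0$. The final ``in particular'' clause is then immediate: when $\varGamma$ consists only of equations we have $\mrm{Disq}(\varGamma)=\varGamma$, so the established conclusion reads $I'(0;0;\omega_n(\beta);\varGamma)$. I do not expect any genuine obstacle, as the argument is a line-by-line transcription of the $I$-version with $\alpha$ frozen at $0$ and the $(\mrm{Comp})$ case deleted; the only bookkeeping to watch is that the finite iteration $\omega_n(\beta)$ of $\omega$-exponentiation remains an admissible bound after the cut, which it does because $\mathrm{co}(A)$ and $k,l$ are finite.
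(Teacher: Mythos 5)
Your proposal is correct and follows essentially the same route as the paper, which proves this lemma by transcribing the Disquotation Lemma argument with the $\T\mhyph\mrm{Intro}$ rank frozen at $0$ (so that rule never applies) and with the $(\mrm{Comp})$ clause absent from $I'$. Your observation that the index drops to $0$ because the induction hypothesis already delivers index-$0$ premises and Cut-admissibility preserves the index is exactly the point the paper relies on.
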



\begin{corollary}[Consistency of $I'$]\label{cor:consistency_of_I'}
No false equation $s=t$ is derivable in $I'$, that is, if $I'(i;\alpha;\beta;s=t)$, then $s=t$ is true. 
\end{corollary}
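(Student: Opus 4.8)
The plan is to follow, essentially verbatim, the proof of Corollary~\ref{cor:consistency_of_I} (Consistency of $I$), exploiting that we have already established for $I'$ exact analogues of every structural lemma used there. So I would begin by assuming $I'(i;\alpha;\beta;s=t)$ for arbitrary $i,\alpha,\beta$, and then strip the derivation of every rule capable of introducing a \emph{false} equality, leaving only $(\mathrm{Ax}.1)$ as a possible final step.

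First I would invoke the Diquotation Lemma for $I'$ (Lemma~\ref{lem:disq_I'}). The singleton $\{s=t\}$ is an atomic sequent containing only an equation, so the ``only equations'' clause of that lemma applies and yields $I'(i;0;\varepsilon_{\beta};s=t)$; this already drives the $(\T \mhyph \mrm{Intro})$-rank down to $0$, so no application of $(\T \mhyph \mrm{Intro})$ survives. Next I would apply the Elimination of $(\mrm{Cons})$ and $(\mrm{Norm})$ in $I'$ to $I'(i;0;\varepsilon_{\beta};s=t)$. Since the sequent consists only of equations, that lemma gives $I'(0;0;\omega_n(\varepsilon_{\beta});s=t)$ for some $n \in \mathbb{N}$, and because $\varepsilon_{\beta}$ is an epsilon number we have $\omega_n(\varepsilon_{\beta}) = \varepsilon_{\beta}$, so in fact $I'(0;0;\varepsilon_{\beta};s=t)$ holds.

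Finally I would read off the conclusion by a case analysis on the last rule of this derivation. The index $i = 0$ forbids $(\mrm{Cons})$ and $(\mrm{Norm})$ from being the last rule, and the $(\T \mhyph \mrm{Intro})$-rank $0$ forbids $(\T \mhyph \mrm{Intro})$. The remaining logical rules $(\lor)$, $(\land)$, $(\exists)$, $(\forall)$ each require a logically compound principal formula in the conclusion, which the singleton $\{s=t\}$ does not contain; $(\mathrm{Ax}.2)$ and $(\mathrm{Ax}.3)$ produce, respectively, a true inequality or a sequent containing a truth predicate, neither of which is $\{s=t\}$. Hence the only possibility is $(\mathrm{Ax}.1)$, whose side condition is precisely that $s=t$ is true, as required.

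The only genuinely delicate point—and hence the expected obstacle—is ensuring that the two ``only equations'' specializations stack correctly, so that the $(\T \mhyph \mrm{Intro})$-rank $\alpha$ and the index $i$ are both driven to $0$ while the derivation length remains pinned at $\varepsilon_{\beta}$ via its closure under $\omega$-exponentiation. Everything else is a routine inspection of the rule set of $I'$ from Definition~\ref{defn:I'}, entirely parallel to the treatment of $I$; indeed, since $I'$ is obtained from $I$ by deleting the clause $(\mrm{Comp})$, no rule present in $I'$ but absent from $I$ needs to be considered.
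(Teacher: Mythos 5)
Your proposal is correct and follows essentially the same route as the paper: the paper proves the $I$-version (Corollary~\ref{cor:consistency_of_I}) by exactly this combination of Weakening, the Disquotation Lemma, and the Elimination of $(\mrm{Cons})$ and $(\mrm{Norm})$ to reach $I(0;0;\varepsilon_{\beta};s=t)$, then observes that such a derivation is possible only when $s=t$ is true, and the $I'$-version is stated as the same argument carried over verbatim. Your more explicit final case analysis on the last rule, and your remark that $\omega_n(\varepsilon_{\beta})=\varepsilon_{\beta}$ keeps the length pinned, just spell out what the paper leaves implicit.
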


Persistency is established, just like in Lemma~\ref{lem:persist-I}:

\begin{lemma}[Persistency of $\models$]
Let $\varGamma$ be a $\T$-positive sequent. 
If $\models^{\alpha_0} \varGamma$, then $\models^{\alpha} \varGamma$ for any $\alpha > \alpha_0$. 
\end{lemma}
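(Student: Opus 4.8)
The plan is to reduce the statement for sequents to the corresponding statement for single sentences, and then to establish the latter by a routine induction on the build-up of the $\T$-positive sentence. By definition $\models^{\alpha_0}\varGamma$ means $\models^{\alpha_0}A$ for some $A\in\varGamma$, so once persistency is known at the level of sentences we immediately obtain $\models^{\alpha}A$ for that same $A$, hence $\models^{\alpha}\varGamma$. It therefore suffices to prove: for every $\T$-positive $\lt$-sentence $A$ and every $\alpha>\alpha_0$, if $\models^{\alpha_0}A$ then $\models^{\alpha}A$.

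The base cases are handled first. If $A$ is an equation $s=t$ or an inequation $s\neq t$, then $\models^{\alpha_0}A$ holds iff $s=t$ is true (respectively false), a condition that does not mention the level at all, so $\models^{\alpha}A$ is immediate. The only substantive atomic case is $A\equiv\T(t)$, and this is precisely where the shape of the clause for $\T$ does the work. Here $\models^{\alpha_0}\T(t)$ unfolds to: the value of $t$ is the code of some $\lt$-sentence $B$, and $I'(1;\alpha_0;\varepsilon_{\alpha_0};B)$ holds. Since $\alpha_0<\alpha$ and $\gamma\mapsto\varepsilon_{\gamma}$ is monotone, we have $\alpha_0\leq\alpha$, $\varepsilon_{\alpha_0}\leq\varepsilon_{\alpha}$, and $\alpha\leq\varepsilon_{\alpha}$; these are exactly the side conditions of the Weakening Lemma for $I'$ when the index is held fixed at $1$ and the sequent is left unchanged. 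Weakening then yields $I'(1;\alpha;\varepsilon_{\alpha};B)$, i.e.\ $\models^{\alpha}\T(t)$.

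The inductive steps are immediate, because the clauses defining $\models^{\alpha}$ for the connectives and quantifiers are uniform in the level parameter: for $B\land C$ one applies the induction hypothesis to both conjuncts, for $B\lor C$ to the satisfied disjunct, and for $\forall xB(x)$ (resp.\ $\exists xB(x)$) to every instance $B(n)$ (resp.\ to a witnessing instance). I do not anticipate a genuine obstacle; the argument is the same as for the persistency lemma for $I$ (Lemma~\ref{lem:persist-I}), with $I$ replaced throughout by its subsystem $I'$. The only point requiring care is the bookkeeping of ordinals: one should check that the heights involved stay below $\varphi_20$, so that $I'$ is defined at every stage, which holds because $\varepsilon_{\alpha}=\varphi_1\alpha<\varphi_20$ whenever $\alpha<\varphi_20$. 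Beyond this, the result rests entirely on the fact that the height bound $\varepsilon_{\alpha}$ attached to $\T(t)$ in the definition of $\models^{\alpha}$ was chosen so that increasing the level keeps both the height and the $\T$-Intro-rank within the range admitted by Weakening.
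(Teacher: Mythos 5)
Your proof is correct and is exactly the argument the paper has in mind: the paper does not spell out a proof (it only cites Cantini's Lemma~9.8 and, for the $I'$ version, says it is established "just like" the $I$ case), and what that reference amounts to is precisely your induction on the build-up of the $\T$-positive sentence, with the $\T(t)$ case discharged by Weakening for $I'$ together with the monotonicity of $\gamma\mapsto\varepsilon_\gamma$. Your additional check that $\varepsilon_\alpha<\varphi_20$ for $\alpha<\varphi_20$ is a worthwhile detail the paper leaves implicit.
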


Using the above lemmata, we can prove the Soundness Lemma for $I'$  in a similar way as for $I$.

\begin{lemma}[Soundness of $\mrm{VFW}^{\infty}$]\label{lem:soundness_VFW-infty}
Let $\varGamma$ be any $\T$-positive sequent and assume $\alpha < \varphi_20$.
If $\sststile{0}{\alpha}\varGamma$, then $\models^{\alpha} \varGamma$. 
\end{lemma}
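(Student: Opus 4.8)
The plan is to prove Lemma~\ref{lem:soundness_VFW-infty} by transfinite induction on the derivation length $\alpha < \varphi_2 0$, with a case analysis on the last rule of the given cut-free $\mrm{VFW}^{\infty}$-derivation (since the cut-rank is $0$, there is no cut case to treat), interpreting $\T$ throughout via $I'$ rather than $I$. The decisive observation is that $I'$ satisfies exact analogues of every structural lemma that the proof of Theorem~\ref{thm:soundness} uses for $I$: Substitution, Weakening, Cut-admissibility (Lemma~\ref{lem:cut-adm_I'}), Disquotation (Lemma~\ref{lem:disq_I'}), Consistency (Corollary~\ref{cor:consistency_of_I'}), and Persistency of $\models$, all stated just above. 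Consequently, every rule that $\mrm{VFW}^{\infty}$ shares with $\mrm{VFM}^{\infty}$ — namely $(\mathrm{Ax}.1)$, $(\mathrm{Ax}.2)$, the propositional and quantifier rules, $(\T_{=})$, $(\T_{\neq})$, $(\T_{\to})$, $(\T_{\forall})$, $(\mrm{Rep})$, $(\mrm{Del})$, $(\T_{\pat})$, and $(\mrm{Cons})$ — is discharged by transcribing the corresponding case of Theorem~\ref{thm:soundness} verbatim, reading $I'$ for $I$. In particular $(\T_{\to})$ and $(\mrm{Cons})$ invoke Cut-admissibility for $I'$, $(\mrm{Del})$ invokes Disquotation and Consistency for $I'$, $(\T_{\neq})$ invokes Consistency for $I'$, and $(\T_{\pat})$ uses closure of $I'$ under the $\omega$-rule and classical logic to derive the coded $\pat$-axiom.

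The genuinely new work concerns the rules by which $\mrm{VFW}^{\infty}$ differs from $\mrm{VFM}^{\infty}$. For the axiom $(\mathrm{Ax}_{I'})$, if $\sststile{0}{\beta}\varGamma, \T(t \simeq \ulcorner A \urcorner)$ holds because $I'(i;\alpha;\beta;A)$, then, using $i \leq 1$ and the constraint $\alpha \leq \beta \leq \varepsilon_{\beta}$ from Definition~\ref{defn:I'}, Weakening for $I'$ upgrades this to $I'(1;\beta;\varepsilon_{\beta};A)$, which is exactly $\models^{\beta} \T(t)$, hence $\models^{\beta} \varGamma$. For the rule $(\mrm{Norm})$ of $\mrm{VFW}^{\infty}$, deriving $\sststile{0}{\alpha} \varGamma$ from $\sststile{0}{\alpha_0} \varGamma, \T(n)$ with $\mathbb{N} \not\models \mrm{Sent}(n)$, the induction hypothesis gives $\models^{\alpha_0} \varGamma, \T(n)$; but $\models^{\alpha_0} \T(n)$ would force the value of $n$ to be the code of an $\lt$-sentence, contradicting $\neg \mrm{Sent}(n)$, so $\models^{\alpha_0} \varGamma$, and Persistency yields $\models^{\alpha} \varGamma$. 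I will also record that, although the clause $(\mrm{Comp})$ has been removed from $I'$ (and $(\T_{\mrm{Comp}})$ from $\mrm{VFW}^{\infty}$, so it contributes no case), the two consistency-flavoured axioms never needed it: $(\T_{\mrm{Cons}})$ is sound because $I'$ derives $\neg \T(n) \lor \neg \T(\subdot{\neg}n)$ from two instances of $(\mathrm{Ax}.3)$ via the $(\mrm{Cons})$ rule followed by $(\lor)$, and $(\T_{\mrm{Norm}})$ is sound by a case split on the truth of $\mrm{Sent}(n)$ — if $\mrm{Sent}(n)$ is true it is an $I'$-derivable arithmetical formula, and if false one derives $\neg \T(n) \lor \mrm{Sent}(n)$ using $(\mathrm{Ax}.3)$ and the $(\mrm{Norm})$ rule of $I'$ — followed by $(\lor)$.

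The point requiring the most care, as in the $\mrm{VFM}$ argument, is the ordinal bookkeeping, now constrained to the bound $\varphi_2 0$. Every $I'$-height manufactured en route — $\omega_{\mrm{co}(A)}(\varepsilon_{\alpha_0} \# \varepsilon_{\alpha_0})$ from Cut-admissibility in the $(\T_{\to})$, $(\mrm{Cons})$ cases, and $\omega_n(\varepsilon_{\alpha_0})$ from Disquotation in the $(\mrm{Del})$ case — must be kept below the target height $\varepsilon_{\alpha}$ so that Weakening can place the resulting derivation at level $\varepsilon_{\alpha}$; this rests on $\varepsilon_{\alpha_0 + 1} \leq \varepsilon_{\alpha}$ for $\alpha > \alpha_0$ together with closure of epsilon numbers under $\#$ and under each $\omega_n$. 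What makes the entire computation legitimate within the prescribed components is that $\varepsilon_{\alpha} = \varphi_1 \alpha < \varphi_2 0$ for every $\alpha < \varphi_2 0$, since $\varphi_2 0$ is the least fixed point of $\xi \mapsto \varphi_1 \xi$; this is precisely why $I'$ and $\mrm{VFW}^{\infty}$ are defined with all components ranging over $\varphi_2 0$ in Definition~\ref{defn:I'}, and why the whole soundness argument can ultimately be internalised in $\mrm{RT}_{< \varphi_2 0}$. The one structurally novel interface to verify — absent from the $\mrm{VFM}$ setting — is the $(\mathrm{Ax}_{I'})$ clause, which is the bridge underlying $I' \subseteq \mrm{VFW}^{\infty}$ (Lemma~\ref{lem:I'_subset_VFW-infty}); here one must confirm that the derivation length $\beta$ attached to the axiom coincides with the $I'$-height of $A$ and that the weakening up to $\varepsilon_{\beta}$ respects the $\alpha \leq \beta$ side condition of $I'$.
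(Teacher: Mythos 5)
Your proposal is correct and follows essentially the same route as the paper: induction on $\alpha$, recycling the proof of Theorem~\ref{thm:soundness} verbatim with $I'$ in place of $I$ for the shared rules, and treating the two genuinely new cases $(\mathrm{Ax}_{I'})$ (via Weakening for $I'$ up to $\varepsilon_{\alpha}$) and $(\mrm{Norm})$ (via the fact that $\models^{\alpha_0}\T(n)$ fails when $n$ codes no sentence, plus Persistency) exactly as the paper does. Your additional checks --- that $(\T_{\mrm{Cons}})$ and $(\T_{\mrm{Norm}})$ remain sound without the $(\mrm{Comp})$ clause of $I'$, and the ordinal bookkeeping below $\varepsilon_{\alpha} < \varphi_20$ --- are correct elaborations of details the paper leaves implicit.
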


\begin{proof}[Proof of Lemma~\ref{lem:soundness_VFW-infty}]
The proof is by induction on $\alpha$ and is almost the same as for Theorem~\ref{thm:soundness}.
Thus, it is sufficient to consider the new cases, $(\mrm{Ax}_{I'})$ and $(\mrm{Norm})$.
\begin{description}
\item[$(\mrm{Ax}_{I'})$] We assume that $\sststile{0}{\alpha}\varGamma, \T(t \simeq \ulcorner A \urcorner)$ holds by $(\mrm{Ax}_{I'})$. Then, by the condition of $(\mrm{Ax}_{I'})$, we have $I'(i;\gamma;\alpha;A)$ for some $i \leq 1$ and $\gamma \leq \alpha$.
Therefore, we obtain by Weakening for $I'$ that $I'(1;\alpha;\varepsilon_{\alpha};A)$, as desired.


\item[$(\mrm{Norm})$] The case for $(\mrm{Norm})$ is obvious, for $\models^{\alpha_0} \T(n)$ does not hold if $n$ denotes no sentence.

\end{description}


\end{proof}

Finally, we observe that
Lemma~\ref{lem:soundness_VFW-infty} yields the admissibility of $\T \mhyph \mrm{Elim}$ and hence the Embedding Lemma for $\mrm{VFW}$:

\begin{corollary}[Admissibility of $\T \mhyph \mrm{Elim}$]\label{cor:adm_T-Elim}
Assume $\alpha < \varphi_20$.
If $\mrm{VFW}^{\infty}\sststile{0}{\alpha} \T \ulcorner A \urcorner$, then $\mrm{VFW}^{\infty}\sststile{0}{\varepsilon_{\alpha}} A$.
\end{corollary}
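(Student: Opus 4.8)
The plan is to obtain the corollary by chaining the two results that were set up precisely for this purpose: the Soundness Lemma for $\mrm{VFW}^{\infty}$ (Lemma~\ref{lem:soundness_VFW-infty}) and the inclusion $I' \subseteq \mrm{VFW}^{\infty}$ (Lemma~\ref{lem:I'_subset_VFW-infty}). Given the hypothesis $\mrm{VFW}^{\infty}\sststile{0}{\alpha}\T\ulcorner A\urcorner$ with $\alpha < \varphi_2 0$, the idea is to pass through the truth-as-provability interpretation $I'$ and come back out into $\mrm{VFW}^{\infty}$, trading the outer $\T$ for a bare derivation of $A$ at the cost of raising the derivation length from $\alpha$ to $\varepsilon_\alpha$.

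First I would note that the singleton $\{\T\ulcorner A\urcorner\}$ is a $\T$-positive sequent, since $\T(\ulcorner A\urcorner)$ is an atomic $\lt$-sentence with the truth predicate occurring positively; hence the Soundness Lemma applies and yields $\models^{\alpha}\T\ulcorner A\urcorner$. Unfolding the satisfaction clause for the truth predicate as redefined for $\mrm{VFW}$, namely that $\models^{\alpha}\T(t)$ holds iff the value of $t$ codes an $\lt$-sentence $B$ and $I'(1;\alpha;\varepsilon_\alpha;B)$ holds, and observing that $\ulcorner A\urcorner$ indeed codes the genuine $\lt$-sentence $A$, I would read off $I'(1;\alpha;\varepsilon_\alpha;A)$. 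Feeding this into Lemma~\ref{lem:I'_subset_VFW-infty} then delivers $\mrm{VFW}^{\infty}\sststile{0}{\varepsilon_\alpha}A$, which is exactly the conclusion.

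I do not expect a genuine obstacle at this last step: all the substantive work — cut-admissibility, disquotation, consistency, persistency, and the soundness of $\mrm{VFW}^{\infty}$ via $I'$, together with the embedding of $I'$ into $\mrm{VFW}^{\infty}$ through the axiom $(\mathrm{Ax}_{I'})$ — has already been carried out in the preceding lemmas. The only point that needs a moment's care is the ordinal bookkeeping: one must check that the output length $\varepsilon_\alpha$ remains admissible, i.e. below the bound $\varphi_2 0$ imposed on derivation lengths in both systems. This holds because $\varphi_2 0$ is the least fixed point of $\xi \mapsto \varphi_1\xi = \varepsilon_\xi$, so from $\alpha < \varphi_2 0$ and the strict monotonicity of $\varphi_1$ we get $\varepsilon_\alpha = \varphi_1\alpha < \varphi_1(\varphi_2 0) = \varphi_2 0$. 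With this closure in hand, the three steps compose without friction.
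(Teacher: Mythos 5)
Your proof is correct and follows exactly the paper's own route: apply the Soundness Lemma to the $\T$-positive singleton $\{\T\ulcorner A\urcorner\}$ to get $I'(1;\alpha;\varepsilon_\alpha;A)$, then feed this into Lemma~\ref{lem:I'_subset_VFW-infty} to recover $\mrm{VFW}^{\infty}\sststile{0}{\varepsilon_\alpha}A$. The ordinal bookkeeping you add ($\varepsilon_\alpha<\varphi_20$ since $\varphi_20$ is closed under $\varphi_1$) is left implicit in the paper but is a worthwhile check.
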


\begin{proof}
Assume $\mrm{VFW}^{\infty}\sststile{0}{\alpha} \T \ulcorner A \urcorner$.
By Lemma~\ref{lem:soundness_VFW-infty}, we have $\models^{\alpha} \T \ulcorner A \urcorner$, thus $I'(1;\alpha;\varepsilon_{\alpha};A)$ holds,
which, by Lemma~\ref{lem:I'_subset_VFW-infty}, implies
$\mrm{VFW}^{\infty}\sststile{0}{\varepsilon_{\alpha}} A$. 
\end{proof}

\begin{corollary}[Embedding]\label{cor:embed_VFW}
If $\mrm{VFW} \vdash A$, then $\mrm{VFW}^{\infty} \sststile{0}{\alpha} A$ for some $\alpha < \varphi_20$.
\end{corollary}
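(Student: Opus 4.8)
The plan is to prove the Embedding Lemma by induction on the length of the $\mrm{VFW}$-derivation of $A$, following the embedding for $\mrm{VFM}$ (Lemma~\ref{lem:embedding_VFM}) but attending to the two features that distinguish $\mrm{VFW}$ from $\mrm{VFM}$: the replacement of $(\mrm{VF}7)$ by $(\mrm{VF}7^{\leftarrow})$, and the presence of the rule $\T\mhyph\mrm{Elim}$.

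For the rules of first-order logic, the axioms of $\mrm{PAT}$, and the truth axioms $\mrm{VF}1$--$\mrm{VF}6$ and $\mrm{VF}8$, I would reuse verbatim the derivations of Lemma~\ref{lem:embedding_VFM}, since every rule of $\mrm{VFM}^{\infty}$ invoked there---namely $(\T_=)$, $(\T_{\neq})$, $(\T_\forall)$, $(\mrm{Rep})$, $(\mrm{Del})$, $(\T_\to)$, $(\T_{\pat})$, and $(\T_{\mrm{Norm}})$---survives in $\mrm{VFW}^{\infty}$, and each is embedded with a fixed finite length, well below $\varphi_20$. The axiom $(\mrm{VF}7^{\leftarrow})$, which unfolds to $\T\ulcorner\T(\subdot\neg\dot x)\to\neg\T(\dot x)\urcorner$, is precisely the consistency half of $\mrm{VF}7$; it is obtained from $(\T_{\mrm{Cons}})$ together with the closure of $\T$ under provable implication (via $(\T_{\pat})$ and $(\T_\to)$), and crucially no longer requires $(\T_{\mrm{Comp}})$, which has been dropped from $\mrm{VFW}^{\infty}$.

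The essential new case is $\T\mhyph\mrm{Elim}$. If $A$ is inferred from $\T\ulcorner A\urcorner$, then by the induction hypothesis $\mrm{VFW}^{\infty}\sststile{0}{\alpha}\T\ulcorner A\urcorner$ for some $\alpha<\varphi_20$, and the admissibility of $\T\mhyph\mrm{Elim}$ (Corollary~\ref{cor:adm_T-Elim}) yields $\mrm{VFW}^{\infty}\sststile{0}{\varepsilon_\alpha}A$. Here the choice of $\varphi_20$ as the length bound is exactly what is needed: since $\varphi_20$ is the least fixed point of $\alpha\mapsto\varphi_1\alpha=\varepsilon_\alpha$, it is closed under the epsilon function, and the strict monotonicity of $\varepsilon$ gives $\varepsilon_\alpha<\varepsilon_{\varphi_20}=\varphi_20$ whenever $\alpha<\varphi_20$. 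Thus the $\varepsilon$-jump does not breach the bound.

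The ordinal bookkeeping for the remaining cases goes through because $\varphi_20$ is an epsilon number, hence a limit closed under successor, natural sum, and $\omega$-exponentiation; every finite combination of subderivations effected by a rule of $\mrm{VFW}^{\infty}$ keeps the length below $\varphi_20$, and since a $\mrm{VFW}$-proof is finite, only finitely many such steps---including the $\varepsilon$-jumps from $\T\mhyph\mrm{Elim}$---are composed. I expect the $\T\mhyph\mrm{Elim}$ case to be the main obstacle, but its difficulty has already been absorbed into the Soundness Lemma for $\mrm{VFW}^{\infty}$ (Lemma~\ref{lem:soundness_VFW-infty}) and the inclusion $I'\subseteq\mrm{VFW}^{\infty}$ (Lemma~\ref{lem:I'_subset_VFW-infty}) that underlie Corollary~\ref{cor:adm_T-Elim}; once those are in hand, the embedding itself is a routine adaptation of the $\mrm{VFM}$ argument.
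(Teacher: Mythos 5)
Your proposal is correct and follows essentially the same route as the paper: induction on the length of the $\mrm{VFW}$-derivation, reusing the $\mrm{VFM}$ embedding for the shared axioms and invoking Corollary~\ref{cor:adm_T-Elim} for $\T\mhyph\mrm{Elim}$, with the bound preserved because $\varphi_20$ is closed under $\alpha\mapsto\varepsilon_\alpha$ and only finitely many $\T\mhyph\mrm{Elim}$ steps occur. Your additional remarks on $(\mrm{VF}7^{\leftarrow})$ via $(\T_{\mrm{Cons}})$ and on the ordinal closure properties are accurate elaborations of details the paper leaves implicit.
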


\begin{proof}
Similar to the proof of Lemma~\ref{lem:embedding_VFM}, the claim is proved by induction on the length of the derivation of $A$ in $\mrm{VFW}$.
For the case of $\T \mhyph \mrm{Elim}$, we use Corollary~\ref{cor:adm_T-Elim}.
Note that in the derivation of $A$ in $\mrm{VFM}$, the number of applications of $\T \mhyph \mrm{Elim}$ is at most finite, so the derivation length $\alpha$ of $A$ in $\mrm{VFW}^{\infty}$ can be kept below $\varphi_20$.
\end{proof}

As the last step to obtain the upper bound, we need to formalise the above arguments in $\mrm{RT}_{<\varphi_20}$.
First, we define a predicate $\mrm{Bew}_{\mrm{VFW}^{\infty}}(x,y,z)$, which means that a sequent $z$ is derivable in $\mrm{VFW}^{\infty}$ with the height $x < \varphi_20$ and with the cut rank $y \in \mathbb{N}$.
Recall that we need to consider non-recursive derivations in $\mrm{VFW}^{\infty}$, but the language of first-order arithmetic is not sufficient for expressing that.
Thus, the predicate must be defined as a formula of $\mc{L}_{<\varphi_20}$.
Intuitively, $\mrm{Bew}_{\mrm{VFW}^{\infty}}(\alpha,y,z)$ is defined by transfinite recursion on $\alpha$, according to the definition of $\mrm{VFW}^{\infty}$.
The base case $\alpha = 0$ is definable as an arithmetical formula:
\[
\mrm{Bew}_{\mrm{VFW}^{\infty}}(0,y,z) \ \text{iff} \ z \ \text{is (the code of) an instance of} \ (\mrm{Ax}.1), (\mrm{Ax}.2), (\T_{\mrm{Cons}}), (\T_{\mrm{Norm}}), \ \text{or} \ (\mrm{Ax}_{I'}).
\]
For $0 < \alpha < \varphi_20$, $\mrm{Bew}_{\mrm{VFW}^{\infty}}(\alpha,y,z)$ is defined according to the last rule of the derivation.
For example, the rule $(\forall)$ is expressed such that
$\mrm{Bew}_{\mrm{VFW}^{\infty}}(\alpha,y,z)$ is implied by the following formula:
\[
\exists \varGamma' \in \mrm{Seq} \exists v, x \big( z = \varGamma' \cup \{ \subdot{\forall}vx \} \land \forall n \exists \alpha_n < \alpha \T_{\alpha}(\ulcorner \mrm{Bew}_{\mrm{VFW}^{\infty}}(\dot{\alpha_n},\dot{y},\dot{\varGamma'} \cup \{ \dot{x}(\dot{n}/\dot{v}) \}) \urcorner) \big).
\]

Here, the predicate $\T_{\alpha}$ is used for expressing the premises of $(\forall)$, i.e., an infinite conjunction of $\mrm{Bew}_{\mrm{VFW}^{\infty}}(\alpha_0,y,\varGamma' \cup \{ x(0/v) \}), \mrm{Bew}_{\mrm{VFW}^{\infty}}(\alpha_1,y,\varGamma' \cup \{ x(1/v) \}), \dots$.

The other cases are similar.
Note that for each $\alpha < \varphi_20$, the predicate $\mrm{Bew}_{\mrm{VFW}^{\infty}}(\alpha,y,z)$ so defined is obviously a formula of $\mc{L}_{\alpha}$. 

To define the above construction formally,
we first define the code $ \ulcorner \mrm{Bew}_{\mrm{VFW}^{\infty}}(\dot{x},\dot{y},\dot{z}) \urcorner$ by using the primitive recursion theorem.
Then, taking a particular ordinal $\alpha < \varphi_20$, the formula $\T_{\alpha} \ulcorner \mrm{Bew}_{\mrm{VFW}^{\infty}}(\dot{x},\dot{y},\dot{z}) \urcorner$ can indeed play the role of the predicate $\mrm{Bew}_{\mrm{VFW}^{\infty}}(x,k,\varGamma)$ for each $x < \alpha$.
Hence, precisely speaking, the predicate $\mrm{Bew}$ is defined relative to a particular ordinal $\alpha < \varphi_20$.
To make this explicit, we write $\mrm{Bew}_{\mrm{VFW}^{\infty}}^{\alpha}(x,y,z)$.
Then, we can verify that it satisfies the properties of $\mrm{VFW}^{\infty}$ up to the length $< \alpha$. 
For example, the rule $(\forall)$ is now expressed as follows for each $\alpha < \varphi_20$:
\[ \mrm{RT}_{< \varphi_20} \vdash 
\forall \beta < \alpha \big( [\forall n \exists \beta_n < \beta \mrm{Bew}_{\mrm{VFW}^{\infty}}^{\alpha}(\beta_n, k, \varGamma \cup \{ x(n/v)\})] \to \mrm{Bew}_{\mrm{VFW}^{\infty}}^{\alpha}(\beta, k, \varGamma \cup \{ \subdot{\forall}vx \}) \big). 
\]

Secondly, derivability in $I'$ up to the length $< \varepsilon_{\alpha}$ is similarly defined as a 4-ary predicate $\mrm{I}'_{\alpha}(x;y;z;w)$ for each $\alpha < \varphi_20$.
Using the predicate $\mrm{I}'_{\alpha}$, the binary satisfaction predicate $\models^{y < \alpha}_n(x)$ is defined by meta-induction on $n$, in the same way as for $\mrm{I}$. 
Its intuitive meaning is that the sentence $x$ has the logical complexity $\leq n$ and $x$ is satisfied at the level $y < \alpha < \varphi_20$.
In particular, $\models^{y < \alpha}_n(x)$ is defined such that the following are satisfied, provably even in $\mrm{PA}$:
\begin{itemize}

\item $\models^{y < \alpha}_n(\ulcorner s = t \urcorner) \ \leftrightarrow \ s = t$.

\item $\models^{y < \alpha}_n(\ulcorner s \neq t \urcorner) \ \leftrightarrow \ s \neq t$.
\item $\models^{y < \alpha}_n(\ulcorner \T (t) \urcorner) 
 \ \leftrightarrow \ \mrm{I}'_{\alpha}(1;y;\varepsilon_{y};\{ t \})$.
 
\item $\models^{y < \alpha}_n (\ulcorner A \land B \urcorner) \ \leftrightarrow \ \models^{y < \alpha}_{n-1} (\ulcorner A \urcorner) \land \models^{y < \alpha}_{n-1} (\ulcorner B \urcorner)$, where $A \land B \in \lt$ and $\mrm{co}(A \land B) \leq n$.

\item $\models^{y < \alpha}_n (\ulcorner A \lor B \urcorner) \ \leftrightarrow \ \models^{y < \alpha}_{n-1} (\ulcorner A \urcorner) \lor \models^{y < \alpha}_{n-1} (\ulcorner B \urcorner)$, where $A \lor B \in \lt$ and $\mrm{co}(A \land B) \leq n$.

\item $\models^{y < \alpha}_n (\ulcorner \forall x A(x) \urcorner) \ \leftrightarrow \ \forall v \big(\models^{y < \alpha}_{n-1} (\ulcorner A(\dot{v}) \urcorner) \big)$, where $\forall x A(x) \in \lt$ and $\mrm{co}(\forall x A(x)) \leq n$.

\item $\models^{y < \alpha}_n (\ulcorner \exists x A(x) \urcorner) \ \leftrightarrow \ \exists v \big(\models^{y < \alpha}_{n-1} (\ulcorner A(\dot{v}) \urcorner) \big)$, where $\exists x A(x) \in \lt$ and $\mrm{co}(\forall x A(x)) \leq n$.

\item $\models^{y < \alpha}_n (\ulcorner A \urcorner) \ \leftrightarrow \ 0 = 1$, if 
$\mrm{co}(A) > n$ or $A$ is not $\T$-positive.
\end{itemize}

Then, we can easily expand the definition to sequents $x \in \mrm{Seq}$:
\[
\models^{y < \alpha}_n (x) : \leftrightarrow \ x \in \mrm{Seq} \land \exists z \in x \big(\models^{y < \alpha}_n(z)\big).
\]

When $x$ is the singleton of an $\lnat$-sentence $A$, we can prove, by meta-induction on $A$, that this predicate implies $A$ itself for every $n \geq \mrm{co} (A)$ and $\alpha < \varphi_20$:
\[
\mrm{RT}_{<\varphi_20} \vdash \ \models^{y < \alpha}_n(\{ \ulcorner A \urcorner \}) \to A.
\]

With the help of these predicates, we can indeed formalise the above results in $\mrm{RT}_{< \varphi_20}$. For example, we restate Lemma~\ref{lem:I'_subset_VFW-infty}, Lemma~\ref{lem:soundness_VFW-infty}, and Corollary~\ref{cor:embed_VFW} as follows:

\begin{lemma}[cf.~Lemma~\ref{lem:I'_subset_VFW-infty}]
Fix any ordinal number $\beta < \varphi_20$. Then, we have the following:
\[
\mrm{RT}_{< \varphi_20} \vdash \forall i \leq 1 \forall \beta_0 < \beta \forall \alpha \leq \beta_0 \forall \varGamma \in \mrm{Seq}\big(\mrm{I}'_{\beta}(i;\alpha;\beta_0;\varGamma) \to \mrm{Bew}^{\beta}_{\mrm{VFW}^{\infty}}(\beta_0;0;\varGamma) \big).
\]
\end{lemma}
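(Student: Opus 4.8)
The plan is to internalise the informal argument for Lemma~\ref{lem:I'_subset_VFW-infty}, which runs by induction on the $I'$-derivation length, inside $\mrm{RT}_{<\varphi_2 0}$, trading ordinary induction for the schema $\mrm{TI}_{\mathcal{L}_{<\varphi_2 0}}(<\varphi_2 0)$ that this theory proves. So I first fix $\beta < \varphi_2 0$ and set
\[
A(\beta_0) :\equiv \forall i \leq 1 \, \forall \alpha \leq \beta_0 \, \forall \varGamma \in \mrm{Seq}\big(\mrm{I}'_{\beta}(i;\alpha;\beta_0;\varGamma) \to \mrm{Bew}^{\beta}_{\mrm{VFW}^{\infty}}(\beta_0;0;\varGamma)\big),
\]
and carry out transfinite induction on the derivation-length variable $\beta_0$. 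Before anything else I would check that $A$ is admissible for this schema, i.e. that it is genuinely an $\mathcal{L}_{<\varphi_2 0}$-formula: by their definitions through the primitive recursion theorem, both $\mrm{I}'_{\beta}$ and $\mrm{Bew}^{\beta}_{\mrm{VFW}^{\infty}}$ mention only ramified predicates $\T_\gamma$ with $\gamma < \varphi_2 0$, since $\beta < \varphi_2 0$ forces the relevant layers (at most $\varepsilon_\beta$) to stay below $\varphi_2 0$. Granting this, it suffices to establish progressiveness $\forall \beta_0(\forall \gamma < \beta_0\, A(\gamma) \to A(\beta_0))$, whence $\forall \beta_0 < \beta\, A(\beta_0)$ follows.

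For progressiveness I would fix $\beta_0$, assume $\forall \gamma < \beta_0\, A(\gamma)$, take $i,\alpha,\varGamma$ with $\mrm{I}'_{\beta}(i;\alpha;\beta_0;\varGamma)$, and unfold the fixed-point equivalence for $\mrm{I}'_{\beta}$ (the formalised counterpart of $A^{I'}$), case-splitting on the clause that produced the derivation. The axioms $(\mathrm{Ax}.1)$–$(\mathrm{Ax}.3)$ are each matched directly by an axiom of $\mrm{VFW}^{\infty}$ ($(\mathsf{Ax}.1)$ or $(\mathsf{Ax}.2)$), so the conclusion is immediate after lifting the length from $0$ to $\beta_0$ by the formalised Weakening for $\mrm{Bew}^{\beta}$. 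The logical clauses $(\lor),(\land),(\exists),(\forall)$ together with $(\mrm{Cons})$ and $(\mrm{Norm})$ are literally shared between $I'$ and $\mrm{VFW}^{\infty}$: each premise has length strictly below $\beta_0$, so the induction hypothesis converts every $\mrm{I}'_{\beta}$-premise into the corresponding $\mrm{Bew}^{\beta}$-premise, and the matching $\mrm{VFW}^{\infty}$-rule delivers $\mrm{Bew}^{\beta}_{\mrm{VFW}^{\infty}}(\beta_0;0;\varGamma)$.

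The only genuinely load-bearing case is $(\T \mhyph \mrm{Intro})$, and here the design of $\mrm{VFW}^{\infty}$ does the work: when $\mrm{I}'_{\beta}(i;\alpha;\beta_0;\varGamma)$ arises from $\mrm{I}'_{\beta}(i;\alpha_0;\beta_1;A)$ with $\T(t) \in \varGamma$ and $t^{\circ} = \ulcorner A \urcorner$, I would not appeal to the induction hypothesis at all, but invoke the axiom $(\mathrm{Ax}_{I'})$, whose very statement asserts that $\varGamma$ is derivable in $\mrm{VFW}^{\infty}$ as soon as $I'$ derives $A$; Weakening then fixes the length. I expect the main obstacle to lie not in this dialectic but in two pieces of bookkeeping. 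The first is the $(\forall)$-clause, where passing between the meta-level quantifier $\forall n$ over premises and the object-level infinite conjunction—encoded by a ramified truth predicate in the definitions of both $\mrm{I}'_{\beta}$ and $\mrm{Bew}^{\beta}_{\mrm{VFW}^{\infty}}$—requires the compositional $\T_\gamma$-biconditionals of $\mrm{RT}_{<\varphi_2 0}$ to be applied at exactly the right layer. The second, and the more delicate, is confirming that every occurrence of the two derivability predicates sits at some layer $\gamma < \varphi_2 0$, so that $A(\beta_0)$ indeed belongs to $\mathcal{L}_{<\varphi_2 0}$ and the transfinite induction that powers the whole argument is legitimately available.
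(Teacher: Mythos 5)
Your proposal is correct and follows essentially the same route the paper takes: the informal Lemma~\ref{lem:I'_subset_VFW-infty} is proved by induction on the $I'$-derivation length with the only nontrivial case being $(\T \mhyph \mrm{Intro})$, discharged directly by the axiom $(\mathrm{Ax}_{I'})$ plus Weakening, and the formalised version is obtained exactly as you describe, by running that induction as an instance of $\mrm{TI}_{\mathcal{L}_{<\varphi_20}}(<\varphi_20)$ on the length index $\beta_0$ after checking that the predicates $\mrm{I}'_{\beta}$ and $\mrm{Bew}^{\beta}_{\mrm{VFW}^{\infty}}$ live in $\mathcal{L}_{<\varphi_20}$. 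Your attention to the two bookkeeping points (the layer of the ramified predicate encoding the infinitary premises, and the admissibility of the induction formula) is precisely where the formalisation requires care.
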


Recall that for each natural number $n$, the predicate $\mathrm{Pos}_n(x)$ expresses that $x$ is a $\T$-positive sequent whose sentences are of at most logical complexity $n$.
\begin{lemma}[cf.~Lemma~\ref{lem:soundness_VFW-infty}]\label{lem:formalized_soundness_VFW-infty}
Fix any ordinal number $\alpha < \varphi_20$ and any natural number $n$. Then,  for a sufficiently large $m \geq n$, we have the following:
\[
\mrm{RT}_{< \varphi_20} \vdash 
\forall \alpha_0 < \alpha \forall x \in {\rm{Pos}}_n (\mrm{Bew}^{\alpha}_{\mrm{VFW}^{\infty}}(\alpha_0,0,x) \to \models^{\alpha_0 < \alpha}_m(x)).
\]
\end{lemma}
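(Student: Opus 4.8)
The plan is to replay the informal soundness argument of Lemma~\ref{lem:soundness_VFW-infty} inside $\mrm{RT}_{< \varphi_20}$, exploiting the fact emphasised above that $\mrm{RT}_{< \varphi_20}$ proves $\mrm{TI}_{\mathcal{L}_{<\varphi_20}}(<\varphi_20)$. This is exactly what lets us avoid passing through $\mrm{ID}^*_1$: both $\mrm{Bew}^{\alpha}_{\mrm{VFW}^{\infty}}$ and $\mrm{I}'_{\alpha}$ were defined (via the primitive recursion theorem and the ramified predicates $\T_\beta$ for $\beta<\varphi_20$) as formulae of $\mathcal{L}_\alpha \subseteq \mathcal{L}_{<\varphi_20}$, so transfinite induction up to $\varphi_20$ is available for every predicate occurring in the argument. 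Since $\varepsilon_\alpha = \varphi_1\alpha < \varphi_20$ whenever $\alpha<\varphi_20$, interpreting $\T(t)$ at level $\varepsilon_y$ never leaves the fragment over which these predicates are defined.

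First I would carry out, for each fixed $\alpha<\varphi_20$, the formalised counterparts inside $\mrm{RT}_{< \varphi_20}$ of the $I'$-lemmas on which the informal proof rests: Substitution, Weakening, Cut-admissibility, the Disquotation lemma, elimination of $(\mrm{Cons})$ and $(\mrm{Norm})$, and Consistency for the predicate $\mrm{I}'_{\alpha}$. These are the $\mrm{RT}_{< \varphi_20}$-analogues of the lemmas proved for $I'$ in the informal setting, and—just as in the $\mrm{ID}^*_1$-formalisation used for $\mrm{VFM}$—each is obtained by formal induction along the defining recursion, with the ordinal bookkeeping via the natural sum and the towers $\omega_n(\cdot)$ transcribed verbatim. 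The crucial quantitative point, inherited from the informal Disquotation lemma, is that disquoting an atomic sequent derived with length $\beta$ costs only finitely many $\omega$-exponentiations and so stays below $\varepsilon_\beta<\varphi_20$; this keeps all auxiliary lengths strictly below $\varphi_20$ and hence within reach of the available transfinite induction.

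With these in hand, the main statement is proved by formal transfinite induction on $\alpha_0<\alpha$, casing on the last rule recorded by $\mrm{Bew}^{\alpha}_{\mrm{VFW}^{\infty}}(\alpha_0,0,x)$. The logical and basic rules are handled exactly as in Lemma~\ref{form soundness}, using formalised persistency of $\models^{y<\alpha}_m$ (the $\mrm{RT}_{< \varphi_20}$-analogue of Lemma~\ref{lem:persist-I}) and choosing $m$ sufficiently large relative to $n$ so that the clauses of $\models^{y<\alpha}_m$ genuinely mirror the connectives and quantifiers up to complexity $n$. The truth-principle cases reduce, through the unfolding $\models^{y<\alpha}_n(\ulcorner\T(t)\urcorner)\leftrightarrow \mrm{I}'_{\alpha}(1;y;\varepsilon_y;\{t\})$, to the formalised $I'$-lemmas just established: $(\T_{\neq})$ and $(\mrm{Cons})$ to formalised Consistency, $(\T_{\to})$ to formalised Cut-admissibility, $(\mrm{Rep})$ to closure under $(\T\mhyph\mrm{Intro})$, and $(\mrm{Del})$ to formalised Disquotation. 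The genuinely new cases are $(\mrm{Ax}_{I'})$ and $(\mrm{Norm})$, dispatched as in Lemma~\ref{lem:soundness_VFW-infty}: the former by Weakening for $\mrm{I}'_{\alpha}$, and the latter by noting that $\models^{\alpha_0<\alpha}_m(\ulcorner\T(n)\urcorner)$ fails when $n$ denotes no sentence.

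The main obstacle I anticipate is the ordinal and level bookkeeping needed to keep every object below $\varphi_20$ while the satisfaction predicate interprets $\T(t)$ at level $\varepsilon_y$; in the $(\mrm{Del})$ case one must check that the disquotation of the atomic singleton $\{\T(n)\}$, produced at length $\omega_k(\varepsilon_{\alpha_0})=\varepsilon_{\alpha_0}<\varepsilon_\alpha<\varphi_20$, still lies inside the fragment over which $\mrm{I}'_{\alpha}$ and the transfinite induction are defined. This, together with the need to verify that $\mrm{Bew}^{\alpha}_{\mrm{VFW}^{\infty}}$ and $\mrm{I}'_{\alpha}$ defined through $\T_\alpha$ really satisfy the claimed closure rules—so that the case analysis on ``the last rule'' is legitimate inside $\mrm{RT}_{< \varphi_20}$—is the part requiring the most care; everything else transfers mechanically from the informal proof and from the $\mrm{ID}^*_1$-formalisation used for $\mrm{VFM}$.
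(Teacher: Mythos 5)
Your proposal is correct and follows essentially the same route the paper intends: the paper gives no separate proof of this lemma but presents it as the direct formalisation in $\mrm{RT}_{<\varphi_20}$ of Lemma~\ref{lem:soundness_VFW-infty}, carried out by formal transfinite induction on $\alpha_0$ (available since $\mrm{RT}_{<\varphi_20}$ proves $\mrm{TI}_{\mathcal{L}_{<\varphi_20}}(<\varphi_20)$), with the auxiliary $I'$-lemmas formalised for $\mrm{I}'_{\alpha}$ and the new cases $(\mrm{Ax}_{I'})$ and $(\mrm{Norm})$ handled exactly as you describe. Your observation that $\varepsilon_{\alpha}<\varphi_20$ for $\alpha<\varphi_20$ keeps all levels inside the available fragment is precisely the point the paper relies on to avoid $\mrm{ID}^*_1$ here.
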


\begin{lemma}[cf.~Corollary~\ref{cor:embed_VFW}]\label{lemma:formalized_embedding-I'}
Assume $\mrm{VFW} \vdash A$ for some $\lt$-sentence $A$.
Then, we have the following  for some $\alpha_0 < \alpha < \varphi_20$:
\[
\mrm{RT}_{<\varphi_20} \vdash \mathrm{Bew}^{\alpha}_{\mrm{VFW}^{\infty}}(\alpha_0,0, \{ \ulcorner A \urcorner \}).
\]
\end{lemma}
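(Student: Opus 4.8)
The plan is to formalise, inside $\mrm{RT}_{<\varphi_20}$, the very argument used for the unformalised embedding (Corollary~\ref{cor:embed_VFW}), proceeding by \emph{meta}-induction on the given $\mrm{VFW}$-derivation of $A$. Since $\mrm{VFW}\vdash A$ is a meta-level hypothesis we have an actual finite derivation tree in hand, so we may walk through it externally and produce, for each node, a separate theorem of $\mrm{RT}_{<\varphi_20}$ asserting the appropriate instance of $\mrm{Bew}^{\alpha}_{\mrm{VFW}^{\infty}}$. For the base cases — the axioms $\mrm{VF}1$–$\mrm{VF}6$, $\mrm{VF}7^{\leftarrow}$, $\mrm{VF}8$, the axioms of $\mrm{PAT}$, and the new axiom $(\mathrm{Ax}_{I'})$ — and for the logical rules, the witnessing $\mrm{VFW}^{\infty}$-derivations are exactly those built in the embedding arguments (cf.~Lemma~\ref{lem:embedding_VFM} and Corollary~\ref{cor:embed_VFW}), each of fixed length below $\varphi_20$; their existence is a routine property of the predicate $\mrm{Bew}^{\alpha}_{\mrm{VFW}^{\infty}}$ that $\mrm{RT}_{<\varphi_20}$ verifies directly (the ramified predicate $\T_\alpha$ is only needed to witness the bounded $\omega$-branching of $(\T_{\forall})$). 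The logical rules combine the inductive hypotheses in the obvious way.

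The single genuinely new case is $\T\mhyph\mrm{Elim}$, and here the plan is to internalise Corollary~\ref{cor:adm_T-Elim} by chaining the two preceding formalised lemmas. Suppose the last inference derives $B$ from $\T\corn{B}$, and that by the inductive hypothesis $\mrm{RT}_{<\varphi_20}$ proves $\mrm{Bew}^{\alpha}_{\mrm{VFW}^{\infty}}(\alpha_0,0,\{\corn{\T\corn{B}}\})$ for some $\alpha_0<\alpha<\varphi_20$. Applying Formalised Soundness (Lemma~\ref{lem:formalized_soundness_VFW-infty}) with a sufficiently large $m$ yields $\models^{\alpha_0<\alpha}_m(\{\corn{\T\corn{B}}\})$, and unfolding the satisfaction clause for $\T$ rewrites this as $\mrm{I}'_{\alpha}(1;\alpha_0;\varepsilon_{\alpha_0};\{\corn{B}\})$. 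Since $\alpha_0<\alpha$ gives $\varepsilon_{\alpha_0}<\varepsilon_\alpha$, and $\varepsilon_{\alpha_0}<\varphi_20$ because $\varphi_20$ is a fixed point of $\xi\mapsto\varepsilon_\xi=\varphi_1\xi$, we may pass to a cap $\beta:=\max\{\alpha,\varepsilon_{\alpha_0}+1\}<\varphi_20$ (using monotonicity of the $\mrm{I}'_{\,\cdot}$ family in its subscript) and instantiate the formalised version of Lemma~\ref{lem:I'_subset_VFW-infty} at $\beta$, with height $\beta_0:=\varepsilon_{\alpha_0}$ and $\T\mhyph\mrm{Intro}$-rank $\alpha_0\leq\beta_0$. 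This delivers $\mrm{Bew}^{\beta}_{\mrm{VFW}^{\infty}}(\varepsilon_{\alpha_0},0,\{\corn{B}\})$, the required conclusion for this node.

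The only real obstacle is ordinal bookkeeping, and it is precisely what pins the bound to $\varphi_20$. Each $\T\mhyph\mrm{Elim}$ step replaces a derivation length $\alpha_0$ by $\varepsilon_{\alpha_0}$, i.e.\ applies $\varphi_1(\cdot)$ once, and a finite $\mrm{VFW}$-derivation uses only finitely many such steps; hence the final length is obtained from a fixed ordinal below $\varphi_20$ by finitely many applications of $\varphi_1$. Because $\varphi_20$ is the least fixed point of $\varphi_1$ — and so closed under finite iteration of $\xi\mapsto\varphi_1\xi$ — every length produced along the branch stays strictly below $\varphi_20$. One then takes the maximum of the finitely many caps $\alpha$ that appear and, via Weakening (monotonicity of $\mrm{Bew}^{\alpha}_{\mrm{VFW}^{\infty}}$ in its superscript), records the final result as a single pair $\alpha_0<\alpha<\varphi_20$, as demanded. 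I expect the subtlety to lie entirely in keeping the meta-induction external: no induction on derivations is carried out inside $\mrm{RT}_{<\varphi_20}$; each node supplies one more provable instance, and only the fixed formalised lemmas — which already absorb the transfinite inductions up to $\varphi_20$ — are invoked.
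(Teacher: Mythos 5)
Your proposal is correct and follows essentially the same route the paper intends: the paper leaves this lemma's proof implicit (it is presented as the formalised restatement of Corollary~\ref{cor:embed_VFW}), and the intended argument is exactly yours — an external induction on the finite $\mrm{VFW}$-derivation, handling $\T\mhyph\mrm{Elim}$ by chaining Lemma~\ref{lem:formalized_soundness_VFW-infty} with the formalised version of Lemma~\ref{lem:I'_subset_VFW-infty}, and noting that only finitely many $\varepsilon$-jumps occur so all lengths stay below $\varphi_20$. Your ordinal bookkeeping (the cap $\beta:=\max\{\alpha,\varepsilon_{\alpha_0}+1\}$ and the appeal to monotonicity of the indexed predicates) is a more explicit rendering of what the paper's proof of Corollary~\ref{cor:adm_T-Elim} does informally.
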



Therefore, we obtain the upper bound of $\mrm{VFW}$:

\begin{thm}[Upper bound of $\mrm{VFW}$]\label{thm:upper_VFW}
Assume $A \in \lnat$.
If $\mrm{VFW} \vdash A$, then $\mrm{RT}_{<\varphi_20} \vdash A$.
\end{thm}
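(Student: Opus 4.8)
The plan is to assemble the three formalized lemmas just established into a single chain of implications carried out \emph{inside} $\mrm{RT}_{<\varphi_20}$, mirroring the structure of the upper-bound argument for $\mrm{VFM}$ (Theorem~\ref{thm:upper-bound_VFM}). The decisive difference is that the entire construction now lives in $\mrm{RT}_{<\varphi_20}$ rather than $\mrm{ID}^*_1$: the ramified truth predicates $\T_\beta$ (for $\beta<\varphi_20$), together with $\mrm{TI}_{\mathcal{L}_{<\varphi_20}}(<\varphi_20)$, are precisely what is needed to define and reason about the non-recursive derivability predicate $\mrm{Bew}^{\alpha}_{\mrm{VFW}^\infty}$ and the partial satisfaction predicate $\models^{y<\alpha}_m$.

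First I would fix an $\lnat$-sentence $A$ and assume $\mrm{VFW}\vdash A$. By the formalized embedding lemma (Lemma~\ref{lemma:formalized_embedding-I'}), this yields $\mrm{RT}_{<\varphi_20}\vdash \mrm{Bew}^{\alpha}_{\mrm{VFW}^\infty}(\alpha_0,0,\{\corn A\})$ for some fixed $\alpha_0<\alpha<\varphi_20$. Note that the cut-rank here is already $0$: unlike in the $\mrm{VFM}$ case, the embedding for $\mrm{VFW}$ must interleave cut-elimination so as to apply the admissibility of $\T\mhyph\mrm{Elim}$ (Corollary~\ref{cor:adm_T-Elim}), whose hypothesis is a cut-free derivation of $\T\corn A$; consequently its output is cut-free and no separate formalized cut-elimination step is needed at this point.

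Next I would invoke the formalized soundness lemma (Lemma~\ref{lem:formalized_soundness_VFW-infty}). Since $A$ is an $\lnat$-sentence it is trivially $\T$-positive, so $\{\corn A\}\in \mrm{Pos}_n$ for $n:=\mathrm{co}(A)$; choosing $m\geq n$ sufficiently large, the lemma gives $\mrm{RT}_{<\varphi_20}\vdash \models^{\alpha_0<\alpha}_m(\{\corn A\})$. Finally, I would appeal to the implication $\mrm{RT}_{<\varphi_20}\vdash \models^{y<\alpha}_m(\{\corn A\})\to A$, which holds for every $m\geq \mathrm{co}(A)$ and is proved by meta-induction on the build-up of the $\lnat$-sentence $A$. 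Combining the last two derivations inside $\mrm{RT}_{<\varphi_20}$ by modus ponens yields $\mrm{RT}_{<\varphi_20}\vdash A$, as required.

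The assembly step itself is routine once the three lemmas are in place; the genuine work has already been absorbed into them. If I had to single out the delicate point specific to this theorem, it is the ordinal bookkeeping: one must verify that the length $\alpha$ supplied by the embedding, together with every ordinal introduced by passing through $\T\mhyph\mrm{Elim}$ (each application replacing a length $\beta$ by $\varepsilon_\beta$), stays below $\varphi_20$. This is guaranteed because $\varphi_20$ is closed under $\beta\mapsto\varepsilon_\beta=\varphi_1\beta$ and under finite iteration of $\omega$-exponentiation, while only finitely many applications of $\T\mhyph\mrm{Elim}$ occur in any $\mrm{VFW}$-derivation. It is exactly this closure property of $\varphi_20$ that fixes the bound and thereby delivers the exact upper bound of Theorem~\ref{thm:upper_VFW}.
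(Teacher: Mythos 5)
Your proposal is correct and follows essentially the same route as the paper: formalized embedding (Lemma~\ref{lemma:formalized_embedding-I'}), then formalized soundness (Lemma~\ref{lem:formalized_soundness_VFW-infty}), then the provable implication $\models^{y<\alpha}_m(\{\ulcorner A\urcorner\})\to A$ for $\lnat$-sentences, all assembled inside $\mrm{RT}_{<\varphi_20}$. Your added remarks on the cut-rank being $0$ after the embedding and on the closure of $\varphi_20$ under $\beta\mapsto\varepsilon_\beta$ accurately reflect the justifications the paper places in Corollary~\ref{cor:embed_VFW} rather than in the theorem's proof itself.
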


\begin{proof}
Assume $\mathsf{VFW} \vdash A$ for $A \in \lnat$.
By Lemma~\ref{lemma:formalized_embedding-I'} and Lemma~\ref{lem:formalized_soundness_VFW-infty}, $\mrm{RT}_{< \varphi_20} \vdash \models^{\alpha_0 < \alpha}_n (\{ \ulcorner  A \urcorner \} )$ follows for some $\alpha_0 < \alpha < \varphi_20$ and some $n \geq \mrm{co}(A)$.
Since $A \in \lnat$, this implies $\mrm{RT}_{< \varphi_20} \vdash A$.
\end{proof}

\begin{proof}[Proof of Theorem \ref{thm:VFW}]
As remarked, the lower bound is obtained by applying the proof in Theorem \ref{thm:lower_bound_VFM^-} to the results obtained in \cite[Theorem~2.41]{rathjen_leigh_2010}. The upper bound is our Theorem \ref{thm:upper_VFW}.
\end{proof}


\section{The schematic extension of VFM}

In this section, we want to explore an extension of $\mrm{VFM}$ based on the idea of the schematic extension of a theory, first proposed by Feferman in \cite{feferman_1991}. Schematic extensions are interesting for two reasons. On the one hand, from a philosophical point of view, they can be seen as a formalization of the notion of \textit{implicit commitment} of a theory, or so did Feferman argue.\footnote{In recent years, the notion of implicit commitment for mathematical theories has gained a renewed attention---see e.g. \cite{nicolai_lelyk_2021, dean_2014}.} On the other hand, schematic extensions have sometimes been shown to increase the proof-theoretic strength of the theory which they extend. A paradigmatic case of this phenomenon is KF, as Feferman showed---the schematic extension of KF, $\mrm{KF}^*$, has proof-theoretic ordinal $\Gamma_0$. Meanwhile, in the case of VF (and, consequently, $\mrm{VF}^-$), the schematic extension is not associated with an increase in proof-theoretic strength---see \cite{fujimoto_2018}.

For the purposes of exploring the schematic extension of VFM, we need to work in a language $\ltp$, which extends $\mc{L}_\T$ with a schematic predicate $P$. Accordingly, $\mc{L}(P)$ will be the language of PA extended with $P$. This predicate does its job as a predicate variable for arithmetical formulae $A(P)$, allowing us then to substitute $P$ in $A$ for any formula $B$ in the language $\ltp$. Thus, the schematic extension of $\mrm{VFM}$, the theory $\mrm{VFM}^*$, is defined as follows: 

\begin{dfn}
$\mrm{VFM}^*$ consists of the axioms of $\mrm{VFM}$ (save for the axiom scheme of induction) and the following: 

\begin{itemize}
    \item The axiom 
    \begin{itemize}
        \item[P-Disq] $\forall x(\T \ulcorner P(\dot x)\urcorner \leftrightarrow P(x)) $
    \end{itemize}
\item The rule: 

\AxiomC{$A(P)$} \RightLabel{P-Subst; for $A$ in $\mc{L}(P)$ and $B$ in $\ltp$}
\UnaryInfC{$A(B/P)$}
\DisplayProof

\end{itemize}
\end{dfn}

In order to find a model for $\mrm{VFM}^*$, we generalize the construction of the minimal fixed point of $\mrm{VFM}$. Given some satisfaction relation $e$, we now write $X^P, Y^T \vDash_e \vphi$ as short for $(\nat, X, Y)\vDash_e \vphi$, where $X$ is the extension of $P$ and $Y$ the extension of $\T$. If the relation is classical, we omit any subscript. 

\begin{dfn}
$X^P, Y^T \vDash_{mc*} \vphi$ iff for all $Z^T\supseteq Y^T$ such that $Z^T\in \mrm{MAXCONS}$, $X^P, Z^T \vDash\vphi$.
\end{dfn}

We can then define: 

\begin{itemize}
    \item ${}_X\Gamma_0 =\varnothing$
    \item ${}_X\Gamma_{\alpha+1}= \{\vphi | X^P, {}_X\Gamma^T_\alpha\vDash_{mc*} \vphi\}$
    \item ${}_X\Gamma_\lambda =\bigcup_{\beta<\lambda} \Gamma_\beta$ for $\lambda$ a limit ordinal
\end{itemize}

\begin{prop}
There is some $\alpha\in\mrm{On}$ such that ${}_X\Gamma^T_\alpha={}_X\Gamma_{\alpha+1}$.
\end{prop}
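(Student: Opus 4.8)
The plan is to recognise the displayed statement as the standard existence-of-least-fixed-point result for a monotone operator, now run with the extension $X$ of $P$ held fixed throughout. First I would isolate the relevant jump operator: for a fixed $X$, set
\[
\mathcal{J}_X(Y) := \{\#\vphi \mid X^P, Y^T \vDash_{mc*} \vphi\},
\]
so that the given transfinite sequence is exactly the iteration of $\mathcal{J}_X$ starting from $\varnothing$, i.e. ${}_X\Gamma_{\alpha+1} = \mathcal{J}_X({}_X\Gamma_\alpha)$, with unions taken at limits. The goal ${}_X\Gamma^T_\alpha = {}_X\Gamma_{\alpha+1}$ is then just the assertion that this iteration reaches a fixed point ${}_X\Gamma_\alpha = \mathcal{J}_X({}_X\Gamma_\alpha)$, paralleling the existence of the minimal fixed-points $\mrm{I}_e$ noted earlier.

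The key lemma is the \emph{monotonicity} of $\mathcal{J}_X$ in its truth-argument. Suppose $Y \subseteq Y'$ are two candidate extensions of $\T$. Then every $Z$ with ${Z}^{T} \supseteq {Y'}^{T}$ and $Z^T \in \mrm{MAXCONS}$ also satisfies $Z^T \supseteq Y^T$, so the class of admissible maximally consistent extensions for $Y'$ is included in that for $Y$. Hence, if $X^P, Z^T \vDash \vphi$ holds for \emph{all} maximally consistent $Z \supseteq Y$, it holds in particular for all maximally consistent $Z \supseteq Y'$; that is, $X^P, Y^T \vDash_{mc*} \vphi$ implies $X^P, {Y'}^{T} \vDash_{mc*} \vphi$, giving $\mathcal{J}_X(Y) \subseteq \mathcal{J}_X(Y')$. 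This is the exact analogue of the monotonicity already observed for $\mathcal{J}_\mrm{mc}$, the only novelty being the presence of the fixed parameter $X$.

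With monotonicity in hand, I would show by a routine transfinite induction that the sequence is weakly increasing, i.e. $\beta \leq \alpha \Rightarrow {}_X\Gamma_\beta \subseteq {}_X\Gamma_\alpha$. It suffices to prove ${}_X\Gamma_\alpha \subseteq {}_X\Gamma_{\alpha+1}$ for every $\alpha$: the base case is trivial since ${}_X\Gamma_0 = \varnothing$; at successors the inclusion follows by applying $\mathcal{J}_X$ to the induction hypothesis; and at a limit $\lambda$ one uses that each ${}_X\Gamma_\beta \subseteq {}_X\Gamma_{\beta+1} = \mathcal{J}_X({}_X\Gamma_\beta) \subseteq \mathcal{J}_X({}_X\Gamma_\lambda)$ for $\beta < \lambda$, whence the union ${}_X\Gamma_\lambda$ is contained in ${}_X\Gamma_{\lambda+1}$. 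Finally comes the stabilisation step: every ${}_X\Gamma_\alpha$ is a subset of the fixed (countable) set $\mrm{Sent}_{\ltp}$, so the weakly increasing ordinal-indexed chain cannot be strict at every successor — otherwise $\alpha \mapsto {}_X\Gamma_\alpha$ would inject the ordinals into the subsets of a set, contradicting Burali-Forti/Hartogs. Therefore some $\alpha$ satisfies ${}_X\Gamma^T_\alpha = {}_X\Gamma_{\alpha+1}$, which is the required fixed point.

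I expect no serious obstacle: the argument is the familiar Knaster--Tarski/Moschovakis iteration, and everything reduces to the monotonicity lemma. The one point meriting care — and the only place where the scheme $mc*$ departs from the bare $mc$ scheme — is confirming that adjoining the parameter $X$ leaves monotonicity intact; since $X$ is never varied in the recursion and enters only through the classical satisfaction clause $X^P, Z^T \vDash \vphi$, the inclusion of admissible-extension classes transfers verbatim. I would also remark, although it is not needed for the statement, that $\mrm{Sent}_{\ltp}$ being countable forces the closure ordinal to be countable.
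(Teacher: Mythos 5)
Your proof is correct and is exactly the standard argument the paper relies on: it states this proposition without proof, having already invoked monotonicity of the supervaluational jump and the general theory of inductive definitions for the analogous unparametrised claim. Your verification that the fixed parameter $X$ does not disturb monotonicity, plus the increasing-chain-and-cardinality stabilisation, fills in precisely what the authors leave implicit.
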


The following result is easy to check:

\begin{prop}\label{P-Disq}
If $\alpha\in\mrm{On}$ is such that ${}_X\Gamma_\alpha={}_X\Gamma_{\alpha+1}$, then $X^P, {}_X\Gamma^T_\alpha \vDash$ P-Disq. 
\end{prop}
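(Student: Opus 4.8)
The plan is to unfold the satisfaction clause for P-Disq at the fixed point and reduce it to a statement purely about the extension $X$ of $P$, exploiting that the witnessing formula contains only a single, $\T$-free atomic occurrence of $P$. Since P-Disq reads $\forall x(\T\corn{P(\dot x)}\leftrightarrow P(x))$, I would first note that $X^P,{}_X\Gamma^T_\alpha\vDash$ P-Disq holds exactly when, for every $n\in\nat$, we have $\#P(\bar n)\in {}_X\Gamma_\alpha$ if and only if $n\in X$. Indeed, the term $\corn{P(\dot x)}$ evaluates at $x=n$ to the code $\#P(\bar n)$, so $\T\corn{P(\dot x)}$ is satisfied at $n$ precisely when $\#P(\bar n)$ lies in the interpretation ${}_X\Gamma_\alpha$ of $\T$, while $P(x)$ is satisfied at $n$ precisely when $n\in X$.

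Next I would invoke the fixed-point hypothesis ${}_X\Gamma_\alpha={}_X\Gamma_{\alpha+1}$ to rewrite the membership condition: $\#P(\bar n)\in{}_X\Gamma_\alpha$ iff $\#P(\bar n)\in{}_X\Gamma_{\alpha+1}$ iff $X^P,{}_X\Gamma^T_\alpha\vDash_{mc*}P(\bar n)$, i.e.\ iff $X^P,Z^T\vDash P(\bar n)$ for every maximally consistent $Z^T\supseteq {}_X\Gamma_\alpha$. The decisive point is that $P(\bar n)$ is an atomic $\mc{L}(P)$-sentence with no occurrence of $\T$, so its classical truth value $X^P,Z^T\vDash P(\bar n)$ is entirely independent of the interpretation $Z$ of $\T$ and equals the condition $n\in X$ uniformly in $Z$. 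Hence, as long as at least one admissible $Z$ exists, the universal quantifier over maximally consistent extensions collapses and we get $X^P,{}_X\Gamma^T_\alpha\vDash_{mc*}P(\bar n)$ iff $n\in X$; combined with the reduction of the first paragraph, this establishes the biconditional for every $n$ and hence P-Disq.

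The single genuine gap — and the step I expect to be the main obstacle — is the proviso that the class of maximally consistent $Z^T\supseteq{}_X\Gamma_\alpha$ is nonempty, equivalently that the fixed point ${}_X\Gamma_\alpha$ is consistent. This is essential: were there no admissible $Z$, the clause for $\vDash_{mc*}$ would be vacuously satisfied by every sentence, so ${}_X\Gamma_{\alpha+1}$ would contain all sentences and the right-to-left direction of the biconditional would fail whenever $X\neq\nat$. I would close this gap by proving, by transfinite induction along the construction, that every ${}_X\Gamma_\beta$ is consistent. The base case ${}_X\Gamma_0=\varnothing$ is immediate; at a successor, the inductive hypothesis together with Lindenbaum's lemma yields a maximally consistent $Z_0\supseteq{}_X\Gamma_\beta$, and since ${}_X\Gamma_{\beta+1}$ is a subset of the (consistent) theory of the classical model $(\nat,X,Z_0)$ it is consistent as well; at limits, the chain $\{{}_X\Gamma_\beta\}_{\beta<\lambda}$ is increasing by monotonicity of the $mc*$-operation (verified exactly as for the jump operators $\mc{J}_e$), so the finitary character of consistency makes its union consistent. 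With consistency secured, Lindenbaum's lemma supplies the extension $Z$ needed above and the argument goes through.
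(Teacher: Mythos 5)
Your proof is correct, and it is essentially the argument the paper has in mind: the paper states this proposition without proof ("easy to check"), and the intended verification is exactly your unfolding of the satisfaction clause for $\T\corn{P(\dot x)}$ at the fixed point, using that $P(\bar n)$ is $\T$-free so its truth value is constant across all admissible extensions $Z$. Your one substantive addition --- that the class of maximally consistent $Z\supseteq{}_X\Gamma_\alpha$ must be nonempty, secured by a transfinite induction showing each stage is consistent --- is a genuine point the paper glosses over (elsewhere, e.g.\ in Proposition~\ref{prop:soundness VFM}, consistency of the fixed point is simply assumed as a hypothesis), and your Lindenbaum-plus-increasing-chain argument closes it correctly; the only blemish is a harmless mislabelling of which direction of the biconditional would fail in the inconsistent case.
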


\begin{prop}\label{P-Subst}
For all $X$, $\mrm{VFM}^* \vdash \vphi \Rightarrow X^P, {}_X\Gamma^T_\alpha \vDash \vphi$
\end{prop}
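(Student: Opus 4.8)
The plan is to argue by induction on the length of the derivation of $\vphi$ in $\mrm{VFM}^*$, fixing an arbitrary $X$ and writing $M_X := (\nat, X, {}_X\Gamma^T_\alpha)$ for the corresponding fixed-point model, where $\alpha$ is chosen (as in the preceding proposition) so that ${}_X\Gamma_\alpha = {}_X\Gamma_{\alpha+1}$. The content of the statement is really the soundness of $\mrm{VFM}^*$ with respect to the whole family $\{M_X\}_X$ \emph{simultaneously}, so at each step I would carry the universal quantifier over $X$ along, establishing that the sentence in question holds in $M_X$ for \emph{every} $X$.

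For the axiom base cases there are three groups. The axioms inherited from $\mrm{VFM}$ (other than induction) are $\T$-only and do not mention $P$, so the extension $X$ of $P$ is inert for them; since ${}_X\Gamma^T_\alpha$ is a consistent fixed point of the relativized jump $\mc{J}_{mc*}(X, \cdot)$, the soundness argument of Proposition~\ref{prop:soundness VFM} applies verbatim with $X$ merely carried along as a side parameter, and these axioms hold in $M_X$. (Consistency of ${}_X\Gamma^T_\alpha$ is needed here and follows from the construction, which starts at $\varnothing$ and only ever adds sentences true in all maximally consistent extensions.) The axiom P-Disq holds in $M_X$ by Proposition~\ref{P-Disq}. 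Finally, all instances of induction hold because $M_X$ is built over the standard model $\nat$, in which first-order induction is valid for every formula regardless of how $\T$ and $P$ are interpreted.

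The inductive steps for the purely logical rules are routine, as $M_X$ is a classical model. The crucial step is the rule P-Subst, which passes from $A(P)$ with $A \in \mc{L}(P)$ to $A(B/P)$ with $B \in \ltp$. Here the decisive observation is that $A$ is \emph{$\T$-free}: being an $\mc{L}(P)$-formula, its truth value in $M_X$ depends only on the interpretation of $P$, not on the $\T$-extension ${}_X\Gamma^T_\alpha$. Consequently the inductive hypothesis ``$M_X \vDash A(P)$ for every $X$'' is equivalent to ``$(\nat, Z) \vDash A(P)$ for every $Z \subseteq \nat$'', i.e.\ to the second-order validity of $A$ over $\nat$ under all interpretations of $P$. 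Now, fixing $X$ and letting $Y := \{ n \mid M_X \vDash B(\bar n) \}$ be the extension of $B$ in $M_X$, one has $M_X \vDash A(B/P)$ iff $(\nat, Y) \vDash A(P)$, and this is just an instance of the validity already obtained (taking $Z := Y$). Hence $M_X \vDash A(B/P)$ for every $X$, which closes the induction.

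The main obstacle is precisely this P-Subst step, and it hinges entirely on the restriction $A \in \mc{L}(P)$. Were $A$ allowed to contain $\T$, substituting $B$ for $P$ would force the $P$-extension to change from $X$ to $Y$, and the fixed point ${}_X\Gamma^T_\alpha$ sitting in $M_X$ would no longer be the one appropriate to $Y$, so the instantiation of the inductive hypothesis at $Y$ would not be available. The $\T$-freeness of $A$ is exactly what decouples the truth of $A$ from the $\T$-component of the model and turns the universally quantified inductive hypothesis into a genuine second-order validity that can be instantiated at the definable set $Y$; this is the heart of the argument and the one point I would write out with care.
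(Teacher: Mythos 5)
Your proposal is correct and follows essentially the same route as the paper's own proof: induction on the derivation, disposing of the $\mrm{VFM}$ axioms via Proposition~\ref{prop:soundness VFM} and of P-Disq via Proposition~\ref{P-Disq}, and handling P-Subst by observing that the $\T$-freeness of $A\in\mc{L}(P)$ turns the $X$-uniform inductive hypothesis into validity of $A(P)$ under all interpretations of $P$ over $\nat$, which is then instantiated at the set defined by $B$ in $M_X$. The identification of the $\T$-freeness of $A$ as the decisive point is exactly the paper's argument.
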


\begin{proof}
The proof follows the line of \cite{feferman_1991}, proceeding by induction on the length of the proof in $\mrm{VFM}^*$. In light of Proposition \ref{prop:soundness VFM} and Proposition \ref{P-Disq}, what remains to be shown is the closure under P-Subst.  

Suppose $\mrm{VFM}^*\vdash A(P)$. By inductive hypothesis, one has $(\nat, X^P, {}_X\Gamma^T_\alpha)\vDash A(P)$ for all possible extensions $X$ for $P$. Since $A(P)$ is a formula of $\mc{L}(P)$, i.e., contains no instances of $\T$, the above is independent from the extension of $\T$; hence, we could consider any model of $\mc{L}(P)$ of the form $(\nat, Y)$ and obtain $(\nat, Y)\vDash A(P)$. But then one such $Y$ will be $Y=\{n \in \omega | (\nat, X^P, {}_X\Gamma^T_\alpha)\vDash B(\bar n) \}$; so indeed, it will follow that $(\nat, X^P, {}_X\Gamma^T_\alpha)\vDash A(B/P)$.
\end{proof}

\begin{corollary}
$\mrm{VFM}^*$ is consistent. 
\end{corollary}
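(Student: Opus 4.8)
The plan is to read off consistency directly from the soundness statement just established in Proposition~\ref{P-Subst}. The underlying idea is that any theory possessing a genuine model whose arithmetical part is standard cannot prove a recognizable arithmetical falsity, and so must be consistent.

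First I would fix a convenient extension for the schematic predicate, say $X=\varnothing$, and invoke the preceding proposition guaranteeing the existence of an ordinal $\alpha\in\mrm{On}$ with ${}_X\Gamma^T_\alpha = {}_X\Gamma_{\alpha+1}$. This fixed-point property ensures that the triple $(\nat, X, {}_X\Gamma_\alpha)$ is a well-defined model in which Proposition~\ref{P-Subst} applies, so that every theorem $\vphi$ of $\mrm{VFM}^*$ satisfies $X^P, {}_X\Gamma^T_\alpha \vDash \vphi$.

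Next I would argue by contradiction. Suppose $\mrm{VFM}^*$ were inconsistent; then $\mrm{VFM}^* \vdash 0=1$. By Proposition~\ref{P-Subst} this would yield $X^P, {}_X\Gamma^T_\alpha \vDash 0=1$, i.e. $(\nat, X, {}_X\Gamma_\alpha)\vDash 0=1$. But the first-order arithmetical vocabulary of this model is interpreted over the standard model $\nat$, in which $0=1$ is plainly false; hence $(\nat, X, {}_X\Gamma_\alpha)\nvDash 0=1$, a contradiction. Therefore $\mrm{VFM}^*\nvdash 0=1$, and the theory is consistent.

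There is essentially no hard step here: the corollary is an immediate consequence of the soundness result, in exact analogy with the consistency corollary already drawn for $\mrm{VFM}$. The only points that require any care are (i) confirming that the fixed-point ordinal $\alpha$ exists, so that the relevant model is well-defined---this is precisely the content of the earlier proposition---and (ii) observing that the arithmetical part of the model is standard, so that arithmetical falsities such as $0=1$ genuinely fail in it. Both facts are already in hand, so the argument is short and the proof amounts to little more than instantiating Proposition~\ref{P-Subst} at $\vphi := 0=1$.
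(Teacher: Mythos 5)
Your argument is correct and is exactly the (implicit) argument behind the paper's corollary: soundness with respect to a model whose arithmetical part is standard (Proposition~\ref{P-Subst}, instantiated at any fixed $X$ together with the fixed-point existence result) immediately rules out $\mrm{VFM}^*\vdash 0=1$. The paper leaves this unproved as an immediate consequence, and your write-up fills in precisely the intended steps.
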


In what follows, we provide the proof-theoretic analysis of $\mrm{VFM}^*$. 

\subsection{Lower bound}

We begin with a lower-bound for the proof-theoretic strength of $\mrm{VFM}^*$. This is just an observation of Fujimoto, namely \cite[Lemma 39]{fujimoto_2010}, together with the points we raised for the proof-theoretic lower-bound of $\mrm{VFM}$. We describe the proof, for convenience. 

\begin{lemma}\label{lower bound}
$| \mrm{VFM}^*| \geqq | \mrm{RT}_{<\Gamma_0} |\equiv |\mrm{RA}_{<\Gamma_0}|$
\end{lemma}

\begin{proof}
We begin by noting that the result we prove for the lower-bound of VFM, i.e., Theorem \ref{thm:lower_bound_VFM^-}, is in fact more general: what we showed, building on Fujimoto, is that any theory proving VF1 and VF5, together with the conditions on Lemma 1, can relatively truth-define as many ramified truth predicates as transfinite induction it can prove for $\mc{L}_\T$ (and this is Fujimoto's original result). So it will suffice to show that we can have transfinite induction for any ordinal $\alpha<\Gamma_0$ and any formula of $\ltp$.

Then, we can define the sequence $\{\beta_n| n<\omega\}$ as follows:

\begin{itemize}
    \item $\beta_0=\varepsilon_0$
    \item $\beta_{n+1}=\vphi_{\beta_n}(0)$
\end{itemize}

One can see that the supremum of this sequence is, precisely, $\Gamma_0$. So we will show, by induction, that $\mrm{VFM}^*\vdash \mrm{TI}_{\ltp}(<\beta_n)$ for all $n$. The case of $n=0$ is clear from the fact that we have full induction for $\ltp$. Assume then that $\mrm{VFM}^*\vdash \mrm{TI}_{\ltp}(<\beta_n)$, and we want to prove $\mrm{VFM}^*\vdash \mrm{TI}_{\ltp}(<\beta_{n+1})$. By the above, it follows that $\mrm{VFM}^*$ can relatively truth-define $\mrm{RT}_{<\beta_n}$. Now, thanks to Feferman's work (see, in particular, \cite{feferman_1968} and \cite{feferman_1991}), we know that $\mrm{RT}_{<\alpha}\vdash \mrm{TI}_{\lnat}(<\varphi_\alpha(0)) $. Hence, it follows that $\mrm{VFM}^*\vdash \mrm{TI}_{\mc{L}(P)}(<\vphi_{\beta_n}(0))$, that is, $\mrm{VFM}^*\vdash \mrm{TI}_{\mc{L}(P)}(<\beta_{n+1})$ (by definition). In particular, then, $\mrm{VFM}^*\vdash \mrm{TI}_{\mc{L}(P)}(<\beta_{n+1}, P)$. But then, since $\mrm{TI}_{\mc{L}(P)}(<\beta_{n+1}, P)$ is an arithmetical formula with parameter $P$, we can just apply the P-Subst rule to obtain $\mrm{VFM}^*\vdash \mrm{TI}(<\beta_{n+1}, B)$, for $B$ any formula of $\ltp$. Therefore, $\mrm{VFM}^*\vdash \mrm{TI}_{\ltp}(<\beta_{n+1})$. This completes the induction.

\end{proof}

\subsection{Upper bound}

Unsurprisingly, the techniques employed below will mimic those in Section \ref{section:upper_bound_vfm} and Section \ref{section:VFW}, and more particularly the latter. 

\begin{dfn}
$\patinf$ is the calculus comprising the basic axioms and rules of Definition \ref{dfn:vfminf}. $\patpinf$ is the calculus extending $\patinf$ with the following axiom:

\begin{equation*}
     \patpinf \sststile{k}{\alpha}\varGamma, \neg P(s), P(s\simeq t) \tag{Ax.3}
\end{equation*}
\end{dfn}

For a sequent $\varGamma:=\{\delta_0, ..., \delta_n\}$, let $\varGamma (B/P)$ be $\{\delta_0 (B/P), ..., \delta_n (B/P)\}$, i.e., the result of substituting $B$ for $P$ in every formula of $\varGamma$.

\begin{lemma}\label{admissibility of P-Subst in PA}
Let $A(X)$ be any formula of $\lp$, and $B$ any formula of $\ltp$. If $\patpinf \sststile{k}{\alpha} \varGamma, A(P)$, then $\patpinf \sststile{k}{\alpha\#\omega} \varGamma(B/P), A(B/P)$. Therefore, the P-Subst rule is admissible in $\patpinf$.
\end{lemma}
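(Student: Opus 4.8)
The plan is to argue by induction on the length $\alpha$ of the given $\patpinf$-derivation, proving the slightly more general statement that for an \emph{arbitrary} $\ltp$-sequent $\varDelta$, $\patpinf \sststile{k}{\alpha} \varDelta$ implies $\patpinf \sststile{k}{\alpha \# \omega} \varDelta(B/P)$; the lemma is the instance $\varDelta = \varGamma, A(P)$, and the admissibility of P-Subst is the further instance $\varGamma = \varnothing$. Since every sequent is a finite set of \emph{sentences}, all substitutions act on closed instances $P(s)$ with $s$ a closed term, so $P(s)$ is replaced by $B(s)$ (i.e.\ $B$ with its single free variable set to $s$) and no variable capture can occur; dually, a literal $\neg P(s)$ is replaced by the negation normal form of $\neg B(s)$. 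Before the induction I would record the two structural facts I will lean on, both transferring verbatim from Lemma~\ref{lem:properties_VFM_inf} to $\patpinf$, namely Weakening and the Substitution lemma for equal-valued closed terms, together with the elementary tautology fact that $\patpinf \sststile{0}{2\,\mathrm{co}(C)} \neg C, C$ for every $\ltp$-sentence $C$ (by a side induction on $\mathrm{co}(C)$, with literal base cases supplied by $(\mathrm{Ax}.1)$ and $(\mathrm{Ax}.2)$).

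The base cases $(\mathrm{Ax}.1)$ and $(\mathrm{Ax}.2)$ are immediate: their principal formulae (a true arithmetical literal, respectively $\neg\T(s), \T(t \simeq s)$) are $P$-free and survive the substitution unchanged, so after substituting inside the side context one reinstates the same axiom and weakens the length up to $\alpha \# \omega$. The genuinely new case is the $P$-identity axiom $(\mathrm{Ax}.3)$, which for closed terms with $s^{\circ} = t^{\circ}$ has the form $\varGamma, \neg P(s), P(t)$ and becomes $\varGamma(B/P), \neg B(s), B(t)$; this is no longer an axiom. Here I would derive the tautology $\neg B(s), B(s)$ in length $2\,\mathrm{co}(B) < \omega$, pass from the second occurrence to $B(t)$ by the Substitution lemma (using $s^{\circ} = t^{\circ}$), and finally Weaken in the context $\varGamma(B/P)$ while raising the length to $\alpha \# \omega$ (legitimate since $2\,\mathrm{co}(B) < \omega \leq \alpha \# \omega$). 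This is exactly the step that forces the $\#\omega$ overhead in the conclusion.

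For the logical rules $(\lor), (\land), (\exists), (\forall)$ the substitution commutes with the connective or quantifier, e.g.\ $(A_0 \land A_1)(B/P) = A_0(B/P) \land A_1(B/P)$ and, on numeral instances, $\big((\forall x A)(B/P)\big)(i) = (A(i))(B/P)$; so I would apply the induction hypothesis to each premise, obtaining premises of length $\alpha_i \# \omega$, and reapply the same rule. The side condition $\alpha_i < \alpha$ is preserved as $\alpha_i \# \omega < \alpha \# \omega$ by strict monotonicity of the natural sum, and — crucially — this holds \emph{uniformly} across the infinitely many premises of the $(\forall)$-rule, which is precisely why $\#\omega$ rather than ordinary $+\omega$ is the correct bound. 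Assembling these cases closes the induction and, at $\varGamma = \varnothing$, yields admissibility of P-Subst with the stated $\#\omega$ cost.

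The one point demanding real care, and which I expect to be the main obstacle, is the $(\mathrm{cut})$ rule: substituting $B$ for $P$ in a cut formula $C$ with $\mathrm{co}(C) < k$ can raise its complexity to as much as $\mathrm{co}(C) + \mathrm{co}(B)$, so the substituted cut need not respect the rank $k$. The clean route in the intended application is to invoke the lemma only on cut-free derivations — those produced after the Cut-elimination part of Lemma~\ref{lem:properties_VFM_inf}, i.e.\ with $k = 0$ — for which the $(\mathrm{cut})$ clause is vacuous and the rank is trivially preserved; for general $k$ one simply tracks the resulting increase of the cut rank by at most $\mathrm{co}(B)$, which leaves the length bound $\alpha \# \omega$ intact and is immaterial once cuts are subsequently eliminated.
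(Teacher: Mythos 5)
Your proof is correct and follows essentially the same route as the paper's: induction on $\alpha$, where the only nontrivial base case is $(\mathrm{Ax}.3)$, whose substituted instance $\varGamma(B/P), \neg B(s), B(t)$ is recovered from the finite-length tautology $\neg B(s), B(s)$ together with Substitution and Weakening --- exactly the source of the $\#\omega$ overhead the paper alludes to. Your remark about the cut rank is a genuine refinement of a point the paper glosses over (the lemma is only ever invoked at cut rank $0$, where the issue is vacuous), and your proposed resolution is the right one.
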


\begin{proof}
The proof is straightforward by induction on $\alpha$. One needs to consider that, for the base case, if $A(P)$ is active and follows from Axiom 1, then it must be of the form $s=t$ or $\neg s=t$, and so $A(P)=A(B/P)$. On the other hand, if $A(P)$ is active and follows from Axiom 3, then $A(P)$ is of the form $P(t)$ or $\neg P(t)$ for some term $t$, and so we might require up to $\omega$-more steps to prove $\varGamma(B/P), \neg B(t), B(s \simeq t)$.
\end{proof}

\begin{dfn}
$\vfmpinf$ is the calculus formulated in the language $\ltp$ and comprising all the rules and axioms of $\patpinf$ and $\mrm{VFM}^\infty$ plus the following two rules:
\end{dfn}

\begin{center}
\AxiomC{$\sststile{k}{\alpha_0}\varGamma, P(t^{\circ})$} \RightLabel{P-Disq$_1$}
\UnaryInfC{$\sststile{k}{\alpha}\varGamma, \T(s\simeq \subdot P t)$}
\DisplayProof    
\end{center}

\begin{center}
\AxiomC{$\sststile{k}{\alpha_0}\varGamma, \neg P(t^{\circ})$} \RightLabel{P-Disq$_2$}
\UnaryInfC{$\sststile{k}{\alpha}\varGamma, \T(s\simeq \subdot \neg\subdot P t)$}
\DisplayProof    
\end{center}

\begin{dfn}
$\mrm{VFM(P)}$ is the theory extending the theory $\mrm{VFM}$ with the axiom: 

\begin{equation*}\label{Disq-axiom}
    \forall t(\T(\subdot P t)\leftrightarrow Pt^\circ)\tag{P-Disq}
\end{equation*}
\end{dfn}

\begin{lemma}[Embedding]
If $\mrm{VFM(P)}\vdash \vphi$, then $\vfmpinf \sststile{0}{\alpha}\vphi$, for some $\alpha<\varepsilon_0$.
\end{lemma}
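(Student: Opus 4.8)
The plan is to argue by induction on the length of the derivation of $\vphi$ in $\mrm{VFM(P)}$, just as in the Embedding Lemma for $\mrm{VFM}$ (Lemma~\ref{lem:embedding_VFM}). Since $\vfmpinf$ contains every axiom and rule of both $\patpinf$ and $\mrm{VFM}^\infty$, almost all cases are inherited. The logical rules, together with the arithmetical axioms and rules—now over $\lp$, with the schematic predicate $P$ handled by the axiom $(\mathrm{Ax}.3)$ of $\patpinf$—are simulated in the usual Schütte style, with induction replaced by suitable applications of $(\forall)$ and $(\mrm{cut})$, keeping the derivation length for that part well below $\varepsilon_0$. The truth axioms $\mrm{VF}1$ through $\mrm{VF}8$ are discharged verbatim by the corresponding rules of $\mrm{VFM}^\infty$, exactly as in Lemma~\ref{lem:embedding_VFM}. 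Thus the only genuinely new case is the axiom (P-Disq), i.e.\ $\forall t(\T(\subdot P t)\leftrightarrow Pt^\circ)$.

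For (P-Disq) it suffices to derive each instance $\T(\subdot P t)\leftrightarrow Pt^\circ$, whose negation normal form is $(\neg\T(\subdot P t)\vee Pt^\circ)\wedge(\neg Pt^\circ\vee\T(\subdot P t))$, with a length bounded uniformly in $t$, and then to apply $(\forall)$. The right-to-left conjunct $\neg Pt^\circ\vee\T(\subdot P t)$ is the straightforward one: I would start from the instance $\neg P(t^\circ),P(t^\circ)$ of $(\mathrm{Ax}.3)$, apply the rule P-Disq$_1$ (with context $\varGamma=\{\neg P(t^\circ)\}$) to obtain $\neg P(t^\circ),\T(\subdot P t)$, and close with $(\lor)$.

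The step I expect to be the main obstacle is the left-to-right conjunct $\neg\T(\subdot P t)\vee Pt^\circ$, because the infinitary machinery for $P$ provides only the \emph{positive} rule P-Disq$_2$, which delivers the true sentence $\T(\subdot\neg\subdot P t)$ rather than the negated truth literal $\neg\T(\subdot P t)$ that we actually need. The plan is to manufacture that literal through the consistency rule. Starting again from $P(t^\circ),\neg P(t^\circ)$ (an instance of $(\mathrm{Ax}.3)$), the rule P-Disq$_2$ yields $P(t^\circ),\T(\subdot\neg\subdot P t)$; meanwhile the basic axiom $(\mathrm{Ax}.2)$ of $\mrm{VFM}^\infty$, instantiated with $s:=\subdot P t$, yields $\neg\T(\subdot P t),P(t^\circ),\T(\subdot P t)$. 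Weakening the former to $\neg\T(\subdot P t),P(t^\circ),\T(\subdot\neg\subdot P t)$ and applying $(\mrm{Cons})$ to the complementary pair $\T(\subdot P t)$, $\T(\subdot\neg\subdot P t)$ then produces the sequent $\neg\T(\subdot P t),P(t^\circ)$, from which $(\lor)$ gives the conjunct. A final $(\land)$ combines the two conjuncts into the instance, and since each instance is obtained in a fixed finite number of steps, the concluding $(\forall)$ yields (P-Disq) with length below $\omega$. Composing these bounds over the finitely many steps of the original $\mrm{VFM(P)}$-proof keeps the total derivation length below $\varepsilon_0$, as required.
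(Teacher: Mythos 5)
Your proof is correct and follows essentially the same route as the paper: every case except (P-Disq) is inherited from the embedding arguments for $\mrm{VFM}^{\infty}$ and $\patpinf$, and each instance of (P-Disq) is recovered by a fixed finite cut-free derivation before a final application of $(\forall)$. The only (harmless) divergence is in the left-to-right conjunct, where you obtain $\neg\T(\subdot P t), P(t^{\circ})$ by applying $(\mrm{Cons})$ to an instance of $(\mathsf{Ax.2})$ and the output of P-Disq$_2$, whereas the paper reaches the same point via $(\T_{\neq})$ together with the internal closure of $\T$ under logic; both derivations are available in $\vfmpinf$.
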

\begin{proof}
The proof follows the lines of the embedding proof for $\mrm{VFM}$. To recover \ref{Disq-axiom}, the only addenda, one first shows how the rule $\T_{\neq}$ together with the internal closure of $\T$ under logic yields the sequent $\varGamma, \neg\T(\subdot\neg\subdot P t), \neg\T (\subdot P t)$. Then, P-Disq$_1$, P-Disq$_2$, together with Ax.3, do the job. 
\end{proof}

\begin{dfn}
Let the extension of the predicate $P$ be $X$. The set $I^*_X \subseteq \{0,1\} \times \Gamma_0 \times \Gamma_0 \times \mrm{Seq}$ is defined to be the least fixed-point which is closed under the clauses in Definition \ref{dfn:I} plus the two additional clauses:

\begin{equation*}
I_X^*(i; \beta; \alpha; \varGamma, P(t)) \text{ if } t\in X\tag{Ax.4}
\end{equation*}
\begin{equation*}
I_X^*(i; \beta; \alpha; \varGamma, \neg P(t)) \text{ if } t\notin X \tag{Ax.5}
\end{equation*}

\noindent We write $I'(i;\alpha;\beta;\varGamma)$ instead of $\langle i,\alpha,\beta, \varGamma \rangle \in I'$.
\end{dfn}

We can now prove a few results about $I_X^*(x; y; z; w)$. They mimic the proofs for $I(x;y;z;w)$: 

\begin{lemma}
For any set $X\subseteq \omega$, the following holds: 

\begin{enumerate}
    \item (Substitution) If $I_X^*(i; \beta; \alpha; \varGamma, A(s))$ and $s=t$, then $I_X^*(i; \beta; \alpha; \varGamma, A(t))$. 
    \item (Weakening) Let $\alpha_0\leq \alpha, i\leq j, \beta_0\leq\beta, \varGamma_0\subseteq \varGamma$. If $I_X^*(i; \beta_0; \alpha_0; \varGamma_0)$, then $I_X^*(j; \beta; \alpha; \varGamma)$. 

\item (Cut-admissibility) If $I_X^*(i; \beta; \alpha_0; \varGamma, A)$ and $I_X^*(i; \beta; \alpha_1; \varDelta, \neg A)$, then $I_X^*(i; \beta; \omega_{\mathrm{co}(A)}(\alpha_0 \# \alpha_1); \varGamma, \varDelta)$, where $\mathrm{co}(A)$ is the logical complexity of $A$.
\end{enumerate}
\end{lemma}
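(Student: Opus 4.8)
The plan is to observe that all three statements are established by exactly the inductive arguments already used for the system $I$ in Lemma~\ref{lem:subst_I}, Lemma~\ref{lem:weak_I}, and Lemma~\ref{lem:cut-adm-I}, so the only genuine task is to check that the two new clauses $(\mathrm{Ax}.4)$ and $(\mathrm{Ax}.5)$ of $I_X^*$ do not disturb those arguments. For Substitution and Weakening I would proceed by induction on the definition of $I_X^*$ (equivalently, on the derivation), reusing every case from the corresponding proof for $I$ and adding the two new base cases. Weakening is immediate: $(\mathrm{Ax}.4)$ and $(\mathrm{Ax}.5)$ hold for an arbitrary index $i$, arbitrary ordinals, and arbitrary context $\varGamma$, so enlarging any of these parameters leaves them instances of the same axiom. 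For Substitution, the only new point is when the principal formula is $P(s)$ via $(\mathrm{Ax}.4)$ (or $\neg P(s)$ via $(\mathrm{Ax}.5)$): since the clause depends only on the value of $s$, and $s=t$ forces $s$ and $t$ to share that value, $P(t)$ is again an instance of $(\mathrm{Ax}.4)$ (resp. $(\mathrm{Ax}.5)$), which closes the case.

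The substance is Cut-admissibility, which I would prove — as in Lemma~\ref{lem:cut-adm-I} and \cite[Lemma~3.6]{rathjen_leigh_2010} — by main induction on $\mathrm{co}(A)$ and sub-induction on the combined height $\alpha_0 \# \alpha_1$, taking care, exactly as there, not to raise the index $i$ nor the $\T$-introduction rank through the cut. All the inductive steps for compound $A$, and all the base cases for equations, inequations, and atoms $\T(t)$, are inherited unchanged from the proof for $I$. The genuinely new base cases are $A \equiv P(t)$ and $A \equiv \neg P(t)$, which have complexity $0$ and therefore fall under the $\mathrm{co}(A) = 0$ analysis.

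The key point — and the one I expect to need the most care — is that the $P$-atoms behave like the (in)equations governed by $(\mathrm{Ax}.1)/(\mathrm{Ax}.2)$ rather than like $\T(t)$, because $X$ \emph{decides} each $P$-atom: for any closed $t$, exactly one of $P(t)$, $\neg P(t)$ is an instance of an axiom, according to whether the value of $t$ lies in $X$. Consequently, cutting $P(t)$ against $\neg P(t)$ never appeals to an identity axiom analogous to $(\mathrm{Ax}.3)$ (which has no counterpart for $P$); instead, whichever of the two cut formulas is false cannot be principal in the last rule of its derivation, since a literal of complexity $0$ is principal only in an axiom and the false $P$-atom matches neither $(\mathrm{Ax}.4)$ nor $(\mathrm{Ax}.5)$. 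If that derivation ends in an axiom, the witnessing formula lies in the remainder of the sequent and the conclusion follows by Weakening; otherwise the false atom occurs merely as a side formula, and I would apply the sub-induction hypothesis to the premises and reapply the last rule, precisely as in the equation case. This is what keeps the derivation consistent and respects the bound $\omega_{\mathrm{co}(A)}(\alpha_0 \# \alpha_1)$; all remaining cases close verbatim as for $I$.
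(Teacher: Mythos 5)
Your proposal is correct and matches the paper's proof in essentials: both reduce everything to the corresponding arguments for $I$ (induction on the derivation, with the cut case organized around the complexity of the cut formula) and then inspect the cases that are new or delicate. The only difference is one of emphasis --- you single out the $P$-atom cuts, which you handle correctly via the observation that $X$ decides every $P$-atom so the false one can never be principal, whereas the paper instead spells out the $(\mrm{Comp})$ base case and the compound case whose last rule is $(\mrm{Cons})$ or $(\mrm{Norm})$; both sets of cases go through exactly as each of you indicates.
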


\begin{proof}
All proofs proceed by induction on the second ordinal index, $\alpha$. In the case of 8, the successor case requires a side-induction on the complexity of the formula $A$. We just mention some steps in that proof that might not be obvious: 

\textit{When proving the claim for $\alpha=0$}, we deal with the case of (Comp). We will then have $I_X^*(i; \beta; 0; \varGamma, \T\ulcorner A\urcorner)$ (the case for $\varGamma, \T\ulcorner \neg A\urcorner$ will be symmetric). We also have $I_X^*(i; \beta; 0; \varDelta, \neg\T\ulcorner A\urcorner)$. We focus on the case in which $\neg\T\ulcorner A\urcorner$ is the active formula, since otherwise it is straightforward. Since the second ordinal index is 0, $I_X^*(i; \beta; 0; \varDelta, \neg\T\ulcorner A\urcorner)$ must follow from (Ax.3), so $\T\ulcorner A\urcorner\in \varDelta$. Thus, weakening on $I_X^*(i; \beta; 0; \varGamma, \T\ulcorner A\urcorner)$, we obtain $I_X^*(i; \beta; 0; \varGamma, \varDelta)$.

\textit{When proving the claim for $\alpha=\gamma+1$, $\vert A\vert>0$ and $A$ the active formula}, one needs to examine two cases. We sketch how it works for the case of $A=A_0\wedge A_1$. We have $I_X^*(i; \beta; \alpha'_0; \varGamma, A_0)$, as well as $I_X^*(i; \beta; \alpha'_0; \varGamma, A_1)$, $\alpha'_0<\alpha_0$ and $I_X^*(i; \beta; \alpha_1; \varDelta, \neg A_0\vee\neg A_1)$. Then the two cases to distinguish are: (i) the case in which $I_X^*(i; \beta; \alpha_1; \varDelta, \neg A_0\vee\neg A_1)$ is obtained by $(\vee)$; and (ii) the case in which it is obtained by (Cons) or (Norm). For (i), the claim will follow by IH (twice) and weakening. For (ii), one uses (5) above on the premise(s) of (Cons)/(Norm), employs again IH twice and weakening, and applies (Cons)/(Norm). 
\end{proof}

\begin{dfn}
    A sequent $\varGamma$ is atomic if it consists only of atomic sentences of $\ltp$, i.e., equations, or formulae of the forms $P(t)$ or $\T(t)$. 
\end{dfn}

\begin{dfn}
    For an atomic sequent $\varGamma$, we define its disquotation, $\mrm{Dis}(\varGamma)$, as the sequent $\{s=t\vert s = t\in \varGamma\} \cup \{P(t)\vert P(t)\in \varGamma\}\cup \{A \in \ltp \vert t \text{ is the code of  } A \text{ and } \subdot \T(t)\in \varGamma\}$.
\end{dfn}

\begin{lemma}[Disquotation]
For any set $X\subseteq\omega$ and for any atomic sequent $\varGamma$, assume $I_X^*(i; \beta; \alpha; \varGamma)$. Then $I_X^*(i; \beta_0; \omega_n(\alpha); \mrm{Disq}(\varGamma))$ holds for some $n\in\nat$ and $\beta_0<\beta$, provided $\beta\geq 1$. Moreover, if $\varGamma$ contains only equations, $I_X^*(i; 0; \varepsilon_\alpha; \varGamma)$ holds. 
\end{lemma}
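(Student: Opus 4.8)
The plan is to prove item~1 by induction on the derivation height, i.e.\ on the third ordinal index $\alpha$ of $I^*_X(i;\beta;\alpha;\varGamma)$, dividing into cases according to the last rule of the derivation. Here the roles of the two ordinal indices are exactly as for $I$ — the second index is the $\T$-Intro rank and the third the height — so the argument is a transcription of the Disquotation lemma for $I$ (Lemma~\ref{lem:disq_I}), with $\beta$ and $\alpha$ now playing the parts that $\alpha$ and $\beta$ played there. The hypothesis $\beta \geq 1$ is precisely what lets us return a conclusion whose $\T$-Intro rank $\beta_0$ is strictly below $\beta$, and throughout I would use Weakening and Cut-admissibility for $I^*_X$ as established above. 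Since $\varGamma$ is atomic (hence built only from the positive literals $s=t$, $P(t)$ and $\T(t)$), the connective and quantifier rules, as well as the axioms demanding a negated literal, cannot be the last rule; so the cases to treat are those already appearing in Lemma~\ref{lem:disq_I} together with the single new axiom $(\mathrm{Ax}.4)$.

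For the inherited cases the argument is verbatim. When the last rule is $(\mathrm{Ax}.1)$, a true equation of $\varGamma$ also lies in $\mrm{Disq}(\varGamma)$, so we re-apply the axiom at any $\beta_0 < \beta$ (legitimate as $\beta \geq 1$) and any height. The case $(\T \mhyph \mrm{Intro})$ is where the $\T$-Intro rank genuinely drops: from the premise $I^*_X(i;\beta_0;\alpha_0;A)$ with $\beta_0 < \beta$ and the displayed $\T(t)$, $t = \corn{A}$, we have $A \in \mrm{Disq}(\varGamma)$, and Weakening delivers the claim. The case $(\mrm{Comp})$ disquotes a complementary pair of $\T$-atoms to $A,\neg A$ and uses closure under classical logic. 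The delicate case is $(\mrm{Cons})$: the induction hypothesis applied to the two premises of smaller height, followed by Cut-admissibility for $I^*_X$ when the cut formula codes a sentence $A$, produces the $\omega_{\mrm{co}(A)}$-blow-up in the height, which is absorbed into $\omega_{n}(\alpha)$ for a larger $n \in \nat$; when the relevant numeral codes no sentence, disquotation leaves $\varGamma$ unchanged and the hypothesis passes straight through. The rule $(\mrm{Norm})$ is treated the same way.

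The one new case, $(\mathrm{Ax}.4)$, is immediate: then $\varGamma$ contains $P(t)$ with $t \in X$, and since disquotation leaves $P$-literals untouched we still have $P(t) \in \mrm{Disq}(\varGamma)$, so we simply re-apply $(\mathrm{Ax}.4)$ at the required indices $\beta_0 < \beta$ and height $\omega_n(\alpha)$. (The dual axiom $(\mathrm{Ax}.5)$ introduces the negated literal $\neg P(t)$ and so never closes an atomic sequent, exactly as $(\mathrm{Ax}.2)$ and $(\mathrm{Ax}.3)$ do not arise in Lemma~\ref{lem:disq_I}.) This proves item~1. For the ``moreover'' clause, suppose $\varGamma$ contains only equations, so that $\mrm{Disq}(\varGamma) = \varGamma$ and $\varepsilon_\alpha$ is a fixed point of every $\omega_n(-)$. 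Weakening the height of the hypothesis up to $\varepsilon_\alpha$ gives $I^*_X(i;\beta;\varepsilon_\alpha;\varGamma)$, and a secondary transfinite induction on the $\T$-Intro rank, each step invoking item~1 together with the identities $\mrm{Disq}(\varGamma)=\varGamma$ and $\omega_n(\varepsilon_\alpha)=\varepsilon_\alpha$, brings the rank down to $0$ without moving the height, yielding $I^*_X(i;0;\varepsilon_\alpha;\varGamma)$.

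The main obstacle is one of bookkeeping rather than of ideas: one must keep the two ordinal indices straight — they are named in the opposite order to the $I$-version — check that the height increase contributed by Cut-admissibility in the $(\mrm{Cons})$ case always remains of the form $\omega_{n'}(\alpha)$ for some finite $n'$, and confirm that no infinitary rule can produce an atomic sequent, which is what guarantees that a finite tower of $\omega$-exponentials suffices. Because the $P$-axioms contribute nothing beyond a verbatim re-application, the whole proof reduces faithfully to that of Lemma~\ref{lem:disq_I}.
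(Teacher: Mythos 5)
Your proof is correct and follows essentially the same route as the paper's: the paper likewise proves item~1 by induction on the height $\alpha$, reducing everything to the argument for $I$ (Lemma~\ref{lem:disq_I}), treating $(\mathrm{Ax}.4)$ exactly as $(\mathrm{Ax}.1)$ and observing that $(\mathrm{Ax}.5)$, like $(\mathrm{Ax}.3)$, cannot close an atomic sequent, with the ``moreover'' clause following from item~1 as you describe. Your write-up is in fact somewhat more explicit than the paper's (e.g.\ on the index bookkeeping and the weakening step in the ``moreover'' part), but there is no substantive difference.
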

\begin{proof}
The proof, of course, works exactly like the one for $I(x;y;z;w)$. It proceeds then by induction on $\alpha$. We treat the case of (Ax.4) as the case of (Ax.1); the case of (Ax.5), just like (Ax.3), cannot arise, as otherwise $\varGamma$ is not atomic. The `moreover' part of the claim follows directly from the first part. 
\end{proof}

\begin{lemma}[Elimination of (Cons) and (Norm)]
For any set $X\subseteq\omega$, if $I_X^*(i; 0; \alpha; \varGamma)$ holds and $\varGamma$ is atomic, then $I_X^*(0; 0; \omega_n(\alpha); \mrm{Dis}(\varGamma))$ holds for some $n\in\nat$. Therefore, if $\varGamma$ contains only equations, we obtain $I_X^*(0; 0; \omega_n(\alpha); \varGamma)$.
\end{lemma}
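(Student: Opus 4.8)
The plan is to mimic the proof of the corresponding lemma for $I$ (Elimination of $(\mrm{Cons})$ and $(\mrm{Norm})$), which itself runs parallel to the Disquotation Lemma, now carried out for $I_X^*$. I would argue by induction on the derivation height, i.e.\ the third ordinal index $\alpha$ of $I_X^*(i;0;\alpha;\varGamma)$. Since the middle index (the $(\T\mhyph\mrm{Intro})$-rank) is fixed at $0$, no premise of the form $I_X^*(i;\beta_0;\cdot;\cdot)$ with $\beta_0<0$ can exist, so the rule $(\T\mhyph\mrm{Intro})$ never occurs in the derivation; this is precisely what keeps the output $(\T\mhyph\mrm{Intro})$-rank at $0$. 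Because $\varGamma$ is atomic, the last rule must be one of $(\mrm{Ax}.1)$, $(\mrm{Ax}.4)$, $(\mrm{Comp})$, $(\mrm{Cons})$, or $(\mrm{Norm})$: the remaining axioms $(\mrm{Ax}.2)$, $(\mrm{Ax}.3)$, $(\mrm{Ax}.5)$ would force a negated literal into $\varGamma$, and the logical rules would force a compound formula into $\varGamma$, either of which contradicts atomicity.

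For the easy cases I proceed as in the Disquotation Lemma. For $(\mrm{Ax}.1)$ and $(\mrm{Ax}.4)$ the witnessing equation $s\subdot{=}t$ (resp.\ atom $P(t)$) survives disquotation unchanged, so I re-apply the same axiom to obtain $I_X^*(0;0;\omega_n(\alpha);\mrm{Dis}(\varGamma))$ for any $n$. For $(\mrm{Comp})$ I disquote the two truth atoms so that $A$ and $\neg A$ both land in $\mrm{Dis}(\varGamma)$, then use the closure of $I_X^*$ under classical logic to derive $A,\neg A$ with index $0$ and a fixed finite height, and finish by Weakening for $I_X^*$. In all three cases the index drops to $0$ and no application of $(\mrm{Cons})$ or $(\mrm{Norm})$ survives.

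The main case is $(\mrm{Cons})$, with $i=1$ and premises $I_X^*(i_0;0;\alpha_0;\varGamma,\T(n))$ and $I_X^*(i_0;0;\alpha_0;\varGamma,\T(\subdot{\neg}n))$ for some $\alpha_0<\alpha$. Both premise-sequents are atomic, so the induction hypothesis applies. If $n$ codes no sentence, then $\mrm{Dis}(\varGamma,\T(n))=\mrm{Dis}(\varGamma)$ and the induction hypothesis together with Weakening yields the claim directly. If $n$ codes a sentence $A$, the induction hypothesis gives $I_X^*(0;0;\omega_k(\alpha_0);\mrm{Dis}(\varGamma),A)$ and $I_X^*(0;0;\omega_l(\alpha_0);\mrm{Dis}(\varGamma),\neg A)$, and Cut-admissibility for $I_X^*$ collapses these to $I_X^*(0;0;\omega_{\mrm{co}(A)}(\omega_k(\alpha_0)\#\omega_l(\alpha_0));\mrm{Dis}(\varGamma))$; bounding $\omega_{\mrm{co}(A)}(\omega_k(\alpha_0)\#\omega_l(\alpha_0))<\omega_{\mrm{co}(A)+\max\{k,l\}}(\alpha)$ and setting $n:=\mrm{co}(A)+\max\{k,l\}$ delivers the required $I_X^*(0;0;\omega_n(\alpha);\mrm{Dis}(\varGamma))$ by Weakening. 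The $(\mrm{Norm})$ case is the same but simpler, since $\neg\mrm{Sent}(n)$ means the disquotation of the premise already equals $\mrm{Dis}(\varGamma)$ and no cut is needed. Finally, when $\varGamma$ consists only of equations, $\mrm{Dis}(\varGamma)=\varGamma$, which gives the concluding $I_X^*(0;0;\omega_n(\alpha);\varGamma)$.

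The only delicate point is the ordinal bookkeeping in the $(\mrm{Cons})$ case: checking that the nested $\omega_k,\omega_l$ produced by Cut-admissibility remain below some $\omega_n(\alpha)$, and at the same time that the index $i$ is genuinely driven to $0$ so that no $(\mrm{Cons})$ or $(\mrm{Norm})$ reappears. These are exactly the estimates already established for $I$, so no essentially new difficulty arises. The genuinely new ingredients are the $P$-atom cases, and they are trivial: $(\mrm{Ax}.4)$ is treated identically to $(\mrm{Ax}.1)$, while $(\mrm{Ax}.5)$ introduces $\neg P(t)$ and so cannot be the last rule of a derivation whose endsequent is atomic.
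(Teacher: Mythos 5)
Your proposal is correct and follows essentially the same route as the paper, which simply performs induction on $\alpha$ and transfers the case analysis of the Disquotation Lemma (noting, as you do, that the $(\T\mhyph\mrm{Intro})$ rule cannot occur when the middle index is $0$, and that the new $P$-atom axioms behave like $(\mathrm{Ax}.1)$ while $(\mathrm{Ax}.5)$ is excluded by atomicity). Your write-up merely spells out the details that the paper leaves implicit.
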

\begin{proof}
By inducting on $\alpha$. Nothing relevant changes w.r.t. the same proof for $I(x;y;z;w)$.
\end{proof}
\begin{lemma}[Consistency]
For any set $X\subseteq\omega$, if $I_X^*(i; \beta;\alpha;s=t)$ holds, then $s=t$. Likewise, if $I_X^*(i; \beta;\alpha;P(t))$ holds, then $t\in X$. 
\end{lemma}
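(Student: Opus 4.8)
The plan is to mimic the proof of the Consistency of $I$ (Corollary~\ref{cor:consistency_of_I}): reduce any $I_X^*$-derivation of a single literal to one that uses neither $(\T\mhyph\mrm{Intro})$ nor $(\mrm{Cons})$ nor $(\mrm{Norm})$, and then simply read off the last rule. Concretely, suppose $I_X^*(i;\beta;\alpha;\varGamma)$ where $\varGamma$ is either $\{s=t\}$ or $\{P(t)\}$. In both cases $\varGamma$ is atomic, so the goal is to pass to $I_X^*(0;0;\varepsilon_\alpha;\varGamma)$, i.e. to a derivation whose $\T$-Intro rank (second index) is $0$ and whose index is $0$, the latter meaning that no application of $(\mrm{Cons})$ or $(\mrm{Norm})$ occurs.

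For the equation case $\varGamma=\{s=t\}$ this is immediate from the two auxiliary lemmata: the ``moreover'' clause of the Disquotation Lemma, which applies precisely to equation-only sequents, yields $I_X^*(i;0;\varepsilon_\alpha;s=t)$, and then Elimination of $(\mrm{Cons})$ and $(\mrm{Norm})$ gives $I_X^*(0;0;\omega_n(\varepsilon_\alpha);s=t)=I_X^*(0;0;\varepsilon_\alpha;s=t)$, using $\omega_n(\varepsilon_\alpha)=\varepsilon_\alpha$. For the predicate case $\varGamma=\{P(t)\}$ the cannot invoke the ``moreover'' clause directly, and the key observation is instead that $\mrm{Disq}(\{P(t)\})=\{P(t)\}$, since disquotation leaves $P$-atoms untouched. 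Hence item~1 of the Disquotation Lemma applies with the disquotandum fixed: after raising the height to an epsilon number by Weakening, from $I_X^*(i;\beta;\varepsilon_\alpha;P(t))$ with $\beta\geq 1$ it produces $I_X^*(i;\beta_0;\omega_n(\varepsilon_\alpha);P(t))=I_X^*(i;\beta_0;\varepsilon_\alpha;P(t))$ for some $\beta_0<\beta$. A transfinite induction on $\beta$ — exactly the argument that derives item~2 from item~1 in the equation case — then collapses the $\T$-Intro rank to $0$, giving $I_X^*(i;0;\varepsilon_\alpha;P(t))$; Elimination of $(\mrm{Cons})$ and $(\mrm{Norm})$ next removes the problematic rules, yielding $I_X^*(0;0;\varepsilon_\alpha;P(t))$.

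It remains to analyse the last rule of a derivation witnessing $I_X^*(0;0;\varepsilon_\alpha;\varGamma)$ for $\varGamma$ a single literal. Since the $\T$-Intro rank is $0$, no $(\T\mhyph\mrm{Intro})$ occurs; since the index is $0$, neither $(\mrm{Cons})$ nor $(\mrm{Norm})$ occurs; and as $\varGamma$ carries no logical connective, no connective or quantifier rule can be the last step. The only clauses that can conclude a sequent of the form $\{s=t\}$ are the axioms, and among these only $(\mrm{Ax}.1)$ produces a bare equation, forcing $s=t$ to be true; likewise the only clause that can conclude $\{P(t)\}$ is $(\mrm{Ax}.4)$, whose side condition is exactly $t\in X$. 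This establishes both halves of the lemma.

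The main obstacle I anticipate is the $P(t)$ case, where the ``moreover'' clause of the Disquotation Lemma is unavailable because it is stated only for equation-only sequents. The remedy — observing that $\mrm{Disq}$ fixes $P$-atoms, so that the same transfinite induction on the $\T$-Intro rank that proves the equation case goes through verbatim while keeping the derivation height below the relevant epsilon number — is the one genuinely nonroutine point; the final inversion step and the equation case are direct transcriptions of the argument for $I$.
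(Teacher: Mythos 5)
Your proposal is correct and matches the argument the paper intends: the lemma is stated without proof, but the paper's proof of the analogous Corollary~\ref{cor:consistency_of_I} for $I$ proceeds exactly as you do (Weakening, Disquotation, Elimination of $(\mrm{Cons})$ and $(\mrm{Norm})$, then inspection of the last rule). Your observation that $\mrm{Disq}$ fixes $P$-atoms, so the equation-only argument transfers verbatim to $\{P(t)\}$ and the only remaining axiom is $(\mrm{Ax}.4)$ with side condition $t\in X$, is precisely the intended extension.
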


Finally, we define the interpretation that yields the soundness of $\vfmpinf$:

\begin{dfn}
For a set $X\subseteq\omega$, the relation $\vDash^\beta_{X}A$ is inductively defined by the clauses for $\vDash^\beta A$  for atomic and negated atomic sentences of $\lnat$, $\wedge$, $\vee$, $\forall$ and $\exists$ plus the following clauses:

\begin{itemize}
    \item $\vDash^\beta_X P(t):\Leftrightarrow t\in X$
    \item $\vDash^\beta_X \neg P(t) :\Leftrightarrow t\notin X$
    \item $\vDash^\beta_X \T(t):\Leftrightarrow$ the value of $t$ is the code of a sentence $A$ and $I_X^*( 1; \beta; \varepsilon_\beta; A)$ holds. 
\end{itemize}

\end{dfn}

\begin{thm}[Soundness]
Let $\varGamma$ be a $\T$-positive sequent. If $\vfmpinf\sststile{0}{\alpha}\varGamma$, then $\vDash^\alpha_X \bigvee \varGamma$, for all $X\subseteq \omega$. 
\end{thm}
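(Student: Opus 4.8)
The plan is to prove the Soundness Theorem by induction on the derivation length $\alpha$, splitting into cases according to the last rule of the $\vfmpinf$-derivation, in complete parallel with the proof of Theorem~\ref{thm:soundness}. Before starting the induction I would record the analogue of Persistency (Lemma~\ref{lem:persist-I}) for $\vDash^\alpha_X$: it is proved exactly as there, the two new clauses for $P(t)$ and $\neg P(t)$ being level-independent and hence trivially persistent. The induction then relies only on properties of $I^*_X$ that have already been transferred to the present setting, namely Substitution, Weakening, Cut-admissibility, the Disquotation Lemma, and Consistency.

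Every case whose last rule is inherited from $\mrm{VFM}^\infty$ is then verified verbatim as in Theorem~\ref{thm:soundness}, since the arguments there use only those transferred properties of $I^*_X$. For instance, $(\T_{\neq})$ and $(\mrm{Cons})$ go through by Cut-admissibility together with the Consistency of $I^*_X$ (no false equation is derivable); $(\mrm{Del})$ uses the Disquotation Lemma to extract a derivation of the disquoted sentence of height below the next epsilon number; and $(\T_\forall)$ and $(\mrm{Rep})$ use closure of $I^*_X$ under the $\omega$-rule and under $(\T\mhyph\mrm{Intro})$ respectively. None of these cases is affected by the presence of $P$.

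The genuinely new work is confined to the three $P$-principles. The axiom $(\mrm{Ax}.3)$ of $\patpinf$, asserting $\varGamma, \neg P(s), P(s\simeq t)$ for closed terms with $s^\circ = t^\circ$, has no premises, and I would argue it directly: since $X$ is a set of numbers, $\vDash^\alpha_X P(u)$ depends only on $u^\circ$, so if $s^\circ\in X$ then $\vDash^\alpha_X P(s\simeq t)$, and if $s^\circ\notin X$ then $\vDash^\alpha_X \neg P(s)$; either way $\vDash^\alpha_X\bigvee\varGamma$. For P-Disq$_1$, with premise $\sststile{k}{\alpha_0}\varGamma, P(t^\circ)$ and conclusion $\sststile{k}{\alpha}\varGamma, \T(s\simeq\subdot P t)$, the induction hypothesis yields $\vDash^{\alpha_0}_X\bigvee(\varGamma, P(t^\circ))$; if the disjunct over $\varGamma$ already holds we finish by Persistency, and otherwise $\vDash^{\alpha_0}_X P(t^\circ)$ gives $t^\circ\in X$. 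As the value of $s$ is the code of the atomic $P$-sentence determined by $t^\circ$, to obtain $\vDash^\alpha_X\T(s\simeq\subdot P t)$ it suffices to produce the witness $I^*_X(1;\alpha;\varepsilon_\alpha; P(t^\circ))$, and this is precisely an instance of the new clause $(\mrm{Ax}.4)$ combined with Weakening. The case P-Disq$_2$ is symmetric, now using $t^\circ\notin X$ and clause $(\mrm{Ax}.5)$ to witness $I^*_X(1;\alpha;\varepsilon_\alpha; \neg P(t^\circ))$.

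The theorem is therefore essentially a transcription of Theorem~\ref{thm:soundness} augmented by three easy cases, so no single step is hard; the only thing to watch is the index bookkeeping. Concretely, I must check that the derivability witnesses delivered by $(\mrm{Ax}.4)$ and $(\mrm{Ax}.5)$ carry exactly the index data $(1;\alpha;\varepsilon_\alpha)$ demanded by the clause defining $\vDash^\alpha_X\T(t)$, and that the transferred Consistency Lemma for $I^*_X$ is invoked in precisely the same places as for $I$. Because the schematic predicate $P$ is interpreted rigidly by membership in the fixed parameter $X$ and the value $(\cdot)^\circ$ pins down the disquoted $P$-atom unambiguously, these verifications are routine; crucially, no separate elimination of $(\mrm{Cons})$ and $(\mrm{Norm})$ beyond the transferred lemmata is required.
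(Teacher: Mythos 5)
Your proposal is correct and follows essentially the same route as the paper: induction on $\alpha$ with the inherited cases handled exactly as in the soundness theorem for $\mrm{VFM}^{\infty}$, and the new cases $(\mathrm{Ax}.3)$, P-Disq$_1$, P-Disq$_2$ discharged by deciding $t \in X$ or $t \notin X$ and invoking the new axioms $(\mathrm{Ax}.4)$/$(\mathrm{Ax}.5)$ of $I^*_X$ (plus Weakening/instantiation to reach the indices $(1;\alpha;\varepsilon_\alpha)$). Your explicit appeal to Persistency in the P-Disq cases is a small point the paper leaves implicit, but it is the same argument.
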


\begin{proof}
The proof proceeds by induction on $\alpha$, and is just like the soundness theorem for $\mrm{VFM}^\infty$. 

For (Ax.3): clearly, for any $t$, either $t \in X$ or $t \notin X$. If the former, then $\vDash^\beta_X P(t)$ holds for all $\beta\in \mrm{On}$ (including 0), so $\vDash^\beta_X \bigvee \varGamma\vee\neg P(s)\vee P(s\simeq t)$ holds for all $\beta$. 

For the new rules, we can reason as follows. Take the case of P-Disq$_1$. By IH, $\vDash^{\alpha_0}_X \bigvee \varGamma \vee P(t)$ holds for any $X\subseteq\omega$. We can assume that $\vDash^{\alpha_0}_X P(t)$, as otherwise we are done. So, by the appropriate clause, $t \in X$, whence $I_X^*(i; \gamma_0; \gamma_1; P(t))$ follows for any $\gamma_0, i, \gamma_1$ by (Ax.4). Instantiating, $I_X^*(1; \alpha; \varepsilon_\alpha; P(t))$ obtains, and hence so does $\vDash^\alpha_X \T(\subdot P(t))$. An identic reasoning would give us P-Disq$_2$.
\end{proof}

\begin{lemma}
Let $A$ be a $\T$-positive formula of $\ltp$. Let
$\mrm{VFM(P)}\vdash A$. Then, for all $X\subseteq \omega$, $\vDash^\alpha_X A$ holds for some $\alpha<\varepsilon_0$.
\end{lemma}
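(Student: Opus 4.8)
The plan is to obtain this lemma as a direct composition of the two results just established for $\mrm{VFM(P)}$: the Embedding Lemma and the Soundness Theorem for $\vfmpinf$. No genuinely new argument is needed; all the real work has already been invested in those statements, so the present lemma is essentially their concatenation, exactly as the analogous $\lnat$-level consistency argument for $\mrm{VFM}$ was extracted from Lemma~\ref{lem:embedding_VFM} and Theorem~\ref{thm:soundness}.

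First I would invoke the Embedding Lemma. Since $\mrm{VFM(P)} \vdash A$, it supplies an ordinal $\alpha < \varepsilon_0$ together with a derivation $\vfmpinf \sststile{0}{\alpha} A$. What matters here is that the Embedding Lemma is stated at cut-rank $0$, i.e.\ it already hands over a \emph{cut-free} derivation. This is legitimate because the finitely many axioms of $\mrm{VFM(P)}$ embed without raising the cut rank---in particular the disquotation axiom \ref{Disq-axiom} is embedded via the rules P-Disq$_1$ and P-Disq$_2$ together with $(\T_{\neq})$ and $(\mrm{Ax}.3)$, as in the embedding proof above---while the standard embedding of $\mrm{PAT}$ contributes only finitely many cuts of bounded rank, so that cut elimination keeps the length below $\varepsilon_0$.

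Second, since $A$ is assumed $\T$-positive, the singleton sequent $\{A\}$ is a $\T$-positive sequent, and hence the Soundness Theorem applies to it. From $\vfmpinf \sststile{0}{\alpha} A$ I therefore conclude $\vDash^\alpha_X \bigvee\{A\}$ for every $X \subseteq \omega$; and since $\bigvee\{A\}$ is just $A$, this is precisely $\vDash^\alpha_X A$ for all $X$, with the same $\alpha < \varepsilon_0$ delivered by the embedding. This completes the argument.

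The closest thing to an obstacle is not a difficulty of proof but two compatibility checks between the cited results: that the derivation produced by the Embedding Lemma really is cut-free (so that the hypothesis $\sststile{0}{\alpha}$ of the Soundness Theorem is met), and that $\T$-positivity of the formula $A$ transfers to the sequent $\{A\}$ to which soundness is applied. I would also stress that the quantifier ``for all $X \subseteq \omega$'' comes for free here, since the Soundness Theorem is itself stated uniformly in $X$; this uniformity is exactly what will later allow the $\mrm{P}$-Subst rule to be handled when one passes from $\mrm{VFM(P)}$ to the full schematic theory $\mrm{VFM}^*$, mirroring the role of the uniform clause in Proposition~\ref{P-Subst}.
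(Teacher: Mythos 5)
Your proposal is correct and matches the paper's (implicit) argument exactly: the lemma is stated without proof precisely because it is the immediate composition of the Embedding Lemma for $\mrm{VFM(P)}$ (which already delivers a cut-rank-$0$ derivation of length $\alpha<\varepsilon_0$) with the Soundness Theorem for $\vfmpinf$ applied to the $\T$-positive singleton sequent $\{A\}$, and this is exactly the chain the paper itself spells out later inside the proof of Lemma~\ref{formalization in RT}. Your two compatibility checks (cut-freeness and transfer of $\T$-positivity to the sequent) are the right ones and both are satisfied by the statements as given.
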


In what follows, we work with ramified theories of truth over the language $\lp$, that is: for an ordinal $\gamma\leq\Gamma_0$, $\mc{L}(P)_{<\gamma}$ is defined as $\mc{L}(P)_{<\gamma}=\mc{L}_{<\gamma}\cup\{P\}$, or else $\lp$ if $\gamma=0$. $\mc{L}(P)_{\gamma}$ is defined as $\mc{L}(P)_{\gamma}=\mc{L}_{\gamma}\cup\{P\}$.

\begin{dfn}
For an ordinal $\gamma\leq\Gamma_0$, we write $\mrm{RT}_{<\alpha} \llbracket \mrm{PA(P)}\rrbracket$ for the theory in the language  $\mc{L}(P)_{<\alpha}$ given by the axioms of $\mrm{RT}_{<\alpha}$ as defined in e.g. \cite[Def.9.2]{halbach_2014} plus the axiom: 

\begin{equation*}
\forall t (\T_\beta(\subdot P t) \leftrightarrow Pt^\circ), \text{ for all }\beta<\alpha \tag{RTP}
\end{equation*}
\end{dfn}

\begin{lemma}\label{rt to pa}
For any formula $A(P)\in \lp$, 

\begin{center}
$\mrm{RT}_{<\alpha} \llbracket \mrm{PA(P)}\rrbracket \vdash A(P) \Rightarrow \patpinf\sststile{0}{\vphi_{\gamma+1}0} A(P)$,
\end{center}

\noindent where $\gamma$ is the lowest ordinal such that $\gamma=\omega^\gamma$ and $\alpha\leq\gamma$.

\end{lemma}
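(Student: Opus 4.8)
The plan is to prove this by the familiar route of embedding the finitary theory into an infinitary Tait calculus, performing a \emph{predicative} cut-elimination through the levels of the ramified hierarchy, and then reading off a truth-free derivation via the subformula property.

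First I would introduce an auxiliary infinitary system $\mrm{RT}^\infty_{<\alpha}\llbracket \mrm{PA(P)}\rrbracket$, obtained from $\patpinf$ by adjoining, for each level $\beta<\alpha$, the compositional introduction rules for the ramified predicate $\T_\beta$ (the clauses for $\subdot\neg$, $\subdot\vee$, $\subdot\wedge$, $\subdot\forall$, $\subdot\exists$, and the rule passing from $\T_\delta(t)$ to $\T_\beta(\corn{\T_\delta\dot t})$ for $\delta<\beta$), together with rules mirroring the axiom RTP, i.e. connecting $\T_\beta(\subdot P t)$ with $P(t^\circ)$. I assign to every formula its \emph{level}, the least $\beta$ with the formula in $\mc{L}(P)_{<\beta}$, so that formulas of $\lp$ have level $0$. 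A routine induction on the finitary derivation (the $\omega$-rule absorbing the induction schema exactly as in Lemma~\ref{lem:embedding_VFM}) then shows that $\mrm{RT}_{<\alpha}\llbracket\mrm{PA(P)}\rrbracket\vdash A(P)$ yields $\mrm{RT}^\infty_{<\alpha}\llbracket\mrm{PA(P)}\rrbracket\sststile{r}{\rho}A(P)$ for some finite cut-rank $r$ and some $\rho<\omega^2$.

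The core step is the cut-elimination, now \emph{stratified by level}. Within a fixed level $\beta$ one eliminates the finite cut-rank by the usual $\omega_k$-bound, and then the remaining cuts whose cut-formula is principal at level $\beta$ are pushed down to strictly lower levels at the cost of a single $\vphi_\beta$-jump in the derivation length. Iterating this reduction downwards through all levels $<\alpha$ collapses the derivation to a cut-free one, and the whole computation is bounded by $\vphi_{\gamma+1}0$. The passage from $\alpha$ to the least epsilon number $\gamma\geq\alpha$ is precisely what makes this Veblen bookkeeping close up: since $\gamma=\omega^\gamma$, every application of $\vphi_\beta$ with $\beta<\alpha\leq\gamma$ to an ordinal below $\vphi_{\gamma+1}0$ stays below $\vphi_{\gamma+1}0$, and the finite starting length $\rho$ is absorbed.

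Finally, the end-sequent $A(P)$ lies in $\lp$ and so contains no truth predicate; by the subformula property of cut-free Tait derivations every formula in the reduced derivation again lies in $\lp$, whence no rule for any $\T_\beta$ can occur in it. The derivation is therefore literally a $\patpinf$-derivation, giving $\patpinf\sststile{0}{\vphi_{\gamma+1}0}A(P)$ as claimed. I expect the only genuinely delicate point to be the ordinal arithmetic of the predicative cut-elimination, namely verifying that discharging the cuts at level $\beta$ costs exactly a $\vphi_\beta$-jump and that the cumulative effect across all levels below $\alpha$ is capped by $\vphi_{\gamma+1}0$; the embedding and the final subformula argument are routine.
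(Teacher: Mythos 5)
Your argument is the right one in substance, but it is organised quite differently from the paper's, which does essentially no cut-elimination of its own: the entire proof in the paper consists of citing Hayashi's adaptation-ready result that $\mrm{RT}_{<\omega^{\alpha}}\llbracket\mrm{PA(P)}\rrbracket\vdash A(P)$ implies $\patpinf\sststile{0}{\vphi_{\alpha+1}0}A(P)$, followed by a three-way case split ($\alpha<\varepsilon_0$; $\alpha\geq\varepsilon_0$ with $\alpha=\omega^{\alpha}$; $\alpha\geq\varepsilon_0$ with $\alpha\neq\omega^{\alpha}$) that uses monotonicity of $\mrm{RT}_{<\cdot}$ in the subscript to replace $\alpha$ by the least $\gamma\geq\alpha$ with $\gamma=\omega^{\gamma}$. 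So where you attack the general statement head-on and absorb the index mismatch through closure of $\vphi_{\gamma+1}0$ under the functions $\vphi_{\beta}$ for $\beta<\gamma$, the paper first normalises the subscript to the form $\omega^{\gamma}$ and only then invokes the black-boxed collapsing lemma; your embedding step and the closing subformula-property argument are exactly what that lemma's proof does. The one point where your direct route is more delicate than you concede is the limit stage of the level-by-level reduction: closure of $\vphi_{\gamma+1}0$ under each individual $\vphi_{\beta}$ does not by itself control a transfinitely iterated composition of such jumps (the target ordinal has cofinality $\omega$), so the bound has to be proved in the uniform form ``a derivation using truth levels $<\omega^{\delta}$ of length $\rho$ collapses to a truth-free cut-free one of length at most $\vphi_{\delta+1}\rho$'' by a single transfinite induction, rather than by composing one $\vphi_{\beta}$-jump per level. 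That is precisely the shape of the statement the paper imports, and it is the reason its detour through $\omega^{\alpha}$ is not mere bookkeeping; if you fold that uniform bound into your middle step, your proof goes through and yields the same conclusion.
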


\begin{proof}
The proof in \cite[\S 3]{hayashi_2022} can easily be adapted to our context, so we obtain: 

\begin{center}
$\mrm{RT}_{<\omega^\alpha} \llbracket \mrm{PA(P)}\rrbracket \vdash A(P) \Rightarrow \patpinf\sststile{0}{\vphi_{\alpha+1}0} A(P)$   
\end{center}

\noindent Then one just needs to consider the different cases that may occur: 

\begin{itemize}
    \item If $\alpha<\varepsilon_0$, then clearly 
    \begin{align*}
    \mrm{RT}_{<\alpha} \llbracket \mrm{PA(P)}\rrbracket \vdash A(P)&\Rightarrow \mrm{RT}_{<\omega^\alpha} \llbracket \mrm{PA(P)}\rrbracket \vdash A(P) \\
    & \Rightarrow \patpinf\sststile{0}{\vphi_{\alpha+1}0} A(P)\\
    & \Rightarrow \patpinf\sststile{0}{\vphi_{\varepsilon_0+1}0} A(P)
    \end{align*}
    
    \item If $\alpha\geq \varepsilon_0$ and $\alpha=\omega^{\alpha}$, then 
    
    \begin{align*}
    \mrm{RT}_{<\alpha} \llbracket \mrm{PA(P)}\rrbracket \vdash A(P)&\Leftrightarrow \mrm{RT}_{<\omega^\alpha} \llbracket \mrm{PA(P)}\rrbracket \vdash A(P) \\
    & \Rightarrow \patpinf\sststile{0}{\vphi_{\alpha+1}0} A(P)\\
    \end{align*}
    
    \item If $\alpha \geq \varepsilon_0$ but $\alpha\neq \omega^\alpha$, there are $\eta_0, \eta_1$ such that $\omega^{\eta_0}=\eta_0<\alpha<\eta_1=\omega^{\eta_1}$, and so 

    \begin{align*}
    \mrm{RT}_{<\alpha} \llbracket \mrm{PA(P)}\rrbracket \vdash A(P)&\Rightarrow \mrm{RT}_{<\eta_1} \llbracket \mrm{PA(P)}\rrbracket \vdash A(P)\\
    & \Leftrightarrow \mrm{RT}_{<\omega^{\eta_1}} \llbracket \mrm{PA(P)}\rrbracket \vdash A(P) \\
    & \Rightarrow \patpinf\sststile{0}{\vphi_{{\eta_1}+1}0} A(P)\\
    \end{align*}

\end{itemize}

\noindent One can see that, in all cases, we just take the first $\gamma$ meeting $\gamma\geq \alpha$ and $\omega^\gamma=\gamma$. 
\end{proof}

Let $\hat{\alpha}$ be the least fixed point of $\varepsilon_x$ larger than $\alpha$, i.e., we write $\hat{\alpha}$ as short for $\mrm{min}\{\beta \in \mrm{On} \sth \beta> \alpha \text{ and } \varepsilon_\beta=\beta\}$.  In particular, $\hat{0} = \varphi_20$. By inspecting the proof of Theorem \ref{thm:upper_VFW}, we realize that:

\begin{lemma}\label{formalizing in rt}
If $\vDash^\alpha_X A$, for $A\in\lp$, then $\mrm{RT}_{<\hat{\alpha}}\llbracket \mrm{PA(P)}\rrbracket\vdash A$. 
\end{lemma}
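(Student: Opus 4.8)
The plan is to replay, essentially verbatim, the formalisation carried out for $\mrm{VFW}$ in the proof of Theorem~\ref{thm:upper_VFW}, but now inside $\mrm{RT}_{<\hat\alpha}\llbracket\mrm{PA(P)}\rrbracket$ rather than $\mrm{RT}_{<\varphi_20}$, and keeping the schematic predicate $P$ as a genuine free predicate throughout. The bound $\hat\alpha$ is exactly what is needed: since $\vDash^\alpha_X\T(t)$ unfolds to $I^*_X(1;\alpha;\varepsilon_\alpha;\{t\})$, the soundness argument manipulates $I^*_X$-derivations of height up to $\varepsilon_\alpha$, and because $\hat\alpha$ is the least fixed point of $\xi\mapsto\varepsilon_\xi$ above $\alpha$, every ordinal occurring in such derivations---and in the recursion clauses defining the satisfaction predicate---remains below $\hat\alpha$. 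Moreover $\mrm{RT}_{<\hat\alpha}\llbracket\mrm{PA(P)}\rrbracket$ has truth predicates $\T_\beta$ for every $\beta<\hat\alpha$ and proves $\mrm{TI}_{\mc{L}(P)_{<\hat\alpha}}(<\hat\alpha)$, which will supply all the transfinite inductions below.

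First I would define, for each fixed $\alpha<\Gamma_0$, a formalised derivability predicate $\mrm{I}^*_\alpha(x;y;z;w)$ mirroring the definition of $I^*_X$, built---as $\mrm{Bew}^\alpha_{\mrm{VFW}^\infty}$ and $\mrm{I}'_\alpha$ were in Section~\ref{section:VFW}---by the primitive recursion theorem together with the ramified predicates $\T_\beta$, which encode the infinite premises of the $\omega$-rule and thereby capture the non-recursive derivations. The two extra clauses (Ax.4) and (Ax.5) are formalised by the literal conditions $P(t^\circ)$ and $\neg P(t^\circ)$, so that the extension $X$ of $P$ is carried by $P$ itself; the axiom (RTP) of $\mrm{RT}_{<\hat\alpha}\llbracket\mrm{PA(P)}\rrbracket$ ensures that the ramified predicates interpret $P$-sentences coherently. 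I would then define the formalised partial satisfaction predicate $\vDash^{y<\alpha}_n(x)$ over $\lp$ by meta-induction on $n$, exactly as in Section~\ref{section:VFW}, adding the two clauses $\vDash^{y<\alpha}_n(\ulcorner P(t)\urcorner)\leftrightarrow P(t^\circ)$ and $\vDash^{y<\alpha}_n(\ulcorner\neg P(t)\urcorner)\leftrightarrow\neg P(t^\circ)$ and interpreting the truth clause by $\mrm{I}^*_\alpha(1;y;\varepsilon_y;\{t\})$.

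Next I would verify, inside $\mrm{RT}_{<\hat\alpha}\llbracket\mrm{PA(P)}\rrbracket$, the formalised counterparts of the structural lemmas for $I^*_X$---Substitution, Weakening, Cut-admissibility, Disquotation, Elimination of (Cons) and (Norm), and Consistency---each by the available $\mrm{TI}_{\mc{L}(P)_{<\hat\alpha}}(<\hat\alpha)$; here the $P$-clauses cause no trouble, as (Ax.4) behaves like (Ax.1) and (Ax.5) is vacuous on atomic sequents. With these in hand, the formalised soundness statement---that $\vfmpinf$-derivability (height $<\alpha$, cut-rank $0$) of a $\T$-positive, complexity-bounded sequent implies its formalised satisfaction---is proved by formal transfinite induction on the derivation height, just as the formalised soundness of $\mrm{VFW}^\infty$ (Lemma~\ref{lem:formalized_soundness_VFW-infty}) was established; the only new cases are P-Disq$_1$ and P-Disq$_2$, discharged via the formalised (Ax.4)/(Ax.5) exactly as in the Soundness Theorem for $\vfmpinf$. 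Finally, for $A\in\lp$ a routine meta-induction on $A$ shows $\mrm{RT}_{<\hat\alpha}\llbracket\mrm{PA(P)}\rrbracket\vdash\vDash^{y<\alpha}_n(\ulcorner A\urcorner)\leftrightarrow A$ (no truth clause is ever invoked, since $A$ carries no $\T$), so the derivation of the formalised satisfaction provided by formalised soundness for the $\vfmpinf$-derivation witnessing $\vDash^\alpha_X A$ yields $\mrm{RT}_{<\hat\alpha}\llbracket\mrm{PA(P)}\rrbracket\vdash A$.

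The main obstacle, as already in Section~\ref{section:VFW}, is the ordinal bookkeeping needed to keep every height appearing in the definitions of $\mrm{I}^*_\alpha$ and $\vDash^{y<\alpha}_n$, and in the proofs of the structural lemmas, strictly below $\hat\alpha$, so that a fixed supply of ramified predicates and a single instance of $\mrm{TI}_{\mc{L}(P)_{<\hat\alpha}}(<\hat\alpha)$ really suffice. Concretely, one must check that the $\varepsilon$- and $\omega_n$-jumps produced by Disquotation and Cut-admissibility (which send a height $\beta$ to $\varepsilon_\beta$ or $\omega_n(\beta)$) never leave $\hat\alpha$---which is precisely guaranteed by the closure of $\hat\alpha$ under $\xi\mapsto\varepsilon_\xi$. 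Everything else is a faithful transcription of the $\mrm{VFW}$ argument with the $P$-clauses inserted.
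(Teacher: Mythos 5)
Your proposal is correct and follows essentially the same route as the paper, which offers no detailed argument here beyond the remark that the lemma follows ``by inspecting the proof of Theorem~\ref{thm:upper_VFW}''; your plan is precisely a careful execution of that inspection, transplanting the $\mrm{RT}_{<\varphi_20}$ formalisation of Section~\ref{section:VFW} to $\mrm{RT}_{<\hat\alpha}\llbracket\mrm{PA(P)}\rrbracket$ with the $P$-clauses inserted and with the ordinal bookkeeping justified by the closure of $\hat\alpha$ under $\xi\mapsto\varepsilon_\xi$. You also correctly make explicit the reading the paper leaves implicit, namely that the hypothesis $\vDash^\alpha_X A$ is taken as witnessed by a (recursive) $\vfmpinf$-derivation so that the formalised embedding and soundness lemmata apply.
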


\begin{lemma}\label{formalization in RT}
Let $A(P)\in\lp$. If $\mrm{VFM(P)}\vdash A(P)$, then $\patpinf\sststile{0}{\vphi_{\vphi_2 0+1}0} A(P)$.
\end{lemma}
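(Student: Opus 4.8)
The plan is to derive the statement purely by composing the machinery already assembled in this subsection, chaining three facts — soundness of $\vfmpinf$, the collapse into a ramified theory, and the embedding of that ramified theory back into $\patpinf$ — and then carrying out a short ordinal computation to check that the bounds line up to exactly $\vphi_{\vphi_2 0+1}0$.

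First I would note that any $A(P)\in\lp$ contains no occurrence of $\T$, so it is vacuously $\T$-positive; this is what licenses the truth-as-provability apparatus. Assuming $\mrm{VFM(P)}\vdash A(P)$, the Embedding Lemma for $\mrm{VFM(P)}$ yields a \emph{fixed} $\alpha<\varepsilon_0$ with $\vfmpinf\sststile{0}{\alpha}A(P)$, and the Soundness Theorem for $\vfmpinf$ then gives $\vDash^\alpha_X A(P)$ \emph{for all} $X\subseteq\omega$ with this same $\alpha$ (this is precisely the content of the preceding lemma). The point worth recording here is the uniformity in $X$: the single derivation produced by embedding is sound at every interpretation $X$ of $P$, which is what corresponds to keeping $P$ free in the target ramified theory.

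Next I apply Lemma~\ref{formalizing in rt} to pass from the semantic statement to provability: $\vDash^\alpha_X A(P)$ gives $\mrm{RT}_{<\hat\alpha}\llbracket\mrm{PA(P)}\rrbracket\vdash A(P)$, where $\hat\alpha$ is the least $\varepsilon$-fixed point above $\alpha$. Since $\alpha<\varepsilon_0$ and the least fixed point of $x\mapsto\varepsilon_x$ is $\vphi_2 0$ (with $\varepsilon_0<\vphi_2 0$), we have $\hat\alpha=\vphi_2 0=\hat 0$, so this reads $\mrm{RT}_{<\vphi_2 0}\llbracket\mrm{PA(P)}\rrbracket\vdash A(P)$. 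Finally I feed this into Lemma~\ref{rt to pa}: its parameter becomes $\vphi_2 0$, and the relevant $\gamma$ is the least ordinal with $\gamma=\omega^\gamma$ and $\vphi_2 0\leq\gamma$. Because $\vphi_2 0$ is a fixed point of $x\mapsto\varepsilon_x$ it is itself an epsilon number, i.e.\ $\omega^{\vphi_2 0}=\vphi_2 0$, so $\gamma=\vphi_2 0$; hence Lemma~\ref{rt to pa} delivers exactly $\patpinf\sststile{0}{\vphi_{\vphi_2 0+1}0}A(P)$.

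The hard part will be nothing more than the ordinal verification in the last two steps, everything else being bookkeeping: I must confirm (i) that every $\alpha<\varepsilon_0$ has the \emph{same} associated fixed point $\hat\alpha=\vphi_2 0$, so that the particular $\alpha$ delivered by the embedding is immaterial, and (ii) that the $\gamma$ selected inside Lemma~\ref{rt to pa} when its argument is $\vphi_2 0$ is again $\vphi_2 0$, which turns on $\vphi_2 0$ being simultaneously a fixed point of $\varepsilon_x$ and of $\omega^x$. Once these two identities are secured, the nominal output $\vphi_{\gamma+1}0$ coincides on the nose with the claimed $\vphi_{\vphi_2 0+1}0$, and no further estimation is needed.
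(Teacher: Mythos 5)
Your proposal is correct and follows exactly the paper's own proof: embedding into $\vfmpinf$ with length $\alpha<\varepsilon_0$, soundness for all $X$, Lemma~\ref{formalizing in rt} to land in $\mrm{RT}_{<\vphi_20}\llbracket \mrm{PA(P)}\rrbracket$, and Lemma~\ref{rt to pa} with $\omega^{\vphi_20}=\vphi_20$. The ordinal bookkeeping you spell out (that $\hat\alpha=\vphi_20$ uniformly for $\alpha<\varepsilon_0$, and that the $\gamma$ in Lemma~\ref{rt to pa} is $\vphi_20$ itself) is exactly what the paper leaves implicit, and it checks out.
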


\begin{proof}
If $\mrm{VFM(P)}\vdash A(P)$, then $\vfmpinf\sststile{0}{\alpha}A(P)$ with $\alpha<\varepsilon_0$ by the embedding. So, for any $X$, $\vDash_X^\alpha A(P)$ by soundness. Then, by Lemma \ref{formalizing in rt}, we get $\mrm{RT}_{<\vphi_20}\llbracket \mrm{PA(P)}\rrbracket\vdash A(P)$.  Finally, the claim follows by Lemma \ref{rt to pa}, since $\omega^{\vphi_20}=\vphi_20$.
\end{proof}

Now, for the next result, note that, clearly, $\omega^{\hat{\alpha}}=\hat{\alpha}$---see e.g. \cite[Lemma 3.4.8] {pohlers_2009}. 

\begin{lemma}\label{from vfmpinf to painf}
Let $A(P)\in\lp$. If $\vfmpinf\sststile{0}{\alpha}A(P)$, then $\patpinf\sststile{0}{\varphi_{\hat{\alpha}+1}0\#\omega} A(B/P)$; therefore, $\vfmpinf\sststile{0}{\varphi_{\hat{\alpha}+1}0\#\omega}A(B/P)$. Hence, the P-Subst rule is admissible in $\vfmpinf$.
\end{lemma}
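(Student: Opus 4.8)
The plan is to exploit the fact that a formula $A(P)\in\lp$ contains no occurrence of the truth predicate, so that it is automatically $\T$-positive and the whole apparatus developed for the upper bound of $\mrm{VFM(P)}$ applies to it unchanged. Concretely, I would route the derivation through the ramified system and then back down into $\patpinf$, where the $P$-substitution rule is already known to be admissible by Lemma~\ref{admissibility of P-Subst in PA}. The only new ingredient compared with the proof of Lemma~\ref{formalization in RT} is that here we must track an arbitrary derivation length $\alpha$ rather than assuming $\alpha<\varepsilon_0$; this is exactly what the passage from $\vphi_20$ to $\hat{\alpha}$ records.

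First I would fix a cut-free derivation witnessing $\vfmpinf\sststile{0}{\alpha}A(P)$. Since $A(P)$ has no $\T$, the singleton $\{A(P)\}$ is a $\T$-positive sequent, so the Soundness Theorem yields $\vDash^{\alpha}_X A(P)$ for every $X\subseteq\omega$. Applying Lemma~\ref{formalizing in rt} (which is available precisely because $A(P)\in\lp$) gives $\mrm{RT}_{<\hat{\alpha}}\llbracket\mrm{PA(P)}\rrbracket\vdash A(P)$. Now I would invoke Lemma~\ref{rt to pa}: the relevant closure ordinal $\gamma$ there is the least fixed point of $\omega^{x}$ that is $\geq$ the index of the ramified system, and since $\hat{\alpha}$ is an $\varepsilon$-number we have $\omega^{\hat{\alpha}}=\hat{\alpha}$, so $\gamma=\hat{\alpha}$ and the lemma delivers $\patpinf\sststile{0}{\vphi_{\hat{\alpha}+1}0}A(P)$.

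It then remains to perform the substitution at the level of $\patpinf$ and to transfer the result back to $\vfmpinf$. Applying Lemma~\ref{admissibility of P-Subst in PA} (with empty side context and cut-rank $0$) to $\patpinf\sststile{0}{\vphi_{\hat{\alpha}+1}0}A(P)$ produces $\patpinf\sststile{0}{\vphi_{\hat{\alpha}+1}0\#\omega}A(B/P)$, the extra $\#\omega$ absorbing the up-to-$\omega$ many steps required whenever an axiom $(\mrm{Ax}.3)$ introduces an active literal $P(t)$ or $\neg P(t)$. Because $\vfmpinf$ comprises all axioms and rules of $\patpinf$, this derivation is a fortiori a $\vfmpinf$-derivation, giving $\vfmpinf\sststile{0}{\vphi_{\hat{\alpha}+1}0\#\omega}A(B/P)$; and this implication---from a cut-free derivation of $A(P)$ to one of $A(B/P)$ with a controlled length increase---is exactly the statement that the $P$-Subst rule is admissible in $\vfmpinf$. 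The only point demanding care is the ordinal bookkeeping: one must be sure that the $\hat{\alpha}$ furnished by the Soundness Theorem and Lemma~\ref{formalizing in rt} is genuinely a fixed point of $\omega^{x}$, since otherwise Lemma~\ref{rt to pa} would return a strictly larger closure ordinal and the advertised bound $\vphi_{\hat{\alpha}+1}0\#\omega$ would fail to be attained.
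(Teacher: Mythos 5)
Your proposal is correct and follows essentially the same route as the paper: soundness of $\vfmpinf$ for the $\T$-positive (indeed $\T$-free) sequent $\{A(P)\}$, then Lemma~\ref{formalizing in rt} to land in $\mrm{RT}_{<\hat{\alpha}}\llbracket\mrm{PA(P)}\rrbracket$, then Lemma~\ref{rt to pa} using $\omega^{\hat{\alpha}}=\hat{\alpha}$, then Lemma~\ref{admissibility of P-Subst in PA}, and finally the inclusion of $\patpinf$ in $\vfmpinf$. The only cosmetic difference is that the paper splits into the cases $\alpha<\vphi_20$ and $\alpha\geq\vphi_20$, whereas you treat them uniformly via $\hat{\alpha}$ (noting $\hat{\alpha}=\vphi_20$ in the first case), which is harmless.
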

\begin{proof}
The `therefore' claim follows from the fact that $\patpinf$ is a subtheory of $\vfmpinf$. For the first claim, we distinguish two cases. 

If $\alpha<\vphi_20$, then the assumption yields $\vDash^\alpha_X A(P)$, and so by Lemma \ref{formalizing in rt}, $\mrm{RT}_{<\vphi_20}\llbracket \mrm{PA(P)}\rrbracket\vdash A(P)$, whence $\patpinf\sststile{0}{\vphi_{\vphi_20+1}0} A(P)$ by Lemma \ref{rt to pa}. Then, apply Lemma \ref{admissibility of P-Subst in PA}. 

If $\alpha\geq\vphi_20$, the assumption yields $\vDash^\alpha_X A(P)$. The upper bound on the height of the derivation in $I^*_X(x;y;z;w)$ that needs to be formalized via ramified predicates is $\varepsilon_{\alpha}$. Taking $\hat{\alpha}$, $\varepsilon_{\hat{\alpha}}=\hat{\alpha}$, and $\varepsilon_\alpha < \varepsilon_{\hat{\alpha}}$. Hence, $\mrm{RT}_{<\hat{\alpha}}\llbracket \mrm{PA(P)}\rrbracket$ suffices to derive $A(P)$. Then, $\patpinf\sststile{0}{\vphi_{\hat{\alpha}+1}0} A(P)$ by Lemma \ref{rt to pa}. Finally, apply Lemma \ref{admissibility of P-Subst in PA}.
\end{proof}


\begin{corollary}\label{vfminf to pa}
Let $A\in \lnat$. If $\vfmpinf\sststile{0}{\alpha}A$, then $\patpinf\sststile{0}{\vphi_{\hat{\alpha}+1}0} A$.
\end{corollary}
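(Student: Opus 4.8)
The plan is to read the corollary directly off the proof of Lemma~\ref{from vfmpinf to painf}, the key observation being that the extra summand $\#\omega$ occurring there is contributed \emph{only} by the final appeal to P-Subst (Lemma~\ref{admissibility of P-Subst in PA}), which becomes vacuous once $A$ is purely arithmetical. Concretely, since $A \in \lnat$ contains no occurrence of $P$ (and none of $\T$), we have $A(P) = A$ and $A(B/P) = A$; in particular $A$ is a $\T$-positive $\lp$-sentence, so the Soundness Theorem for $\vfmpinf$ applies. Thus from $\vfmpinf \sststile{0}{\alpha} A$ I first obtain $\vDash^\alpha_X A$ for every $X \subseteq \omega$ (the truth value being independent of $X$ anyway, as $P$ does not occur).

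Next I would run the same chain that was assembled for Lemma~\ref{from vfmpinf to painf}, but stop one step earlier. By Lemma~\ref{formalizing in rt}, $\vDash^\alpha_X A$ yields $\mrm{RT}_{<\hat{\alpha}}\llbracket \mrm{PA(P)}\rrbracket \vdash A$; here the case split between $\alpha < \vphi_20$ and $\alpha \geq \vphi_20$ appearing in the proof of Lemma~\ref{from vfmpinf to painf} is absorbed into Lemma~\ref{formalizing in rt}, since $\hat{\alpha} = \vphi_20$ whenever $\alpha < \vphi_20$. Then Lemma~\ref{rt to pa} converts this into an infinitary derivation $\patpinf \sststile{0}{\vphi_{\gamma+1}0} A$, where $\gamma$ is the least ordinal with $\gamma = \omega^\gamma$ and $\hat{\alpha} \leq \gamma$. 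Because $\hat{\alpha}$ is itself an $\varepsilon$-number, it is closed under $\omega$-exponentiation, i.e. $\omega^{\hat{\alpha}} = \hat{\alpha}$ (as remarked just before Lemma~\ref{from vfmpinf to painf}); hence $\gamma = \hat{\alpha}$ and the bound is exactly $\vphi_{\hat{\alpha}+1}0$.

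The entire content of the corollary then sits in the observation that this bound is reached \emph{before} any P-Subst step. In Lemma~\ref{from vfmpinf to painf} the $\#\omega$ enters precisely when Lemma~\ref{admissibility of P-Subst in PA} is invoked to replace $P$ by $B$, passing from $A(P)$ to $A(B/P)$; for $A \in \lnat$ there is no $P$ to replace, so that step is skipped entirely and no $\#\omega$ is incurred. I therefore expect no real obstacle beyond this bookkeeping: both nontrivial inputs (Soundness and the $\mrm{RT}$-to-$\patpinf$ translation of Lemma~\ref{rt to pa}) are already established, and the desired conclusion $\patpinf \sststile{0}{\vphi_{\hat{\alpha}+1}0} A$ follows immediately.
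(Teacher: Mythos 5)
Your argument is correct and matches the paper's own proof exactly: soundness of $\vfmpinf$ gives $\vDash^\alpha_X A$, Lemma~\ref{formalizing in rt} gives $\mrm{RT}_{<\hat{\alpha}}\llbracket \mrm{PA(P)}\rrbracket \vdash A$, and Lemma~\ref{rt to pa} together with $\omega^{\hat{\alpha}}=\hat{\alpha}$ yields the bound $\vphi_{\hat{\alpha}+1}0$. Your added observation that the $\#\omega$ summand of Lemma~\ref{from vfmpinf to painf} disappears precisely because no P-Subst step is needed for $A\in\lnat$ is accurate, though the paper leaves it implicit.
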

\begin{proof}
If $\vfmpinf\sststile{0}{\alpha}A$, then $\vDash^\alpha_X A$. Then, $\mrm{RT}_{< \hat{\alpha}} \llbracket \mrm{PA(P)}\rrbracket \vdash A$. Finally, we apply Lemma \ref{rt to pa}.
\end{proof}

\begin{thm}[Upper-bound for $\mrm{VFM}^*$]
$\mrm{VFM}^*\leq \mrm{PA}+\mrm{TI}(<\Gamma_0)$. 
\end{thm}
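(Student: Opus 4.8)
The plan is to mirror the upper-bound arguments for $\mrm{VFM}$ and $\mrm{VFW}$, now taking advantage of the fact that P-Subst has already been shown admissible in the infinitary calculus $\vfmpinf$ with a bounded ordinal cost. So suppose $\mrm{VFM}^*\vdash A$ for an $\lnat$-sentence $A$. First I would argue, by induction on the $\mrm{VFM}^*$-derivation, that $\vfmpinf\sststile{0}{\alpha}A$ for some $\alpha<\Gamma_0$. Every inference other than P-Subst is already an inference of $\mrm{VFM(P)}$ (whose axioms subsume those of $\mrm{VFM}^*$ apart from the substitution rule), and so is handled by the Embedding Lemma for $\mrm{VFM(P)}$, which yields derivations of height below $\varepsilon_0$; each application of P-Subst is absorbed using Lemma~\ref{from vfmpinf to painf}, which converts a derivation of height $\beta$ of $A'(P)$ into a derivation of height $\vphi_{\hat{\beta}+1}0\#\omega$ of $A'(B/P)$.

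The decisive bookkeeping observation is that a single $\mrm{VFM}^*$-proof contains only finitely many applications of P-Subst and that $\Gamma_0$ is closed under the map $\beta\mapsto\vphi_{\hat{\beta}+1}0\#\omega$. Indeed, if $\beta<\Gamma_0$ then $\hat{\beta}<\Gamma_0$, since the fixed points of $\varepsilon_x$ are cofinal in $\Gamma_0$; hence $\hat{\beta}+1<\Gamma_0$, and $\vphi_{\hat{\beta}+1}0<\Gamma_0$ because $\Gamma_0$ is the least fixed point of $\gamma\mapsto\vphi_\gamma0$, while the natural sum with $\omega$ keeps us below $\Gamma_0$ as well. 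Each of the finitely many inference steps therefore preserves a height strictly below $\Gamma_0$, and since the derivation is finite the overall height $\alpha$ is the maximum of finitely many such ordinals, hence $\alpha<\Gamma_0$.

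Next I would pass to the arithmetical fragment. As $A\in\lnat$, Corollary~\ref{vfminf to pa} turns $\vfmpinf\sststile{0}{\alpha}A$ into $\patpinf\sststile{0}{\vphi_{\hat{\alpha}+1}0}A$, with $\vphi_{\hat{\alpha}+1}0<\Gamma_0$ by the same closure argument. This is a cut-free $\patpinf$-derivation of an $\lnat$-sentence, so by the subformula property neither $\T$ nor $P$ occurs in it, and it is thus in effect a $\patinf$-derivation of height below $\Gamma_0$. The standard boundedness correspondence for the purely arithmetical infinitary calculus---the same one underlying the identification of $|S|$ with $\mrm{PA}+\mrm{TI}_{\lnat}(<|S|)$ recalled in the preliminaries---then gives $\mrm{PA}+\mrm{TI}_{\lnat}(<\Gamma_0)\vdash A$, which is precisely the claim $|\mrm{VFM}^*|\leqq|\mrm{PA}+\mrm{TI}(<\Gamma_0)|$.

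The hard part is the ordinal control in the second paragraph: one must guarantee that repeatedly paying the P-Subst cost never escapes $\Gamma_0$. This is where the whole construction is calibrated---Lemma~\ref{from vfmpinf to painf} is designed exactly so that one substitution costs only a single step of $\beta\mapsto\vphi_{\hat{\beta}+1}0$, and the closure of $\Gamma_0$ under the Veblen function together with the cofinality of the $\varepsilon$-fixed points below $\Gamma_0$ does the rest. The finiteness of the number of P-Subst applications is essential, since it prevents the countable cofinality of $\Gamma_0$ from letting a supremum of step-heights actually reach $\Gamma_0$.
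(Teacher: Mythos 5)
Your proposal is correct and follows essentially the same route as the paper's: embed the P-Subst-free part via $\mrm{VFM(P)}$ and its infinitary counterpart, absorb each of the finitely many applications of P-Subst using Lemma~\ref{from vfmpinf to painf} at the cost of one step of $\beta\mapsto\vphi_{\hat{\beta}+1}0\,\#\,\omega$, and use the closure of $\Gamma_0$ under this map before passing to $\patpinf$ and $\mrm{PA}+\mrm{TI}(<\Gamma_0)$. The paper merely makes your closure argument explicit by defining the sequence $f(0)=\vphi_{\vphi_20+1}0$, $f(n)=\vphi_{f(n-1)}0$ with limit $\Gamma_0$ and inducting on the number of P-Subst applications.
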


\begin{proof}
 Define first the following sequence, for all $n\in\omega$: 

    \begin{itemize}
        \item $f(0)=\varphi_{\vphi_20+1}0$
        \item $f(n)=\varphi_{f(n-1)}0$
    \end{itemize}
    
Now, we claim that, if $A \in \lnat$ is derivable in $\mrm{VFM}^*$ with $n$-many applications of P-Subst, $\patpinf\sststile{0}{f(n+2)} A$. For that, we need to induct on $n$. 

If $n=0$, the proof has in fact been carried out in $\mrm{VFM(P)}$, so Lemma \ref{formalization in RT} yields the result. 

If $n=m$, we assume the claim for $(m-1)$. For the last application of the P-Subst, the IH yields that $\patpinf\sststile{0}{f(m+1)} A(P)$ holds. So $\vfmpinf\sststile{0}{f(m+1)} A(P)$ also holds. By Lemma \ref{from vfmpinf to painf}, $\vfmpinf\sststile{0}{f(m+1)} A(P)$ implies $\patpinf\sststile{0}{\varphi_{\hat{\alpha}+1}0\#\omega} A(B/P)$, for $B\in \lnat$. Now note that, for any $\alpha=f(n), n>0$, $\varphi_{\hat{\alpha}+1}0\#\omega<f(n+1)$. So one can conclude $\patpinf\sststile{0}{f(m+2)} A(B/P)$. This completes the claim. 

Now, it is easy to see that the limit of the sequence $f(n)$ is $\Gamma_0$. Since any proof in $\mrm{VFM}^*$ contains at most $n$-many applications of P-Subst, if $A \in \lnat$ is derivable in $\mrm{VFM}^*$, then $\patpinf\sststile{0}{\alpha} A$ for some $\alpha<\Gamma_0$. Finally, derivations of $\lnat$ formulae in $\patpinf$ of length up to $\Gamma_0$ can be formalized in $\mrm{PA}+\mrm{TI}(<\Gamma_0)$. Therefore, the theorem follows. 
\end{proof}

By the results obtained in this section: 

\begin{thm}
$\vert\mrm{VFM}^*\vert \equiv \vert\mrm{PA}+\mrm{TI}(<\Gamma_0)\vert \equiv \vert \mrm{KF}^*\vert $
\end{thm}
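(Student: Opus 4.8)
The plan is to assemble the final chain of equivalences from the two bounds already in hand, invoking only standard facts about the proof-theoretic ordinals of the systems involved. First, the upper-bound direction is immediate: the Upper-bound Theorem for $\mrm{VFM}^*$ states that every $\lnat$-theorem of $\mrm{VFM}^*$ is provable in $\mrm{PA}+\mrm{TI}(<\Gamma_0)$, so $\vert \mrm{VFM}^*\vert \leqq \vert \mrm{PA}+\mrm{TI}(<\Gamma_0)\vert$.

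For the matching lower bound I would invoke Lemma~\ref{lower bound}, which yields $\vert \mrm{VFM}^*\vert \geqq \vert \mrm{RT}_{<\Gamma_0}\vert \equiv \vert \mrm{RA}_{<\Gamma_0}\vert$. It then remains to identify $\vert \mrm{RT}_{<\Gamma_0}\vert$ with $\vert \mrm{PA}+\mrm{TI}(<\Gamma_0)\vert$. This is Feferman's computation $\vert \mrm{RT}_{<\alpha}\vert \equiv \vert \mrm{PA}+\mrm{TI}(<\varphi_\alpha 0)\vert$ — the very identity underlying the case $\alpha=\varepsilon_0$ used in the $\mrm{VFM}$ analysis. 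Specializing to $\alpha=\Gamma_0$ and using that $\Gamma_0$ is by definition the least fixed point of the map $\beta\mapsto\varphi_\beta 0$, so that $\varphi_{\Gamma_0}0=\Gamma_0$, gives $\vert \mrm{RT}_{<\Gamma_0}\vert \equiv \vert \mrm{PA}+\mrm{TI}(<\Gamma_0)\vert$. Sandwiching $\vert \mrm{VFM}^*\vert$ between the two bounds forces $\vert \mrm{VFM}^*\vert \equiv \vert \mrm{PA}+\mrm{TI}(<\Gamma_0)\vert$.

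Finally, for the equivalence with $\mrm{KF}^*$ I would appeal to Feferman's result that the schematic extension of $\mrm{KF}$ has proof-theoretic ordinal $\Gamma_0$, i.e. $\vert \mrm{KF}^*\vert \equiv \vert \mrm{PA}+\mrm{TI}(<\Gamma_0)\vert$; this is exactly the phenomenon flagged in the opening remarks of the section. Chaining the three equivalences closes the argument.

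As to difficulty, there is no genuine obstacle at this final stage, since all the substantive work has already been discharged: the lower-bound lemma handles the iterated relative truth-definability of ramified predicates fed by the transfinite induction that P-Subst licenses, and the upper-bound theorem carries out the ordinal-bounded embedding and soundness argument formalized through $\patpinf$ and $\mrm{RT}_{<\hat\alpha}\llbracket\mrm{PA(P)}\rrbracket$. The one point that merits care is purely ordinal-theoretic bookkeeping: confirming that the ``$+1$'' shifts in the Veblen subscripts appearing in the intermediate bounds collapse at the fixed point $\Gamma_0$, so that the lower and upper ordinals genuinely coincide rather than leaving a gap.
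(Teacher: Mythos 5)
Your proposal is correct and follows exactly the route the paper intends: the theorem is stated as an immediate consequence of Lemma~\ref{lower bound} and the Upper-bound Theorem for $\mrm{VFM}^*$, combined with the standard identifications $\vert \mrm{RT}_{<\Gamma_0}\vert \equiv \vert\mrm{RA}_{<\Gamma_0}\vert \equiv \vert\mrm{PA}+\mrm{TI}(<\Gamma_0)\vert$ and Feferman's $\vert\mrm{KF}^*\vert \equiv \Gamma_0$. Your observation that $\varphi_{\Gamma_0}0=\Gamma_0$ closes the only gap between the two bounds, just as the paper implicitly assumes.
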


\section{Concluding remarks}

In this paper, we aimed to present axiomatic counterparts for the semantic supervaluational theories of truth in the style of Kripke which had not been addressed in the literature: the theories VB and MC. This aim has only been partially accomplished, because indeed it can only be partially accomplished. By results obtained by Fischer et al. \cite{fischer_halbach_kriener_stern_2015}, we know that no axiomatic theory is actually a good axiomatization of VB, nor of MC, nor of any semantic supervaluational theory in the literature; at least not if by `good axiomatization' we understand an axiomatic theory $S$ such that $S$ is $\nat$-categorical with respect to the supervaluational operator $\Phi$, i.e., if 

\begin{equation*}
    (\nat, X)\vDash S \Leftrightarrow X=\Phi(X).
\end{equation*}

Given this constraint,\footnote{In particular, the exact result is that for any theory based on a scheme $\Phi$ such that $\mrm{SV}(X)\subseteq \Phi(S)\subseteq \mrm{MC}(X)$, $\nat$-categoricity fails.} our axiomatic theories as good as one can expect for supervaluational theories, insofar as both $\mrm{VF}^-$ and $\mrm{VFM}$ are sound with respect to, respectively, VB and MC; and they are not trivially so.\footnote{For instance, $\pat$ is also sound with respect to all semantic supervaluational theories, but it is trivially so.}

Besides the above, in the paper we have introduced variations of those theories and, in all cases, we have provided a proof-theoretic analysis. We can sum up the results we obtained with the following table:

\begin{table}[H]
\centering
\begin{tabular}{|p{2cm}|p{2cm}|p{3cm}|p{2.5cm}|}
\hline
Proof-theoretic ordinal & First-order arithmetical theory & Supervaluational system & Compositional system  \\ \hline
   $\psi_0(\varepsilon_{\Omega+1})$                 &   $\mrm{ID}_1$ & VF / $\mrm{VF}^-$ / $\mrm{VF}^*$ $\mrm{VF}^{-*}$                    &       ??        \\ \hline
         $\Gamma_0$               &     $\widehat{\mrm{ID}}_{<\omega}$         & $\mrm{VFM}^*$           &     $\mrm{KF}^*$          \\ \hline
         $\vphi_{\vphi_20}0$               &     ??        & $\mrm{VFW}$           &     ??          \\ \hline
    $\vphi_{\varepsilon_0}0$               &     $\widehat{\mrm{ID}}_1$         & $\mrm{VFM}$           &     $\mrm{KF}$          \\ \hline
          
\end{tabular}
\end{table}

Arguably, out of the four supervaluational schemes (SV, VB, VC and MC), VC and MC enjoy a privileged position: they require, and uniformly so, that truth be either consistent (VC) or consistent and complete (MC). Of course, this is reflected on the axiomatic theories, as these requirements are imposed on the internal theories of VF and VFM. Now, while perhaps meeting both requirements at once would be desirable, our results cast doubt on the possibility to adjuciate so quickly in favour of VFM. For if one believes that proof-theoretic strength is one of the desiderata for axiomatic theories of truth, our proof-theoretic analysis for VFM points in the other direction, giving us reasons to reject $\mrm{VFM}$ in favour of $\mrm{VF}$ or even the weaker $\mrm{VFW}$. 

In fact, the proof-theoretic analyses of VF, VFW and VFM do seem to suggest that internal completeness severely limits the proof-theoretic strength. Admittedly, the latter can be lifted by considering the schematic extension, but the resulting theory still falls short of the strength of VF. 

What's more: by examining the theories presented by Friedman and Sheard in \cite{friedman_sheard_1987} and their respective analyses in \cite{leigh_rathjen_2012}, as well as considering other axiomatic theories in the literature that include the completeness (T-Comp) axiom, such as KF+(T-Comp), we notice that none of them surpasses the proof-theoretic limits of predicative analysis---or of $\widehat{\mrm{ID}}_1$, for that matter. So an immediate open question arises: 

\begin{op}
    Is there a ``natural'' axiomatic theory of truth that includes the axiom of $\mrm{(T}$-$\mrm{Comp)}$, either in the internal or the external theory, with proof-theoretic ordinal $>\vphi_{\varepsilon_0}(0)$? And with proof-theoretic ordinal $>\Gamma_0$?
\end{op}

While it is obvious that the meaning of `natural' is difficult to spell out, it is also clear to us that schematic extensions fall outside of this category. 

A related but distinct question concerns supervaluational theories in general. Thus, having seen how the proof-theoretic strength of VFM decreases substantially with respect to VF and $\mrm{VF}^-$, we wonder whether a supervaluational scheme can be produced so that it is more restrictive than VB \textit{and} the corresponding axiomatic theory is stronger than VFM. In technical terms: 

\begin{op}
Is there an admissibility condition $\Phi(\cdot)$ such that:

\begin{itemize}
    \item $\{\#\vphi \sth \forall Y\supseteq X (\Phi(Y)\Ra Y\vDash \vphi)\}\subsetneq \mrm{VB}(X)$ for $X$ a consistent set of sentences,
    \item there is some axiomatic theory of truth $\Sigma$ such that $(\nat, X)\vDash \Sigma$ when $X=\{ \vphi \sth \forall Y\supseteq X(\Phi(Y) \Ra Y\vDash \vphi)\}$, and
    \item $\vert \Sigma\vert > \vphi_{\varepsilon_0}0$?
\end{itemize}
\end{op}

\noindent Finally, we list a couple of further technical questions that arise out of our project: 

\begin{op}
How expressively strong is the truth predicate of $\mrm{VFM}$?
For example, can $\mrm{VFM}$ define the truth predicate of $\mrm{KF}$ in the sense of \cite{fujimoto_2010}?
\end{op}

\begin{op}
Can the upper-bound of $\mrm{VFM}$ be obtained via a direct interpretation of the theory in some system of first- or second-order arithmetic, or a known theory of truth?
\end{op}

\begin{op}
Can the upper-bound proof of $\mrm{VFM}$, for which we offer a formalization in $\mrm{ID}^*_1$, be formalized in $\mrm{RT}_{<\varepsilon_0}$?
\end{op}


\footnotesize \subsection*{\small Acknowledgements} The authors would like thank audiences at the 1st KCL/SNS Pisa Logic and Phil of Maths meeting, the 2024 Logic Colloquium and the XIII Workshop in Philosophical Logic at CONICET/University of Buenos Aires. We would particularly like to thank Carlo Nicolai for helpful comments. The work of the first author was made possible by an LAHP (London Arts and Humanities Partnership) studentship, as well as by PLEXUS (Grant Agreement no 101086295), a Marie Sklodowska-Curie action funded by the EU under the Horizon Europe Research and Innovation Programme.

\normalsize

\bibliography{bibliography}
\bibliographystyle{plain}
\end{document}